\documentclass[10pt,a4paper,nosumlimits,reqno]{amsart}
\usepackage{amssymb,amsmath,amsthm,url,lmodern}
\usepackage{xcolor,graphicx}
\usepackage{type1cm}%
\usepackage[shortlabels]{enumitem}
\setlist{font=\normalfont\upshape}
\usepackage[all,cmtip]{xy}
\theoremstyle{plain}
\newtheorem{thm}{Theorem}[section]
\newtheorem{prop}[thm]{Proposition}
\newtheorem{lem}[thm]{Lemma}
\newtheorem{cor}[thm]{Corollary}
\theoremstyle{definition}
\newtheorem{defn}[thm]{Definition}
\newtheorem{exmpl}[thm]{Example}
\newtheorem{rem}[thm]{Remark}
\newtheorem{probl}[thm]{Problem}
\newtheorem*{TC}{Telescope Conjecture}
\newtheorem*{NTC}{Nonstable Telescope Conjecture}
\newtheorem*{CSprobl}{Cuadra--Simson Problem}
\hyphenation{gro-then-dieck}

\makeatletter
\def\longrightleftarrows{\mathrel{
	\mathop{\vcenter{
	\offinterlineskip
	\hbox to 0.6truecm{\rightarrowfill}%
	\hbox to 0.6truecm{\leftarrowfill}}}%
	}}
\newcommand{\reflective}[1][\empty]{
	\mathrel{\mathop{\vcenter{%
	\offinterlineskip
	\hbox to 0.6truecm{\rightarrowfill}%
	\hbox to 0.6truecm{$\vphantom\gets\smash{\longleftarrow\joinrel\rhook}$}}}%
	\limits^{#1}%
	}}
\newcommand{\coreflective}[1][\empty]{%
	\mathrel{\mathop{\vcenter{%
	\offinterlineskip
	\hbox to 0.6truecm{$\vphantom\to\smash{\lhook\joinrel\to}$}%
	\hbox to 0.6truecm{\leftarrowfill}}}\limits_{#1}%
	}}
\newcommand{\doublewidetilde}[1]{{%
 \mathpalette\double@widetilde{#1}%
}}
\newcommand{\double@widetilde}[2]{%
 \sbox\z@{$\m@th#1\widetilde{#2}$}%
 \ht\z@=.9\ht\z@
 \widetilde{\box\z@}%
}

\newcommand{\ubar}[2][2]{%
	{}\underline{#2\mkern-#1mu}\mkern#1mu}
\newcommand{\wbar}[2][3]{%
	{}\mkern#1mu\overline{\mkern-#1mu#2}}

\newcommand{\what}[2][3]{%
	{}\mkern#1mu\widehat{\mkern-#1mu#2}}

\newdimen\ex@
\ex@.2326ex
\def\nointerlineskip{\prevdepth-\@m\p@}
\def\@projlim{%
		\mathop{\vtop{\ialign{##\crcr
		\hfil\rm lim\hfil\crcr\noalign{\nointerlineskip}\leftarrowfill\crcr
		\noalign{\nointerlineskip\kern-\ex@}\crcr}}}
}
\def\@injlim{%
		\mathop{\vtop{\ialign{##\crcr
		\hfil\rm lim\hfil\crcr\noalign{\nointerlineskip\kern1pt}\rightarrowfill\crcr
		\noalign{\nointerlineskip\kern-\ex@}\crcr}}}
}
\def\@holim{%
		\mathop{\vtop{\ialign{##\crcr
		\hfil\rm holim\hfil\crcr\noalign{\nointerlineskip}\leftarrowfill\crcr
		\noalign{\nointerlineskip\kern-\ex@}\crcr}}}
}
\def\@hocolim{%
		\mathop{\vtop{\ialign{##\crcr
		\hfil\rm holim\hfil\crcr\noalign{\nointerlineskip}\rightarrowfill\crcr
		\noalign{\nointerlineskip\kern-\ex@}\crcr}}}
}
\def\varprojlim{\mathop{\@projlim}}
\def\varinjlim{\mathop{\@injlim}}
\def\varholim{\mathop{\@holim}}
\def\varhocolim{\mathop{\@hocolim}}

\def\subsection{\@startsection{subsection}{2}%
 \z@{.5\linespacing\@plus.7\linespacing}{-.5em}%
 {\bfseries\mathversion{bold}}}
\makeatother

\let\cal=\mathcal
\def\op{\mathrm{op}}
\def\N{\mathbb{N}}
\def\Z{\mathbb{Z}}

\def\C{\mathcal{C}}

\def\ker{\operatorname{Ker}}
\def\coker{\operatorname{Coker}}
\def\colim{\operatorname{Colim}}
\def\Im{\operatorname{Im}}
\def\Tr{\operatorname{Tr}}
\def\tr{\operatorname{tr}}
\def\Rad{\operatorname{Rad}}
\def\rad{\operatorname{rad}}
\def\rann{\operatorname{r{.}ann}}
\def\lann{\operatorname{l{.}ann}}
\def\Fix{\operatorname{Fix}}
\def\Cone{\operatorname{Cone}}
\def\Mor{\operatorname{Mor}}
\let\im=\Im\relax
\def\Spec{\operatorname{Spec}}
\def\Max{\operatorname{Max}}
\def\Supp{\operatorname{Supp}}
\def\Max{\operatorname{Max}}
\def\Hom{\operatorname{Hom}}
\def\End{\operatorname{End}}
\def\Ext{\operatorname{Ext}}
\def\Tor{\operatorname{Tor}}
\def\Obj{\operatorname{Ob}}
\def\Gen{\operatorname{Gen}}
\def\Pres{\operatorname{Pres}}
\def\Sum{\operatorname{Sum}}
\def\A{\mathcal{A}}
\def\B{\mathcal{B}}

\def\Proj{\operatorname{Proj}}
\def\proj{\operatorname{proj}}
\def\Flat{\operatorname{Flat}}
\def\rTors{\mathrm{Tors}\mathchar`-}
\def\Ab{\mathrm{Ab}}
\def\rMod{\mathrm{Mod\mathchar`-}}
\def\lMod{\mathrm{\mathchar`-Mod}}
\def\biMod{\mathrm{\mathchar`-Mod\mathchar`-}}
\def\rmod{\mathrm{mod\mathchar`-}}
\def\lmod{\mathrm{\mathchar`-mod}}

\def\rstmod{\underline{\mathrm{mod}}\mathchar`-}
\def\lstmod{\mathchar`-\underline{\mathrm{mod}}}
\def\rComod{\mathrm{Comod\mathchar`-}}

\def\fg{\operatorname{fg}}
\def\fp{\operatorname{fp}}
\def\fl{\operatorname{fl}}
\def\noeth{\operatorname{noeth}}
\def\leqdef{\mathrel{\mathrel{\mathop:}=}}
\def\reqdef{\mathrel{=\mathrel{\mathop:}}}
\def\chbull{{\textstyle\boldsymbol{\cdot}}}
\let\rho\varrho
\let\Gamma\varGamma\let\Delta\varDelta\let\Theta\varTheta
\let\Lambda\varLambda\let\Xi\varXi\let\Pi\varPi
\let\Sigma\varSigma\let\Upsilon\varUpsilon\let\Phi\varPhi
\let\Psi\varPsi\let\Omega\varOmega\relax
\def\G{\mathcal G}
\newcommand{\calI}{I}
\newcommand{\calJ}{J}

\title[L.F.P.\ Grothendieck categories with a flat generator]
{Locally finitely presented Grothendieck categories with a flat generator}
\thanks{Carlos E.~Parra was supported by project ANID+FONDECYT/REGULAR+1240253. Manuel Saor\'\i n was supported by the grant PID2020-113206GB-I00, funded by the MCIN/10.13039/501100011033, and the project 22004/PI/22 of the Fundaci\'on `S\'eneca' de Murcia. A little part of this paper was done while the third named author was visiting the University of Padova in March~2025. He thanks this institution and the colleagues of its Dipartimento di Matematica for their hospitality. Simone Virili was partially supported by MINECO (grant PID2023-147110NB-I00) and by the Generalitat
de Catalunya as part of the research group ``Laboratori d'Interaccions entre Geometria, \'Algebra i Topologia
(LIGAT)'' (grant 2021SGR01015).}

\author[L.~Martini]{Lorenzo Martini}
\address[Lorenzo Martini]{%
Departamento de Matem\'atica Aplicada, Ciencia e Ingenier\'\i a de los Materiales y Tecnolog\'\i a
Electr\'onica, Universidad Rey Juan Carlos, 28933 M\'ostoles (Madrid), Spain}
\author[C.\,E.~Parra]{Carlos E.~Parra}
\address[Carlos E.~Parra]{%
Instituto de Ciencias F\'\i sicas y Matem\'aticas\\
Edificio Emilio Pug\'\i n, Campus Isla Teja\\
Universidad Austral de Chile\\
5090000 Valdivia, Chile}
\author[M.~Saor\'\i n]{Manuel Saor\'\i n}
\address[Manuel Saor\'\i n]{%
Departamento de Matem\'aticas\\
Universidad de Murcia\\
30100 Espinardo, Murcia, Spain}
\author[S.~Virili]{Simone Virili}
\address[Simone Virili]{%
Departament de Matem\`atiques\\
Universitat Aut\`onoma de Barcelona\\
08001 Bellaterra, Barcelona, Spain}	
\email{lorenzo.martini@urjc.es}
\email{carlos.parra@uach.cl}
\email{msaorinc@um.es}
\email{virili.simone@gmail.com \text{or} simone.virili@uab.cat}

\subjclass[2020]{18E10,18E35,18G25,18C35}
\keywords{Grothendieck category, locally finitely presented, trace of a projective module, flat generator}


\begin{document}\fontsize{10.5pt}{13.5pt}\selectfont

\begin{abstract}
    A problem raised by Cuadra and Simson in 2007 asks whether any locally finitely presented Grothendieck category with enough flat objects also has enough projectives. In this paper, we start from a key observation: a locally finitely presented Grothendieck category has enough flat objects if, and only if, it has exact products. This enables several equivalent reformulations of the problem, allowing us to identify a counterexample (thus providing a negative solution to the problem), while also connecting it to a classical ring-theoretical question posed by Miller in 1975, and even to the Telescope Conjecture for compactly generated triangulated categories. Moreover, we describe several classes of Grothendieck categories where the problem can be answered affirmatively. For example, we show that a locally finitely presented Grothendieck category whose category of  finitely presented objects is Krull--Schmidt has enough flats if, and only if, it is generated by a family of finitely generated projectives.
\end{abstract}

\maketitle

\section*{Introduction}%

The goal of this paper is to study in depth the Grothendieck categories mentioned in the title. Unlike module categories, where flatness is defined via the classical tensor product, the definition of flatness of objects in general Grothendieck categories requires a suitable theory of purity that should coincide with the classical one via tensor products when we restrict ourselves to module categories. By now, this general theory of purity is well understood in the context of locally finitely presented Grothendieck (see \cite{St2,GJ,AR,CB,H97,Krause3,P}), and it can be easily generalized to arbitrary Grothendieck categories, as we show in Section~\ref{sec.purity-flatness}. In this generality, a flat object is defined as one for which any epimorphism onto it is pure. But, even within a locally finitely presented Grothendieck category, the concept of a flat object is somewhat elusive. For instance, in Algebraic Geometry, it is well known that the (Grothendieck) category $\mathrm{Qcoh}(\mathbb{X})$ of quasi-coherent sheaves over a scheme $\mathbb{X}$ is locally finitely presented whenever $\mathbb{X}$ is quasi-compact and quasi-separated (see \cite[I.6.9.12]{GD}). In that setting, the standard notion of flatness is based on the tensor product of sheaves and, when defined this way, the class of flat objects is generating. However, there are several choices of a scheme $\mathbb{X}$ for which $\mathrm{Qcoh}(\mathbb{X})$ has no nonzero flat objects in the categorical sense (see \cite{ES15}).

Even so, mathematicians are inclined to believe that within a locally finitely presented Grothendieck category with enough flats, equivalently with a flat generator, one should be able to develop homology theory in patterns very close to the case of categories with enough projectives. 

Our main motivation comes from three problems raised by Juan Cuadra and Daniel Simson (see \cite[Open Problems~2.9]{CS07}). The first of them (Problem~2.9.(a)) asks for the characterization of Grothendieck categories as in the title. Motivated by the so-called Enoch's or flat cover conjecture, whose settlement for module categories (see \cite{B-EB-E}) represented a major breakthrough in Ring Theory and later on in other areas of Mathematics (see, e.g., \cite{N2}), Problem~2.9.(b) of \cite{CS07} asked whether any locally finitely presented Grothendieck category had flat covers. This problem was answered in the affirmative by Wolfgang Rump (see \cite[Corollary~5.2]{Rum10} and also \cite{CPT}), thus gathering in one several instances of those Grothendieck categories for which the flat cover conjecture had been proved (see \cite{E-GR,E-Oy,E-Es}). Note that the existence of flat covers in such a Grothendieck category does not imply the existence of enough flat objects. The third problem \cite[Problem~2.9.(c)]{CS07} asks whether any Grothendieck category as in the title has enough projectives. This problem has remained open since then and, although there was a feeling that the answer would be negative (see \cite[Section~6, Remark]{Rum10}), it still continues to call the attention of algebraists (see \cite{CI} for a very recent example). 

The main goal of this paper is to tackle in depth this last-mentioned problem of Cuadra and Simson. We then emphasize it since we will be quoting it throughout the paper:

\begin{CSprobl}
Let $\mathcal{G}$ be a locally finitely presented Grothendieck category with enough flat objects. Does it have enough projective objects?
\end{CSprobl}

The most important consequence of our results is a couple of reformulations of the problem (see Problem~\ref{probl.CS reformulated}) that implies that such a $\mathcal{G}$ is always a bilocalizing subcategory of a module category over a ring or over a small preadditive category, and that a locally finitely presented Grothendieck category has enough flat objects if, and only if, it is $\mathrm{AB}\mathchar`-4^\ast$ (see Corollary~\ref{cor.Cuadra-Simson versus Djament}). By these reformulations, we get two surprises. First, the problem is linked to some classical and some new problems that people have been studying for a while. Secondly, the problem has a negative answer in general. More specifically, we will show that the Cuadra--Simson Problem is both a particular case of a broader question by Robert Miller, who asked when an idempotent ideal of an (always associative and unital) ring is the trace of a projective module, and also equivalent to one of the questions posted online by Aur\'elien Djament (see \cite{Djament}), who asked whether an $\mathrm{AB}\mathchar`-4^\ast$ Grothendieck category must be generated by a family of finitely generated projectives, whenever it is either locally noetherian, locally coherent or, at least, locally finitely generated. In the final section of the paper, we will also establish a connection between the Cuadra--Simson Problem, and the Telescope Conjecture for general compactly generated triangulated categories. Concretely, if $\mathcal{D}$ is such a triangulated category, $\mathcal{D}^c$ is its subcategory of compact objects and $\mathbf{t}=(\mathcal{X},\mathcal{Y})$ is a smashing semiorthogonal decomposition $\mathcal{D}$, then $\mathbf{t}$ is compactly generated exactly when the bilocalizing subcategory of $\rMod \mathcal{D}^c$ associated to the idempotent ideal $\calI_\mathbf{t}$ of $\mathcal{D}^c$ defined by Krause is locally finitely presented (see Theorem~\ref{thm.CuadraSimson-Telescope}).

On what concerns the negative answer to the problem, we show that if $R$ is a ring and $I$ is an idempotent two-sided ideal of $R$ that satisfies one of the conditions (i) and (ii) below, and it is not the trace of a projective right $R$-module then  $\mathcal{G}_I$, the bilocalizing subcategory of $\rMod R$ associated to $I$, is locally finitely presented and has enough flat objects, but not enough projective objects (see Theorem~\ref{teor.counterexample-recipe} for an extended version):
\begin{enumerate}[(i)]
\item  the multiplication map $I\otimes_RI\to I$ is an isomorphism (or, equivalently, $\Tor_1^R(I,R/I)=0$), and $I$ is finitely presented as a right ideal;
\item $I$ is pure as a left ideal.
\end{enumerate}
In fact, following a suggestion by Herbera and P\v r\'\i hoda \cite{HPV}, we will show in Proposition~\ref{exmpl.Dubrovin-Puninski} that, if $R$ belongs to the class of Dubrovin--Puninski rings (studied in \cite{DP}, and in the appendix of \cite{B-H-P-S-T*}), then it has a two-sided idempotent ideal $I$ that satisfies both conditions (i) and (ii) above and, moreover, it is not the trace of a projective right $R$-module, thus solving the Cuadra--Simson Problem in the negative. 
In fact, since $R$ is right coherent, $\mathcal{G}_I$ is even locally coherent (see \cite{H97}) so our example provides a negative answer to two of the three questions of Djament, while the last one about locally noetherian categories remains open.

But the negative solution of the general Cuadra--Simson Problem is not the end, as it is still interesting to study its restriction to suitable classes of categories. In this direction, we show that the problem has an affirmative solution when $\mathcal{G}$ is a bilocalizing subcategory of the module category over: (i) a commutative ring (Theorem~\ref{teor.CS-commutative case}); (ii) a small additive category all of whose objects have semiregular endomorphism rings (Theorem~\ref{teor.conjecture true for semiregular-quotients}). As a consequence of this last result, we get an affirmative answer when $\mathcal{G}$ is locally finite, in particular when $\mathcal{G}=\rComod C$  is the category of comodules over a $K$-coalgebra, where $K$ is a field (see Corollary~\ref{thm.Cuadra-Simson for comodules}). This was the problem that originally motivated Cuadra and Simson.

We now give a summary of the contents of the paper, in which we will point out the most important results. In Section~1 we recall a few basic facts and definitions about Grothendieck categories, torsion pairs, and t-structures, while Section~2 is devoted to recall all the preliminary results about modules over small preadditive categories. In Section~3, we define purity and (categorical) flatness in an arbitrary Grothendieck category $\mathcal{G}$, show their preservation by certain adjoint functors and prove that those concepts coincide with the ones defined via tensor product when $\mathcal{G}=\rMod \mathcal{A}$ is the module category over a small preadditive category $\mathcal{A}$. To do that, we have taken some time in defining and explaining the properties of the tensor product of modules over a small preadditive category for which we did not find an appropriate reference. In Section~4 we study the torsion torsionfree (TTF) triple $(\mathcal{C}_{\calI},\mathcal{T}_{\calI},\mathcal{F}_{\calI})$ and the corresponding recollement of abelian categories associated to an idempotent ideal $\calI$ of a small preadditive category $\mathcal{A}$. We show that, as done by Ohtake (\cite{Oh}) for module categories over rings, also here the colocalization functor is given as the tensor product by a certain $\mathcal{A}\mathchar`-\mathcal{A}$-bimodule from which we get the crucial point for our purposes that if $j^*\colon \rMod \mathcal{A}\to (\rMod \mathcal{A})/\mathcal{T}_{\calI}$ is the associated Gabriel quotient functor and $j_!$ is its fully faithful left adjoint, then $\Im(j_!)\reqdef\mathcal{X}_{\calI}$ consists of the $\mathcal{A}$-modules $X$ such that the multiplication map $X\otimes_\mathcal{A}\calI\to X$ is an isomorphism (Proposition~\ref{prop.colocalization as tensor product}). 

In Section~5 we find the desired reduction of the Cuadra--Simson Problem showing that if $\mathcal{G}$ is locally finitely presented and has enough flat objects, then it is $\mathrm{AB}\mathchar`-6$ and $\mathrm{AB}\mathchar`-4^\ast$ (Proposition~\ref{prop.Carlos result}), which implies, by an old result of Roos, that there are a small preadditive category (even a ring) $\mathcal{A}$ and an idempotent ideal $\calI$ of $\mathcal{A}$ such that $\mathcal{G}$ is equivalent to $\mathcal{X}_{\calI}$. We then study the concepts relevant for the Cuadra--Simson Problem within the category $\mathcal{X}_{\calI}$, showing that this category always has enough flat objects, has enough projectives (resp., a set of finitely projective generators) exactly when $\calI$ is the trace of a projective (resp., a set of finitely generated projective) $\mathcal{A}$-modules and is locally finitely presented exactly when $\mathcal{C}_{\calI}$ is generated by $\mathcal{X}_{\calI}\cap\rmod \mathcal{A}$, where $\rmod \mathcal{A}$ is the subcategory of finitely presented $\mathcal{A}$-modules (Theorem~\ref{thm.always enough flats}). We end the section by explicitly reformulating the conjecture (see Problem~\ref{probl.CS reformulated}) in two equivalent ways, something that we emphasize here in an equivalent way:

\begin{CSprobl}[Reformulation~1]
Let $\mathcal{A}$ be a small preadditive category (with just one object) and $\calI$ be an idempotent ideal of $\mathcal{A}$. Suppose that the Gabriel quotient $(\rMod \mathcal{A})/\mathcal{T}_I$ is locally finitely presented. Does it have enough projectives?
\end{CSprobl}

\begin{CSprobl}[Reformulation~2]
Let $\mathcal{G}$ be a locally finitely presented $\mathrm{AB}\mathchar`-4^\ast$ Grothendieck category. Does $\mathcal{G}$ have enough projectives?
\end{CSprobl}

In Section~6 we give some partial affirmative answers to the Cuadra--Simson Problem. Taking the last-mentioned reformulation 1, we show that it has an affirmative answer whenever $\mathcal{A}=R$ is a commutative ring (Theorem~\ref{teor.CS-commutative case}) or when all the objects in the additive closure $\what{\mathcal{A}}$\, have a semiregular endomorphism ring (Theorem~\ref{teor.conjecture true for semiregular-quotients}).
As a byproduct, we show that if $\mathcal{G}$ is any locally finitely presented Grothendieck category in which any finitely presented object has a semiregular endomorphism ring (e.g., if $\fp(\mathcal{G})$ is Krull-Schmidt) then $\mathcal{G}$ has enough flat objects if, and only if, it has a set of finitely generated projective generators. This applies in particular to the case of the category of comodules over a $K$-coalgebra, where $K$ is a field, thus answering in the affirmative Cuadra--Simson Problem for these categories. 

The fact that every finitely presented $\mathcal{A}$-module is the cokernel of a morphism between finite coproducts of representable $\mathcal{A}$-modules directly leads to the study of the morphisms $\alpha$ in $\what{\mathcal{A}}$ that produce an object in $\fp(\mathcal{X}_I)=\mathcal{X}_{\calI}\cap\rmod \mathcal{A}$, something we do in Section~7. Those morphisms are called fp-detecting and it is shown that if $\what{b}\in\Obj(\what{\mathcal{A}})$ and $\eta$ is an endomorphism of $\what{b}$ that lies in the ideal $\calI$, then $1_{\what{b}}-\eta$ is fp-detecting and $\mathcal{X}_{\calI}\cong (\rMod \mathcal{A})/\mathcal{T}_I$ is locally finitely presented precisely when the left annihilators $\Fix(\eta )\leqdef\lann_\mathcal{A}(1_{\what{b}}-\eta)$ of these endomorphisms generate $\calI$ (Theorem~\ref{thm.finitepresentability-on-ideal}). This implies that whenever $\what{\mathcal{A}}$ has pseudocokernels, $\mathcal{X}_{\calI}$ is locally finitely presented if, and only if, the ideal $\calI$ is generated by pseudocokernels of morphisms $1_{\what{b}}-\eta$ as indicated. This fact is crucial in Section~8 to relate the Cuadra--Simson Problem to the (Stable and Nonstable) Telescope Conjecture in arbitrary compactly generated triangulated categories. We show that within such a triangulated category $\mathcal{D}$ a t-structure $\mathbf{t}$ with definable co-aisle (e.g., any smashing semiorthogonal decomposition) is compactly generated if, and only if, the Gabriel quotient $(\rMod \mathcal{D}^c)/\mathcal{T}_{I_\mathbf{t}}$ is locally finitely presented.

\medskip\smallskip \noindent
{\bf Acknowledgment.} The Authors wish to express their gratitude to Dolors Herbera and Pavel P\v r\'\i hoda for suggesting the use of Dubrovin--Puninski rings and for several useful discussions about their properties. The Authors are also grateful to Juan Cuadra for drawing their attention to the particular case of the problem concerning comodules. Moreover, they are indebted to Rosie Laking,  Frederik Marks, and Jorge Vit\'oria; and to Manolo Cort\'es, for sharing \cite{LMV}, and a first draft of \cite{CI}, respectively.

\section{Preliminaries}%

All through the paper $\mathcal{A}$ will be a fixed small preadditive category and $\mathcal{G}$ will denote an abelian category. All subcategories will be strictly full. For any preadditive category $\mathcal{C}$ and any object $C\in\Obj(\mathcal{C})$, we will denote by $\mathcal{C}(X,-)\colon\mathcal{C}\to\Ab$ and $\mathcal{C}(-,X)\colon\mathcal{C}^\textup{op}\to\Ab$ the associated Hom functors. 

\subsection{Grothendieck categories}%

Recall that $\mathcal{G}$ is $\mathrm{AB}\mathchar`-3$ (resp., $\mathrm{AB}\mathchar`-3^\ast$) when it is cocomplete (resp., complete), is $\mathrm{AB}\mathchar`-4$ (resp., $\mathrm{AB}\mathchar`-4^\ast$) when coproducts (resp., products) are exact and $\mathcal{G}$ is $\mathrm{AB}\mathchar`-5$ when it is $\mathrm{AB}\mathchar`-3$ and direct limits are exact. If $\mathcal{G}$ is $\mathrm{AB}\mathchar`-5$ and has a set of generators (equivalently, a generator), then $\mathcal{G}$ is called a \emph{Grothendieck category}. When $\mathcal{G}$ is $\mathrm{AB}\mathchar`-3$ and $\mathcal{S}$ is any class of objects, we shall denote by $\Sum_\mathcal{G}(\mathcal{S})$, $\Gen_\mathcal{G}(\mathcal{S})$ and $\Pres_\mathcal{G}(\mathcal{S})$ the subcategories of $\mathcal{G}$ that consist of the objects isomorphic to (set-indexed) coproducts, epimorphic images of coproducts and cokernels of morphisms between coproducts of objects of $\mathcal{S}$, respectively. When $\mathcal{G}$ is clear from the context, we will omit it as a subscript (e.g., $\Gen(\mathcal{S})$ instead of $\Gen_\mathcal{G}(\mathcal{S})$). 

When $\mathcal{C}$ is any preadditive category and $\mathcal{S}\subseteq\Obj(\mathcal{C})$ is any class of objects, we will put $\mathcal{S}^\bot\leqdef\{C\in\Obj(\mathcal{C})\mid\mathcal{C}(S,C)=0, \ \hbox{for all $S\in\mathcal{S}$}\}$. When, in addition, $\mathcal{C}=\mathcal{G}$ is abelian, we will put
\begin{gather*}
	\mathcal{S}^{\bot_1}\leqdef\{X\in\Obj(\mathcal{G})\mid\Ext_\mathcal{G}^1(S,X)=0,
		\ \hbox{for all $S\in\mathcal{S}$}\} \cr
	\noalign{\hbox{and}}
	\mathcal{S}^{\bot_{0,1}}\leqdef\mathcal{S}^\bot\cap\mathcal{S}^{\bot_1}.
\end{gather*}
By duality, we get obvious subcategories ${}^{\perp}\mathcal{S}$, ${}^{\perp_1}\mathcal{S}$ and ${}^{\perp_{0,1}}\mathcal{S}$. An \emph{orthogonal pair} in $\mathcal{C}$ is a pair $(\mathcal{X},\mathcal{Y})$ of subcategories such that $\mathcal{Y}=\mathcal{X}^\perp$ and $\mathcal{X}={}^\perp\mathcal{Y}$.

When $\mathcal{G}$ is a Grothendieck category, an object $X$ of $\mathcal{G}$ is called \emph{finitely presented} (resp., \emph{finitely generated}) when $\mathcal{G}(X,-)\colon\mathcal{G}\to\Ab$ preserves direct limits (resp., direct unions). We denote by $\fp(\mathcal{G})$ (resp., $\fg(\mathcal{G})$) the subcategory of finitely presented (resp., finitely generated) objects, which is skeletally small (see \cite[Proposition~9.2]{CI-Cr-Sa}). The category $\mathcal{G}$ is \emph{locally finitely presented} (resp., \emph{locally finitely generated}) when it has a set of finitely presented (resp., finitely generated) generators, something which is equivalent to say that each of object of $\mathcal{G}$ is a direct limit of finitely presented (resp., finitely generated) objects. When $\mathcal{G}$ is locally finitely presented and $\fp(\mathcal{G})$ is an abelian subcategory, equivalently closed under kernels, we say that $\mathcal{G}$ is \emph{locally coherent}.

The proof of the following result is left to the reader.

\begin{lem}
Let $L:\mathcal{A}\rightleftarrows\mathcal{B}:\varGamma$ be an adjunction, where $\A$ and $\cal B$ are Abelian categories. Then, the following assertions hold true:%
\label{l:CategoryTheory}%
\begin{enumerate}[(i)]
\item If $\varGamma$ is (right) exact, then $L$ preserves projective objects. The converse holds whenever $\mathcal{A}$ has enough projectives. 
\item If $\mathcal{A}$ and $\mathcal{B}$ are both Grothendieck, with $\cal A$  locally finitely presented, then $\varGamma$ commutes with direct limits if, and only if, $L$ sends objects in $\fp(\A)$ to objects in $\fp(\cal B)$.
\end{enumerate}
\end{lem}

\subsection{Torsion pairs and localizations}%

A \emph{torsion pair} $\mathbf{t}=(\mathcal{T},\mathcal{F})$ in a Grothendieck category $\mathcal{G}$ is just an orthogonal pair. In that case $\mathcal{T}$ is closed under coproducts, extensions and quotients and, dually, $\mathcal{F}$ is closed under subobjects, extensions and products. $\mathcal{T}$ is called the \emph{torsion class} and $\mathcal{F}$ the \emph{torsionfree class} and $\mathbf{t}$ is called a \emph{hereditary torsion pair} when $\mathcal{T}$ is closed under taking subobjects. Any subcategory $\mathcal{T}$ appearing as first component of such a pair is called a \emph{hereditary torsion class} in $\mathcal{G}$. Note that $\mathcal{T}$ is called \emph{localizing subcategory} in \cite{G}.

A \emph{torsion torsionfree class} (TTF class in the sequel) in $\mathcal{G}$ is a subcategory which is both a torsion and a torsionfree class, equivalently a hereditary torsion class closed under taking products. In that case, putting $\mathcal{C}={}^{\perp}\mathcal{T}$ and $\mathcal{F}\leqdef\mathcal{T}^\perp$, we get what is called a \emph{TTF triple} $(\mathcal{C},\mathcal{T},\mathcal{F})$ in $\mathcal{G}$. 

Given a Grothendieck category $\mathcal{G}$ and a hereditary torsion class $\mathcal T$ in $\mathcal{G}$, there is a new Grothendieck category $\mathcal{G}/\mathcal T$, called the \emph{(Gabriel) quotient  of $\mathcal{G}$ by $\mathcal T$}, and a \emph{quotient or localization functor} $q\colon \mathcal{G}\to\mathcal{G}/\mathcal{T}$ satisfying the following two properties (see \cite{G}):
\begin{enumerate}[(i)]
\item $q$ is exact and vanishes on $\mathcal T$;
\item $q$ is universal with respect to property~(i). That is, if $F\colon\mathcal{G}\to\mathcal{A}$ is an exact functor to any abelian category $\mathcal A$ which vanishes on $\mathcal T$, then $F$ factors through $q$ in a unique way (up to natural isomorphism).
\end{enumerate}
The functor $q$ has a fully faithful right adjoint $j\colon\mathcal{G}/\mathcal T\to\mathcal{G}$, called the \emph{section functor}, whose essential image is $\mathcal{G}=\mathcal{T}^{\perp_{0,1}}$, that is called the \emph{Giraud subcategory} associated to $\mathcal{T}$. The counit of the adjoint pair $(q,j)$ is then an isomorphism and we will denote by $\mu\colon 1_{\mathcal{G}}\Rightarrow j\circ q$ its unit. Note that $\ker(\mu_M)$ and $\coker(\mu_M)$ are in $\mathcal{T}$, for each object $M$ of $\mathcal{G}$.

\subsection{t-Structures in triangulated categories with coproducts}%
\label{subs.triangulcats}%
A \emph{triangulated category} is a triple $(\mathcal{D},\Delta,[1])$, where $\mathcal{D}$ is an additive category, $[1]\leqdef(-)[1]\colon\mathcal{D}\to\mathcal{D}$ is an autoequivalence, called the \emph{suspension functor} and $\Delta$ is class of sequences of morphisms $X\mathrel{\smash[t]{\buildrel f\over\to}} Y\mathrel{\smash[t]{\buildrel g\over\to}} Z\mathrel{\smash[t]{\buildrel h\over\to}} X[1]$, called \emph{(distinguished) triangles}, that satisfy certain axioms (we refer the reader to \cite{N} for the details). We put $[n]=[1]^n$, for all $n\in\mathbb{Z}$. The axioms guarantee that any morphism $f$ appears in such a triangle, in which case the object $Z$ is uniquely determined by $f$, up to (non-unique) isomorphism. We put $Z=\Cone(f)$ and call it the \emph{cone of $f$}.

An example of triangulated category that will appear later is  the \emph{homotopy category} $\mathcal{K}(\mathcal{B})$ of an additive category $\mathcal{B}$. Its objects are the (cochain) complexes \[
	B^\chbull\colon \cdots\longrightarrow
		B^{n}\buildrel d^{n}\over\longrightarrow
		B^n\buildrel d^{n+1}\over\longrightarrow B^{n+1}\longrightarrow\cdots
\]
of objects of $\mathcal{B}$ and the morphisms are the homotopy classes of cochain maps. In this case $[1]\colon\mathcal{K}(\mathcal{B})\to\mathcal{K}(\mathcal{B})$ is the functor that moves a complex one unit to the left and the distinguished triangles are those isomorphic in $\mathcal{K}(\mathcal{B})$ to pointwise split short exact sequences in the category of cochain complexes (see \cite[Section~10.1]{Wei}) for the details. When $\mathcal{B}$ is an abelian category, the object $H^n(B^\chbull)=\ker(d^n)/\im(d^{n-1})$ is called the \emph{$n$-th cohomology object of $B^\chbull$} and the assignment $B^\chbull\mapsto H^n(B^\chbull)$ is the definition on objects of an additive functor $H^n\colon\mathcal{K}(\mathcal{B})\to\mathcal{B}$, called the \emph{$n$-th cohomology functor}.

A \emph{torsion pair} in a triangulated category $(\mathcal{D},\Delta,[1])$ is an orthogonal pair $\mathbf{t}=(\mathcal{X},\mathcal{Y})$ of subcategories such that each $D\in\Obj(\mathcal{D})$ fits into a triangle
\begin{equation}
	X\longrightarrow D\longrightarrow Y\longrightarrow X[1]
	\tag{$\triangle$}\label{eq:triangle}
\end{equation}
where $X\in\mathcal{X}$ and $Y\in \mathcal{Y}$. Such a torsion pair is called a \emph{t-structure} when in addition $\mathcal{X}[1]\subseteq\mathcal{X}$ (equivalently, $\mathcal{Y}[-1]\subseteq\mathcal{Y}$), and is called a \emph{semiorthogonal decomposition} or a \emph{stable t-structure} when $\mathcal{X}=\mathcal{X}[1]$ (equivalently, $\mathcal{Y}=\mathcal{Y}[1]$). When $\mathbf{t}$ is a t-structure, in particular when it is a semiorthogonal decomposition, the objects $X$ and $Y$ of the triangle \eqref{eq:triangle} depend functorially on $D$ and give rise to functors $\tau_\mathbf{t}^{\le0}\colon\mathcal{D}\to\mathcal{X}$ and $\tau_\mathbf{t}^{>0}\colon\mathcal{D}\to\mathcal{Y}$, which are right and left adjoint to the respective inclusion functors. They are called the \emph{left} and \emph{right truncation functors} with respect to $\mathbf{t}$, and the triangle \eqref{eq:triangle}, that becomes
\[
	\tau_\mathbf{t}^{\le0}D\longrightarrow D\longrightarrow
		\tau_\mathbf{t}^{>0}D\longrightarrow (\tau_\mathbf{t}^{\leq 0}D)[1],
\]
is called the \emph{truncation triangle} with respect to $\mathbf{t}$. Note that our definition of t-structure differs from the one in \cite{BBD}, but the assignment $(\mathcal{X},\mathcal{Y})\mapsto(\mathcal{X},\mathcal{Y}[1])$ gives a bijection between our t-structures and the ones in \cite{BBD}. We shall extend to an arbitrary torsion pair  $\mathbf{t}=(\mathcal{X},\mathcal{Y})$ the following terminology commonly used for t-structures: we will call $\mathcal{X}$ the \emph{aisle} and $\mathcal{Y}$ the \emph{co-aisle} of $\mathbf{t}$. 

Let $\mathcal{D}$ have coproducts. A \emph{compact object} in $\mathcal{D}$ is an object $X$ such that the functor $\mathcal{D}(X,-)\colon\mathcal{D}\to\Ab$ preserves coproducts. The corresponding subcategory is denoted by $\mathcal{D}^c$. A torsion pair in $\mathcal{D}$ is called \emph{compactly generated} when its co-aisle is of the form $\mathcal{C}^\perp$, for some set $\mathcal{C}\subset\mathcal{D}^c$. We say that $\mathcal{D}$ is a \emph{compactly generated triangulated category} when the trivial torsion pair $(\mathcal{D},0)$ is compactly generated. A torsion pair  is called \emph{smashing} if its co-aisle is closed under coproducts. 

Suppose that $\mathcal{D}$ is a compactly generated triangulated category in this paragraph. A \emph{definable subcategory} of $\mathcal{D}$ (see \cite{Krause4,Laking}) is a full subcategory $\mathcal{Z}$ of $\mathcal{D}$ for which there is a set of morphisms $\mathcal{S}\subseteq\Mor(\mathcal{D}^c)$ such that, given $Z\in\mathcal{D}$, we have that $Z\in\mathcal{Z}$ if, and only if, $\mathcal{D}(s,Z)\colon\mathcal{D}(C',Z)\to\mathcal{D}(C,Z)$ is the zero morphism, for all $s\colon C\to C'$ in $\mathcal{S}$. It is clear that any definable subcategory is closed under coproducts. 
Moreover, any compactly generated torsion pair
$\mathbf{t}=(\mathcal{X},\mathcal{Y})$ has a definable co-aisle, in
particular it is smashing, because if
$\mathcal{C}\subseteq\mathcal{D}^c$ is such that
$\mathcal{C}^\perp=\mathcal{Y}$ then one can choose
$\mathcal{S}:=\{1_C\mid C\in\mathcal{C}\}$.

\section{Modules over small preadditive categories}%

\subsection{The ($2$-)category of preadditive categories}   \label{subs_preadd}
A category $\A$ is said to be preadditive if it is enriched in the category $\Ab$ of abelian groups. Together with additive functors and natural transformations, preadditive categories form a $2$-category. Whenever our $\A$ is (skeletally) small, and $\B$ is any preadditive category, we denote by $[\A,\B]$ be the obvious (preadditive) category of additive functors and natural transformations. 

Let $\A$, $\B$, and $\C$ be preadditive categories, and let $\A\times \B$ be the usual product (which is still preadditive). A functor $F\colon \A\times \B\to \C$ is called {\em biadditive} if: 
\begin{itemize}
    \item $F(-,b)\colon \A\to \C$ is additive, for all $b\in \Obj(\B)$;
    \item $F(a,-)\colon \B\to \C$ is additive, for all $a\in \Obj(\A)$.
\end{itemize}
Assuming $\A$ and $\B$ small, we denote by $\text{Bi}[\A\times \B, \C]$ the category of biadditive functors and natural transformations. There are two standard equivalences:
\begin{equation}\label{pretenshomad_eq}
[\B,[\A,\C]]\cong\text{Bi}[\A\times \B, \C]\cong [\A,[\B,\C]],
\end{equation}
reflecting the fact that a biadditive functor $F\colon \A\times \B\to \C$ is equivalent both to the additive functor $b\mapsto [F(-,b)\colon \A\to \C]$, and also to $a\mapsto [F(a,-)\colon \B\to \C]$. 

The \emph{tensor product} $\mathcal{A}\otimes\mathcal{B}$ of $\A$ and $\B$ (not necessarily small) is defined as:
\begin{itemize}
\item $\Obj(\mathcal{A}\otimes\mathcal{B})\leqdef\Obj(\mathcal{A})\times\Obj(\mathcal{B})$;
\item $(\mathcal{A}\otimes\mathcal{B})((a,b),(a',b'))\leqdef\mathcal{A}(a,a')\otimes_\Z\mathcal{B}(b,b')$ (the usual tensor product of abelian groups),
for all $(a,b),(a',b')\in\Obj(\mathcal{A}\otimes\mathcal{B})$;
\item $(f'\otimes g')\circ (f\otimes g)\leqdef(f'\circ f)\otimes (g'\circ g)$, if both $f'\circ f$ and $g'\circ g$ make sense (in $\A$ and in $\B$, respectively); now extend this rule bilinearly.
\end{itemize}
The obvious functor $T\colon\A\times\B\to \A\otimes\B$ is the universal biadditive functor out of $\A\times \B$. For $\A$ and $\B$ small, this means that $T$ induces the following equivalence:
\[
-\circ T\colon [\A\otimes \B, \C]\overset \cong\longrightarrow \text{Bi}[\A\times \B, \C].
\]
Observe that, by \eqref{pretenshomad_eq}, this just means that $-\otimes \B$ and $\A\otimes-$ are left adjoint to $[\B,-]$ and $[\A,-]$, respectively.

\smallskip
We will denote by $\what{\mathcal{A}}$ the \emph{additive closure} of $\mathcal{A}$. This is the universal additive category associated with $\mathcal{A}$, and it can be described as follows:
\begin{itemize}
    \item $\Obj(\what{\A})\leqdef\{\what{a}=(a_1,\dots,a_n)\mid a_i\in \Obj(\A),\ \forall i=1,\dots,n\}$;
    \item $\what{\A}(\what{a},\what{b})\leqdef\{(\alpha_{ij})\mid  \alpha_{ij}\in\A(a_j,b_i)\}$, for all $\what a,\,\what b\in \Obj(\what{\A})$.
\end{itemize}
So morphisms in $\what{\A}$ are represented by matrices, and can be composed with the usual matrix multiplication. There is a canonical, fully faithful additive functor $\A\to \what{\A}$, identifying any  $a\in \Obj(\A)$ with $(a)\in \Obj(\what{\A})$, and morphisms in $\A$ with $1\times 1$ matrices. Observe that, under this identification, the coproduct in $\widehat{\A}$ of a family $a_1,\dots,a_n\in \Obj(\A)$, is simply $\what{a}\leqdef(a_1,\dots,a_n)$, i.e., $\coprod_{i=1}^na_i=\what{a}$.

To describe the additive closure of $\A^\op$, observe that $\Obj(\what{\A^\op})=\Obj(\what{\A})$, but 
\[
\what{\A^\op}(\,\what{b},\what{a}\,)\leqdef\{(\alpha_{ij})^t\mid (\alpha_{ij})\in \what{\A}(\what{a},\what{b})\},
\]
where $(\alpha_{ij})^t\leqdef(\alpha_{ji})$ denotes the transpose matrix. 
We refer to \cite[Section~1]{PSV} for a deeper discussion of additive closures and their properties.

\subsection{Modules over small preadditive categories}
Let $\A$ be a small preadditive category. The categories of {\em left} and, respectively, {\em right $\mathcal{A}$-modules} are defined as the following additive functor categories:
\[
\mathcal{A}\lMod\leqdef[\A,\Ab]\quad\text{and}\quad\rMod\mathcal{A}\leqdef [\A^\op,\Ab].%
\] 
In what follows we will concentrate primarily on right modules, without this being restrictive, as shown by the following equivalence: $\mathcal{A}\lMod\cong\rMod\mathcal{A}^\textup{op}$. 

As is usual in functor categories, all limits and colimits are computed point-wise in the target. As a consequence, the category of modules inherits several of the nice properties of $\Ab$. In particular, $\rMod\A$ is an abelian category, it is complete and cocomplete, and both direct limits and products are exact. As we will see below, it  also has a (very nice) family of generators, so  we can summarize all these properties by saying that $\rMod\A$ is an $\mathrm{AB}\mathchar`-4^\ast$ Grothendieck category.

\begin{lem}[Additive Yoneda Lemma]
Let $\A$ be a small preadditive category, and consider the following functor, called the {\em Yoneda Embedding}:
\[
\mathbf{y}=\mathbf{y}_\mathcal{A}\colon\mathcal{A}\to\rMod\mathcal{A}\quad \text{such that}\quad a\mapsto\mathbf{y}(a)=\mathcal{A}(-,a).
\]
Then, $\mathbf{y}$ is additive and fully faithful. Moreover, there is a natural isomorphism $(\rMod\A)(\mathbf{y}(a),M)\cong M(a)$, for all $M\in \rMod\A$, and all $a\in \Obj(\A)$.
\end{lem}
Let $\mathbf{y}(a)\reqdef H_a$ be the \emph{representable right $\mathcal{A}$-module} associated to $a$, for all $a\in\Obj(\mathcal{A})$. Dually, $H'_a\leqdef\mathcal{A}(a,-)=\mathbf{y}_{\mathcal{A}^\textup{op}}(a)$, for all $a\in\Obj(\mathcal{A})$. Observe that, as a direct consequence of the Additive Yoneda Lemma, $\{H_a\mid a\in\Obj(\mathcal{A})\}$ is a set of finitely generated projective generators of $\rMod\mathcal{A}$. Therefore, the (finitely generated) projective right $\mathcal{A}$-modules are precisely the direct summands of (finite) coproducts of representable $\mathcal{A}$-modules. 
We shall denote by $\Proj(\mathcal{A})$ (resp., $\proj(\mathcal{A})$) the subcategory of $\rMod\mathcal{A}$ consisting of the (finitely generated) projective $\mathcal{A}$-modules.

Observe that, for any $M\in \rMod\A$, since $M$ is an additive functor, there is a unique $\what{M}\in \rMod{\what{\A}}$ (up to a unique isomorphism) such that $\what{M}_{\restriction\A^\op}=M$, and a similar argument applies to morphisms in $\rMod\A$. In this way, it is possible to show that there is a canonical equivalence of categories $\rMod\A\cong \rMod\what{\A}$ or, in other words, that $\A$ and $\what{\A}$ are Morita equivalent. This allows us to identify $\rMod\A$ and $\rMod\what{\A}$ (resp., $\A\lMod$ and $\what{\A}\lMod$). In particular, given $\what{a}=\coprod_{j=1}^na_j\in \Obj(\what{\A})$, we will refer to the finitely generated projective modules the form $H_{\what{a}}\leqdef\coprod_{j=1}^nH_{a_j}\in \rMod\A$ (resp., $H'_{\what{a}}\leqdef\coprod_{j=1}^nH'_{a_j}\in \A\lMod$) as representable.

With these conventions in mind, a finitely generated object in $\rMod\A$, i.e.,  a \emph{finitely generated right $\mathcal{A}$-module}, is precisely an epimorphic image of a representable $H_{\what{a}}$, for some  $\what{a}\in\Obj(\what{\mathcal{A}})$. Similarly, a finitely presented object in $\rMod\mathcal{A}$, i.e., a \emph{finitely presented right $\mathcal{A}$-module}, can always be written in the form $\coker(H_{\alpha})$, for some  $\alpha=(\alpha_{ij})_{i,j}\colon\what{a}\to\what{b}$ in $\what{\mathcal{A}}$. We put $\rmod\mathcal{A}\leqdef\fp(\rMod\mathcal{A})$. Note that $\rMod\mathcal{A}$ is locally finitely presented.

\subsection{Gabriel--Popescu theorem}%

In this subsection, we just remind the reader of the refinement of the classical Gabriel--Popescu Theorem using the vision of Mitchell of small preadditive categories as rings with several objects (see \cite{Mitc}). A proof may be found in \cite[Theorem~1.1]{P2} (see also \cite[Theorem~1.2]{Lowen}).

\begin{thm}[Gabriel--Popescu--Mitchell]
\label{teor.Gabriel-Popescu-Mitchell}
Let $\mathcal{G}$ be a Grothendieck category generated by a small subcategory $\mathcal{A}$, and consider the following functor, called the {\em Restricted Yoneda Embedding} (relative to $\A$):
\[
\mathbf{y}\colon\mathcal{G}\longrightarrow\rMod\mathcal{A},\quad\text{such that}\quad Y\longmapsto\mathcal{G}(-,Y)_{\lvert\mathcal{A}}, \ \ \forall Y\in \Obj(\cal G).
\]
Then, the following assertions hold true.
\begin{enumerate}[(i)]
\item $\mathbf{y}$ is fully faithful, and it has an exact left adjoint $q\colon\rMod\mathcal{A}\to\mathcal{G}$. 
\item $\mathcal{T}\leqdef\ker(q)$ is a hereditary torsion class in $\rMod\mathcal{A}$, and $q$ induces an equivalence of categories $(\rMod\mathcal{A})/\mathcal{T}\cong\mathcal{G}$.
\end{enumerate}
\end{thm}

Later on we will need a more concrete description of the $\mathcal{A}$-modules in $\mathcal{T}$; this is taken care of in the following result.

\begin{prop}
\label{lem.Mitchell}%
In the setting of Theorem~\ref{teor.Gabriel-Popescu-Mitchell}, suppose that the objects of $\mathcal{A}$ are all finitely generated in $\cal G$. The following are equivalent for  $T\in \rMod \A$:
\begin{enumerate}[(a)]
\item $T\in\mathcal{T}$;
\item there is an epimorphism $f\colon Y\twoheadrightarrow Y'$ in $\mathcal{G}$ inducing an exact sequence
\[
	\mathbf{y}(Y)\buildrel\mathbf{y}(f)\over\longrightarrow
		\mathbf{y}(Y')\longrightarrow T\longrightarrow 0\qquad\text{in $\rMod\mathcal{A}$;}
\]
\item as in assertion~\textup{(b)}, but with $Y,\,Y'\in\Sum_\mathcal{G}(\mathcal{A})$.
\end{enumerate}
\end{prop}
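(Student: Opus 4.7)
The plan is to dispose of the easy implications first and then focus on (a) $\Rightarrow$ (c), which is the substantive direction. The implication (c) $\Rightarrow$ (b) is immediate. For (b) $\Rightarrow$ (a), I would apply the exact functor $q$ to the given sequence and use that the counit $q\mathbf{y}\Rightarrow\id_\mathcal{G}$ is an isomorphism (because $\mathbf{y}$ is fully faithful): this yields an exact sequence $Y\to Y'\to q(T)\to 0$ in which the first arrow is identified with $f$ up to the counit iso, and since $f$ is epi, $q(T)=0$, so $T\in\ker(q)=\mathcal{T}$.

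The key preliminary step I would isolate for the main implication is that, under the hypothesis that every object of $\mathcal{A}$ is finitely generated, the restriction of $\mathbf{y}$ to $\Sum_\mathcal{G}(\mathcal{A})$ preserves arbitrary coproducts; concretely, $\mathbf{y}\bigl(\coprod_i a_i\bigr)\cong\coprod_i H_{a_i}$ for any family $(a_i)$ in $\mathcal{A}$. Evaluating at an arbitrary $a\in\mathcal{A}$, this reduces to the identity
\[
\mathcal{G}\Bigl(a,\coprod_i a_i\Bigr)=\varinjlim_F \bigoplus_{i\in F}\mathcal{G}(a,a_i)=\bigoplus_i \mathcal{A}(a,a_i),
\]
which follows by writing $\coprod_i a_i$ as the direct union of its finite sub-coproducts and invoking that $\mathcal{G}(a,-)$ preserves direct unions (by finite generation of $a$), together with its commutation with finite coproducts.

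With this in hand, (a) $\Rightarrow$ (c) proceeds as follows. Starting from $T\in\mathcal{T}$, choose an epimorphism $\pi\colon\coprod_i H_{a_i'}\twoheadrightarrow T$ in $\rMod\mathcal{A}$, which is possible since $\{H_a\mid a\in\Obj(\mathcal{A})\}$ is a set of generators. Setting $Y'\leqdef\coprod_i a_i'\in\Sum_\mathcal{G}(\mathcal{A})$, the preliminary step identifies the source of $\pi$ with $\mathbf{y}(Y')$; let $K\leqdef\ker(\pi)$. Repeat the procedure to obtain an epi $\mathbf{y}(Y)\twoheadrightarrow K$ with $Y\in\Sum_\mathcal{G}(\mathcal{A})$. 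The composition $g\colon\mathbf{y}(Y)\twoheadrightarrow K\hookrightarrow\mathbf{y}(Y')$ then fits into an exact sequence $\mathbf{y}(Y)\xrightarrow{g}\mathbf{y}(Y')\to T\to 0$, and fully faithfulness of $\mathbf{y}$ promotes $g$ to a unique morphism $f\colon Y\to Y'$ in $\mathcal{G}$ with $\mathbf{y}(f)=g$. Finally, applying the exact functor $q$ to this sequence and using the counit iso gives that $f$ is epi (since $q(T)=0$), verifying~(c). The only non-routine ingredient is the coproduct-preservation step; it is precisely the hypothesis that $\mathcal{A}$ consists of finitely generated objects that makes the whole argument go through, and everything else is formal from exactness of $q$, fully faithfulness of $\mathbf{y}$, and the projective generation of $\rMod\mathcal{A}$ by the representables.
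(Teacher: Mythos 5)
Your proof is correct and follows essentially the same route as the paper: both hinge on the isomorphism $q\circ\mathbf{y}\cong1_{\mathcal G}$, the coproduct-preservation of $\mathbf y$ forced by finite generation of the objects of $\mathcal A$ (yielding $\mathbf y(\coprod a_i)\cong\coprod H_{a_i}$), a free presentation of $T$, and exactness of $q$. The only cosmetic difference is organizational — the paper establishes (a)$\Leftrightarrow$(c) in a single chain of equivalences after converting the presentation morphism to a morphism $\alpha$ in $\Sum_{\mathcal G}(\mathcal A)$, while you split this into (a)$\Rightarrow$(c) plus the cycle (c)$\Rightarrow$(b)$\Rightarrow$(a).
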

\begin{proof}
Since $\mathbf{y}$ is fully faithful it follows that the counit $q\circ\mathbf{y}\Rightarrow 1_\mathcal{G}$ is a natural isomorphism (apply the dual of \cite[Proposition~7.5]{HS}). Moreover, the fact that all objects of $\mathcal{A}$ are finitely generated in $\mathcal{G}$ implies that $\mathbf{y}$ preserves coproducts. 

\smallskip \noindent``$\textup{(a)}\Leftrightarrow\textup{(c)}$'' If $H_A=\mathcal{A}(-,A)$ denotes the representable right $\mathcal{A}$-module associated to $A\in\Obj(\mathcal{A})$, one has $q(H_A)=A$ and, moreover, $\mathbf{y}$ and $q$ restrict to an equivalence $\Sum_\mathcal{G}(\mathcal{A})\rightleftarrows\mathrm{Free}\mathchar`-\mathcal{A}$, where $\mathrm{Free}\mathchar`-\mathcal{A}$ is the subcategory of $\rMod\mathcal{A}$ with objects the free $\mathcal{A}$-modules, i.e., the coproducts of representable $\mathcal{A}$-modules. Consider now any free presentation of $T$ in $\rMod\mathcal{A}$,
\[
	\coprod_{j\in J}H_{A_j}\buildrel\varphi\over\longrightarrow
		\coprod_{i\in I}H_{B_i}\buildrel\pi\over\longrightarrow T\longrightarrow 0,
\]
where $A_j,B_i\in\mathcal{A}$ for all $i\in I$ and $j\in J$. By the Additive Yoneda Lemma, $\varphi$ may be identified with a column-finite matrix $(H_{\alpha_{ij}}\colon H_{A_j}\to H_{B_i})_{i\in I,j\in J}$, where $\alpha_{ij}\in\mathcal{A}(A_j,B_i)=\mathcal{G}(A_j,B_i)$, for all $i\in I$ and $j\in J$. Consider the obvious morphism $\alpha\colon\coprod_{j\in J}A_j\to\coprod_{i\in I}B_i$ in $\Sum_\mathcal{G}(\mathcal{A})$ given as $\alpha =(\alpha_{ij})$: by the equivalence mentioned above, we deduce that $\mathbf{y}(\alpha )\cong\varphi$. Using the exactness of $q$, we get the following chain of double implications:
\[	T\in\mathcal{T} \iff q(T)=0 
	\iff \hbox{$q(\varphi)\cong\alpha$ is an epimorphism in $\mathcal{G}$}.
\]

 \noindent``$\textup{(c)}\Rightarrow\textup{(b)}$'' Clear.

\smallskip \noindent``$\textup{(b)}\Rightarrow\textup{(a)}$'' Let $f\colon Y\twoheadrightarrow Y'$ be an epimorphism in $\mathcal{G}$ such that $T$ is isomorphic to the cokernel of $\mathbf{y}(f)\colon\mathbf{y}(Y)\to\mathbf{y}(Y')$. The exactness of $q$, and the fact that $q\circ\mathbf{y}\cong 1_\mathcal{G}$, readily imply that $q(T)=0$. 
\end{proof}

\subsection{Modules and ideals associated to morphisms}%
\label{ss:modules-morphisms}%

Let $\mathcal{C}$ be a (not necessarily small) preadditive category. An \emph{ideal of $\mathcal{C}$} is a collection $(\calI(c,c'))_{c,c'\in\Obj(\mathcal{C})}$ such that $\calI(c,c')$ is a subgroup of $\mathcal{C}(c,c')$, for all $c,c'\in\Obj(\mathcal{C})$, and such that, for each composition $c''\mathrel{\smash[t]{\buildrel\alpha\over\to}}c\mathrel{\smash[t]{\buildrel\beta\over\to}}c'\mathrel{\smash[t]{\buildrel\gamma\over\to}}c'''$ of morphisms in $\mathcal{C}$, if $\beta\in\calI(c,c')$ then $\gamma\circ\beta\circ\alpha\in\calI(c'',c''')$.
\begin{exmpl}\label{ideal_gen_morph_ex}
For each morphism $\alpha\colon a\to b$ in $\mathcal{A}$, we denote by $\mathcal{A}\alpha\mathcal{A}$ the smallest ideal of $\mathcal{A}$ that contains $\alpha$. Recall (see \cite[Definition~2.2]{PSV}) that, for each $c$ and $d\in\Obj(\mathcal{A})$, the subgroup $(\mathcal{A}\alpha\mathcal{A})(c,d)\leq \A(c,d)$ consists of those morphisms $c\to d$ in $\A$ that can be expressed as a finite sum of compositions of the form
\(
	c\mathrel{\smash[t]{\buildrel\beta\over\to}}
	a\mathrel{\smash[t]{\buildrel\alpha\over\to}}
	b\mathrel{\smash[t]{\buildrel\gamma\over\to}} d
\) (for suitable morphisms $\beta$ and $\gamma$ in $\A$).
\end{exmpl}

For any ideal $I$ in $\C$, there is an associated {\em quotient category} $\mathcal{C}/\calI$, such that:
\begin{itemize}
    \item $\Obj(\mathcal{C}/\calI)\leqdef\Obj(\C)$;
    \item $(\mathcal{C}/\calI)(c,d)\leqdef\mathcal{C}(c,d)/\calI (c,d)$, for all $c,d\in\Obj(\mathcal{C})$;
\end{itemize}
with the obvious composition induced from that of $\mathcal{C}$. Clearly $\mathcal{C}/\calI$ is preadditive, and there is an additive projection functor $p\colon\mathcal{C}\to\mathcal{C}/\calI$, which is the identity on objects and where $p_{c,d}\colon\mathcal{C}(c,d)\to(\mathcal{C}/\calI)(c,d)=\mathcal{C}(c,d)/\calI (c,d)$ is the obvious group projection, for all $c,d\in \Obj(\C)$. 

\begin{exmpl}\label{stab_cat_mod_proj_ex}
Let $\mathcal{A}$ be a small preadditive category, and denote by  $\mathcal{P}$ the ideal of $\rMod\mathcal{A}$ (resp., of $\rmod\mathcal{A}$) of all those morphisms that factor through some object in $\Proj(\mathcal{A})$ (resp., in $\proj(\mathcal{A})$). The corresponding quotient category  $\underline{\mathrm{Mod}}\mathchar`-\mathcal{A}\leqdef(\rMod\mathcal{A})/\mathcal{P}$ (resp., $\rstmod\mathcal{A}\leqdef(\rmod\mathcal{A})/\mathcal{P}$) is the so-called \emph{stable category (modulo projectives)}.
\end{exmpl}

Observe that, given $\what{a}$ in $\what{\mathcal{A}}$, a submodule (or, equivalently, a subfunctor) $M$ of the representable left $\mathcal{A}$-module $H'_{\what{a}}\,\colon\mathcal{A}\to\Ab$
is simply a family of subgroups $\{M(b)\leq \what{\A}(\what{a},b)\mid b\in \Obj(\A)\}$ such that $\alpha\circ m\in M(d)$, for all $m\in M(c)$, $\alpha\in \A(c,d)$, and $c,d\in \Obj(\A)$. Submodules of representable right $\A$-modules can be described similarly. 

In particular, for any morphism $\alpha\colon\what{a}\to\what{b}$ in $\what{\mathcal{A}}$, the \emph{submodule} $\mathcal{A}\alpha\leq\smash{H'_{\what{a}}}$ (resp., $\alpha\mathcal{A}\leq\smash{H_{\what{b}}}$) \emph{generated by $\alpha$} can be described as follows (for all $c\in \Obj(\A)$):
\begin{itemize}
\item $(\mathcal{A}\alpha)(c)\leqdef\{\beta\circ\alpha\in\what{\mathcal{A}}(\what{a},c) \mid \beta\in\what{\mathcal{A}}(\what{b},c)\}$;
\item $(\alpha\mathcal{A})(c)\leqdef\{\alpha\circ\beta\in\what{\mathcal{A}}(c,\what{b})\mid \beta\in\what{\mathcal{A}}(c,\what{a})\}$.
\end{itemize}
Moreover, let us define the \emph{left (resp., right) annihilator of $\alpha$} as the submodule $\lann_\mathcal{A}(\alpha)\leq H'_{\what{b}}$ (resp., $\rann_\mathcal{A}(\alpha)\leq H_{\what{a}}$) such that (for all $c\in\Obj(\mathcal{A})$):
\begin{itemize}
\item $[\lann_\mathcal{A}(\alpha)](c)\leqdef\{\gamma\in\what{\mathcal{A}}(\what{b},c)\mid \gamma\circ\alpha =0\}$;
\item $[\rann_\mathcal{A}(\alpha)](c)\leqdef\{\beta\in\what{\mathcal{A}}(c,\what{a})\mid \alpha\circ\beta =0\}$.
\end{itemize}

\begin{defn}
\label{def.ideal associated to a submodule}%
Given  a submodule $M\le H'_{\what{a}}$\,, for some  $\what{a}$ in $\what{\mathcal{A}}$, the \emph{ideal of $\mathcal{A}$ generated by $M$} is the smallest ideal $M\mathcal{A}$ of $\A$ such that $M(b)\leq (M\mathcal{A})(\what{a},b)$, for all $b\in \Obj(\A)$. Equivalently, for each $c,d\in \Obj(\A)$, we have:
\[
(M\mathcal{A})(c,d)\leqdef\Bigl\{\sum_{k=1}^n\mu_k\circ\beta_k\Bigm\vert \beta_k\in\what{\mathcal{A}}(c,\what{a}),\, \mu_k\in M(d)\subseteq\what{\mathcal{A}}(\what{a},d)\Bigr\}.
\]
The ideal $\mathcal{A}N$, generated by a submodule $N\leq H_{\what{a}}$\,, is defined dually.
\end{defn}


 We can now describe the ideals of $\mathcal{A}$ generated by the submodules  generated by a morphism $\alpha\colon \what{a}\to \what{b}$, and by its left and right annihilators. The proof of the following corollary is immediate from the definitions.

\begin{cor}
\label{cor.idealgenerated-annihilator}%
Let $\alpha\leqdef (\alpha_{ij})_{i,j}\in \what{\A}(\what{a},\what{b})$, represented by a $m\times n$ matrix with coefficients $\alpha_{ij}\in \A(a_j,b_i)$, for all $i=1,\dots, m$ and $j=1,\dots, n$. Then, the following assertions hold:
\begin{enumerate}[(i)]
\item The ideal generated by $\mathcal{A}\alpha\leq H'_{\what{a}}$ (resp., $\alpha\mathcal{A}\leq H_{\what{b}}$) is (see Example~\ref{ideal_gen_morph_ex}):
\[
	\mathcal{A}\alpha\mathcal{A}\leqdef\sum_{\substack{1\leq i\leq m \cr 1\leq j\leq n}}
		\mathcal{A}\alpha_{ij}\mathcal{A}.
\]
\item The ideal $\lann_\mathcal{A}(\alpha)\mathcal{A}$ of $\mathcal{A}$ generated by $\lann_\mathcal{A}(\alpha)\leq H'_{\what{b}}$ can be described,  for all $c,d\in \Obj(\A)$, as follows:
\[
[\lann_\mathcal{A}(\alpha)\mathcal{A}](c,d)\leqdef\Bigl\{\sum_{k=1}^n\mu_k\circ\beta_k\Bigm\vert \beta_k\in\what{\mathcal{A}}(c,\what{b}), \text{ }\mu_k\in[\lann_\A(\alpha)](d)\Bigr\}.
\]
\end{enumerate}
\end{cor}
%
%


\begin{rem}\label{rem_sub_gen_id}
Suppose that $\alpha =1_{\what{a}}$\,, then clearly $\mathcal{A}1_{\what{a}}=H'_{\what{a}}$ and $1_{\what{a}}\mathcal{A}=H_{\what{a}}$\,. In particular, $\mathcal{A}1_{\what{a}}\,\mathcal{A}=\sum_{j=1}^n\mathcal{A}1_{a_j}\mathcal{A}$ is the ideal generated by $\{1_{a_j}\mid j=1,\ldots,n\}$. 
\end{rem}

We conclude this subsection with a description of  $\proj(\A^\op)\leqdef\proj(\A\lMod)$ and $\proj(\A)\leqdef\proj(\rMod\A)$, the categories of finitely generated projective left and, resp., right $\A$-modules. As a consequence of the proposition below one deduces immediately that $\proj(\A)$ (resp., $\proj(\A^\op)$) is equivalent to the idempotent completion of $\what{\A}$ (resp., $\what{\A^\op}$).

\begin{prop}
\label{prop.idempotent-versus-fgprojectives}%
Let $e\in\what{\mathcal{A}}(\what{a},\what{a})$ be an idempotent endomorphism of some $\what{a}\in\Obj(\what{\A})$. Then, the following assertions hold true:
\begin{enumerate}[(i)]
\item $e\mathcal{A}$ (resp., $\mathcal{A}e$) is a direct summand of $H_{\what{a}}$ (resp., $H'_{\what{a}}$\,), and so it is a finitely generated projective right (resp., left) $\mathcal{A}$-module.
\item Any finitely generated projective right (resp., left) $\mathcal{A}$-module is of the form $e\mathcal{A}$ (resp., $\mathcal{A}e$), for a suitable $\what{a}\in\Obj(\what{\mathcal{A}})$, and $e=e^2\in\what{\mathcal{A}}(\what{a},\what{a})$.
\end{enumerate}
\end{prop}
\begin{proof}
Since the representable right (resp., left) $\mathcal{A}$-modules form a set of finitely generated projective generators, the finitely generated projective right (resp., left) $\mathcal{A}$-modules are the direct summands of finite coproducts of representable right (resp., left) $\mathcal{A}$-modules. That is, they are the direct summands of right (resp., left) $\mathcal{A}$-modules of the form $H_{\what{a}}^{\vphantom\prime}$ (resp., $H'_{\what{a}}$\,), for some $\what{a}\in\Obj(\what{\mathcal{A}})$. 
%
\end{proof}

\subsection{Categories of bimodules}%

The category of {\em $\B$-$\A$-bimodules} is defined as: 
\[
\mathcal{B}\biMod\mathcal{A}\leqdef \text{Bi}[\A^\op\times \B, \Ab],
\]
but we will generally identify it with one of the following categories:
\begin{prop}\label{prop.interpretations of bimodules}%
The following categories are all equivalent to $\mathcal{B}\biMod\mathcal{A}$:
\begin{enumerate}[(a)]
\item $\rMod(\mathcal{B}^\textup{op}\otimes\mathcal{A})$;
\item $(\mathcal{A}^\textup{op}\otimes\mathcal{B})\lMod$;
\item $[\mathcal{B},\rMod\mathcal{A}]$;
\item $[\mathcal{A}^\textup{op},\mathcal{B}\lMod]$.
\end{enumerate}
\end{prop}
\begin{proof}
The equivalence between $\mathcal{B}\biMod\mathcal{A}$ and the categories in (b), (c), and (d), follows by the discussion in Section~\ref{subs_preadd}. For the equivalence between (b) and (a), it is enough to observe that $(\mathcal{A}^\textup{op}\otimes \mathcal{B})^\op\cong\mathcal{B}^\textup{op}\otimes\mathcal{A}$.
\end{proof}

\begin{rem}\label{rem.modules as bimodules}
Observe that $\A\otimes \Z\cong \A \cong \Z\otimes \A$, for any small preadditive category $\mathcal{A}$ (identifying $\Z$ with a one-object preadditive category). These trivial equivalences can be used to embed the theory of modules into that of bimodules. More precisely, $\rMod\mathcal{A}\cong\mathbb{Z}\biMod\mathcal{A}$, and $\mathcal{A}\lMod\cong\mathcal{A}\biMod\mathbb{Z}$.
\end{rem}

\begin{exmpl}\mbox{\label{rem.regular bimodule and ideal}}%
Let $\A$ be a small preadditive category.
\begin{enumerate}
\item Consider the \emph{regular $\mathcal{A}\mathchar`-\mathcal{A}$-bimodule} $\mathcal{A}_\textup{reg}\leqdef\mathcal{A}(-,-)\colon\mathcal{A}^\textup{op}\otimes\mathcal{A}\to\Ab$. The equivalences $(\mathcal{A}^\textup{op}\otimes\mathcal{A})\lMod\cong[\mathcal{A},\rMod\mathcal{A}]\cong[\mathcal{A}^\textup{op},\mathcal{A}\lMod]$ of  Proposition~\ref{prop.interpretations of bimodules}, allow us to identify $\mathcal{A}_\textup{reg}$ with the Yoneda Embedding $\mathbf{y}_{\A}\colon\mathcal{A}\to\rMod\mathcal{A}$ or, equivalently, with  $\mathbf{y}_{\A^\op}\colon\mathcal{A}^\textup{op}\to\mathcal{A}\lMod$.
\item An $\mathcal{A}\mathchar`-\mathcal{A}$-sub-bimodule of $\mathcal{A}_\textup{reg}$ is just an ideal of $\mathcal{A}$ (see Subsection~\ref{ss:modules-morphisms}). So, by part (1), we can identify the ideals of $\mathcal{A}$ with the subfunctors of either of the Yoneda Embeddings.
\end{enumerate}
\end{exmpl}

\begin{rem}
The equivalence between $\rMod\mathcal{A}$ and $\rMod\what{\mathcal{A}}$ induces a bijection between the ideals of $\mathcal{A}$ and $\what{\mathcal{A}}$, that is, for any ideal $\calI$ in $\A$, there is a unique ideal $\what{\calI}$ in $\what{\mathcal{A}}$, such that $\calI(a,b)=\what{\calI}(a,b)$ for all $a,b\in\Obj(\A)$. Similarly, for each $\what{a}=\coprod_{j=1}^na_j$, $\what{b}=\coprod_{i=1}^mb_i\in \Obj(\what{\A})$, we have:
\[
\what{\calI}(\what{a},\what{b})= \{(\alpha_{ij})_{i,j}\in \what{\A}(\what a, \what b)\mid  \alpha_{ij}\in \calI(a_j,b_i),\ \forall i,j\}.\] As these two ideals determine each other so easily, we will generally identify them, and write expressions like $I(\what a,\what b)$ (instead of $\what I(\what a,\what b)$).
\end{rem}

Among the ideals of $\A$, there is a particular family that will play an important role in the paper. These are the ideals that arise as traces (see \cite[Sections~1 and~2]{PSV}). Observe that, even if $\mathcal{S}\subseteq \rMod\mathcal{A}$ is a proper class, for any given $M\in\rMod\A$, the collection $\{\Im(f)\leq M\mid f\colon S\to M,\ S\in\mathcal{S}\}$ is just a set,  since $\rMod\mathcal{A}$ is well-powered. 

\begin{defn}[see {\cite[Subsection~1.3]{PSV}}]
\label{def. trace}%
For any class $\mathcal{S}\subseteq \rMod\mathcal{A}$, and any given right $\A$-module $M\in \rMod\mathcal{A}$, we define:
\begin{itemize}
    \item the {\em trace of $\mathcal{S}$ in $M$}, $\tr_\mathcal{S}(M)\leqdef\sum\{\Im(f)\mid f\colon S\to M,\ S\in\mathcal{S}\}\leq M$;
    \item the {\em trace ideal $\tr_{\mathcal{S}}(\A)$ of $\mathcal{S}$ in $\A$}, which is the ideal of $\A$ such that
    \[
    [\tr_{\mathcal{S}}(\A)](a,b)\leqdef[\tr_{\mathcal{S}}(H_b)](a),\quad\forall a,b\in \Obj(\A).
    \]
\end{itemize}
\end{defn}
Observe that, by Example~\ref{rem.regular bimodule and ideal}\,(2), $\tr_{\mathcal{S}}(\A)$ can also be identified with a subfunctor of the Yoneda Embedding $\bf y_\A\colon \A\to \rMod\A$ such that $a\mapsto \tr_{\cal S}(H_a)$.

\smallskip
A crucial property of trace ideals is the following (see \cite[Proposition~2.7]{PSV}).

\begin{prop}
\label{prop.traces-versus-idempotents}%
Let $\mathcal{A}$ be a small preadditive category.
\begin{enumerate}[(i)]
\item $\tr_{e\mathcal{A}}(\A_{\rm reg})=\A e \A$, for all $\what{a}\in\Obj(\what{\mathcal{A}})$ and $e=e^2\in\what{\mathcal{A}}(\what{a},\what{a})$.
\item An ideal $\calI$ of $\mathcal{A}$ is the trace of a set of finitely generated projective right (resp., left) $\mathcal{A}$-modules if, and only if, it is generated by idempotent endomorphisms in $\what{\mathcal{A}}$. That is, if and only if, there is a family $(\what{a}_\lambda)_{\lambda\in\Lambda}$ of objects in $\what{\mathcal{A}}$ and, for each $\lambda$, an idempotent  $e_\lambda\in\what{\mathcal{A}}(\what{a}_\lambda,\what{a}_\lambda)$, such that \[\calI=\sum_{\Lambda}\mathcal{A}e_\lambda\mathcal{A}.\]
\end{enumerate}
\end{prop}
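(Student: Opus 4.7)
The plan is to deduce both assertions from a direct computation of the trace $\tr_{e\mathcal{A}}(H_a)(c)$ for $c,a\in\Obj(\mathcal{A})$, combined with the description of finitely generated projective right $\mathcal{A}$-modules as modules of the form $e\mathcal{A}$ provided by Proposition~\ref{prop.idempotent-versus-fgprojectives}.

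For part~(i), the starting observation is that, by Proposition~\ref{prop.idempotent-versus-fgprojectives}(i), the inclusion $e\mathcal{A}\hookrightarrow H_{\what{a}}$ splits. Hence every morphism $f\colon e\mathcal{A}\to H_a$ in $\rMod\mathcal{A}$ extends to a morphism $H_{\what{a}}\to H_a$, which by Yoneda's lemma corresponds to some $\mu\in\what{\mathcal{A}}(\what{a},a)$. Since $(e\mathcal{A})(c)=\{e\circ\beta:\beta\in\what{\mathcal{A}}(c,\what{a})\}$, the image of such an $f$ at $c$ consists precisely of the elements $\mu\circ e\circ\beta$; summing over all $f$ (equivalently, over all $\mu$, as every $\mu$ defines such a morphism) identifies $\tr_{e\mathcal{A}}(H_a)(c)$ with the set of finite sums $\sum_k\mu_k\circ e\circ\beta_k$ with $\mu_k\in\what{\mathcal{A}}(\what{a},a)$ and $\beta_k\in\what{\mathcal{A}}(c,\what{a})$. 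Unpacking the matrix entries of $\mu_k$ and $\beta_k$ and invoking Corollary~\ref{cor.idealgenerated-annihilator}(i) identifies this set with $(\mathcal{A}e\mathcal{A})(c,a)$. Since the equality is natural in $c$, the subfunctors of the Yoneda embedding that correspond, through Definition~\ref{def. trace} and Remark~\ref{rem.regular bimodule and ideal}(2), to $\tr_{e\mathcal{A}}(\mathcal{A})$ and to $\mathcal{A}e\mathcal{A}$ coincide, giving $\tr_{e\mathcal{A}}(\mathcal{A})=\mathcal{A}e\mathcal{A}$. The statement concerning $\mathcal{A}e$ follows by the dual argument applied to $\mathcal{A}^\textup{op}$.

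Part~(ii) is then a formal consequence. For the forward implication, write $\calI=\tr_\mathcal{S}(\mathcal{A})=\sum_{S\in\mathcal{S}}\tr_S(\mathcal{A})$ with $\mathcal{S}$ a set of finitely generated projective right $\mathcal{A}$-modules; Proposition~\ref{prop.idempotent-versus-fgprojectives}(iii) furnishes, for each $S\in\mathcal{S}$, an $\what{a}_S\in\Obj(\what{\mathcal{A}})$ and an idempotent $e_S\in\what{\mathcal{A}}(\what{a}_S,\what{a}_S)$ with $S\cong e_S\mathcal{A}$, whence part~(i) gives $\calI=\sum_{S\in\mathcal{S}}\mathcal{A}e_S\mathcal{A}$. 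Conversely, given $\calI=\sum_\Lambda\mathcal{A}e_\lambda\mathcal{A}$ with each $e_\lambda$ idempotent, each $e_\lambda\mathcal{A}$ is finitely generated projective by Proposition~\ref{prop.idempotent-versus-fgprojectives}(i), and part~(i) reassembles the sum as $\calI=\tr_{\{e_\lambda\mathcal{A}\}_{\lambda\in\Lambda}}(\mathcal{A})$. The only genuine work is in part~(i); the main subtlety is the non-uniqueness of the extension $\mu$ of a morphism $e\mathcal{A}\to H_a$, but this is harmless since the set of finite sums $\sum_k\mu_k\circ e\circ\beta_k$ is the same whether one indexes by $f$ or by $\mu$.
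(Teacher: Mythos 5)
Your proof is correct and follows essentially the same route as the paper: part~(ii) is deduced formally from part~(i) and Proposition~\ref{prop.idempotent-versus-fgprojectives}, just as the paper does. The only difference is in part~(i), where the paper simply cites an ``easy generalization'' of a result in \cite{PSV}, whereas you carry out the direct computation of $\tr_{e\mathcal{A}}(H_a)(c)$ — using that the split inclusion $e\mathcal{A}\hookrightarrow H_{\what{a}}$ lets every morphism $e\mathcal{A}\to H_a$ be realized as the restriction of some $H_\mu$ — which is exactly the argument being referenced.
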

\begin{proof}
Assertion~(i) for the right version is an easy generalization of \cite[Proposition~2.6]{PSV}, and the left version follows by duality. Assertion~(ii) is then a direct consequence of assertion~(i) and Proposition~\ref{prop.idempotent-versus-fgprojectives}.
\end{proof}

\subsection{Radicals of modules and of small preadditive categories}%

In this subsection we recall some facts about the radical of a small preadditive category. The concept seems to have been considered first by Kelly \cite{Kelly}. Here we follow closely \cite[Section~2]{Krause2}, where the development is done for additive categories. 

\smallskip
In what follows, given $M\in \rMod\A$, we will say that a proper submodule $N\lneq M$ is {\em maximal} if $N\leq N'\lneq M$ implies that $N=N'$.

\begin{defn}
Given $M\in \rMod\A$, the \emph{radical of $M$} is, by definition:
\[
\Rad(M)\leqdef \begin{cases}
    \bigcap\{N\mid N\lneq M\text{ maximal}\}&\text{if this family is $\neq\emptyset$;}\\
    M&\text{otherwise.}
\end{cases}
\]
Then, the {\em radical $\rad_{\mathcal{A}}$ of $\A$} is defined as the ideal of $\A$ such that:
\[
\rad_{\mathcal{A}}(b,a)\leqdef [\Rad(H_a)](b), \quad\forall a,b\in \Obj(\A).
\]
\end{defn}

\begin{lem}
\label{lem.proper submodule}%
The following are equivalent for $M\leq H_a$ ($a\in \Obj(\A)$):
\begin{enumerate}[(a)]
\item $M\lneq H_a$, i.e., $M$ is a proper submodule of $H_a$;
\item $M(a)\lneq H_a(a)=\mathcal{A}(a,a)$, i.e., $M(a)$ is a proper subgroup (or, equivalently, a proper right ideal) of $\A(a,a)$;
\item $1_a\not\in M(a)$, where $1_a\colon a\to a$ is the identity morphism.
\end{enumerate}
Moreover, if $M\lneq H_a$ is maximal, then $M(a)$ is a maximal right ideal of $\A(a,a)$.
\end{lem}
\begin{proof}
The implications ``(c)$\Rightarrow$(b)$\Rightarrow$(a)'' are trivial, while  ``(a)$\Rightarrow$(c)'' follows from the first part of Remark~\ref{rem_sub_gen_id}.

\smallskip
Finally, if $M\lneq H_a$ is maximal, but $M(a)$ is not a maximal ideal in $\mathcal{A}(a,a)$, there has to be some $\alpha\in\mathcal{A}(a,a)$ such that $M(a)\lneq M(a)+\alpha\mathcal{A}(a,a)\lneq\mathcal{A}(a,a)$. But then $M\lneq M+\alpha\mathcal{A}\lneq H_a$, contradicting the maximality of $M$ in $H_a$.
\end{proof}


The next result is standard for additive categories (see \cite[Corollary~2.10]{Krause2}, although Krause's definition of the radical is different). A consequence of the next theorem is then that our definition of the radical is equivalent to the one in \cite{Krause2}, at least for additive categories. In what follows, we will denote by $J(R)$ the Jacobson radical of $R$, for any (associative and unitary) ring $R$.

\begin{thm}\label{thm.radical morphisms}%
Let $\mathcal{A}$ be a small preadditive category, and let $\alpha \colon b\to a$ be a morphism in $\mathcal{A}$. Then, the following assertions are equivalent:
\begin{enumerate}[(a)]
\item $\alpha\in\rad_\mathcal{A}(b,a)$;
\item $\alpha\circ\beta\in J(\mathcal{A}(a,a))$, for all $\beta\in\mathcal{A}(a,b)$;
\item $1_a-\alpha\circ\beta$ is an automorphism of $a$, for all $\beta\in\mathcal{A}(a,b)$;
\item $\alpha\in\rad_{\mathcal{A}^\textup{op}}(a,b)$;
\item $\beta\circ\alpha\in J(\mathcal{A}(b,b))$, for all $\beta\in\mathcal{A}(a,b)$;
\item $1_b-\beta\circ\alpha$ is an automorphism of $b$, for all $\beta\in\mathcal{A}(a,b)$.
\end{enumerate}
\end{thm}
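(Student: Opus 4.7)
The plan is to split the six conditions into two symmetric halves and bridge them via the classical $(1-xy)$ vs.\ $(1-yx)$ identity. The ring-theoretic equivalences (b) $\Leftrightarrow$ (c) and (e) $\Leftrightarrow$ (f) follow immediately from the standard characterization of the Jacobson radical ($x \in J(R)$ if and only if $1_R - xy$ is a unit for every $y \in R$): from (b), specialising $y=1_a$ gives (c); conversely, given (c) and any $\gamma \in \mathcal{A}(a,a)$, the product $\beta\gamma$ still lies in $\mathcal{A}(a,b)$, so $1_a - (\alpha\beta)\gamma = 1_a - \alpha(\beta\gamma)$ is a unit and (b) follows. The equivalence (c) $\Leftrightarrow$ (f) is then a direct check using the formula $(1_b - \beta\alpha)^{-1} = 1_b + \beta(1_a - \alpha\beta)^{-1}\alpha$, together with its mirror.

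The core of the argument is (a) $\Leftrightarrow$ (c), which I would establish by a maximal-submodule argument relying on Lemma~\ref{lem.proper submodule} and on the fact (quoted from \cite[Lemma~2.5]{Krause2} in the paragraph preceding that lemma) that every proper submodule of the finitely generated module $H_a$ is contained in a maximal one. For (c) $\Rightarrow$ (a): if $\alpha \notin \Rad(H_a)(b)$, pick a maximal submodule $M \leq H_a$ with $\alpha \notin M(b)$, so $M + \alpha\mathcal{A} = H_a$ by maximality; evaluating at $a$ yields $1_a = m + \alpha\beta$ for some $m \in M(a)$ and $\beta \in \mathcal{A}(a,b)$. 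Under (c), $m = 1_a - \alpha\beta$ is an automorphism of $a$, hence $1_a = m \circ m^{-1} \in M(a)$, contradicting Lemma~\ref{lem.proper submodule}. For (a) $\Rightarrow$ (c): fix $\beta \in \mathcal{A}(a,b)$ and consider the submodule $N = (1_a - \alpha\beta)\mathcal{A} \leq H_a$; if $N$ were proper, a maximal $M \supseteq N$ would also contain $\alpha \in \Rad(H_a)(b)$, hence $\alpha\beta = H_a(\beta)(\alpha) \in M(a)$, forcing $1_a \in M(a)$---contradiction. Thus $N = H_a$ and $1_a - \alpha\beta$ admits a right inverse in $\mathcal{A}(a,a)$.

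The main subtlety, and what I expect to be the principal obstacle, is upgrading this right-invertibility to genuine two-sided invertibility. The idea is that the previous conclusion holds simultaneously for every $\beta \in \mathcal{A}(a,b)$, so the substitution $\beta \leadsto \beta\gamma$, with $\gamma \in \mathcal{A}(a,a)$ arbitrary, shows that $1_a - (\alpha\beta)\gamma$ is right-invertible for every $\gamma$. By the classical characterization $J(R) = \{x \in R : 1 - xy \text{ has a right inverse for all } y \in R\}$, one concludes $\alpha\beta \in J(\mathcal{A}(a,a))$ for every $\beta$, which is (b); and hence (c). Finally, (a) $\Leftrightarrow$ (d) follows by applying the equivalence (a) $\Leftrightarrow$ (b) just established in $\mathcal{A}^\textup{op}$: this identifies (d) with (e), and together with (e) $\Leftrightarrow$ (f) $\Leftrightarrow$ (c), the cycle closes.
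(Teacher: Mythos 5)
Your proof is correct, and it takes a genuinely different route from the paper's. The paper first reduces to the case of an additive category $\mathcal{A}$ by passing to the additive closure $\what{\mathcal{A}}$, then invokes \cite[Proposition~2.9, Corollary~2.10]{Krause2} to collapse the entire six-way equivalence to the single identity $\rad_\mathcal{A}(a,a) = J(\mathcal{A}(a,a))$, which it then proves by a maximal-submodule argument resting on Lemma~\ref{lem.proper submodule}. You instead give a fully self-contained argument: the ring-theoretic halves (b)$\Leftrightarrow$(c) and (e)$\Leftrightarrow$(f) by the standard Jacobson-radical characterization, the bridge (c)$\Leftrightarrow$(f) by the $(1-\alpha\beta)^{-1} \leftrightarrow (1-\beta\alpha)^{-1}$ formula, the core (a)$\Leftrightarrow$(c) by maximal submodules and Lemma~\ref{lem.proper submodule}, and (d)$\Leftrightarrow$(e) by passing to $\mathcal{A}^\textup{op}$. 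Your core step (a)$\Leftrightarrow$(c) is in the same spirit as the paper's $\rad_\mathcal{A}(a,a)=J(\mathcal{A}(a,a))$ argument (both are maximal-submodule arguments hinging on Lemma~\ref{lem.proper submodule}), but you correctly identify and handle the right-versus-two-sided invertibility subtlety via the characterization $J(R)=\{x : 1-xy \text{ right-invertible for all } y\}$. The trade-off is that your route is longer but avoids both the reduction to the additive case and the Krause citations, which makes the proof more transparent and elementary; the paper's route is shorter by offloading most of the work to the reference.
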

\begin{proof}
Observe that it is enough to prove the equivalences in the case of additive categories. In fact, if $\mathcal{A}$ is preadditive, then $\A(a,a)= \what{\A}(a,a)$, so that $J(\A(a,a))=J( \what{\A}(a,a))$, and $\rad_{\what{\mathcal{A}}}(b,a)=\rad_{\mathcal{A}\vphantom{\what{\mathcal{A}}}}(b,a)$, for all $a,b\in\Obj(\A)$. In particular, the equivalence of the above conditions for $\A$ is a consequence of the same result for $\what{\A}$. So, in the rest of the proof, we assume that $\mathcal{A}$ is additive. 

By \cite[Proposition~2.9, Corollary~2.10]{Krause2}, we just need to prove that our definition of $\rad_\mathcal{A}$ gives  $\rad_\mathcal{A}(a,a)=J(\mathcal{A}(a,a))$, for all $a\in\Obj(\mathcal{A})$, and this is a direct consequence of Lemma~\ref{lem.proper submodule}. Indeed,
let $\mathbf{m}$ be a maximal right ideal of $\mathcal{A}(a,a)$, so that $\mathbf{m}\mathcal{A}=\sum_{\mu\in\mathbf{m}}\mu\mathcal{A}$ is a submodule of $H_a$ such that $(\mathbf{m}\mathcal{A})(a)=\mathbf{m}$, in particular,  $\mathbf{m}\mathcal{A}\lneq H_a$. Since $H_a$ is finitely generated, there exists a maximal submodule $M\leq H_a$ such that $\mathbf{m}\mathcal{A}\leq M$. Evaluating at $a$, we see that $\mathbf{m}=M(a)$ and, therefore:
\[
\rad_\mathcal{A}(a,a)=\bigcap_{M\in\Max(H_a)}M(a)=
	\bigcap_{\mathbf{m}\in\Max(\mathcal{A}(a,a))}\mathbf{m}=
	J(\mathcal{A}(a,a)),
\]
where we have used $\Max(-)$ to denote both the set of maximal submodules of $H_a$, and also the set of maximal right ideals of $\mathcal{A}(a,a)$. 
\end{proof}

Bearing in mind that nonzero finitely generated $\mathcal{A}$-modules have a maximal submodule, we also get the following proposition (see the second paragraph of Section~\ref{sect.idempotent-TTF-recollements} for the definition of $M\calI$, where $\calI$ is an ideal of $\mathcal{A}$, and $M\in \rMod\A$).

\begin{prop}\label{prop.NAK}%
Let $\mathcal{A}$ be a small preadditive category and let $M\in\rMod\A$. Then, the following assertions hold true:
\begin{enumerate}[(i)]
\item $M\rad_\mathcal{A}\subseteq\Rad(M)$.
\item (Nakayama Lemma over preadditive categories) If $M$ is finitely generated, and $M\rad_\mathcal{A}=M$, then $M=0$.
\item If $N\leq M$ is a submodule such that $M/N$ is finitely generated, then $N=M$ if, and only if, $M=N+M\rad_\mathcal{A}$.
\end{enumerate}
\end{prop}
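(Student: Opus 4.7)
The plan is to prove (i) first, since it is the heart of the matter, and then obtain (ii) and (iii) as quick corollaries. For (i), my key reduction is that $\Rad(M)$ is the intersection of the maximal submodules of $M$, so it suffices to show that $M\rad_\mathcal{A}\subseteq M'$ for each maximal submodule $M'\leq M$; equivalently, that the simple quotient $S\leqdef M/M'$ satisfies $S\rad_\mathcal{A}=0$. Indeed, since the canonical projection $\pi\colon M\twoheadrightarrow S$ is a morphism of $\mathcal{A}$-modules, naturality of $\pi$ gives $\pi_a(M(\alpha)(m))=S(\alpha)(\pi_b(m))$ for every $\alpha\in\rad_\mathcal{A}(a,b)$ and $m\in M(b)$, and the pointwise description of the product of a module by an ideal (see Section~\ref{sect.idempotent-TTF-recollements}) then forces the image of $M\rad_\mathcal{A}$ under $\pi$ to land inside $S\rad_\mathcal{A}$.

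To prove that $S\rad_\mathcal{A}=0$ for any simple right $\mathcal{A}$-module $S$, I would fix $b\in\Obj(\mathcal{A})$ and a nonzero $s\in S(b)$, and consider the Yoneda morphism $\phi^s\colon H_b\to S$ sending $\beta\in H_b(c)=\mathcal{A}(c,b)$ to $S(\beta)(s)$. Since $\phi^s$ is nonzero and $S$ is simple, $\phi^s$ is an epimorphism, so $\ker(\phi^s)$ is a maximal submodule of $H_b$. For any $\alpha\in\rad_\mathcal{A}(a,b)$, the very definition $\rad_\mathcal{A}(a,b)=\Rad(H_b)(a)$ yields $\alpha\in\ker(\phi^s)(a)$, that is, $S(\alpha)(s)=\phi^s_a(\alpha)=0$. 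Since $b$ and $s\neq 0$ are arbitrary (and the case $s=0$ is trivial), this shows $S\rad_\mathcal{A}=0$, completing (i).

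For (ii), I would observe that if $M\neq 0$ is finitely generated, then $M$ admits a maximal submodule $M'$ (as recalled just before the proposition's statement); by (i), $M\rad_\mathcal{A}\subseteq M'\subsetneq M$, contradicting $M\rad_\mathcal{A}=M$. For (iii), the implication ``$N=M\Rightarrow M=N+M\rad_\mathcal{A}$'' is trivial, while for the converse I would use the identification $(M/N)\rad_\mathcal{A}=(M\rad_\mathcal{A}+N)/N$, which follows directly from the pointwise description of $M\calI$: then $M=N+M\rad_\mathcal{A}$ rewrites as $(M/N)\rad_\mathcal{A}=M/N$, and (ii) applied to the finitely generated module $M/N$ gives $M/N=0$.

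No step looks like a serious obstacle. The only thing requiring minimal care is the bookkeeping verification that the product $M\calI$ behaves well with respect to morphisms and quotients (so that $\pi(M\rad_\mathcal{A})\subseteq S\rad_\mathcal{A}$, and $(M/N)\rad_\mathcal{A}=(M\rad_\mathcal{A}+N)/N$); both follow immediately from the description of $(M\calI)(a)$ as the subgroup of $M(a)$ generated by the elements $M(\alpha)(m)$ with $\alpha\in\calI(a,b)$ and $m\in M(b)$. The essential input that makes the whole argument work is the identification $\rad_\mathcal{A}(a,b)=\Rad(H_b)(a)$ plus Yoneda's lemma, exactly as in the classical one-object case.
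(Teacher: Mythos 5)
Your proof is correct and follows the route the paper intends: the paper states the proposition is immediate from the existence of maximal submodules in nonzero finitely generated modules (together with the definition $\rad_\mathcal{A}(a,b)=\Rad(H_b)(a)$), and your argument — show that $\rad_\mathcal{A}$ kills simple modules via the Yoneda morphism $H_b\to S$ and then deduce (i), with (ii) and (iii) falling out as corollaries — is exactly the natural way to spell that out. The only thing you could shorten is the verification that $M\calI$ is compatible with quotients, which the paper would take as understood from the pointwise description of $M\calI$.
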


\section{Purity and flatness in Grothendieck categories}%
\label{sec.purity-flatness}

Even if, for our purposes, it would be enough to use the well-known purity theory in locally finitely presented Grothendieck categories (see \cite{CB}), we extend that theory to arbitrary Grothendieck categories.

\begin{defn}
Let $\mathcal{G}$ be a Grothendieck category.%
\label{def.purity-flatness}
\begin{itemize}
\item A morphism $p\colon Y\to Z$ in $\cal G$ is called a \emph{pure epimorphism} when it is a direct limit of retractions.
\item A short exact sequence $0\to X\to Y\mathrel{\smash[t]{\buildrel p\over\to}} Z\to 0$ in $\cal G$ is called \emph{pure exact} when $p$ is a pure epimorphism or, equivalently, when it is a direct limit of split short exact sequences.
\item An object $F$ in $\cal G$ is called \emph{(categorically) flat} when every epimorphism of the form $X\twoheadrightarrow F$ in $\mathcal{G}$ is pure. 
\end{itemize}
\end{defn}

\label{exmpl.coproduct-directlimit}%
\begin{exmpl}
Let $((Y_i)_{i\in I},(u_{ij}\colon Y_i\to Y_j)_{i\leq j})$ be a direct system in $\mathcal{G}$. Then, the canonical epimorphism $\pi\colon\coprod_IY_i\to\varinjlim_I Y_i$ is pure. 
\end{exmpl}

First of all, let us show that our definition of purity agrees with the usual one when the ambient category is locally finitely presented (see also \cite[Section~3]{CB}).

\begin{prop}
Let $\mathcal{G}$ be a locally finitely presented Grothendieck category. A morphism $p\colon Y\to Z$ in $\cal G$ is a pure epimorphism if, and only if, the induced map $\mathcal{G}(X,p)\colon\mathcal{G}(X,Y)\to\mathcal{G}(X,Z)$ is an epimorphism in $\Ab$, for any $X\in\fp(\mathcal{G})$.%
\label{p:PureEpimorphisms}%
\end{prop}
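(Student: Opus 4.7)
The plan is to prove both implications using the characterization of finitely presented objects as those whose Hom functors preserve direct limits, together with the fact that, in a Grothendieck (hence AB5) category, filtered colimits commute with pullbacks.

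\smallskip

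For ``$\Rightarrow$'', I would express $p$ as $p=\varinjlim_{i\in I}p_i$ with each $p_i\colon Y_i\to Z_i$ a retraction and $I$ a small filtered category (so in particular $Y=\varinjlim Y_i$ and $Z=\varinjlim Z_i$). Given $X\in\fp(\mathcal{G})$, the functor $\mathcal{G}(X,-)$ commutes with direct limits, so
\[
\mathcal{G}(X,p)\;\cong\;\varinjlim_{i\in I}\mathcal{G}(X,p_i)\colon\;
\varinjlim\mathcal{G}(X,Y_i)\longrightarrow\varinjlim\mathcal{G}(X,Z_i).
\]
Each $\mathcal{G}(X,p_i)$ is a split epimorphism in $\Ab$, and since filtered colimits in $\Ab$ are exact, the colimit is still a surjection. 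Hence $\mathcal{G}(X,p)$ is surjective.

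\smallskip

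For ``$\Leftarrow$'', consider the comma category $I\leqdef\fp(\mathcal{G})/Z$, with objects $(X,g)$ where $X\in\fp(\mathcal{G})$ and $g\colon X\to Z$, and morphisms the obvious triangles over $Z$. Since $\mathcal{G}$ is locally finitely presented, $I$ is essentially small and filtered, and the forgetful functor $I\to\mathcal{G}$, $(X,g)\mapsto X$, has colimit canonically isomorphic to $Z$. For each $(X,g)\in I$ form the pullback
\[
\begin{array}{ccc}
W_{(X,g)} & \longrightarrow & Y \\
{\scriptstyle q_{(X,g)}}\downarrow\phantom{q_{(X,g)}} & & \phantom{p}\downarrow p \\
X & \stackrel{g}{\longrightarrow} & Z.
\end{array}
\]
The hypothesis provides $\tilde g\colon X\to Y$ with $p\tilde g=g$; the universal property of the pullback yields a section of $q_{(X,g)}$, which is therefore a split epimorphism. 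The construction is functorial in $(X,g)$, giving a filtered diagram in the arrow category of $\mathcal{G}$. Because $\mathcal{G}$ is AB5, filtered colimits commute with pullbacks, and thus
\[
\varinjlim_I W_{(X,g)}\;\cong\;\Bigl(\varinjlim_I X\Bigr)\times_Z Y\;\cong\;Z\times_Z Y\;\cong\;Y.
\]
Taking the colimit of the whole pullback square in $\mathcal{G}^{\to}$ identifies $\varinjlim_I q_{(X,g)}$ with $p$, so $p$ is a direct limit of retractions.

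\smallskip

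The only delicate point is the exchange of filtered colimits with the pullback in the last display: this follows from AB5 (a pullback is the equalizer of two morphisms out of a finite coproduct, and both equalizers and finite coproducts are preserved by filtered colimits in an AB5 abelian category). The rest is bookkeeping. Essential smallness of $I$ (needed to form the colimit) is guaranteed by $\fp(\mathcal{G})$ being skeletally small, and filteredness of $I$ is standard since $\fp(\mathcal{G})$ is closed under finite colimits in $\mathcal{G}$.
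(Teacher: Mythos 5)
Your proof is correct and takes essentially the same approach as the paper's: in both arguments the forward direction rests on $\mathcal{G}(X,-)$ preserving direct limits together with the fact that each $p_i$ being a retraction makes $\mathcal{G}(X,p_i)$ a split surjection, and the converse is proved in both by pulling $p$ back along morphisms $X\to Z$ from finitely presented objects and using the lift supplied by the hypothesis to split the pullback projection. The only difference is presentational: you make the filtered comma category $\fp(\mathcal{G})/Z$ and the AB5-based commutation of filtered colimits with pullbacks explicit, whereas the paper fixes an arbitrary direct system $(Z_i)$ with colimit $Z$ and asserts more briefly that the pullbacks assemble into a direct system of retractions with colimit $p$.
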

\begin{proof}
\smallskip \noindent``$\Leftarrow$'' Let $p\colon Y\to Z$ be a morphism such that $\mathcal{G}(X,p)$ is an epimorphism, for all $X\in\fp(\mathcal{G})$. Write $Z=\varinjlim_I Z_i$, where $(Z_i)_{i\in I}\subseteq\fp(\mathcal{G})$ is a suitable direct system. Since, for each $j\in I$, the canonical map $\lambda_j\colon Z_j\to Z$ factors through $p$, the pullback of $p$ along $\lambda_j$ yields a retraction $p_j\colon Y_j\twoheadrightarrow Z_j$. Hence, we obtain a direct system of retractions $(p_i\colon Y_i\to Z_i)_{i\in I}$ whose direct limit is $p$.

\smallskip \noindent``$\Rightarrow$'' Let $(p_i\colon Y_i\to Z_i)_{i\in I}$ be a direct system of retractions whose direct limit is $p$. In particular, $p\circ\lambda_j^Y=\lambda_j^Z\circ p_j^{\vphantom Z}$, for all $j\in I$, where the $\lambda_j^?$ are the canonical maps to the respective direct limits. Observe that, given $f\colon X\to Z$, with $X\in\fp(\mathcal{G})$, there is a factorization $f=\lambda_j^Z\circ f_j\colon X\to Z_j\to Z$, for some $j\in I$.
To conclude, choose a section $s_j\colon Z_j\to Y_j$ of $p_j$; we obtain that:
\[
	p\circ\lambda_j^Y\circ s_j^{\vphantom Y}\circ f_j^{\vphantom Y}=
		\lambda_j^Z\circ p_j^{\vphantom Z}\circ s_j^{\vphantom Z}\circ f_j^{\vphantom Z}=
		\lambda_j^Z\circ 1_{Z_j}\circ f_j^{\vphantom Z}=
		\lambda_j^Z\circ f_j^{\vphantom Z}=f,
\]
showing that $f$ factors through $p$. 
\end{proof}

\begin{lem}
Let $\cal G$ be a Grothendieck category, and consider two morphisms $f\colon X\to Y$ and $g\colon Y\to Z$ in $\cal G$. Then, if $g\circ f$ is a pure epimorphism,  $g$ is a pure epimorphism.%
\label{l:PureEpimorphisms}%
\end{lem}
\begin{proof}
Write $g\circ f=\varinjlim_I h_i$, where $(h_i\colon X_i\to Z_i)_{i\in I}$ is a suitable direct system of retractions, and let $\lambda_j^X\colon X_j^{\vphantom X}\to X$, and $\lambda_j^Z\colon Z_j^{\vphantom Z}\to Z$, be the canonical maps to the respective direct limit, so that $g\circ f\circ\lambda_j^X=\lambda_j^Z\circ h_j^{\vphantom Z}$, for all $j\in I$. Taking the pullback of $g$ along $\lambda_j^Z$, we obtain the following commutative diagram:
\[
\xymatrix@R=12pt@C=40pt{%
	X_j\ar@/_1.5pc/[dddr]|-{f\circ\lambda_j^X}\ar@{.>}[dr]|-{\exists!f_j}\ar@/^1.5pc/[drr]|-{h_j} \\
    & Y_j\ar@{}[ddr]|{\text{P.B.}} \ar[dd]_-{\lambda_j^Y}\ar[r]|-{g_j} &	Z_j \ar[dd]^-{\lambda_j^Z} \\
    \\
	  & Y \ar[r]|-{g} & Z.
}
\]
The uniqueness of all the arrows involved clearly gives direct systems of morphisms $(f_i\colon X_i\to Y_i)_{i\in I}$ and $(g_i\colon Y_i\to Z_i)_{i\in I}$ such that $g_j\circ f_j=h_j$, for all $j\in I$.
Conclude by observing that each $g_j$ is a retraction (as $g_j\circ f_j=h_j$, where $h_j$ is a retraction) and that, clearly, $g\cong\varinjlim_I g_i$.
\end{proof}

Our next result deals with the preservation of purity and flatness by suitable adjoint functors.

\begin{lem}
Let $L:\mathcal{G}\rightleftarrows\mathcal{G}':\varGamma$ be an adjunction between Grothendieck categories. Then, the following assertions hold true.%
\label{l:Flatness}%
\begin{enumerate}[(i)]
\item $L$ preserves pure epimorphisms.
\item Suppose that $L$ is fully faithful, then:
\begin{enumerate}[(1)]
\item If $\varGamma$ is (right) exact, then $L$ preserves flat objects.
\item If $\varGamma$ preserves direct limits, then $L$ reflects flat objects.
\end{enumerate}
\end{enumerate}
\end{lem}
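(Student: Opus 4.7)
The plan is to handle the three assertions in sequence, using part~(i) to underwrite parts~(ii) and~(iii). A recurring ingredient is the elementary observation that purity of an epimorphism is invariant under isomorphism in the arrow category of $\mathcal{G}$: if $f$ is a pure epimorphism and $\phi$, $\psi$ are isomorphisms making $\psi\circ f\circ\phi^{-1}$ well defined, then by transporting the canonical maps of a direct system of retractions exhibiting $f$ along $\phi^{-1}$ and $\psi$ one obtains the new composition as a direct limit of the same retractions. Part~(i) itself is immediate: being an additive left adjoint, $L$ preserves direct limits and retractions, so it sends a pure epimorphism $p=\varinjlim p_i$ to the pure epimorphism $L(p)=\varinjlim L(p_i)$.

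For part~(ii), I would take an epimorphism $p'\colon Y'\twoheadrightarrow L(F)$ in $\mathcal{G}'$ and apply $\varGamma$ to obtain an epimorphism $\varGamma(p')$, using the hypothesis that $\varGamma$ preserves epimorphisms. Full faithfulness of $L$ forces the unit $\eta$ to be a natural isomorphism, so composing $\varGamma(p')$ with $\eta_F^{-1}$ produces an epimorphism onto $F$, which is pure by flatness of $F$. Applying $L$ and invoking part~(i) yields a pure epimorphism in $\mathcal{G}'$ that, via the triangle identity $\varepsilon_{L(F)}=L(\eta_F)^{-1}$ and naturality of the counit $\varepsilon$, equals $p'\circ\varepsilon_{Y'}$. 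Lemma~\ref{l:PureEpimorphisms} then forces $p'$ itself to be a pure epimorphism, as desired.

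For part~(iii), I would start with an epimorphism $p\colon Y\twoheadrightarrow F$ in $\mathcal{G}$. Since $L$ is a left adjoint it preserves epimorphisms, so $L(p)$ is an epimorphism, hence pure by flatness of $L(F)$. Next, $\varGamma$ preserves retractions (being additive) and direct limits (by hypothesis), so it sends pure epimorphisms to pure epimorphisms; in particular $\varGamma L(p)$ is pure. Finally, naturality of the unit gives $\varGamma L(p)\circ\eta_Y=\eta_F\circ p$, and since $\eta_Y$ and $\eta_F$ are isomorphisms the preliminary observation identifies $p$ with $\varGamma L(p)$ up to isomorphism in the arrow category, yielding that $p$ is pure. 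No step presents a real obstacle; the least automatic point is the triangle-identity bookkeeping in~(ii) used to rewrite $L(\eta_F)^{-1}$ as $\varepsilon_{L(F)}$, which is a one-line check once one recalls that fully faithfulness of $L$ forces $\varepsilon_{L(X)}$ to be the inverse of $L(\eta_X)$ for every $X$.
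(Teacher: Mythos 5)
Your proof is correct and follows essentially the same path as the paper's: part~(i) is immediate from preservation of retractions and direct limits, part~(ii) applies $\varGamma$, uses flatness of $F$ and part~(i), identifies $L\varGamma(p')$ with $p'\circ\varepsilon_{Y'}$ via the counit, and invokes Lemma~\ref{l:PureEpimorphisms}, and part~(iii) transports the direct system of retractions presenting $L(p)$ back through $\varGamma$ and the unit isomorphism. The only cosmetic difference is that you make the arrow-category invariance of purity and the triangle-identity bookkeeping explicit, whereas the paper leaves them implicit.
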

\begin{proof}
Assertion~(i) is clear since $L$ preserves retractions and direct limits.

\smallskip \noindent(ii.1) Let $F\in\Obj(\mathcal{G})$ be a flat object, and let $p\colon X'\twoheadrightarrow L(F)$ be an epimorphism. Observe that the unit $\lambda\colon 1_\mathcal{G}\Rightarrow \varGamma\circ L$ is a natural isomorphism (see \cite[Proposition~7.5]{HS}), and denote by $\epsilon$ the counit $L\circ \varGamma\Rightarrow 1_{\mathcal{G}'}$. By hypothesis, the map $\varGamma(p)\colon \varGamma(X')\to (\varGamma\circ L)(F)\cong F$ is an epimorphism, whence a pure epimorphism since $F$ is flat. It then follows from assertion~(i) that $(L\circ \varGamma)(p)\colon (L\circ \varGamma)(X')\to (L\circ \varGamma\circ L)(F)\cong L(F)$ is a pure epimorphism. But this last map decomposes as $(L\circ \varGamma)(X')\mathrel{\smash[t]{\buildrel\epsilon_{\smash{X'}}\over\to}} X'\mathrel{\smash[t]{\buildrel p\over\to}} L(F)$. It now follows from Lemma~\ref{l:PureEpimorphisms} that $p$ is a pure epimorphism. Therefore, $L(F)$ is flat. 

\smallskip \noindent(ii.2) Given $F\in\Obj(\mathcal{G})$ such that $L(F)$ is flat in $\mathcal{G}'$, let $p\colon X\twoheadrightarrow F$ be an epimorphism in $\mathcal{G}$. Then, $L(p)\colon L(X)\twoheadrightarrow L(F)$ is a pure epimorphism in $\mathcal{G}'$. Fix a direct system of retractions $(q_i\colon M_i\to N_i)_{i\in I}$ in $\cal G'$ such that $L(p)\cong\varinjlim_I q_i$. Therefore, $p\cong (\Gamma\circ L)(p)\cong\varinjlim_I\Gamma (q_i)$, as  $\Gamma$ preserves direct limits, so that $p$ is a direct limit of retractions, i.e., a pure epimorphism. Thus, $F$ is flat in $\mathcal{G}$.
\end{proof}

\subsection{Tensor product of modules and bimodules}%

In this subsection we gather some facts about tensor products of (bi)modules over small preadditive categories that seem to be folklore, but for which the authors did not find a suitable reference. The first definition of such a tensor product seems to have appeared in \cite{OR}, but for most of the properties we will use \cite[Section~6]{K}, with the inconvenience that Keller works in a dg context, that we should ``un-dgrade''.

Let us start with a simple observation: given a small preadditive category $\A$, the categories of left and right $\A$-modules can be equivalently described as:
\[
\A\lMod\cong [\rMod\A,\Ab]_{!}\quad(\text{resp., } \rMod\A\cong [\A\lMod,\Ab]_{!}),
\]
where $[-,\Ab]_!$ denotes the category of cocontinuous additive functors to $\Ab$, that is, the functors that commute with all colimits (or, equivalently, with all coproducts and cokernels), see \cite[Formula (4.56), p.68]{Kelly_enriched}. In fact, the Yoneda Embedding identifies $\A$ with the full subcategory of representable left $\A$-modules, and this allows us to see a given $M\in \rMod\A$ as the additive functor from representable left $\A$-modules to $\Ab$, such that $M(H_a')\leqdef M(a)$, for all $a\in \Obj(\A)$. Moreover, since any $N\in \A\lMod$, can be written as a colimit of representables, 
there is a unique way to extend $M$ to a cocontinuos functor $M^\to\colon \A\lMod\to \Ab$ (see \cite{Kelly_enriched}, and Lemma~\ref{lem.checking natural isomorphism on generators} and its proof). Similarly, there is a unique extension of $N\in \A\lMod$ to a cocontinuos functor $N^\to\colon \rMod\A\to \Ab$. 

It is possible to define $M\otimes_\A N\leqdef M^\to(N)\cong N^\to (M)$, for all $M\in \rMod \A$, and $N\in \A\lMod$. In other words, there are natural isomorphisms of functors $(M\otimes_\A -)\leqdef M^\to$, and $(-\otimes_\A N)\leqdef N^\to$. With these definitions, the usual tensor-hom adjunctions become (see \cite[Theorem~1]{St2}):
\[
\Ab(M\otimes_\A N,G)\cong (\rMod \A)(M,\Ab(N,G))\cong (\A\lMod)(N,\Ab(M,G)),
\]
where, for all $G\in \Ab$, $\Ab(N,G)\leqdef \Ab(-,G)\circ N^\op\colon \A^\op\to \Ab^\op\to \Ab$, and $\Ab(M,G)\leqdef \Ab(-,G)^\op\circ M^\op\colon \A\to \Ab^\op\to \Ab$.

It is also possible to define $M\otimes_\A N$  more explicitly, as a cokernel (in $\Ab$) of 
\[
\Psi\colon\bigoplus_{a,a'\in \Obj(\A)}M({a'})\otimes_\Z \A(a,a')\otimes_\Z N(a)\longrightarrow \bigoplus_{a\in \Obj(\A)}M(a)\otimes_\Z N(a),
\]
where $\Psi(y\otimes f\otimes x)\leqdef y\otimes N(f)(x)-M(f)(y)\otimes x$, for all $y\in M(a')$, $f\in \A(a,a')$, and $x\in N(a)$. To see that this definition is, in fact, equivalent to the previous one, it is enough to check that also in this way we obtain a functor which is cocontinuous in each variable, and such that there are natural isomorphisms $M\otimes_\A H'_a\cong M(a)$, and $H_a\otimes_\A N\cong N(a)$, for all $a\in \Obj(\A)$ (see \cite{K}). 

\smallskip
Now, given three small preadditive categories $\mathcal{A}$, $\mathcal{B}$ and $\mathcal{C}$, a $\mathcal{B}\mathchar`-\mathcal{A}$-bimodule $X$, and an $\mathcal{A}\mathchar`-\mathcal{C}$-bimodule $Y$, we define the \emph{tensor product of $X$ and $Y$ (over $\mathcal{A}$)} as the $\mathcal{B}\mathchar`-\mathcal{C}$-bimodule $X\otimes_\mathcal{A}Y\colon\mathcal{C}^\textup{op}\otimes\mathcal{B}\to\Ab$ defined on objects as follows: $[X\otimes_\mathcal{A}Y](c,b)\leqdef X(-,c)\otimes_\A Y(b,-)$, 
for all  $b\in\Obj(\mathcal{B})$, $c\in\Obj(\mathcal{C})$. 

As one may expect, we have the following (see \cite[Section~6]{K}).

\begin{prop}\label{prop.tensor product functor}%
Let $\mathcal{A}$, $\mathcal{B}$ and $\mathcal{C}$ be small preadditive categories, and let $X$ be a $\mathcal{B}\mathchar`-\mathcal{A}$-bimodule. Then, the following assertions hold true:
\begin{enumerate}[(i)]
\item The assignment $Y\mapsto X\otimes_\mathcal{A}Y$ can be naturally extended to a coproduct-preserving right exact functor $\mathcal{A}\biMod\mathcal{C}\to\mathcal{B}\biMod\mathcal{C}$;
\item The assignment $Z\mapsto Z\otimes_\mathcal{B}X$ can be naturally extended to a coproduct-preserving right exact functor $\mathcal{C}\biMod\mathcal{B}\to\mathcal{C}\biMod\mathcal{A}$.
\end{enumerate}
\end{prop}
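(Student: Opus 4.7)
The plan is to reduce everything to familiar properties of tensor products of abelian groups together with standard colimit-interchange arguments, and ultimately to identify $X\otimes_\mathcal{A}-$ as a left adjoint, from which coproduct preservation and right exactness follow formally.

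First I would establish functoriality in $Y$. Given a morphism $\varphi\colon Y\to Y'$ of $\mathcal{A}\mathchar`-\mathcal{C}$-bimodules and fixed $(c,b)\in\Obj(\mathcal{C})\times\Obj(\mathcal{B})$, the maps $\id_{X(a,b)}\otimes\varphi_{c,a}\colon X(a,b)\otimes Y(c,a)\to X(a,b)\otimes Y'(c,a)$ assemble into a map $\bigoplus_a X(a,b)\otimes Y(c,a)\to\bigoplus_a X(a,b)\otimes Y'(c,a)$, and naturality of $\varphi$ in the $\mathcal{A}$-variable ensures compatibility with the defining morphisms $\Psi_{c,b}$ and $\Psi'_{c,b}$. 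The universal property of cokernels then produces $(X\otimes_\mathcal{A}\varphi)_{c,b}$, and varying $(c,b)$ gives a natural transformation with the bimodule structure induced by acting on the $X$-factor (for $\mathcal{B}$) and on the $Y$-factor (for $\mathcal{C}$). Well-definedness of these actions on the cokernel is a direct check on generators, and functoriality (preservation of identities and composition) is immediate from the universal property. This yields the additive functor $X\otimes_\mathcal{A}-\colon\mathcal{A}\biMod\mathcal{C}\to\mathcal{B}\biMod\mathcal{C}$.

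For the preservation of coproducts and right exactness, I would argue pointwise. Coproducts, kernels and cokernels in any bimodule category are computed pointwise in $\Ab$ (under the identification of Proposition~\ref{prop.interpretations of bimodules}), because bimodule categories are functor categories into $\Ab$. Now both the tensor product in $\Ab$ and the cokernel functor commute with coproducts, so given a family $(Y_i)_{i\in I}$, the morphism $\Psi_{c,b}$ associated to $\coprod_{i\in I}Y_i$ is the coproduct of the corresponding $\Psi_{c,b}^{(i)}$, and cokernel-coproduct interchange produces the desired natural isomorphism $X\otimes_\mathcal{A}(\coprod_iY_i)\cong\coprod_i(X\otimes_\mathcal{A}Y_i)$. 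For right exactness, given a short exact sequence $0\to Y'\to Y\to Y''\to 0$ in $\mathcal{A}\biMod\mathcal{C}$, evaluation at each $(c,a)$ yields a short exact sequence in $\Ab$; applying $X(a,b)\otimes-$ yields right exactness of the corresponding sequences of abelian groups, and a routine diagram chase on the two parallel cokernel presentations (or one application of the snake lemma) shows $X\otimes_\mathcal{A}Y'\to X\otimes_\mathcal{A}Y\to X\otimes_\mathcal{A}Y''\to 0$ is exact at each $(c,b)$, hence exact in $\mathcal{B}\biMod\mathcal{C}$.

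A more conceptual alternative for the second half, which I would also mention, is to exhibit a right adjoint: for $W\in\mathcal{B}\biMod\mathcal{C}$ one defines an $\mathcal{A}\mathchar`-\mathcal{C}$-bimodule $\Hom_\mathcal{B}(X,W)$ with $\Hom_\mathcal{B}(X,W)(c,a)\leqdef(\mathcal{B}\biMod\mathbb{Z})(X^a,W_c^?)$ in an evident way, and checks the adjunction $(\mathcal{B}\biMod\mathcal{C})(X\otimes_\mathcal{A}Y,W)\cong(\mathcal{A}\biMod\mathcal{C})(Y,\Hom_\mathcal{B}(X,W))$ from the defining cokernel presentation; as a left adjoint, $X\otimes_\mathcal{A}-$ then preserves all colimits, which subsumes both coproducts and right exactness. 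Assertion~(ii) follows by the symmetric argument, or by using the identification $(\mathcal{B}^\textup{op}\otimes\mathcal{A})^\textup{op}=\mathcal{A}^\textup{op}\otimes\mathcal{B}$ together with Proposition~\ref{prop.interpretations of bimodules} to transport assertion~(i). The only real bookkeeping hurdle is verifying that the $\mathcal{B}$- and $\mathcal{C}$-actions descend unambiguously to the cokernel and commute with one another; once that is settled, the rest is a direct transcription of the classical ring-theoretic facts.
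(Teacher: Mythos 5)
The paper itself gives no proof here: Proposition~\ref{prop.tensor product functor} is stated with only a pointer to Keller's \cite[Section~6]{K} (in the dg setting). Your argument is a correct and perfectly standard way to fill in that gap: you read off functoriality in $Y$ from the universal property of the cokernel, you observe that (co)limits in the bimodule categories of Proposition~\ref{prop.interpretations of bimodules} are pointwise in $\Ab$, and you reduce both coproduct preservation and right exactness to the classical facts that $\otimes_\mathbb{Z}$ is right exact and commutes with direct sums, plus the elementary lemma that the cokernel of a map between two right-exact sequences is again right exact. Your alternative via the tensor--Hom adjunction, with $\Hom_\mathcal{B}(X,W)(c,a)=(\mathcal{B}\lMod)(X^a,W^c)$, is also standard and gives the stronger statement that $X\otimes_\mathcal{A}-$ preserves \emph{all} colimits; that route is closer in spirit to Keller's treatment and is arguably the cleaner path.

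One small slip of notation: you wrote $W_c^?$ for the left $\mathcal{B}$-module obtained from $W$ by fixing $c$; in the paper's conventions (Proposition~\ref{prop.interpretations of bimodules}) this is $W^c$, with $W^c(b)=W(c,b)$, so the adjunction should read $(\mathcal{B}\biMod\mathcal{C})(X\otimes_\mathcal{A}Y,W)\cong(\mathcal{A}\biMod\mathcal{C})(Y,\Hom_\mathcal{B}(X,W))$ with $\Hom_\mathcal{B}(X,W)(c,a)\leqdef(\mathcal{B}\lMod)(X^a,W^c)$. That is purely cosmetic; the mathematics is sound, and the bookkeeping you flag (that the $\mathcal{B}$- and $\mathcal{C}$-actions descend to the cokernel and commute) is exactly the verification one must make but is routine.
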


In the following result we collect some useful properties of the tensor product of (bi)modules whose verification is left to the reader.

\begin{cor}\label{cor.properties tensor product}%
Let $\mathcal{A}$, $\mathcal{B}$ and $\mathcal{C}$ be small preadditive categories, let $X$ be a $\mathcal{B}\mathchar`-\mathcal{A}$-bimodule, and let $Y$ be an $\mathcal{A}\mathchar`-\mathcal{C}$-bimodule. The following assertions hold:
\begin{enumerate}[(i)]
\item $X\otimes_\mathcal{A} H'_a\cong X(a,-)\colon\mathcal{B}\to\Ab$, for all $a\in\Obj(\mathcal{A})$.
\item $H_b\otimes_\mathcal{B}X\cong X(-,b)\colon\mathcal{A}^\textup{op}\to\Ab$, for all $b\in\Obj(\mathcal{B})$.
\item $(X\otimes_\mathcal{A}Y)(c,b)\cong X(-,b)\otimes_\mathcal{A}Y(c,-)$, for all $(c,b)\in \Obj(\mathcal{C})\times\Obj(\mathcal{B})$.
\item $-\otimes_\mathcal{A}\mathcal{A}_\textup{reg}\colon\mathcal{B}\biMod\mathcal{A}\to\mathcal{B}\biMod\mathcal{A}$ is naturally isomorphic to the identity endofunctor of $\mathcal{B}\biMod\mathcal{A}$. Similarly, $\mathcal{A}_\textup{reg}\otimes_\mathcal{A}-\cong 1_{\mathcal{A}\biMod\mathcal{B}}$.
\end{enumerate}
\end{cor}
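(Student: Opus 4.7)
The proof consists of directly unwinding the coequalizer formula defining $\otimes_\mathcal{A}$; the only non-formal ingredient is a co-Yoneda-type manipulation for~(i), after which (ii) is its left-right mirror and (iii), (iv) are formal consequences. I expect no genuine obstacle.

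Assertion~(iii) is purely notational: for fixed $(c,b)$, the morphism $\Psi_{c,b}$ in the definition of $X\otimes_\mathcal{A}Y$ depends only on $X(-,b)$ and $Y(c,-)$. Specialising the definition of $\otimes_\mathcal{A}$ to the case $\mathcal{B}=\mathcal{C}=\mathbb{Z}$ via Remark~\ref{rem.modules as bimodules}, the morphism whose cokernel computes $X(-,b)\otimes_\mathcal{A}Y(c,-)$ is literally $\Psi_{c,b}$, whence the identification.

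For~(i), fix $a_0\in\Obj(\mathcal{A})$, set $Y=H'_{a_0}$ (so $Y(a)=\mathcal{A}(a_0,a)$ as left $\mathcal{A}$-module), and fix $b\in\Obj(\mathcal{B})$. The key observation is that in the coequalizer relation $x\alpha\otimes y=x\otimes(\alpha\circ y)$, taking $y=1_{a_0}\in\mathcal{A}(a_0,a_0)$ and $\alpha\in\mathcal{A}(a_0,a')$ identifies $x\otimes\alpha\in X(a',b)\otimes\mathcal{A}(a_0,a')$ with $X(\alpha,b)(x)\otimes 1_{a_0}\in X(a_0,b)\otimes\mathcal{A}(a_0,a_0)$ inside the cokernel. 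Hence $\iota_b\colon X(a_0,b)\to(X\otimes_\mathcal{A}H'_{a_0})(b)$, $x\mapsto x\otimes 1_{a_0}$, is surjective. An inverse is induced by $\bigoplus_{a'}X(a',b)\otimes\mathcal{A}(a_0,a')\to X(a_0,b)$, $x\otimes\alpha\mapsto X(\alpha,b)(x)$, which annihilates the image of $\Psi_{*,b}$ (both components of the defining relation evaluate to $X(\alpha\circ y,b)(x)$) and restricts to the identity along $\iota_b$. Naturality in $b$ is immediate, and~(ii) is the symmetric statement, obtained by swapping $\mathcal{A}$ with $\mathcal{A}^\textup{op}$ (equivalently, by applying~(i) to the bimodule obtained from $X$ by transposition of variables).

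Finally, for~(iv), given $X\in\mathcal{B}\biMod\mathcal{A}$ and $(a,b)\in\Obj(\mathcal{A})\times\Obj(\mathcal{B})$, chain the isomorphisms
\[
	(X\otimes_\mathcal{A}\mathcal{A}_\textup{reg})(a,b)
		\stackrel{\text{(iii)}}{\cong}
		X(-,b)\otimes_\mathcal{A}\mathcal{A}_\textup{reg}(a,-)
		=X(-,b)\otimes_\mathcal{A}H'_a
		\stackrel{\text{(i)}}{\cong}X(a,b),
\]
so the composite sends the class of $x\otimes 1_a$ to $x$. Naturality in $b$ is immediate (the $\mathcal{B}$-slot is untouched throughout); naturality in $a$ amounts to the following check: for $\alpha\colon a\to a'$ in $\mathcal{A}$, the map induced on the left-hand side takes the class of $x\otimes 1_{a'}\in X(a',b)\otimes\mathcal{A}(a',a')$ to that of $x\otimes\alpha\in X(a',b)\otimes\mathcal{A}(a,a')$, which in the cokernel equals $X(\alpha,b)(x)\otimes 1_a$ by the very coequalizer relation used in~(i); this matches $X(\alpha,b)\colon X(a',b)\to X(a,b)$ as required. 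The analogous isomorphism $\mathcal{A}_\textup{reg}\otimes_\mathcal{A}-\cong\id$ follows symmetrically from~(ii).
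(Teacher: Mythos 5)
The paper explicitly leaves this verification to the reader (pointing to Keller's dg treatment), and your argument correctly supplies the omitted proof by the expected route: identification of the two coequalizer presentations for (iii), a co-Yoneda computation with the explicit two-sided inverse $[x\otimes\alpha]\mapsto X(\alpha,b)(x)$ for (i), its left--right mirror for (ii), and a combination of (iii) and (i) together with a naturality check for (iv). The only detail you leave implicit is naturality in the bimodule variable $X$ of the isomorphism $X\otimes_\mathcal{A}\mathcal{A}_\textup{reg}\cong X$ in (iv), but this is immediate from the explicit formula together with the fact that bimodule morphisms intertwine the structure maps $X(\alpha,b)$.
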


\begin{rem}
\label{rem.morphism by multiplication}%
Given a $\mathcal{B}\mathchar`-\mathcal{A}$-bimodule $X$, and an ideal $\calI$ in $\mathcal{A}$, we denote by $\rho_X$ the following composition:
\[
	\rho_X\colon X\otimes_\mathcal{A}\calI\buildrel 1_X\otimes\iota\over{\relbar\joinrel\longrightarrow} X\otimes_\mathcal{A}\mathcal{A}_\textup{reg}\buildrel\cong\over{\relbar\joinrel\longrightarrow} X,
\]
where $1_X\otimes\iota \leqdef(X\otimes_\mathcal{A}-)(\iota)$ (with $\iota\colon\calI\hookrightarrow\mathcal{A}_\textup{reg}$ the obvious inclusion of $\A$-$\A$-bimodules), and where $X\otimes_\mathcal{A}\mathcal{A}_\textup{reg}\mathrel{\smash[t]{\buildrel\cong\over\to}}X$ is the natural isomorphism given by assertion~(iv) of the above corollary. 
Explicit computations show that $(\rho_X)_{(c,b)}\colon (X\otimes_\mathcal{A}\calI)(c,b)\to X(c,b)$ takes the element $x\otimes\eta$ (with $x\in X(a,b)$ and $\eta\in\calI(c,a)$, for some $a\in\Obj(\mathcal{A})$) to $x\eta\leqdef X(\eta,b)(x)\in X(c,b)$. It is not hard to see that the $\rho_X$'s are the components of a natural transformation $\rho\colon-\otimes_\mathcal{A}\calI\Rightarrow 1_{\mathcal{B}\biMod\mathcal{A}}$ of functors $\mathcal{B}\biMod\mathcal{A}\to\mathcal{B}\biMod\mathcal{A}$. We will say that $\rho$ (or any of its components $\rho_X$) is \emph{induced by multiplication}.
\end{rem}

\begin{prop}\label{prop.flat modules}%
Let $\mathcal{A}$ and $\mathcal{B}$ be small preadditive categories, and let $X$ be a $\mathcal{B}\mathchar`-\mathcal{A}$-bimodule. Then, the following assertions are equivalent:
\begin{enumerate}[(a)]
\item $X\otimes_\mathcal{A}-\colon\mathcal{A}\biMod\mathcal{C}\to\mathcal{B}\biMod\mathcal{C}$ is an exact functor, for any small preadditive category $\mathcal{C}$;
\item $X\otimes_\mathcal{A}-\colon\mathcal{A}\lMod\to\mathcal{B}\lMod$ is an exact functor;
\item $X(-,b)\otimes_\mathcal{A}-\colon\mathcal{A}\lMod\to\Ab$ is an exact functor, for all $b\in\Obj(\mathcal{B})$;
\item For each $\what{a}\in\Obj(\what{\mathcal{A}})$ and each finitely generated submodule $L\leq H'_{\what{a}}$\,, the morphism $1_X\otimes\iota\leqdef (X\otimes_\mathcal{A}-)(\iota)\colon X\otimes_\mathcal{A}L\to X\otimes_\mathcal{A}H'_{\what{a}}$\, is a monomorphism in $\mathcal{B}\lMod$, where $\iota \colon L\hookrightarrow H'_{\what{a}}$\, is the inclusion.
\end{enumerate}
\end{prop}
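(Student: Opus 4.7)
The equivalences (a) $\Leftrightarrow$ (b) $\Leftrightarrow$ (c) are pointwise in nature. The implication (a) $\Rightarrow$ (b) follows by specializing $\mathcal{C}$ to the one-object category $\mathbb{Z}$ (Remark~\ref{rem.modules as bimodules}). Using the identification $(X \otimes_\mathcal{A} Y)(c,b) \cong X(-,b) \otimes_\mathcal{A} Y(c,-)$ of Corollary~\ref{cor.properties tensor product}(iii), together with the fact that exactness in a bimodule category is detected at each pair of objects, the exactness of $X \otimes_\mathcal{A} -$ on $\mathcal{A}\biMod\mathcal{C}$ reduces, for any $\mathcal{C}$, to the exactness of all the functors $X(-,b) \otimes_\mathcal{A} -$ on $\mathcal{A}\lMod$. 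This yields the three-fold equivalence.

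The implication (b) $\Rightarrow$ (d) is immediate. For (d) $\Rightarrow$ (b), I would first note that, by Proposition~\ref{prop.tensor product functor}, the functor $X \otimes_\mathcal{A} -\colon\mathcal{A}\lMod \to \mathcal{B}\lMod$ is right exact and preserves coproducts, hence it also preserves direct limits (being a cocontinuous additive functor between Grothendieck categories). Consequently, it is enough to show that it preserves monomorphisms.

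The plan is then to enlarge (d) in two steps. First, since any submodule $L \leq H'_{\what{a}}$ is the directed union of its finitely generated submodules, and directed colimits of monomorphisms toward a fixed object remain monomorphisms in the Grothendieck category $\mathcal{B}\lMod$, (d) extends to arbitrary submodules of $H'_{\what{a}}$. Second, writing any free left $\mathcal{A}$-module $P = \coprod_{j \in J} H'_{a_j}$ as the directed colimit of the $H'_{\what{a}_F}$ indexed by the finite subsets $F \subseteq J$, and $L \leq P$ as $\varinjlim_F (L \cap H'_{\what{a}_F})$, the same reasoning yields that $X \otimes_\mathcal{A} L \to X \otimes_\mathcal{A} P$ is a monomorphism for every submodule $L$ of every free left $\mathcal{A}$-module $P$.

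Finally, given a monomorphism $\iota: M \hookrightarrow N$ in $\mathcal{A}\lMod$, I would choose a surjection $\pi: P \twoheadrightarrow N$ from a free $P$ and set $L = \pi^{-1}(M)$ and $K = \ker(\pi)$. The restriction of $\pi$ gives short exact sequences $0 \to K \to L \to M \to 0$ and $0 \to K \to P \to N \to 0$, along with the inclusion $L \hookrightarrow P$. Applying the right exact functor $X \otimes_\mathcal{A} -$ produces a commutative ladder whose central vertical arrow $X \otimes_\mathcal{A} L \to X \otimes_\mathcal{A} P$ is a monomorphism by the previous paragraph; a direct diagram chase then forces $X \otimes_\mathcal{A} \iota$ to have trivial kernel. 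I expect the main obstacle to be carrying out this chase cleanly: in essence it is the classical computation showing that the vanishing of $\mathrm{Tor}_1^{\mathcal{A}}(X, N/M)$ makes $X \otimes_\mathcal{A} \iota$ injective, executed here by hand to avoid invoking derived functors over a small preadditive category.
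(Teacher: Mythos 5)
Your proof is correct and takes essentially the same approach as the paper's: extend (d) from finitely generated submodules of $H'_{\what{a}}$ to arbitrary submodules of arbitrary free modules via direct limits and $\mathrm{AB}\mathchar`-5$, then reduce the general case to inclusions into free modules by pulling back along a free cover and chasing the tensored ladder. The paper merely performs these two steps in the opposite order (pullback reduction first, direct-limit argument second), but the ingredients and the underlying chase are identical.
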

\begin{proof}[Proof (sketched)]
\smallskip \noindent``$\textup{(a)}\Rightarrow\textup{(b)}$'' Take $\mathcal{C}=\mathbb{Z}$.

\smallskip \noindent``$\textup{(c)}\Leftrightarrow\textup{(b)}\Rightarrow\textup{(a)}$''  Use Corollary~\ref{cor.properties tensor product}~(iii).

\smallskip \noindent``$\textup{(b)}\Rightarrow\textup{(d)}$'' Clear.

\smallskip \noindent``$\textup{(d)}\Rightarrow\textup{(b)}$'' We need to prove that, if $u\colon N\hookrightarrow M$ is an inclusion in $\mathcal{A}\lMod$, then $1_X\otimes u\colon X\otimes_\mathcal{A}N\to X\otimes_\mathcal{A}M$ is a monomorphism. Taking the pullback of $u$ along an epimorphism onto $M$ from a coproduct of representables, one easily reduces the proof to the case when $M$ itself is a coproduct of representable $\mathcal{A}$-modules. But since $N$ is the direct union of its finitely generated submodules, it is easily seen that the inclusion $N\hookrightarrow M=\coprod_{I}H'_{a_i}$ is the direct limit of a direct system of inclusions $\iota_\lambda\colon N_\lambda\hookrightarrow\coprod_{i\in I_\lambda}H'_{a_i}$, where $N_\lambda$ is a finitely generated submodule of $N$ and $I_\lambda\subseteq I$ is a finite subset, for all $\lambda\in\Lambda$. Since the functor $X\otimes_\mathcal{A}-$ preserves direct limits, and each  $1_X\otimes\iota_\lambda \colon X\otimes_\mathcal{A}N_\lambda\to X\otimes_\mathcal{A}\coprod_{I_\lambda}H'_{a_i}$ is a monomorphisms by (d), we conclude that $1_X\otimes u\colon X\otimes_\mathcal{A}N\to X\otimes_\mathcal{A} \coprod_{I}H'_{a_i}$ is a monomorphism, by the $\mathrm{AB}\mathchar`-5$ condition in $\Ab$.
\end{proof}

\begin{defn}\label{def.flat module}
Let $\mathcal{A}$ and $\mathcal{B}$ be small preadditive categories.
\begin{enumerate}[(i)]
\item A $\mathcal{B}\mathchar`-\mathcal{A}$-bimodule $X$ is said to be \emph{flat on the right} when it satisfies the equivalent conditions of Proposition~\ref{prop.flat modules}. 
\item A right $\mathcal{A}$-module $F$ is called \emph{flat} if it is flat on the right when we see it as a $\Z\mathchar`-\mathcal{A}$-bimodule. 
\end{enumerate}
The dual definitions of flat left $\mathcal{B}$-module and flat on the left bimodule are clear. 
\end{defn}

As in module categories over rings, we immediately get the following:

\begin{cor}\label{cor.properties of flat modules}%
The class of flat right (resp., left) $\mathcal{A}$-modules is closed under taking direct summands, coproducts and direct limits in $\rMod\mathcal{A}$ (resp., $\mathcal{A}\lMod$), and it contains all the projective $\mathcal{A}$-modules. 
\end{cor}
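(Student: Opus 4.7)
The plan is to treat each assertion separately, leaning throughout on Proposition~\ref{prop.tensor product functor}, the identification of $H_a\otimes_\mathcal{A}-$ with an evaluation functor from Corollary~\ref{cor.properties tensor product}(ii), and the AB-4 / AB-5 exactness properties of $\Ab$. Since a right $\mathcal{A}$-module is flat by definition whenever $-\otimes_\mathcal{A}-$ is exact in its right variable, each closure claim reduces to showing that the relevant operation on the left variable is preserved by the tensor product and is compatible with exactness in $\Ab$.

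First I would dispatch \emph{coproducts}: by Proposition~\ref{prop.tensor product functor}(ii), the functor $-\otimes_\mathcal{A}Y$ preserves coproducts, so for a family $(F_i)_{i\in I}$ of flat right $\mathcal{A}$-modules one has a natural isomorphism $\bigl(\coprod_i F_i\bigr)\otimes_\mathcal{A}-\cong\coprod_i(F_i\otimes_\mathcal{A}-)$. Since $\Ab$ is AB-4, a coproduct of exact functors into $\Ab$ is exact, so $\coprod_i F_i$ is flat. Closure under \emph{direct summands} then comes for free: if $F$ is a direct summand of a flat $G$, then $F\otimes_\mathcal{A}-$ is a direct summand of the exact functor $G\otimes_\mathcal{A}-$, and a direct summand of a short exact sequence of abelian groups is exact.

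For \emph{direct limits}, the key observation is that right exactness together with coproduct preservation (both given by Proposition~\ref{prop.tensor product functor}(ii)) forces the functor $-\otimes_\mathcal{A}Y$ to preserve arbitrary direct limits, since a direct limit can be computed as the cokernel of a canonical map between coproducts. Hence for $F=\varinjlim_i F_i$ with each $F_i$ flat, and any monomorphism $u$ in $\mathcal{A}\lMod$, we get $F\otimes_\mathcal{A}u\cong\varinjlim_i(F_i\otimes_\mathcal{A}u)$, which is a direct limit of monomorphisms in $\Ab$. Invoking the AB-5 condition in $\Ab$ gives that this map is again a monomorphism, so $F\otimes_\mathcal{A}-$ is exact.

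Finally, for the fact that projectives are flat, I would observe that Corollary~\ref{cor.properties tensor product}(ii), applied with $\mathcal{B}=\mathcal{A}$ and with a left $\mathcal{A}$-module regarded as an $\mathcal{A}\mathchar`-\mathbb{Z}$-bimodule (cf.\ Remark~\ref{rem.modules as bimodules}), shows that for each $a\in\Obj(\mathcal{A})$ the functor $H_a\otimes_\mathcal{A}-\colon\mathcal{A}\lMod\to\Ab$ is naturally isomorphic to the evaluation-at-$a$ functor $Y\mapsto Y(a)$, which is obviously exact. Thus every representable $H_a$ is flat, every coproduct of representables is flat by the first step, and every projective $\mathcal{A}$-module is flat by closure under direct summands. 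There is no real obstacle here; the only slightly subtle point is verifying that $-\otimes_\mathcal{A} Y$ preserves direct limits, which I would derive formally from right exactness plus coproduct preservation rather than check by hand.
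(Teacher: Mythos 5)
Your proof is correct, and it fills in precisely the standard details that the paper leaves to the reader with the remark ``as in module categories over rings, we immediately get the following.'' All four verifications (coproducts via the coproduct-preservation of $-\otimes_\mathcal{A}Y$ and AB-4 of $\Ab$; summands as a formal consequence; direct limits from right exactness plus coproduct preservation together with AB-5 of $\Ab$; projectives by reducing to representables and identifying $H_a\otimes_\mathcal{A}-$ with evaluation at $a$, cf.\ also Proposition~\ref{prop.tensor product via transpose}(i)) are the standard arguments the authors are tacitly invoking, so this matches the intended proof.
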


\subsection{Purity and flatness in categories of modules}%

We next see that, when $\mathcal{G}=\rMod\mathcal{A}$, purity and flatness can be characterized in the familiar ways. Our proof will be based on the following definition due to Auslander and Bridger (see \cite[Definition~2.5]{AB}): for any given $M\in\rmod\mathcal{A}$, fix a projective presentation 
$
H_{\what b}\to H_{\what a}\to M\to 0,
$
induced by a suitable $\alpha\leqdef (\alpha_{ij})_{j,i}\in \what{\A}(\what{b},\what{a}\,)$. Taking the transpose $\alpha^t\leqdef (\alpha_{ji})_{j,i}\in \what{\A^\op}(\what{a},\what{b})$, we obtain a morphism $H'_{\alpha^t}\colon H'_{\what{a}}\to H'_{\what b}$ in $\mathcal{A}\lMod$, and we define $\Tr(M)\leqdef\coker(H'_{\alpha^t})$. This finitely presented left $\mathcal{A}$-module is uniquely determined, up to isomorphism in the stable category modulo projectives $\mathcal{A}\lstmod$ (see Example~\ref{stab_cat_mod_proj_ex}), and its isomorphism class (in $\mathcal{A}\lstmod$) is independent of the choice of projective presentation for $M$. We will refer to $\Tr(M)$ as \emph{the transpose of $M$}. Observe that, if $P\in \proj\text{-}\A$, then $\Tr(P)\in \A\text{-}\proj$ (i.e., $\Tr(P)=0$ in $\mathcal{A}\lstmod$). In particular, we obtain a well-defined functor $\Tr\colon\rstmod\mathcal{A}\to\mathcal{A}\lstmod$, which is an equivalence of categories with inverse $\Tr\colon\mathcal{A}\lstmod=\rstmod\mathcal{A}^\textup{op}\to \mathcal{A}^\op\lstmod=\rstmod\mathcal{A}$.

\begin{prop}\label{prop.tensor product via transpose}
Let $\A$ be a small preadditive category, let $M\in \rmod\A$, choose a suitable $\alpha\in \what{\A}(\what{b},\what{a}\,)$ for which $M\cong \coker(H_\alpha)$, and take the transpose  $\Tr(M)\leqdef\coker(H'_{\alpha^t})\in \A\lmod$, as above.  Then, the following assertions hold:
\begin{enumerate}[(i)]
\item For each $a\in\Obj(\A)$, there are natural isomorphisms of functors \[
H_{a}\otimes_\mathcal{A}-\cong (\mathcal{A}\lMod)(H'_{a},-),\ \ \text{ and }\ \ -\otimes_\mathcal{A}H'_{a}\cong (\rMod\mathcal{A})(H_a,-).
\]
\item There is an exact sequence of functors $\mathcal{A}\lMod\to\Ab$,
\[
0\longrightarrow (\Tr(M),-) (H'_{\what{b}}\,,-)\buildrel(H'_{\alpha^t},-)\over{\relbar\joinrel\relbar\joinrel\longrightarrow} (H'_{\what{a}}\,,-)\longrightarrow M\otimes_\mathcal{A}-\longrightarrow 0,
\]
where $(X,-)\leqdef(\mathcal{A}\lMod)(X,-)$, for all $X\in \mathcal{A}\lMod$.
\item There is an exact sequence of functors $\rMod\A\to\Ab$,
\[
0\longrightarrow (M,-)'\longrightarrow -\otimes_\A H'_{\what{a}}\buildrel 1_{-}\otimes H'_{\alpha^t}\over{\relbar\joinrel\relbar\joinrel\longrightarrow} -\otimes_\A H'_{\what{b}}\longrightarrow -\otimes_\mathcal{A}\Tr(M)\longrightarrow0, 
\]
where $(M,-)'\leqdef(\rMod\A)(M,-)$.
\end{enumerate}
\end{prop}
\begin{proof}
\smallskip \noindent(i) Let $a\in\Obj(\mathcal{A})$; by Corollary~\ref{cor.properties tensor product}(i) and (ii) there are natural  isomorphisms $H_a\otimes_\mathcal{A}X\cong X(a)$, and $Y\otimes_\mathcal{A}H'_a\cong Y(a)$, for all $Y\in \A\lMod$ and all $X\in \A\lMod$. Similarly, the Additive Yoneda Lemma (for $\A$ and $\A^\op$) gives natural isomorphisms $X(a)\cong (\mathcal{A}\lMod)(H'_a,X)$, and $Y(a)\cong (\rMod\A)(H_a,Y)$. 

\smallskip \noindent(ii) The exactness of the sequence $H'_{\what{a}}\to H'_{\what{b}}\to\Tr(M)\to 0$ implies that $\ker((H'_{\alpha^t},-))\cong (\Tr(M),-)$, while the exactness of $H_{\what{b}}\to H_{\what{a}}\to M\to 0$, together with assertion~(i), shows that $\coker((H'_{\alpha^t},-))\cong M\otimes_\A -$.

\smallskip \noindent(iii) Follows similarly to assertion~(ii).
\end{proof}

Although we have stated the above proposition with $M\in \rmod\A$, it is clear that a similar result also holds for any $N\in \A\lmod=\rmod{\A^\op}$. As a consequence, we obtain the following.

\begin{cor}\label{cor.pure exact sequences modules}
A short exact sequence $0\to \smash[t]{X\buildrel u\over\longrightarrow Y\buildrel v\over\longrightarrow Z}\to 0$ in $\mathcal{A}\lMod$ is pure if, and only if, for any (finitely presented) $M\in\rMod\A$, the following is a short exact sequence in $\Ab$:
\[
	0\longrightarrow M\otimes_\mathcal{A}X\buildrel 1_M\otimes u\over{\relbar\joinrel\longrightarrow}
		M\otimes_\mathcal{A}Y\buildrel 1_M\otimes v\over{\relbar\joinrel\longrightarrow} M\otimes_\mathcal{A}Z\longrightarrow 0.
\]
\end{cor}
\begin{proof}
As in the case of modules over rings, it is enough to consider the case $M\in \rmod\A$. Suppose that the sequence is pure, so that $(\mathcal{A}\lMod)(U,-)$ keeps it exact, for all $U\in\mathcal{A}\lmod$. Consider the exact sequence of functors in Proposition~\ref{prop.tensor product via transpose}(ii). The three left most functors there are exact on our sequence, for each $M\in\rmod\mathcal{A}$. It then follows that $M\otimes_\mathcal{A}-$ keeps the sequence exact, for each $M\in\rmod\mathcal{A}$. 

Conversely, suppose that our sequence is kept exact by $V\otimes_\mathcal{A}-$, for all $V\in\rmod\mathcal{A}$. Using the obvious analog for left $\A$-modules of the previous proposition, the three right most functors in its assertion~(iii) leave the sequence exact, for each $N\in\mathcal{A}\lmod$. It then follows that $(\A\lMod)(N,-)$ keeps the sequence exact, for each $N\in\mathcal{A}\lmod$. 
\end{proof}

To end the subsection, we see that, in categories of modules, the categorical definition of flatness coincides with the one based on tensor products.

\begin{prop}\label{prop.tensor-categorical-flats-the-same}%
Let $\mathcal{A}$ be a small preadditive category. For a right $\mathcal{A}$-module $F$, the following assertions are equivalent:
\begin{enumerate}[(a)]
\item $F$ is flat, i.e.\ $F\otimes_\mathcal{A}-\colon\mathcal{A}\lMod\to\Ab$ is an exact functor;
\item $F$ is categorically flat (see Definition~\ref{def.purity-flatness});
\item $F$ is a direct limit of finite coproducts of representable right $\mathcal{A}$-modules.
\end{enumerate}
\end{prop}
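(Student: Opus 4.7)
The plan is to prove the four implications (c)$\,\Rightarrow\,$(a), (c)$\,\Rightarrow\,$(b), (a)$\,\Rightarrow\,$(c), and (b)$\,\Rightarrow\,$(a). The two implications out of (c) are easy. For (c)$\,\Rightarrow\,$(a), finite coproducts of representables are finitely generated projective, hence tensor-flat, and the flat class is closed under direct limits by Corollary~\ref{cor.properties of flat modules}. For (c)$\,\Rightarrow\,$(b), I would write $F=\varinjlim F_i$ with each $F_i$ a finite coproduct of representables and let $p\colon X\twoheadrightarrow F$ be an arbitrary epimorphism; by Proposition~\ref{p:PureEpimorphisms}, purity of $p$ reduces to showing that every $f\colon M\to F$ with $M$ finitely presented factors through $p$. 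Such an $f$ factors through some $\lambda_i\colon F_i\to F$, and projectivity of $F_i$ allows us to lift $\lambda_i$ along $p$, giving the required factorization.

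The main work goes into the Govorov--Lazard-type implication (a)$\,\Rightarrow\,$(c), which is also the main obstacle. I would consider the comma category $\mathcal{J}$ whose objects are pairs $(X,f)$ with $X\in\Obj(\what{\mathcal{A}})$ and $f\colon H_X\to F$, and whose morphisms $(X,f)\to(X',f')$ are morphisms $\alpha\colon X\to X'$ in $\what{\mathcal{A}}$ satisfying $f'\circ H_\alpha=f$. The co-Yoneda density formula provides a canonical isomorphism $F\cong\varinjlim_\mathcal{J}H_X$, so it suffices to show that $\mathcal{J}$ is filtered whenever $F$ is flat. The existence of common upper bounds for finite diagrams of objects is immediate from the zero object and direct sums in $\what{\mathcal{A}}$. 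The critical coequalizing axiom reduces to the classical Lazard condition: given $\phi\colon X\to X'$ in $\what{\mathcal{A}}$ together with $f'\in F(X')\cong(\rMod\mathcal{A})(H_{X'},F)$ such that $f'\circ H_\phi=0$, one must find $Y\in\Obj(\what{\mathcal{A}})$, a morphism $\gamma\colon X'\to Y$ with $\gamma\circ\phi=0$, and $g\in F(Y)$ satisfying $F(\gamma)(g)=f'$. To obtain this data, set $L=\ker(H'_\phi\colon H'_{X'}\to H'_X)$ in $\mathcal{A}\lMod$; flatness of $F$ keeps exact the sequence $0\to F\otimes_\mathcal{A}L\to F\otimes_\mathcal{A}H'_{X'}\to F\otimes_\mathcal{A}H'_X$, while Corollary~\ref{cor.properties tensor product}\,(i) identifies its last two terms with $F(X')$ and $F(X)$. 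Hence $f'\circ H_\phi=0$ translates to $f'\in F\otimes_\mathcal{A}L$, so one may write $f'=\sum_{k=1}^nx_k\otimes\ell_k$ with $x_k\in F(a_k)$ and $\ell_k\colon X'\to a_k$ in $\what{\mathcal{A}}$ satisfying $\ell_k\circ\phi=0$; taking $Y=\coprod_ka_k$, $\gamma=(\ell_k)_k\colon X'\to Y$, and $g=(x_k)_k\in F(Y)$ completes the construction.

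For (b)$\,\Rightarrow\,$(a), I would fix a free presentation $\pi\colon P=\coprod_iH_{a_i}\twoheadrightarrow F$; by (b), $\pi$ is a pure epimorphism, so $0\to K\to P\to F\to 0$ is pure exact. The right-module analog of Corollary~\ref{cor.pure exact sequences modules}, obtained by applying it to $\mathcal{A}^\textup{op}$, then guarantees that this sequence stays exact after tensoring on the right with any $N\in\mathcal{A}\lMod$. Given a short exact sequence $0\to N'\to N\to N''\to 0$ in $\mathcal{A}\lMod$, a routine diagram chase in the resulting $3\times 3$ grid — where the middle row is exact by flatness of $P$ and all three columns are exact by the purity just established — shows that $F\otimes_\mathcal{A}N'\to F\otimes_\mathcal{A}N$ is injective, proving that $F$ is flat.
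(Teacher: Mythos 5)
Your proof is correct, and it is more self-contained than the paper's. The paper dispatches the proposition in two lines by citing Stenstr\"om's \cite[Proposition~I.11.1]{St} for the ring-theoretic argument behind $(a)\Leftrightarrow(b)$ and Oberst--R\"ohrl's \cite[Theorem~3.2]{OR} for $(a)\Leftrightarrow(c)$, trusting the reader to transport the classical arguments to the several-objects setting via Corollary~\ref{cor.pure exact sequences modules} applied to $\mathcal{A}^\textup{op}$. You instead work out the transported arguments explicitly. Your (c)$\Rightarrow$(a) and (b)$\Rightarrow$(a) are the classical closure-under-direct-limits and $3\times3$-grid arguments; your (c)$\Rightarrow$(b) reduces purity of an arbitrary epimorphism onto $F$ to lifting maps out of finitely presented objects via Proposition~\ref{p:PureEpimorphisms} and projectivity of the $F_i$; and your (a)$\Rightarrow$(c) is a full Govorov--Lazard argument: you set up the comma category $\mathcal{J}=\mathbf{y}_{\what{\mathcal{A}}}\downarrow F$, invoke the density formula to get $F\cong\varinjlim_\mathcal{J}H_X$, and verify the coequalizer axiom for filteredness by expressing, for $\phi\colon X\to X'$ in $\what{\mathcal{A}}$, the kernel of $F(\phi)\colon F(X')\to F(X)$ as $F\otimes_\mathcal{A}\ker(H'_\phi)$ using flatness of $F$ and Corollary~\ref{cor.properties tensor product}\,(i). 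This is precisely what Oberst--R\"ohrl's proof amounts to, so the content agrees with the paper's cited route; the advantage of your version is that the dependence on the tensor machinery set up earlier in Section~2 is made transparent, whereas the paper's brevity buys conciseness at the cost of leaving the translation to the several-objects case implicit.

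One cosmetic remark: in your (c)$\Rightarrow$(b) step, you implicitly use that the direct limit $F=\varinjlim F_i$ is filtered when you say ``$f$ factors through some $\lambda_i$''; this is fine because ``direct limit'' in the statement of (c) (and throughout the paper) means colimit over a directed poset, but it is worth saying explicitly since the $\mathcal{J}$ in your (a)$\Rightarrow$(c) is only filtered, not a poset, and the two uses of the word ``direct'' differ slightly.
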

\begin{proof}
Using the version of Corollary~\ref{cor.pure exact sequences modules} for $\mathcal{A}^\textup{op}$, the proof of ``$\textup{(a)}\Leftrightarrow\textup{(b)}$'' is essentially identical to the one for module categories over rings (see \cite[Proposition~I.11.1]{St}). Similarly, ``$\textup{(a)}\Leftrightarrow\textup{(c)}$'' is included in \cite[Theorem~3.2]{OR}.
\end{proof}

\section{Idempotent ideals, TTF triples, and recollements}%
\label{sect.idempotent-TTF-recollements}%

Recall that if $\calI$ and $\calJ$ are two ideals of $\mathcal{A}$, then their product $\calI\calJ$ is an ideal of $\A$ that can be defined as follows: for each $(a,b)\in\Obj(\mathcal{A})\times\Obj(\mathcal{A})$,
\[
(\calI\calJ)(a,b)\leqdef\Bigl\{\sum_{i=1}^n\gamma_i\circ\beta_i\Bigm\vert \beta_i\in\calI(a,c_i),\ \gamma_i\in\calJ(c_i,b), \text{ and }c_i\in\Obj(\mathcal{A})\Bigr\}.
\]
We write $\calI^2\leqdef \calI\calI$ and say that $\calI$ is an \emph{idempotent ideal} of $\mathcal{A}$ when $\calI^2=\calI$.

If $M$ is a right $\mathcal{A}$-module and $\calI$ is an ideal of $\mathcal{A}$, we define the $\mathcal{A}$-submodules $M\calI$ and $\rann_M\calI$ of $M$ such that, for all $a\in \Obj(\mathcal{A})$ (see \cite[Section~2]{PSV}):
\begin{itemize}
\item $(M\calI)(a)\leqdef\sum\{\im(M(\alpha))\mid b\in\Obj(\mathcal{A}), \, \alpha\in\calI(a,b)\}
$.
\item $
	(\rann_M\calI)(a)\leqdef\bigcap\{\ker(M(\alpha))\mid b\in\Obj(\mathcal{A}),\,\alpha\in\calI(b,a)\}$.
\end{itemize}

The following is a generalization of the classical Jans' Correspondence (\cite{Jans}).

\begin{prop}[{\cite[Theorem~4.5]{PSV}}]
\label{prop.TTF triple associated to idemp.ideal}%
Let $\calI$ be an idempotent ideal of $\mathcal{A}$ and consider the following subcategories of $\rMod\mathcal{A}$:
\begin{itemize}
    \item $\mathcal{C}_I\leqdef \{C\in\rMod\mathcal{A}\mid C\calI=C\}$;
    \item $\mathcal{T}_I\leqdef\{T\in\rMod\mathcal{A}\mid T\calI=0\}=\{T\in\rMod\mathcal{A}\mid T(\alpha)=0, \textup{for all $\alpha\in I$} \}$;
    \item $\mathcal{F}_I \leqdef\{F\in\rMod\mathcal{A}\mid \rann_F(\calI)=0\}$.
\end{itemize}
Then, $\mathcal{T}_I$ is a TTF class in $\rMod\mathcal{A}$ and $(\mathcal{C}_I,\mathcal{T}_I,\mathcal{F}_I)$ is the associated TTF triple. Moreover, the assignment $\calI\mapsto (\mathcal{C}_I,\mathcal{T}_I,\mathcal{F}_I)$ yields a bijection between the set of idempotent ideals of $\mathcal{A}$ and the one of TTF triples in $\rMod\mathcal{A}$. 
\end{prop}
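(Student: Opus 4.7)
My plan is to prove this in two parts: first that the forward assignment yields a TTF triple, then construct an inverse and verify the two composites are identities.

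For the forward direction, I would begin by checking that $\mathcal{T}_\calI$ is a TTF class. Closure of $\mathcal{T}_\calI$ under subobjects, quotients, coproducts, and products is immediate because the defining condition ``$T(\alpha)=0$ for all $\alpha\in\calI$'' is preserved by each of these operations evaluated pointwise. For closure under extensions, given a short exact sequence $0\to T_1\to T\to T_2\to 0$ with $T_i\in\mathcal{T}_\calI$, the submodule $T\calI$ maps to $T_2\calI=0$, so $T\calI\subseteq T_1$, and then $T\calI=T\calI\cdot\calI\subseteq T_1\cdot\calI=0$; this is the step that crucially uses $\calI^2=\calI$. Then I would identify $\mathcal{C}_\calI={}^\perp\mathcal{T}_\calI$ and $\mathcal{F}_\calI=\mathcal{T}_\calI^\perp$: for the first, the short exact sequence $0\to M\calI\to M\to M/M\calI\to 0$ has right term in $\mathcal{T}_\calI$, which forces $M=M\calI$ whenever $M\in{}^\perp\mathcal{T}_\calI$, while the reverse inclusion follows because any map from $C\calI$ to a module in $\mathcal{T}_\calI$ vanishes on the images of the action of $\calI$; for the second, one checks that $\rann_F(\calI)\in\mathcal{T}_\calI$ (using that ideals absorb composition, so $F(\alpha)$ maps $\rann_F(\calI)(b)$ into $\rann_F(\calI)(a)$ and is zero there), so $F\in\mathcal{T}_\calI^\perp$ forces $\rann_F(\calI)=0$, and conversely any $T\to F$ with $T\in\mathcal{T}_\calI$ lands pointwise in $\rann_F(\calI)$.

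For the inverse assignment, given a TTF triple $(\mathcal{C},\mathcal{T},\mathcal{F})$, let $c\colon\rMod\mathcal{A}\to\mathcal{C}$ denote the functor sending $M$ to its largest $\mathcal{C}$-submodule (the right adjoint to the inclusion). The assignment $a\mapsto c(H_a)$ is a subfunctor of the Yoneda embedding, so by Remark~\ref{rem.regular bimodule and ideal}\,(2) it defines an ideal $\calI$ of $\mathcal{A}$ with $\calI(b,a)=c(H_a)(b)\subseteq\mathcal{A}(b,a)$. To show $\mathcal{T}=\mathcal{T}_\calI$, I would argue as follows: via Yoneda each $\alpha\in\calI(b,a)$ corresponds to a morphism $H_b\to H_a$ that factors through $c(H_a)\in\mathcal{C}$, so for any $M\in\mathcal{T}$ the precomposition map $M(\alpha)\colon M(a)\to M(b)$ vanishes because $\Hom(\mathcal{C},\mathcal{T})=0$; conversely, if $M\calI=0$ then every $f\colon H_a\to M$ restricts to zero on $c(H_a)$ and hence factors through $H_a/c(H_a)\in\mathcal{T}$, exhibiting $M$ as a quotient of a coproduct of $\mathcal{T}$-modules, hence itself in $\mathcal{T}$. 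Idempotency of $\calI$ then drops out: the module $c(H_a)/c(H_a)\calI$ is simultaneously in $\mathcal{C}$ (as a quotient of $c(H_a)$) and in $\mathcal{T}_\calI=\mathcal{T}$ (since $M/M\calI\in\mathcal{T}_\calI$ for every $M$), hence it vanishes, and evaluating at $b$ gives $\calI(b,a)=\calI^2(b,a)$.

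Finally, for the bijection: the composite TTF~$\to$~ideal~$\to$~TTF is the identity because a TTF triple in a Grothendieck category is determined by its middle class $\mathcal{T}$, and we have just shown $\mathcal{T}_\calI=\mathcal{T}$. The composite ideal~$\to$~TTF~$\to$~ideal is the identity because, for a given idempotent $\calI$, the coradical of $(\mathcal{C}_\calI,\mathcal{T}_\calI)$ sends $H_a$ to $H_a\calI$ (which is in $\mathcal{C}_\calI$ by idempotency and is maximal since any $N\leq H_a$ with $N=N\calI$ satisfies $N\subseteq H_a\calI$), and the direct computation $(H_a\calI)(b)=\calI(b,a)$ recovers $\calI$. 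I expect the main subtle point to be the careful use of the Yoneda identification $c(H_a)(b)\leftrightarrow\{\alpha\colon H_b\to H_a\mid\alpha\text{ factors through }c(H_a)\}$, which is what translates the abstract orthogonality $\Hom(\mathcal{C},\mathcal{T})=0$ into the concrete annihilation $M(\alpha)=0$ needed both for the equality $\mathcal{T}=\mathcal{T}_\calI$ and for the verification that $\calI$ is idempotent.
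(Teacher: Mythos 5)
The paper does not actually supply a proof of this proposition: it is quoted from \cite[Theorem~4.5]{PSV}, a generalization of Jans' classical theorem for rings to small preadditive categories. Your argument is therefore a self-contained reconstruction rather than a comparison candidate, and it is correct. The key steps all check out: closure of $\mathcal{T}_\calI$ under extensions is precisely where $\calI=\calI^2$ is used (so that $T\calI\subseteq T_1$ can be upgraded to $T\calI=(T\calI)\calI\subseteq T_1\calI=0$); the identifications $\mathcal{C}_\calI={}^\perp\mathcal{T}_\calI$ and $\mathcal{F}_\calI=\mathcal{T}_\calI^\perp$ are proved by the short exact sequence $0\to M\calI\to M\to M/M\calI\to 0$ and the observation that $\rann_F(\calI)$ is a submodule of $F$ lying in $\mathcal{T}_\calI$ (both facts rely on $\calI$ being a two-sided ideal); the inverse assignment via the $\mathcal{C}$-torsion radical $c$ applied to representables is the right construction, and the Yoneda bookkeeping $c(H_a)(b)\leftrightarrow\{\text{morphisms }H_b\to H_a\text{ factoring through }c(H_a)\}$ is exactly what converts orthogonality into annihilation, as you anticipate. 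Your idempotency argument ($c(H_a)/c(H_a)\calI$ lies in both $\mathcal{C}$ and $\mathcal{T}$, hence vanishes) is a clean way to package that step, and the two composites are verified correctly: the TTF triple is determined by its middle class, and $c'(H_a)=H_a\calI$ is the maximal $\mathcal{C}_\calI$-submodule of $H_a$ (using $\calI^2=\calI$ to see $H_a\calI\in\mathcal{C}_\calI$), whose evaluation recovers $\calI$. One very small remark: in the step showing ${}^\perp\mathcal{T}_\calI\supseteq\mathcal{C}_\calI$ you should say ``any map from $C$'' rather than ``from $C\calI$'', since the two coincide by hypothesis; the argument itself (elements of $C(a)$ are sums of images under $C(\alpha)$ with $\alpha\in\calI$, and naturality forces the composite through a $\mathcal{T}_\calI$-module to vanish) is unaffected.
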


In the rest of the section $\calI$ will be a fixed idempotent ideal of the small preadditive category $\mathcal{A}$, $(\mathcal{C}_I,\mathcal{T}_I,\mathcal{F}_I)$ will be the associated TTF triple in $\rMod\mathcal{A}$, define $\mathcal{G}_I\leqdef \mathcal{T}_I{}^{\bot_{0,1}}$, which is the Giraud subcategory associated with the hereditary torsion pair $(\mathcal T_I,\mathcal F_I)$. We also consider the following recollement (we refer to  \cite[Theorem~4.3, Corollary~4.4]{PV14} and \cite{PV} for more details about abelian recollements):
\[
\rMod\mathcal{A}/\calI=\empty\!\xymatrix@C=6em{%
	**[l]\mathcal{T}_I \ar[r]|(.5){i_\ast} &
	\rMod\mathcal{A} \ar[r]|(.5){j^\ast} \ar@/_1.5pc/[l]_(.5){i^\ast} \ar@/^1.5pc/[l]^(0.5){i^!} &
	**[r] \mathcal{G}_I \ar@/_1.5pc/[l]_(.5){j_!} \ar@/^1.5pc/[l]^(.5){j_\ast}
}
\]
where $i_\ast\colon\mathcal{T}_I\hookrightarrow\rMod\mathcal{A}$ and $j_\ast\colon\mathcal{G}_I\hookrightarrow\rMod\mathcal{A}$ are the (fully faithful) inclusions, while $j^\ast\colon\rMod\mathcal{A}\to\mathcal{G}_I\cong(\rMod\mathcal{A})/\mathcal{T}_I$ is  the (exact) Gabriel quotient functor. 

On the other hand, recall that we can recover the original TTF triple from the above recollement via the equality  $(\mathcal{C}_I,\mathcal{T}_I,\mathcal{F}_I)=(\ker(i^{\ast}),\im(i_\ast),\ker(i^{!}))$. Furthermore, $\im(i_\ast)=\mathcal{T}_I=\ker(j^\ast)$, which implies that $j^\ast\circ i_\ast=0$ and hence, by adjunction, also $i^\ast\circ j_!=0$ and $i^!\circ j_\ast=0$. Recall also that $i_\ast$, $j_!$ and $j_\ast$ are fully faithful, which implies that the units of the adjunctions $(i_\ast,i^!)$ and $(j_!,j^\ast)$, and the counits of $(i^\ast,i_\ast)$ and $(j^\ast,j_\ast)$, are all natural isomorphisms. We denote by $\omega\colon i_\ast\circ i^!\Rightarrow 1_{\rMod\mathcal{A}}$ and $\nu \colon j_!\circ j^\ast\Rightarrow1_{\rMod\mathcal{A}}$ the corresponding counits, and by $\gamma \colon 1_{\rMod\mathcal{A}}\Rightarrow i_\ast\circ i^\ast$ and $\mu\colon 1_{\rMod\mathcal{A}}\Rightarrow j_\ast\circ j^\ast$ the respective units. There are two natural exact sequences of functors:
\begin{gather*}
	0\Longrightarrow i_\ast\circ i^!\buildrel\omega\over\Longrightarrow
		1_{\rMod\mathcal{A}}\buildrel\mu\over\Longrightarrow j_\ast\circ j^\ast, \\
	j_!\circ j^\ast\buildrel\nu\over\Longrightarrow 1_{\rMod\mathcal{A}}
		\buildrel\gamma\over\Longrightarrow	i_\ast\circ i^\ast\Longrightarrow 0,
\end{gather*}
where $\ker(\nu_M)$ and $\coker(\mu_M)$ are in $\mathcal{T}_I=\ker(i_\ast)$, for all $M\in\rMod\A$. We will call $\mu_M$ and $\nu_M$ the \emph{localization} and \emph{colocalization morphisms}, respectively. Similarly, the functors $j_\ast\circ j^\ast$ and $j_!\circ j^\ast$ will be called the \emph{localization} and \emph{colocalization functors}, respectively. 

The proposition below contains the basic properties of recollements that will be needed in our treatment.

\begin{prop}\label{prop.relevantfacts-recollement}%
Let $\mathcal{A}$ be a small preadditive category, let $\calI=\calI^2$ be an  ideal in $\mathcal{A}$, and consider the associated TTF triple and recollement, as above. Then:
\begin{enumerate}[(i)]
\item The following statements are equivalent for $M\in \rMod\A$:
\begin{enumerate}[(a)]
\item $M\in\mathcal{C}_I$;
\item $\nu_M\colon(j_!\circ j^\ast)(M)\to M$ is an epimorphism;
\item $M$ is a quotient of some object in $\im(j_!)$.
\end{enumerate}
\item $\im(\nu_M)=M\calI$, for all $M\in\rMod\A$.
\item  $T\otimes_\mathcal{A}\calI=0$, for all $T\in\mathcal{T}_I$ and, more generally, \mbox{$T\otimes_\mathcal{A}C'=0$,} for all   $C'\in\mathcal{C}'_I\leqdef \{X\in\mathcal{A}\lMod\mid\calI X=X\}$.
\item $\im(j_!)={}^{\bot_{0,1}}\mathcal{T}_I$.
\item  $j_!$ induces an equivalence  $\mathcal{G}_{\calI}\mathrel\to\mathcal{X}_I\leqdef \im(j_!)$. In particular, the Grothendieck category $\mathcal{X}_I$ is coreflective in $\rMod\mathcal{A}$.
\end{enumerate}
\end{prop}
\begin{proof}
(i) If $M\in\mathcal{C}_I$, then $\gamma_M=0$ because $i_\ast i^\ast M\in\im(i_\ast)=\mathcal{T}_I$. It follows that $\nu_M$ is an epimorphism, thus proving the implication ``$\textup{(a)}\Rightarrow\textup{(b)}$''.
The implication ``$\textup{(b)}\Rightarrow\textup{(c)}$'' is clear, and the implication ``$\textup{(c)}\Rightarrow\textup{(a)}$'' holds since $\im(j_!)\subseteq\mathcal{C}_I$, and $\mathcal{C}_I$ is closed under taking quotients.

\smallskip \noindent(ii) This is well known (see \cite[Theorem~4.3]{PV14} and its proof).

\smallskip \noindent(iii) By Corollary~\ref{cor.properties tensor product}, for each $T\in\mathcal T_I$ and each $a\in\Obj(\mathcal{A})$, we have that $(T\otimes_\mathcal{A}\calI)(a)=T\otimes_\mathcal{A}\calI(a,-)\in \Ab$, and this group is generated by elements of the form $x\otimes \eta$, for suitable  $b\in \Obj(\A)$, $x\in T(b)$, and $\eta\in I(a,b)$. On the other hand, since $I^2=I$, each $\eta\in I(a,b)$, can be written as  $\eta=\sum_{i=1}^s\beta_i\alpha_i$, with $\alpha_i\in\calI(a,b_i)$ and $\beta_i\in\calI(b_i,b')$, for all $i=1,\ldots,s$. As a consequence:
\[
x\otimes \eta=x\otimes \sum_{i=1}^s\beta_i\alpha_i=\sum_{i=1}^sx\otimes \beta_i\alpha_i=\sum_{i=1}^s[T(\beta_i)](x)\otimes\alpha_i=0,
\]
since $T(\beta_i)=0$, by Proposition~\ref{prop.TTF triple associated to idemp.ideal}. This shows that $T\otimes_\mathcal{A}\calI(a,-)=0$, for all $a\in\Obj(\mathcal{A})$, which implies that $T\otimes_\mathcal{A}\calI=0$.

For the final part, note that $\mathcal{C}'_I$ is the left constituent of the TTF triple in $\mathcal{A}\lMod=\rMod\mathcal{A}^\textup{op}$ associated to the ideal $\calI$ of $\mathcal{A}^\textup{op}$. Hence, we have that $\mathcal{C}'_I=\Gen(\calI(a,-)\mid a\in\Obj(\mathcal{A}))$ (see \cite[Lemma~2.4]{PSV}), and we have seen above that $T\otimes_\mathcal{A}\calI(a,-)=0$, for all $a\in\Obj(\mathcal{A})$ and all $T\in\mathcal{T}_{\calI}$. We then conclude by the right exactness of $T\otimes_\mathcal{A}-\colon\mathcal{A}\lMod\to\Ab$. 

\smallskip \noindent(iv) It is well known (see \cite[Theorem~2.5]{PV14}).

\smallskip \noindent(v) The equivalence of categories follows directly from the full faithfulness of $j_!$. As for the coreflectivity condition, let $\iota\colon\mathcal{X}_I\hookrightarrow\rMod\mathcal{A}$ be the inclusion, and denote by $u\colon\mathcal{G}_I\to\mathcal{X}_I=\im(j_!)$ the equivalence  induced by $j_!$. Since being left or right adjoint to an equivalence just means being its quasi-inverse, and since the adjoint of a composition is the composition of the adjoints in the opposite order, the equality $j_!=\iota\circ u$ implies that $\iota\cong j_!\circ u^{-1}$ is left adjoint to $u\circ j^\ast$.
\end{proof}

Let us fix the notation $\mathcal{X}_I\leqdef \im(j_!)={}^{\bot_{0,1}}\mathcal{T}_I$, for the rest of the section.

\begin{lem}\label{lem.checking natural isomorphism on generators}%
Let $F,\, G\colon\rMod\mathcal{A}\to\mathcal{V}$ be additive functors that preserve all colimits, where $\mathcal{V}$ is a cocomplete abelian category, and let $\mathbf{y}\colon \A\to \rMod\mathcal{A}$ be the Yoneda Embedding. If the two compositions \mbox{$F\circ\mathbf{y},\, G\circ\mathbf{y}\colon\mathcal{A}\to\rMod\mathcal{A}\to\mathcal{V}$} are naturally isomorphic, then $F$ and $G$ are naturally isomorphic.
\end{lem}
\begin{proof}
Given $M\in \rMod\A$, let $\mathbf y/M$ be its category of points, that is:
\begin{itemize}
    \item $\Obj(\mathbf y/M)\leqdef\{(a,x)\mid a\in \Obj(\A),\ x\in M(a)\}$;
    \item $(\mathbf y/M)((a,x),(b,y))\leqdef\{\alpha\in \A(a,b)\mid [M(\alpha)](y)=x\}.$
\end{itemize}
Then, $M\cong \colim_{(a,x)\in \Obj(\mathbf y/M)}H_a$ (see \cite[Proposition~1.3.8]{BorceuxII}). In particular, we have that $F(M)\cong G(M)$, since $F$ and $G$ both commute with colimits, and $F(H_a)\cong G(H_a)$, for all $a\in \Obj(\A)$.
%
\end{proof}

The following is an extension of \cite{Oh} to categories of modules over small preadditive categories.

\begin{prop}\label{prop.colocalization as tensor product}%
Consider the composition $j_!\circ j^\ast\circ {\bf y}\colon\mathcal{A}\to\rMod\mathcal{A}$, where ${\bf y}\colon \A\to \rMod\A$ is the Yoneda Embedding, and let $\Lambda \colon\mathcal{A}^\textup{op}\times\mathcal{A}\to\Ab$ be the associated $\mathcal{A}\mathchar`-\mathcal{A}$-bimodule (see Proposition~\ref{prop.interpretations of bimodules}). The following assertions hold:
\begin{enumerate}[(i)]
\item The counit $\nu \colon j_!\circ j^\ast\Rightarrow 1_{\rMod\mathcal{A}}$ induces pointwise a natural transformation $\nu*{\bf y}\colon j_!\circ j^\ast\circ {\bf y}\Rightarrow {\bf y}$ in $[\A,\rMod\A]$,  that corresponds to a unique  morphism $\widetilde{\nu}\colon \Lambda\to\mathcal{A}_\textup{reg}$ in $\mathcal{A}\biMod\mathcal{A}$.  Then, $\Im(\widetilde\nu)=\calI\leq \mathcal{A}_\textup{reg}$.
\item The functors $j_!\circ j^\ast$ and $-\otimes_\mathcal{A}\Lambda$ 
are naturally isomorphic, and the counit $\nu \colon j_!\circ j^\ast\Rightarrow 1_{\rMod\mathcal{A}}$ is isomorphic to the following composition
\[
	-\otimes_\mathcal{A}\Lambda \buildrel 1_{\relbar}\otimes\widetilde\nu\over{\Relbar\joinrel\Longrightarrow}
		-\otimes_\mathcal{A}\mathcal{A}_\textup{reg}
		\buildrel\cong\over{\Relbar\joinrel\Longrightarrow} 1_{\rMod\mathcal{A}}.
\]
\item The morphism $\rho_\calI \colon\calI\otimes_\mathcal{A}\calI\to\calI$ in $\mathcal{A}\biMod\mathcal{A}$ induced by multiplication, factors as a composition
\(
	\smash[t]{\calI\otimes_\mathcal{A}\calI\buildrel\pi\over\to
		\Lambda\buildrel\overline{\nu}\over\to\calI}
\),
where $\pi$ is an epimorhism of $\mathcal{A}\mathchar`-\mathcal{A}$-bimodules, and $\overline{\nu}$ is the corestriction of  $\widetilde\nu\colon\Lambda\to\mathcal{A}_\textup{reg}$ to its image.
\item $\mathcal{X}_I$ is the full subcategory of all $X\in\rMod \A$ for which the morphism $\rho_X\colon X\otimes_\mathcal{A}\calI\to X$, induced by multiplication, is an isomorphism.
\end{enumerate}
\end{prop}
\begin{proof}
 \noindent(i) Observe that, by construction, $\widetilde\nu (b,a)\leqdef \nu_{H_a}(b)$, for all  $a,b\in\Obj(\mathcal{A})$. Hence, 
 $\im(\widetilde\nu (b,a))=\im(\nu_{H_a})(b)=(H_aI)(b)=\calI(b,a)$,  by Proposition~\ref{prop.relevantfacts-recollement}\,(ii).

\smallskip \noindent(ii) By Lemma~\ref{lem.checking natural isomorphism on generators}, we just need to check that $j_!\circ j^\ast\circ\mathbf{y}$ and $(-\otimes_\mathcal{A}\Lambda)\circ\mathbf{y}$ are naturally isomorphic in $[\mathcal{A},\rMod\mathcal{A}]_!$. But this is clear since, for each $a\in\Obj(\mathcal{A})$, there is a sequence of natural isomorphisms:
\[
(j_!\circ j^\ast\circ\mathbf{y})(a)=(j_!\circ j^\ast)(H_a)=\Lambda (-,a)\cong H_a\otimes_\mathcal{A}\Lambda= [(-\otimes_\mathcal{A}\Lambda)\circ\mathbf{y}](a).
\]

\noindent(iii) Apply the right exact functor $-\otimes_\mathcal{A}\calI\colon\mathcal{A}\biMod\mathcal{A}\to\mathcal{A}\biMod\mathcal{A}$ to the short exact sequence $\smash[t]{0\rightarrow\ker(\widetilde\nu)\buildrel\iota\over\hookrightarrow\Lambda\buildrel\overline{\nu}\over\to\calI\rightarrow 0}$ (see assertion~(i)). By Corollary~\ref{cor.properties tensor product},  $[\ker(\widetilde\nu)\otimes_\mathcal{A}\calI](b,a)\cong [\ker(\widetilde\nu)](-,a)\otimes_{\mathcal{A}}\calI(b,-)=\ker(\widetilde\nu (-,a))\otimes_\mathcal{A}\calI(b,-)$.
By construction, the map $\wbar{\nu}(-,a)\colon\Lambda (-,a)\to\calI(-,a)$ is just the corestriction of  $\nu_{H_a}\colon j_!j^\ast(H_a)\to\im(\nu_{H_a})=H_a\calI=\calI(-,a)$, and so  $\ker(\tilde\nu (-,a))\in\mathcal{T}_I$. Hence, $[\ker(\tilde\nu)\otimes_\mathcal{A}\calI] (b,a)=0$, for all $b,a\in\Obj(\mathcal{A})$, by Proposition~\ref{prop.relevantfacts-recollement}\,(iii). In particular, $\ker(\tilde\nu)\otimes_\mathcal{A}\calI=0$, and so $\overline{\nu}\otimes 1_\calI\colon\Lambda\otimes_\mathcal{A}\calI\to\calI\otimes_\mathcal{A}\calI$ is an isomorphism. 
Consider the following commutative square, where the vertical arrows are induced by multiplication, and $\overline{\nu}\otimes1_\calI$ is invertible by the previous discussion:
\[
\xymatrix@R=18pt@C=25pt{%
	\Lambda\otimes_\mathcal{A}\calI \ar[r]^-{\overline{\nu}\otimes1_\calI}\ar[d]_-{\rho_\Lambda} &
		\calI\otimes_\mathcal{A}\calI \ar[d]^-{\rho_\calI} \cr
	\Lambda \ar[r]^{\overline{\nu}} & \calI,
}
\]
where the commutativity follows because $\overline{\nu}$ is a morphism of $\mathcal{A}\mathchar`-\mathcal{A}$-bimodules. But $\rho_\calI\circ (\overline{\nu}\otimes 1_\calI)=\overline{\nu}\circ\rho_\Lambda$ implies that  $\rho_\calI=\overline{\nu}\circ (\rho_\Lambda\circ (\overline{\nu}\otimes 1_\calI)^{-1})$, so that $\pi\leqdef \rho_\Lambda\circ (\overline{\nu}\otimes 1_\calI)^{-1}$ is a morphism $\calI\otimes_\mathcal{A}\calI\to\Lambda$ in $\mathcal{A}\biMod\mathcal{A}$ such that $\overline{\nu}\circ\pi =\rho_\calI$. To conclude, observe that $(\rho_\Lambda)_{(-,a)}\colon(\Lambda\otimes_\mathcal{A}\calI)(-,a)=\Lambda (-,a)\otimes_\mathcal{A}\calI\to\Lambda (-,a)$ is an epimorphism (for all $a\in \Obj(\A)$), by Proposition~\ref{prop.relevantfacts-recollement}(i), as it can be identified with the multiplication map $j_!j^\ast(H_a)\otimes_\mathcal{A}\calI\to j_!j^\ast(H_a)\in \im(j_!)$. 

\smallskip \noindent(iv) If $X\in\mathcal{X}_I$, then $1_X\otimes\widetilde\nu \colon X\otimes_\mathcal{A}\Lambda\to X\otimes_\mathcal{A}\mathcal{A}_\textup{reg}\cong X$ is an isomorphism, by assertion~(ii). Furthermore, $1_X\otimes\widetilde\nu$ has a canonical  factorization:
\[
	1_X\otimes\widetilde\nu=\rho_X\circ(1_X\otimes\overline{\nu})\colon \xymatrix@C=25pt{X\otimes_\mathcal{A}\Lambda\ar[r]^-{1_X\otimes\overline{\nu}}&
		X\otimes_\mathcal{A}\calI\ar[r]^-{\rho_X}& X.}
\]
where $1_X\otimes\overline{\nu}$ is an epimorphism, since so is $\overline{\nu}\colon\Lambda\to\calI$. As a consequence, $\rho_X\colon X\otimes_\A \calI\to X$ is an isomorphism.

Conversely, suppose that $\rho_X\colon X\otimes_\mathcal{A}\calI\to X$ is an isomorphism. Then, also $\rho_X\otimes 1_\calI\colon X\otimes_\mathcal{A}\calI\otimes_\mathcal{A}\calI\to X\otimes_\mathcal{A}\calI$ is an isomorphism. On the other hand,  $\rho_X\otimes 1_\calI=1_X\otimes\rho_\calI$, since the former takes the standard generator  $x\otimes\eta\otimes\eta'$ to $(x\eta)\otimes\eta'$, while the latter takes it to $x\otimes (\eta\eta')$; but $ (x\eta)\otimes\eta'=x\otimes (\eta\eta')$ in $X\otimes_\mathcal{A}\calI$. Considering now the factorization $\rho_\calI=\overline{\nu}\circ\pi$ of  assertion~(iii), we deduce that $1_X\otimes\overline{\nu}\colon X\otimes_\mathcal{A}\Lambda\to X\otimes_{\mathcal{A}}\calI$ is an isomorphism which, in turn, implies that also the composition $\rho_X\circ(1_X\otimes\overline{\nu})$ is  an isomorphism. But $\rho_X\circ(1_X\otimes\overline{\nu})$ is isomorphic to the colocalization map $j_!j^\ast X\cong X\otimes_\mathcal{A}\Lambda\to X$, and so $X\in\mathcal{X}_I=\im(j_!)$.
\end{proof}

\section{Approaching the Cuadra--Simson Problem via colocalization}%
\label{sec.Cuadra-Simson via colocal.}
\subsection{Reduction of the problem to colocalizations}%

Recall the following facts and definitions.

\begin{rem}
\label{r:Sources}%
Let $\mathcal{G}$ be a Grothendieck category.
\begin{enumerate}
\item (\cite[Corollary~1.4]{R06}) $\mathcal{G}$ is $\mathrm{AB}\mathchar`-4^\ast$ if, and only if, each $M\in\Obj(\mathcal{G})$ admits a \emph{projective effacement\/}, that is, an epimorphism $p_M$ ending at $M$ such that the canonical morphism $\Ext^1_{\mathcal{G}}(p_M,N)$ is zero, for all $N\in\Obj(\mathcal{G})$.

\item $\mathcal{G}$ is $\mathrm{AB}\mathchar`-6$ if, for any  $M\in\Obj(\mathcal{G})$, and any family  $\{(M_i(\lambda))_{i\in I_\lambda}\mid \lambda\in \Lambda\}$ of direct systems of subobjects of $M$, the following equality holds:
\[
	\bigcap_{\lambda\in\Lambda}\Bigl(\sum_{i\in I_\lambda}M_i(\lambda)\Bigr)
		= \sum_{(i_\lambda)_{\lambda\in\Lambda}\in\prod_{\lambda\in\Lambda}I_\lambda}
			\Bigl(\bigcap_{\lambda\in\Lambda}M_{i_\lambda}(\lambda)\Bigr) .
\]
Equivalently (\cite[Theorem~1]{R67}), if any object $M\in\Obj(\mathcal{G})$ is the sum of its \emph{finite type subobjects\/}, where a given $L\le M$ is of finite type if, for any direct system $(M_i)_{i\in I}$ of subobjects of $M$ such that $\sum_IM_i=M$, we have that $L\cap M_i=L$, for some $i\in I$. In particular, any finitely generated subobject is of finite type, and so any Grothendieck category that is locally finitely generated is also $\mathrm{AB}\mathchar`-6$.
\end{enumerate}
\end{rem}

The following is a slight generalization of \cite[Theorem~1]{R65}.

\begin{prop}
\label{prop.Roos result}%
The following  are equivalent, for a Grothendieck category $\mathcal{G}$:
\begin{enumerate}[(a)]
\item $\mathcal{G}$ is $\mathrm{AB}\mathchar`-4^\ast$ and $\mathrm{AB}\mathchar`-6$;
\item There is a small preadditive category $\mathcal{A}$ (with just one object) and an idempotent ideal $\calI$ of $\mathcal{A}$ such that $\mathcal{G}$ is equivalent to the associated Giraud subcategory $\mathcal{G}_I$ of $\rMod\mathcal{A}$;
\item There is a small preadditive category $\mathcal{A}$ (with just one object) and an idempotent ideal $\calI$ of $\mathcal{A}$ such that $\mathcal{G}$ is equivalent to $\mathcal{X}_I$.
\end{enumerate}
\end{prop}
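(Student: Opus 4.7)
The plan is to observe that (b)$\Leftrightarrow$(c) is immediate from Proposition~\ref{prop.relevantfacts-recollement}(v), which supplies an equivalence $\mathcal{G}_I\cong\mathcal{X}_I$ via $j_!$, so the substance lies in (a)$\Leftrightarrow$(b). For (b)$\Rightarrow$(a), I transport along $\mathcal{G}\cong\mathcal{G}_I$ and show $\mathcal{G}_I$ is $\mathrm{AB}\mathchar`-4^\ast$ and $\mathrm{AB}\mathchar`-6$. The ambient $\rMod\mathcal{A}$ is $\mathrm{AB}\mathchar`-4^\ast$ (products are exact, being computed pointwise in $\Ab$) and $\mathrm{AB}\mathchar`-6$ (being locally finitely generated by the representables $H_a$). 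In the recollement of Section~\ref{sect.idempotent-TTF-recollements}, the functor $j^\ast$ is exact and sits simultaneously as right adjoint to $j_!$ and left adjoint to $j_\ast$, so it preserves all products, coproducts and directed unions. A short exact sequence in $\mathcal{G}_I$ pushes forward via $j_\ast$ to a sequence in $\rMod\mathcal{A}$ with cokernel in $\mathcal{T}_I$; taking products preserves exactness by $\mathrm{AB}\mathchar`-4^\ast$ and keeps the cokernel in $\mathcal{T}_I$ (closure under products), whence applying $j^\ast$ recovers the product in $\mathcal{G}_I$ as a short exact sequence. A similar reduction handles the $\mathrm{AB}\mathchar`-6$ identity.

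For the substantive direction (a)$\Rightarrow$(b), I would apply Theorem~\ref{teor.Gabriel-Popescu-Mitchell} to a single generator $G$ of $\mathcal{G}$, e.g.\ the coproduct of any set of generators, letting $R = \End_\mathcal{G}(G)$ be viewed as a one-object preadditive category. This yields $\mathcal{G}\simeq(\rMod R)/\mathcal{T}$ for a hereditary torsion class $\mathcal{T}$, with quotient $q$ exact and section $j$ fully faithful. By Proposition~\ref{prop.TTF triple associated to idemp.ideal}, to conclude (b) it suffices to upgrade $\mathcal{T}$ to a TTF class, i.e.\ to show that $\mathcal{T}$ is closed under products in $\rMod R$; then $\mathcal{T}=\mathcal{T}_\calI$ for a unique idempotent ideal $\calI$ of $R$, giving $\mathcal{G}\cong\mathcal{G}_\calI$.

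The main obstacle is precisely this closure under products, which is the content of Roos's 1965 theorem. The strategy is: for a family $(T_\lambda)_{\lambda\in\Lambda}$ in $\mathcal{T}$, use Proposition~\ref{lem.Mitchell} to present each $T_\lambda$ as the cokernel of $\mathbf{y}(f_\lambda)$ for an epimorphism $f_\lambda\colon Y_\lambda\twoheadrightarrow Y'_\lambda$ in $\mathcal{G}$. The product $\prod_\Lambda T_\lambda$ in $\rMod R$ is then governed by how products interact with sums and directed unions of subobjects of $\prod_\Lambda Y'_\lambda$ in $\mathcal{G}$: the $\mathrm{AB}\mathchar`-6$ axiom provides the precise combinatorial identity needed here, while $\mathrm{AB}\mathchar`-4^\ast$ ensures that the product of the $f_\lambda$ remains an epimorphism in $\mathcal{G}$. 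Together these force $q(\prod_\Lambda T_\lambda)=0$, giving $\prod_\Lambda T_\lambda\in\mathcal{T}$. The paper markets this as a \emph{slight} generalization of the original result only in that (b) allows $\mathcal{A}$ to be an arbitrary small preadditive category; since the statement already notes that one may take $\mathcal{A}$ with a single object, Roos's original argument applies essentially verbatim.
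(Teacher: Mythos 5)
Your treatment of $\textup{(b)}\Leftrightarrow\textup{(c)}$ agrees with the paper's, and for $\textup{(a)}\Rightarrow\textup{(b)}$ both you and the paper ultimately defer to Roos \cite{R65} (the paper literally cites it and nothing more; your sketch, if one tried to make it self-contained, would hit the problem that Proposition~\ref{lem.Mitchell} requires the generator to be finitely generated, which an arbitrary $\mathrm{AB}\mathchar`-6$ Grothendieck category need not admit, but since you explicitly lean on Roos this is not a real objection). Your direct argument for the $\mathrm{AB}\mathchar`-4^\ast$ half of $\textup{(b)}\Rightarrow\textup{(a)}$ is correct and more self-contained than the paper's, which simply cites \cite[Proposition~3.5]{PV}: because $\mathcal{G}_\calI$ is closed under products in $\rMod\mathcal{A}$ and $\mathcal{T}_\calI$, being a TTF class, is also closed under products, the product of a family of short exact sequences in $\mathcal{G}_\calI$ stays exact after applying the exact functor $j^\ast$.

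Where your proposal has a genuine gap is the sentence ``A similar reduction handles the $\mathrm{AB}\mathchar`-6$ identity.'' The reduction you carried out for $\mathrm{AB}\mathchar`-4^\ast$ exploits that $j_\ast$ preserves \emph{limits} (products, kernels, intersections), so products and short-exactness transport cleanly between $\mathcal{G}_\calI$ and $\rMod\mathcal{A}$. The $\mathrm{AB}\mathchar`-6$ identity, by contrast, involves \emph{directed sums of subobjects}, and the section functor $j_\ast$ is a right adjoint: it has no reason to preserve these colimits, so the sum of a family of subobjects inside $\mathcal{G}_\calI$ is in general a proper subobject of the sum computed inside $\rMod\mathcal{A}$. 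Consequently the $\mathrm{AB}\mathchar`-6$ identity in $\rMod\mathcal{A}$ does not descend to $\mathcal{G}_\calI$ by the same device. The paper's proof of $\mathrm{AB}\mathchar`-6$ for $\mathcal{G}_\calI$ circumvents this by working instead with the coreflection $j_!$, which \emph{does} preserve colimits: it shows that for $X\in\mathcal{X}_\calI$ and any finitely generated $\mathcal{A}$-submodule $L\le X$, the image $j^\ast(L)$ is a subobject of finite type of $j^\ast(X)$, and then uses the characterization of $\mathrm{AB}\mathchar`-6$ from Remark~\ref{r:Sources}\,(2) in terms of generation by finite type subobjects. That step is the real content of $\textup{(b)}\Rightarrow\textup{(a)}$ beyond the ring case, and your proposal does not supply a substitute for it.
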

\begin{proof}
\smallskip \noindent``$\textup{(b)}\Leftrightarrow\textup{(c)}$'' follows from Proposition~\ref{prop.relevantfacts-recollement}\,(v).

\smallskip \noindent``$\textup{(a)}\Rightarrow\textup{(b)}$'' It is included in \cite[Theorem~1]{R65}.

\smallskip \noindent``$\textup{(b)}\Rightarrow\textup{(a)}$'' The case when $\mathcal{A}=R$ is a ring is included in \cite[Theorem~1]{R65}. We prove the case when $\mathcal{A}$ is any small preadditive category. From \cite[Proposition~3.5]{PV} we obtain that $\mathcal{G}$ is $\mathrm{AB}\mathchar`-4^\ast$. We claim that, if  $X\in\mathcal{X}_{\calI}=\im(j_!)$ and $L\leq X$ is a finitely generated $\mathcal{A}$-submodule, then $j^\ast(L)\leq j^\ast(X)$ is a subobject of finite type. Observe that, for any $Y\in\Obj(\mathcal{G})_I$, we can write $j_!(Y)=\bigcup_{\Lambda} L_\lambda$ as the direct union of its finitely generated submodules, and so, if our claim is true, we can write $Y\cong (j^\ast\circ j_!)(Y)=j^\ast(\bigcup_{\Lambda}L_\lambda)=\bigcup_{\Lambda}j^*(L_\lambda)$  as a direct union of a family of subobjects of finite type, showing that $\mathcal G$ is $\mathrm{AB}\mathchar`-6$, as desired. 

Let us verify our claim. If we write $j^\ast(X)=\bigcup_{\Lambda}Y_\lambda$ as a direct union of a direct system of subobjects, where $u_{\lambda\mu}\colon Y_\lambda\hookrightarrow Y_\mu$ and $u_\lambda\colon Y_\lambda\hookrightarrow j^\ast(X)$ in $\mathcal{G}$ are the corresponding inclusions, for all $\lambda\leq \mu\in \Lambda$, then $(j_!(Y_\lambda)_{\lambda\in \Lambda},(j_!(u_{\lambda\mu}))_{\lambda\leq \mu})$ is a direct system in $\rMod\mathcal{A}$ such that $\varinjlim_\Lambda j_!(Y_\lambda)=j_!(j^\ast(X))\cong X$. But then $X\cong (j_!\circ j^\ast)(X)=\bigcup_{\Lambda}\im(j_!(u_\lambda))$ is a direct union of submodules. Identifying $X$ with $(j_!\circ j^\ast)(X)$, there is some $\mu\in \Lambda$ such that $L\subseteq \im(j_!(u_\mu))$ (i.e., $L=L\cap\im(j_!(u_\mu))$) since $L$ is finitely generated. Furthermore,  the canonical map $j_!(u_\mu)\colon j_!(Y_\mu)\to (j_!\circ j^*)(X)$ has kernel in $\ker(j^\ast)=\mathcal{T}_I$, since $j^\ast$ is exact and $(j^\ast\circ j_!)(u_\mu)\cong u_\mu$ is a monomorphism. It follows that $j^\ast$ induces an isomorphism $Y_\mu\cong (j^\ast\circ j_!)(Y_\mu)\mathrel{\smash[t]{\buildrel\cong\over\to}}j^\ast(\im(j_!(u_\mu))$, so that, with the obvious abuse of notation, $j^\ast(L)\leq Y_\mu$.
\end{proof}

The key point to tackle the problem via colocalization is the following.

\begin{prop}\label{prop.Carlos result}%
Any locally finitely presented Grothendieck category $\mathcal{G}$ with enough flat objects is $\mathrm{AB}\mathchar`-6$ and $\mathrm{AB}\mathchar`-4^\ast$. In particular, there are a small preadditive category $\mathcal{A}$, and an idempotent ideal $\calI$ of $\A$, such that $\mathcal{G}\cong\mathcal{G}_I\cong\mathcal{X}_I$.
\end{prop}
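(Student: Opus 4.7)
The statement has two parts: (i) $\mathcal{G}$ is $\mathrm{AB}\mathchar`-6$ and $\mathrm{AB}\mathchar`-4^\ast$, and (ii) the ``in particular'' consequence about the equivalences with $\mathcal{G}_I$ and $\mathcal{X}_I$. Part~(ii) is immediate from Proposition~\ref{prop.Roos result} (combined with Proposition~\ref{prop.relevantfacts-recollement}\,(v)) once (i) is proved, so I would concentrate on establishing the two axioms.

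The $\mathrm{AB}\mathchar`-6$ half is essentially for free. Every finitely presented object is finitely generated, so the assumption that $\mathcal{G}$ is locally finitely presented forces it to be locally finitely generated; by the final sentence of Remark~\ref{r:Sources}\,(2) every locally finitely generated Grothendieck category is $\mathrm{AB}\mathchar`-6$.

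For $\mathrm{AB}\mathchar`-4^\ast$, I would invoke Roos's criterion (Remark~\ref{r:Sources}\,(1)): it suffices to provide, for each $M\in\mathcal{G}$, a projective effacement, i.e.\ an epimorphism $p_M\colon P\twoheadrightarrow M$ such that $\mathrm{Ext}^1_\mathcal{G}(p_M,N)=0$ for every $N\in\mathcal{G}$. The natural candidate is a flat epimorphism $p_M\colon F\twoheadrightarrow M$, whose existence is guaranteed by the hypothesis. Unwinding, one must show that for every extension $0\to N\to X\to M\to 0$, its pullback
\[
0\longrightarrow N\longrightarrow X\times_MF\longrightarrow F\longrightarrow 0
\]
along $p_M$ admits a section. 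The key input is that the right-hand projection $X\times_MF\to F$ is an epimorphism onto the flat object $F$ and hence, by the very definition of categorical flatness, a \emph{pure} epimorphism; equivalently, by Proposition~\ref{p:PureEpimorphisms}, every morphism from a finitely presented object into $F$ lifts to $X\times_MF$.

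The technical heart---and the main obstacle---lies in passing from these pointwise liftings to a single coherent section $F\to X\times_MF$. In general a pure-exact sequence ending at a flat object does \emph{not} split (for instance $0\to\mathbb{Z}\to X\to\mathbb{Q}\to 0$ in $\rMod\mathbb{Z}$ is pure but non-split), so a purely formal purity argument is inadequate. Overcoming this requires exploiting simultaneously the locally finitely presented structure of $\mathcal{G}$---writing $F=\varinjlim_iF_i$ with $F_i$ finitely presented and using $\mathrm{AB}\mathchar`-5$ to amalgamate compatible partial lifts furnished by purity---together with the freedom to refine the choice of $p_M$ (e.g.\ to a flat cover in the sense of Rump~\cite{Rum10}) so that $\ker(p_M)$ is suitably cotorsion with respect to flats. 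Realising this amalgamation is where the substance of the proof sits.
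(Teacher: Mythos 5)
Your AB-6 argument is correct, and you correctly identify that AB-$4^\ast$ should be established via Roos's projective-effacement criterion from Remark~\ref{r:Sources}\,(1). But the AB-$4^\ast$ half has a genuine gap, and you have diagnosed it yourself: you propose to use the flat epimorphism $p_M\colon F\twoheadrightarrow M$ as the effacement and then try to split a pure sequence ending at the flat object $F$, which, as your own example $0\to\mathbb{Z}\to X\to\mathbb{Q}\to0$ shows, does not work. The amalgamation you gesture at and the flat-cover/cotorsion refinement do not repair this; there is no reason an arbitrary flat epimorphism onto $M$ should be an effacement.

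The missing idea is a two-step reduction, after which the effacement is a \emph{different} morphism. First, if $q\circ p$ is a composite of epimorphisms and one of the two is a projective effacement, so is $q\circ p$, by functoriality of $\Ext^1$ in the first variable. Since every object of $\mathcal{G}$ is an epimorphic image of a flat object, it therefore suffices to construct a projective effacement for each flat object $F$. Second, the correct effacement of $F$ is \emph{not} a flat epimorphism onto $F$ but the presentation map $p\colon\coprod_{i\in I}M_i\to F$ obtained from any direct system $(M_i)_{i\in I}$ in $\fp(\mathcal{G})$ with $F=\varinjlim_I M_i$ (so $\coprod_I M_i$ need not be flat; that is irrelevant). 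Given any $X$ and any class $[\epsilon]\in\Ext^1_{\mathcal{G}}(F,X)$ represented by $\epsilon\colon 0\to X\to Y\mathrel{\smash[t]{\buildrel g\over\to}}F\to 0$, the sequence $\epsilon$ is automatically pure because its codomain $F$ is flat. By Proposition~\ref{p:PureEpimorphisms}, each canonical map $M_i\to F$ lifts through $g$ (this is where finite presentability of the $M_i$ enters), and assembling these lifts yields $\beta\colon\coprod_I M_i\to Y$ with $g\circ\beta=p$. Restricting $\beta$ to $K\leqdef\ker(p)$ gives $\alpha\colon K\to X$ whose image under the connecting homomorphism $\Hom_{\mathcal{G}}(K,X)\to\Ext^1_{\mathcal{G}}(F,X)$ is exactly $[\epsilon]$. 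Since $[\epsilon]$ was arbitrary, the connecting map is surjective, which is precisely the statement that $\Ext^1_{\mathcal{G}}(p,X)=0$ for all $X$. Thus $p$ is a projective effacement of $F$, and $\mathcal{G}$ is AB-$4^\ast$. The "in particular" clause then follows from Proposition~\ref{prop.Roos result} as you say.
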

\begin{proof}
By Remark~\ref{r:Sources}, any locally finitely presented Grothendieck category is $\mathrm{AB}\mathchar`-6$. Note that if in a composition of epimorphisms $q\circ p$ in $\mathcal{G}$ one of the two is a projective effacement, then so is $q\circ p$ by the functoriality of $\Ext^1$. Since, by hypothesis, each object is epimorphic image of a flat object, in order to prove that all objects of $\mathcal{G}$ admit a projective effacement, equivalently that $\mathcal{G}$ is $\textup{AB}\mathchar`-4^\ast$ (see Remark~\ref{r:Sources}), we just need to check that all flat objects have a projective effacement. Let then $F$ be a flat object and let $(M_\lambda)_{\lambda\in \Lambda}$ be a direct system in $\fp(\mathcal{G})$ such that $F=\varinjlim_\Lambda M_\lambda$. Given $X\in\Obj(\mathcal{G})$, pick any element in $\Ext^1_{\mathcal{G}}(F,X)$, represented by a (pure) short exact sequence $\epsilon$. We can construct a commutative diagram with pure exact rows, where $\epsilon$ is the lower one:
\[
\xymatrix{%
	\hphantom{\epsilon:{}}0 \ar[r] & K \ar[r]\ar@{.>}[d]_-\alpha &
		\coprod\nolimits_\Lambda M_\lambda \ar[r]^-{p}\ar@{.>}[d]^-\beta & F \ar[r]\ar@{=}[d] & 0 \cr
	\epsilon : 0 \ar[r] & X \ar[r]^-f & Y \ar[r]^-g & F \ar[r] & 0
}
\]
Here the morphism $\beta$ is a lifting of $p$, which exists since each $M_\lambda$ is finitely presented, and the morphism $\alpha$ is given by the universal property of kernels. The connecting homomorphism $\Hom_{\mathcal{G}}(K,X)\to\Ext^1_{\mathcal{G}}(F,X)$ sends $\alpha\mapsto[\epsilon]$, and so it is surjective, by the arbitrariness of $\epsilon$. In particular, $\Ext^1_{\mathcal{G}}(p,X)=0$, for all $X\in\Obj(\mathcal{G})$, so that $p\colon\coprod_\Lambda M_\lambda\to F$ is a projective effacement.
\end{proof}

\subsection{On the colocalizing subcategory $\mathcal{X}_\calI$}%
Recall that a functor between small categories $F\colon \mathcal C'\to \mathcal C$ is called {\em final} if, for all functors $D\colon \mathcal C\to \mathcal D$ ($\mathcal D$ any category) the colimit of $D$ exists if and only if the colimit of $D\circ F$ exists and, in that case, $\colim(D\circ F)\cong \colim D$.
\begin{lem}
\label{lem.smaller-directed-set}%
Let $\Lambda$ be a nonempty set with two partial orders $\preceq$ and $\leq$ such that, for all $\lambda,\mu\in \Lambda$, the following properties hold:
\begin{enumerate}[(1)]
\item if $\lambda\preceq \mu$ then $\lambda\leq \mu$;
\item if $\lambda\leq \mu$, then $\lambda\preceq \nu$ and $\mu\preceq\nu$, for some $\nu\in \Lambda$.
\end{enumerate}
Then, $(\Lambda,\preceq)$ and $(\Lambda,\mathord\leq)$ are both directed sets. Furthermore, the constant map $U\leqdef1_\Lambda\colon(\Lambda,\preceq)\to(\Lambda,\leq)$ is a final functor.
\end{lem}
\begin{proof}
By \cite[Lemma~8.3.4]{Riehl}, $U$ is final if and only if, for each $\lambda\in \Lambda$, the comma category $(\lambda\downarrow \Lambda)$ is not empty and connected (i.e., any two objects are connected by a finite zig-zag of morphisms). The objects of $(\lambda\downarrow \Lambda)$ are of the form $(\lambda, \lambda\leq U(\mu)=\mu)$, so that $(\lambda\downarrow \Lambda)$ is not empty as it contains, at least, the object $(\lambda,\lambda\leq \lambda)$. Moreover, there is exactly one morphism in $(\lambda\downarrow \Lambda)$ from the object $(\lambda,\lambda\leq \mu_1)$ to the object $(\lambda,\lambda\leq \mu_2)$ if $\mu_1\preceq \mu_2$, and no morphism otherwise. Now, by condition (2), we can find $\nu$ such that $\mu_1\preceq \nu$ and $\mu_2\preceq \nu$ and, using (1), it is also clear that $\lambda\leq \nu$. In particular, our two objects are connected by the zig-zag $(\lambda,\lambda\leq \mu_1)\to (\lambda,\lambda\leq \nu)\leftarrow (\lambda,\lambda\leq \mu_2)$. 
\end{proof}


We are now ready to identify the objects of $\mathcal{X}_{\calI}$ which are relevant to tackle the Cuadra--Simson Problem.

\begin{prop}
\label{prop.Proposition 4.4 of Lorenzo-notes}%
Let $\mathcal{A}$ be a small preadditive category and let $\calI$ be an idempotent ideal of $\mathcal{A}$. Then, the following assertions hold true:
\begin{enumerate}[(i)]
\item $\fp(\mathcal{X}_I)=\mathcal{X}_I\cap\rmod\mathcal{A}$;
\item $\Proj(\mathcal{X}_I)=\mathcal{X}_I\cap\Proj(\mathcal{A})=\mathcal{C}_I\cap\Proj(\mathcal{A})$;
\item $\Flat(\mathcal{X}_I)=\mathcal{X}_I\cap\Flat(\mathcal{A})=\mathcal{C}_I\cap\Flat(\mathcal{A})$.
\item A right $\mathcal{A}$-module $F$ is in $\mathcal{C}_I\cap\Flat(\mathcal{A})$ if, and only if, there is a direct system $\smash[t]{(\what{a}_\lambda\buildrel\alpha_{\lambda,\mu}\over\to\what{a}_\mu)_{\lambda\leq \mu}}$ in $\what{\mathcal{A}}$ such that $\alpha_{\lambda,\mu}\in\what{\calI}(\what{a}_\lambda,\what{a}_\mu)$ whenever $\lambda<\mu$, and $F\cong\varinjlim_I (H_{\what{a}_\lambda}, (H_{\alpha_{\lambda,\mu}}))$.
\end{enumerate}
\end{prop}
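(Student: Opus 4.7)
The plan is to exploit the coreflective adjunction $\iota\colon\mathcal{X}_\calI\rightleftarrows\rMod\mathcal{A}\colon u\circ j^\ast$ from Proposition~\ref{prop.relevantfacts-recollement}(v). Since $j^\ast$ is exact and has both adjoints $j_!,j_\ast$, the right adjoint $u\circ j^\ast$ preserves both epimorphisms and direct limits, while $\iota$, being a fully faithful left adjoint, preserves all colimits. Part~(i) then follows by direct adjunction calculations: for $M\in\mathcal{X}_\calI\cap\rmod\mathcal{A}$ and a direct system $(N_i)$ in $\mathcal{X}_\calI$, use that $\iota$ preserves direct limits and that $\iota M$ is finitely presented in $\rMod\mathcal{A}$; conversely, for $M\in\fp(\mathcal{X}_\calI)$ and a direct system $(Y_i)$ in $\rMod\mathcal{A}$, combine the adjunction isomorphism with direct-limit-preservation of $u\circ j^\ast$ to reduce to finite presentability of $M$ in $\mathcal{X}_\calI$.

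For~(ii), the same strategy (with epimorphisms replacing direct limits) shows that $\iota$ preserves and reflects projectives, giving $\Proj(\mathcal{X}_\calI)=\mathcal{X}_\calI\cap\Proj(\mathcal{A})$; the second equality $\mathcal{X}_\calI\cap\Proj(\mathcal{A})=\mathcal{C}_\calI\cap\Proj(\mathcal{A})$ follows from $\mathcal{X}_\calI={}^{\bot_{0,1}}\mathcal{T}_\calI=\mathcal{C}_\calI\cap{}^{\bot_1}\mathcal{T}_\calI$ together with the fact that any projective automatically lies in ${}^{\bot_1}\mathcal{T}_\calI$. For~(iii), Lemma~\ref{l:Flatness}(ii,iii) gives that $\iota$ both preserves and reflects categorically flat objects; combined with Proposition~\ref{prop.tensor-categorical-flats-the-same} (identifying categorical and tensor flatness in $\rMod\mathcal{A}$) this yields $\Flat(\mathcal{X}_\calI)=\mathcal{X}_\calI\cap\Flat(\mathcal{A})$. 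The remaining equality $\mathcal{X}_\calI\cap\Flat(\mathcal{A})=\mathcal{C}_\calI\cap\Flat(\mathcal{A})$ uses the characterization $\mathcal{X}_\calI=\{X\mid\rho_X\text{ is an iso}\}$ from Proposition~\ref{prop.colocalization as tensor product}(iv): for $F\in\mathcal{C}_\calI\cap\Flat(\mathcal{A})$, the map $\rho_F$ is surjective because $\im(\rho_F)=F\calI=F$ by Proposition~\ref{prop.relevantfacts-recollement}(ii), and injective because exactness of $F\otimes_\mathcal{A}-$ applied to $\calI\hookrightarrow\mathcal{A}_\textup{reg}$ turns $\rho_F$ into the restriction of an isomorphism.

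Part~(iv) is the substantive content. The ``if'' direction is routine: $F=\varinjlim H_{\what{a}_i}$ is flat because each $H_{\what{a}_i}$ is projective and $\Flat(\mathcal{A})$ is closed under direct limits. To see $F\in\mathcal{C}_\calI$, take $x\in F(a)$ and express it as $(u_i)_a(y)$ with $y\in\what{\mathcal{A}}(a,\what{a}_i)$ and canonical map $u_i\colon H_{\what{a}_i}\to F$; for any $j>i$ one has $x=(u_j)_a(\alpha_{ij}\circ y)$ with $\alpha_{ij}\circ y\in\what{\calI}(a,\what{a}_j)$, and idempotency of $\calI$ decomposes $\alpha_{ij}\circ y$ as $\sum_k\gamma_k\circ\delta_k$ with $\delta_k\in\calI$; naturality of $u_j$ then exhibits $x$ as an element of $(F\calI)(a)$.

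For the ``only if'' direction, the plan is to start from any representation $F\cong\varinjlim_{(I,\leq)}H_{\what{a}_i}$ as a filtered direct limit (Proposition~\ref{prop.tensor-categorical-flats-the-same}) with canonical maps $u_i$ and transitions $\alpha_{ij}$, and to define a coarser order $\preceq$ on $I$ by $i\preceq j$ iff $i=j$, or $i<j$ and $\alpha_{ij}\in\what{\calI}$; applying Lemma~\ref{lem.smaller-directed-set} to $(\preceq,\leq)$ will then give $F\cong\varinjlim_{(I,\preceq)}H_{\what{a}_i}$, all of whose nontrivial transitions lie in $\what{\calI}$. The main obstacle, where the bulk of the work lies, is to show that for every $i\in I$ there exists $k\geq i$ with $\alpha_{ik}\in\what{\calI}$; once this is known, the hypotheses of Lemma~\ref{lem.smaller-directed-set} follow by composing such $k$'s. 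The argument: passing to $\what{\mathcal{A}}$-modules so that $F\what{\calI}=F$, identify $u_i$ with an element of $F(\what{a}_i)$ via Yoneda and write $u_i=\sum_k F(\eta_k)(v_k)$ with $\eta_k\in\what{\calI}(\what{a}_i,\what{c}_k)$ and $v_k\in F(\what{c}_k)$. Filteredness of $(I,\leq)$ produces a common $j\geq i$ such that each $v_k=(u_j)_{\what{c}_k}(\gamma_k)$ for some $\gamma_k\in\what{\mathcal{A}}(\what{c}_k,\what{a}_j)$, and naturality of $u_j$ yields $u_i=u_j\circ H_{\beta_i}$ for $\beta_i\leqdef\sum_k\gamma_k\circ\eta_k\in\what{\calI}(\what{a}_i,\what{a}_j)$. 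Since also $u_i=u_j\circ H_{\alpha_{ij}}$ and $H_{\what{a}_i}$ is finitely presented, the standard property of filtered colimits supplies $k\geq j$ with $\alpha_{jk}\circ\alpha_{ij}=\alpha_{jk}\circ\beta_i$; this common morphism equals $\alpha_{ik}$ and lies in $\what{\calI}$, completing the key step.
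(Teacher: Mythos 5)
Your proof is correct and follows essentially the same approach as the paper: parts (i)–(iii) are handled via the adjunction $(\iota,u\circ j^\ast)$ together with Proposition~\ref{prop.colocalization as tensor product}\,(iv), and part~(iv) ``only if'' via the key step followed by Lemma~\ref{lem.smaller-directed-set}; your streamlining in (iv) (working directly with $u_i$ as a Yoneda element of $F(\what{a}_i)$ over $\what{\mathcal{A}}$ and treating only $\beta=1_{\what{a}_i}$, rather than proving the paper's more general Claim and reducing to $\what{a}=a\in\Obj(\mathcal{A})$) is a legitimate shortcut, and your alternative for the second equality in (ii) via $\mathcal{X}_\calI=\mathcal{C}_\calI\cap{}^{\perp_1}\mathcal{T}_\calI$ is cleaner than the paper's $\nu_P$-splitting argument. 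One small inaccuracy in the ``if'' direction of (iv): the decomposition of $\alpha_{ij}\circ y\in\what{\calI}(a,\what{a}_j)$ into $\sum_k\gamma_k\circ\delta_k$ with $\delta_k\in\calI(a,c_k)$ and $c_k\in\Obj(\mathcal{A})$ does not use idempotency of $\calI$; it is simply the component decomposition with respect to the coproduct $\what{a}_j=\coprod_m a_{mj}$ (so $\delta_m\in\calI(a,a_{mj})$ and $\gamma_m$ the canonical inclusion), which is all that is needed to place $x$ in $(F\calI)(a)$ via naturality.
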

\begin{proof}
\smallskip \noindent(i) We have seen in the proof of Proposition~\ref{prop.relevantfacts-recollement}\,(v) that $\iota\colon\mathcal{X}_I\hookrightarrow\rMod\mathcal{A}$ has $u\circ j^\ast$ as its right adjoint, where $j^\ast$ is itself a left adjoint, and $u\colon\mathcal{G}_I\to\mathcal{X}_I$ is an equivalence. Using  Lemma~\ref{l:CategoryTheory}\,(ii), we deduce that $\iota$ preserves finitely presented objects, and so $\fp(\mathcal{X}_I)\subseteq\mathcal{X}_I\cap\fp(\rMod\mathcal{A})=\mathcal{X}_I\cap\rmod\mathcal{A}$. But, since $\mathcal{X}_I$ is coreflective in $\rMod\mathcal{A}$, direct limits in $\mathcal{X}_I$ are computed as in $\rMod\mathcal{A}$, so that the inclusion $\mathcal{X}_I\cap\rmod\mathcal{A}\subseteq\fp(\mathcal{X}_I)$ also holds. 

\smallskip \noindent(ii) Applying Lemma~\ref{l:CategoryTheory}\,(i) to the adjunction $(\iota,u\circ j^\ast)$, we obtain that $\iota$ preserves projective objects, so that $\Proj(\mathcal{X}_I)\subseteq\mathcal{X}_I\cap\Proj(\mathcal{A})$ and, Proposition~\ref{prop.relevantfacts-recollement}, also that  $\Proj(\mathcal{X}_I)\subseteq\mathcal{C}_I\cap\Proj(\mathcal{A})$. On the other hand, if $P\in\mathcal{C}_I\cap\Proj(\mathcal{A})$, it follows from Proposition~\ref{prop.relevantfacts-recollement} that $\nu_P$ is an epimorphism and, therefore, $P$ is a direct summand of $(j_!\circ j^\ast)(P)\in\mathcal{X}_I$, which implies that $P\in\mathcal{X}_I\cap\Proj(\mathcal{A})$, as $\mathcal{X}_I$ is closed under direct summands in $\rMod\mathcal{A}$. Hence, $P\in\Proj(\mathcal{X}_I)$ by Lemma~\ref{l:CategoryTheory}\,(i), since $\iota \colon\mathcal{X}_I\hookrightarrow\rMod\mathcal{A}$ is (right) exact.

\smallskip \noindent(iii) Applying Lemma~\ref{l:Flatness}\,(ii.1) to the adjunction $(\iota,u\circ j^\ast)$, we deduce that $\iota$ preserves flat objects, that is,  $\Flat(\mathcal{X}_I)\subseteq\mathcal{X}_I\cap\Flat(\mathcal{A})$. As a consequence,  $\Flat(\mathcal{X}_I)\subseteq\mathcal{C}_I\cap\Flat(\mathcal{A})$. Conversely, let $F\in\mathcal{C}_I\cap\Flat(\mathcal{A})$. The morphism $\rho_F \colon F\otimes_\mathcal{A}\calI\to F\otimes_\mathcal{A}\mathcal{A}_\textup{reg}\cong F$, induced by multiplication, is a monomorphism since $F$ is flat, and it is an epimorphism  since $F\in\mathcal{C}_I$. It then follows from Proposition~\ref{prop.colocalization as tensor product}\,(iv) that $F\in\mathcal{X}_I\cap\Flat(\mathcal{A})$. Now apply Lemma~\ref{l:Flatness}\,(ii.b) to the adjunction $(\iota, u\circ j^\ast )$ to conclude that $F\in\Flat(\mathcal{X}_{\calI})$.

\smallskip \noindent(iv) Let $F$ be a right $\mathcal{A}$-module such that there is a direct system as indicated in the statement. Then, $F$ is flat by Proposition~\ref{prop.tensor-categorical-flats-the-same}. In order to see that $F\in\mathcal{C}_I$, i.e., $F=F\calI$, we need to prove that, for each $a\in\Obj(\mathcal{A})$, the following inclusion holds: $F(a)\subseteq (F\calI)(a)=\sum_{b\in\Obj(\mathcal{A}),\alpha\in\calI(a,b)}\im(F(\alpha))$. Fix $a\in \Obj(\A)$, and pick an element $x\in F(a)$. Since direct limits in $\rMod\mathcal{A}$ are computed pointwise, there is some $\mu\in \Lambda$,  with $u_\mu\colon H_{\what{a}_\mu}\to \varinjlim_\Lambda (H_{\what{a}_\lambda}, (H_{\alpha_{\lambda,\mu}}))=F$ the corresponding map to the direct limit, such that $x=(u_\mu)_a(\eta)$, for some $\eta\in H_{\what{a}_\mu}(a)=\what{\mathcal{A}}(a,\what{a}_\mu)$. But, whenever $\mu<\mu'$, we have that $u_\mu=u_{\mu'}\circ H_{\alpha_{\mu,\mu'}}$ and so $x=(u_{\mu'})_a(\alpha_{\mu,\mu'}\circ\eta)$, for all $\mu'>\mu$. This means that, up to replacement of $\mu$ by a $\mu'>\mu$ and $\eta$ by $\alpha_{\mu,\mu'}\circ\eta$ if necessary, we can and shall assume that $x=(u_\mu)_a(\eta)$ with $\eta\in\what{\calI}(a,\what{a}_\mu)$. Since $u_\mu\colon H_{\what{a}_\mu}\to F$ is a natural transformation of additive functors $\mathcal{A}^\textup{op}\to\Ab$, we have a commutative diagram
\[
\xymatrix@R=20pt@C.5em{%
	F(a) & F(\what a_\mu) \ar[l]_-{F(\eta)} \cr
	**[l]\what{\mathcal{A}}(a,\what a_\mu)=H_{\what a_\mu}(a) \ar[u]^-{(u_\mu)_a} &
		**[r] H_{\what a_\mu}(\what a_\mu)=\what{\mathcal{A}}(\what a_\mu,\what a_\mu)
		\ar[u]_-{(u_\mu)_{\what a_\mu}}\ar[l]_-{H_{\what a_\mu}(\eta)}
}
\]
It then follows that
\begin{align*}
	x &=(u_\mu)_a(\eta)=(u_\mu)_a (H_{\what{a}_\mu}(\eta)(1_{\what{a}_\mu})) =[(u_\mu)_a\circ H_{\what{a}_\mu}(\eta)](1_{\what{a}_\mu})\cr
    &=[F(\eta)\circ (u_\mu)_{\what{a}_\mu}](1_{\what{a}_\mu})=F(\eta) ( (u_\mu)_{\what{a}_\mu}(1_{\what{a}_\mu})).
\end{align*}
If $\what{a}_\mu\leqdef(a_{1,\mu},\ldots,a_{m,\mu})$ and $\eta\leqdef(\eta_k\colon a\to a_{k,\mu})_{k=1,\ldots,m}$, where $a_{k,\mu}\in\Obj(\A)$, for all $k=1,\ldots,m$, then $\eta_{k,\mu}\in\calI(a,a_{k,\mu})$, for all $k=1,\ldots,m$. In particular,  $x\in\im(F(\eta))=\sum_{k=1}^m\im(F(\eta_k))$, and so $x\in (F\calI)(a)$, as desired.

Suppose now that $F\in\mathcal{C}_I\cap\Flat(\mathcal{A})$. By Proposition~\ref{prop.tensor-categorical-flats-the-same} and the Additive Yoneda Lemma, there exists a direct system $\smash[t]{(\what{a}_\lambda\buildrel\alpha_{\lambda,\mu}\over\to\what{a}_\mu)_{\lambda\leq \mu}}$ in $\what{\mathcal{A}}$ such that $F\cong\varinjlim_\Lambda (H_{\what{a}_\lambda},H_{\alpha_{\lambda,\mu}})$, and we identify $F$ with this direct limit. The crucial point now is the following:

\subparagraph*{\emph{Claim}}%
If $\mu\in \Lambda$, $\what{a}\in\Obj(\what{\mathcal{A}})$ and $\beta\in\what{\mathcal{A}}(\what{a},\what{a}_\mu)$, then there exists some $\mu'\in \Lambda$ such that $\mu\leq \mu'$ and $\alpha_{\mu,\mu'}\circ\beta\in\what{\calI}(\what{a},\what{a}_{\mu'})$.

Once the claim is proved, we can apply it to $\what{a}=\what{a}_\mu$ and $\beta =1_{\what{a}_\mu}$ to deduce that, for each $\mu\in \Lambda$, there is  $\mu'>\mu$ in $\Lambda$ such that $\alpha_{\mu,\mu'}\in\what{\calI}(\what{a}_\mu,\what{a}_{\mu'})$. This allows us to consider a relation $\preceq$ in $\Lambda$ defined by the following rule: $\lambda\preceq \mu$ if, and only if, either $\lambda=\mu$ or $\lambda<\mu$ and $\alpha_{\lambda,\mu}\in\what{\calI}(\what{a}_\lambda,\what{a}_\mu)$. It is now routine to check that $\preceq$ is a partial order in $\Lambda$ and that $\preceq$ and $\leq$ satisfy the hypotheses of Lemma~\ref{lem.smaller-directed-set}. Hence, there is a direct system, based on $(\Lambda,\preceq )$, such that $\alpha_{\lambda,\mu}\in\what{\calI}(\what{a}_\lambda,\what{a}_\mu)$ whenever $\lambda\preceq \mu$ and the corresponding direct limit is isomorphic to $F$, thus ending the proof.

Let us prove our claim. It is not restrictive to assume that $\what{a}=a\in\Obj(\mathcal{A})$. But then $\beta\in H_{\what{a}_\mu}(a)$, and so $x\leqdef (u_\mu)_a(\beta)\in F(a)$, where $u_\mu\colon H_{\what{a}_\mu}\to F$ is the canonical map to the direct limit. But the equality $F=F\calI$ gives a family of morphisms $\beta_k\in\calI(a,b_k)$ and elements $x_k\in F(b_k)$, for $k=1,\ldots,m$, such that $x=\sum_{k=1}^mF(\beta_k)(x_k)$. Since $x_k\in\varinjlim_\Lambda\what{\mathcal{A}}(b_k,\what{a}_\lambda)$ we can find a large enough index $\mu'\geq \mu$ such that $x_k=(u_{\mu'})_a(\eta_k)$ for some $\eta_k\in H_{a_{\mu'}}(b_k)=\what{\mathcal{A}}(b_k,\what{a}_{\mu'})$, and for all $k=1,\ldots,m$. Letting $\beta'=\alpha_{\mu,\mu'}\circ\beta$,  the element $\beta '-\sum_{k=1}^m\eta_k\circ\beta_k$ is in the kernel of $(u_{\mu'})_a\colon H_{\what{a}_{\mu'}}(a)=\what{\mathcal{A}}(a,\what{a}_{\mu'})\to F(a)$. Using the standard properties of direct limits in $\Ab$, we can find some $\mu'<\mu''\in \Lambda$, such that $H_{\alpha_{\mu',\mu''}}(\beta '-\sum_{k=1}^m\eta_k\circ\beta_k)=0$ or, equivalently,  $\alpha_{\mu',\mu''}\circ\beta'=\sum_{k=1}^m\alpha_{\mu',\mu''}\circ\eta_k\circ\beta_k$. Hence, 
$\alpha_{\mu,\mu''}\circ\beta'\in\what{\calI}(a,\what{a}_{\mu''})$, as  $\beta_k\in\calI(a,b_k)$, for all $k=1,\ldots,m$. 
\end{proof}

Recall that the trace $\tr_{\cal S}(\A)$ of $\cal S$ in $\A$ was introduced in Definition~\ref{def. trace}. 

\begin{lem}
\label{lema.generation of C versus of X}%
The following assertions are equivalent for a subclass  $\mathcal{S}\subseteq\mathcal{X}_I$:
\begin{enumerate}[(a)]
\item $\mathcal{C}_I=\Gen(\mathcal{S})$;
\item $\tr_\mathcal{S}(\mathcal{A})=\calI$;
\item $\mathcal{S}$ is a class of generators of the Grothendieck category $\mathcal{X}_I$.
\end{enumerate}
\end{lem}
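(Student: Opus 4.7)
The plan is to prove $(a)\Leftrightarrow(b)$ directly by pinning down the trace ideal, and then $(a)\Leftrightarrow(c)$ by exploiting the fact that $\iota\colon\mathcal{X}_I\hookrightarrow\rMod\mathcal{A}$ is a fully faithful left adjoint (Proposition~\ref{prop.relevantfacts-recollement}\,(v)). A preliminary observation underlies everything: for each $a\in\Obj(\mathcal{A})$, the submodule $H_a\calI\le H_a$ coincides with $\calI(-,a)$, because $\calI$ is an ideal and hence $(H_a\calI)(b)=\sum\{\gamma\circ\alpha\mid\alpha\in\calI(b,c),\gamma\in\mathcal{A}(c,a)\}=\calI(b,a)$. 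In addition, a Yoneda argument shows that for any $C\in\rMod\mathcal{A}$ the submodule $C\calI$ is generated by the images of the morphisms $H_b\calI\hookrightarrow H_b\to C$ ranging over all $b$ and all morphisms $H_b\to C$; combined with the fact that $\mathcal{C}_I$ is a torsion class, this yields $\mathcal{C}_I=\Gen(H_a\calI\mid a\in\Obj(\mathcal{A}))$.

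For $(a)\Leftrightarrow(b)$, note that (by Definition~\ref{def. trace} and Remark~\ref{rem.regular bimodule and ideal}\,(2)) the equality $\tr_\mathcal{S}(\mathcal{A})=\calI$ is the same as $\tr_\mathcal{S}(H_a)=H_a\calI$ for every $a$. Assuming $(b)$, each $H_a\calI$ is then an epimorphic image of a coproduct of objects of $\mathcal{S}$, so $\{H_a\calI\}_{a\in\Obj(\mathcal{A})}\subseteq\Gen(\mathcal{S})$; combined with the preliminary observation and $\mathcal{S}\subseteq\mathcal{X}_I\subseteq\mathcal{C}_I$, this gives $(a)$. Conversely, assuming $(a)$, each $H_a\calI\in\mathcal{C}_I=\Gen(\mathcal{S})$ shows $H_a\calI\subseteq\tr_\mathcal{S}(H_a)$; and any $f\colon S\to H_a$ with $S\in\mathcal{S}\subseteq\mathcal{C}_I$ has image in $\mathcal{C}_I$ (closure under quotients), hence $\im(f)=\im(f)\calI\subseteq H_a\calI$, giving the reverse inclusion.

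For $(a)\Leftrightarrow(c)$, the decisive input is that $\iota$, being a left adjoint, preserves coproducts and epimorphisms, and being fully faithful, reflects epimorphisms. Assuming $(c)$, take $C\in\mathcal{C}_I$: by Proposition~\ref{prop.relevantfacts-recollement}\,(i), $\nu_C\colon j_!j^\ast C\twoheadrightarrow C$ is epic in $\rMod\mathcal{A}$, with $j_!j^\ast C\in\mathcal{X}_I$; since $\mathcal{S}$ generates $\mathcal{X}_I$, there is an epimorphism $\coprod_\Lambda S_\lambda\twoheadrightarrow j_!j^\ast C$ in $\mathcal{X}_I$, which $\iota$ carries to an epimorphism in $\rMod\mathcal{A}$, and composition with $\nu_C$ proves $C\in\Gen(\mathcal{S})$. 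Conversely, assuming $(a)$, any $X\in\mathcal{X}_I\subseteq\mathcal{C}_I$ receives an epimorphism $p\colon\coprod_\Lambda S_\lambda\twoheadrightarrow X$ in $\rMod\mathcal{A}$; since $\iota$ preserves coproducts, $\coprod_\Lambda S_\lambda\in\mathcal{X}_I$, and reflection of epimorphisms by $\iota$ makes $p$ an epimorphism in $\mathcal{X}_I$, which is the required generation condition.

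I do not anticipate any substantial obstacle: the only subtle point is keeping track of the fact that coproducts and epimorphisms in $\mathcal{X}_I$ coincide with those in $\rMod\mathcal{A}$, which is precisely what the left-adjoint/fully-faithful properties of $\iota$ provide. The argument relies on no new machinery beyond the identification $H_a\calI=\calI(-,a)$, the torsion-class properties of $\mathcal{C}_I$, and the recollement data of Proposition~\ref{prop.relevantfacts-recollement}.
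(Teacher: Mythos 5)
Your proof is correct. The $(a)\Leftrightarrow(b)$ part is essentially the argument in the paper, once you note that the identification $H_a\calI=\calI(-,a)$ and the equality $\mathcal{C}_I=\Gen(H_a\calI\mid a)$ are what make the trace–ideal dictionary work. The $(c)\Rightarrow(a)$ direction is likewise the same idea as in the paper, passing through the epimorphism $\nu_C\colon j_!j^\ast C\twoheadrightarrow C$; you should still record the easy inclusion $\Gen(\mathcal{S})\subseteq\mathcal{C}_I$ (which you do invoke in $(b)\Rightarrow(a)$, so this is a trivial omission rather than a gap).

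Where you genuinely diverge is in $(a)\Rightarrow(c)$. The paper takes a short exact sequence $0\to L\to\coprod_I S_i\to X\to 0$ in $\rMod\mathcal{A}$, applies the right exact functor $-\otimes_\mathcal{A}\calI$, uses Proposition~\ref{prop.colocalization as tensor product}\,(iv) to see that this ``fixes'' $\coprod_I S_i$ and $X$, deduces $L\calI=L$ and hence $L\in\mathcal{C}_I=\Gen(\mathcal{S})$, and concludes $X\in\Pres(\mathcal{S})$. You instead observe that an epimorphism $\coprod_\Lambda S_\lambda\twoheadrightarrow X$ in $\rMod\mathcal{A}$ between objects of $\mathcal{X}_I$ lifts (by fullness and preservation of coproducts) to a morphism in $\mathcal{X}_I$, and that the faithful functor $\iota$ reflects epimorphisms, so the morphism is already an epimorphism in $\mathcal{X}_I$; since that is precisely what ``$\mathcal{S}$ generates $\mathcal{X}_I$'' requires, you are done. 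Your route is shorter and more elementary, avoiding the tensor-product machinery entirely, at the cost of establishing only $X\in\Gen(\mathcal{S})$ rather than the marginally stronger $X\in\Pres(\mathcal{S})$ that drops out of the paper's computation; for the statement of this lemma (and for its later uses in the paper) the stronger conclusion is not needed, so your simplification is a net gain.
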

\begin{proof}
\smallskip \noindent``$\textup{(a)}\Rightarrow\textup{(b)}$'' Assertion~(a) implies an equality $\tr_\mathcal{S}(H_a)= H_aI=\calI(-,a)$ for all $a\in\Obj(\mathcal{A})$, so that $\tr_\mathcal{S}(\mathcal{A})=\calI$.

\smallskip \noindent``$\textup{(b)}\Rightarrow\textup{(a)}$'' Assertion~(b) implies that each $\calI(-,a)$ is an epimorphic image of some coproduct of objects in $\mathcal{S}$. Since $\mathcal{C}_I=\Gen(\calI(-,a)\mid a\in\Obj(\mathcal{A}))$ (see \cite[Subsection 4.2]{PSV}), we conclude that $\mathcal{C}_I=\Gen(\mathcal{S})$.

\smallskip \noindent``$\textup{(c)}\Rightarrow\textup{(a)}$'' Bearing in mind that the inclusion $\mathcal{X}_\calI\hookrightarrow\rMod\mathcal{A}$ preserves coproducts and cokernels, assertion~(c) is equivalent to the equality $\mathcal{X}_\calI=\Pres(\mathcal{S})$. The equality $\mathcal{C}_I=\Gen(\mathcal{S})$ then follows from Proposition~\ref{prop.relevantfacts-recollement}.

\smallskip \noindent``$\textup{(a)}\Rightarrow\textup{(c)}$'' Let $X\in\mathcal{X}_I\subseteq\mathcal{C}_I$ and consider a short exact sequence of the form: $0\to L\to\coprod_{\Lambda}S_\lambda\mathrel{\smash[t]{\buildrel p\over\to}} X\rightarrow 0$, where $S_\lambda\in\mathcal{S}$ for all $\lambda\in \Lambda$. Applying the functor $-\otimes_\mathcal{A}\calI\colon\rMod\mathcal{A}\to\rMod\mathcal{A}$, that ``fixes'' $\coprod_{I}S_\lambda$ and $X$ by Proposition~\ref{prop.colocalization as tensor product}, we obtain an exact sequence $L\otimes_\mathcal{A}\calI\to\coprod_{\Lambda}S_\lambda\mathrel{\smash[t]{\buildrel p\over\to}} X\to 0$. Hence, $L\in\mathcal{C}_I=\Gen(\mathcal{S})$, and so $X\in\Pres(\mathcal{S})$, showing that $\mathcal{S}$ is a class of generators of $\mathcal{X}_\calI$.
\end{proof}

We have now the following:

\begin{thm}\label{thm.always enough flats}%
Let $\mathcal{A}$ be a small preadditive category, and let $\mathcal{G}_I\subseteq\rMod\A$ be a Giraud subcategory associated with an idempotent ideal $I\leq \A$. Then:
\begin{enumerate}[(i)]
\item $\mathcal{G}_I$ has always enough flat objects or, equivalently, a flat generator.
\item $\mathcal{G}_I$ is locally finitely presented if and only if $\mathcal{C}_I=\Gen(\mathcal{X}_I\cap\rmod\mathcal{A})$.
\item $\mathcal{G}_I$ has enough projectives or, equivalently, a projective generator, if and only if $\calI$ is the trace of a projective right $\mathcal{A}$-module.
\item $\mathcal{G}_{\calI}$ has a set of finitely generated projective generators if, and only if, $\calI$ is the trace of a set of finitely generated projective right $\mathcal{A}$-modules if, and only if, $\calI$ is generated by a family of idempotents in $\what{\mathcal{A}}$.
\end{enumerate}
\end{thm}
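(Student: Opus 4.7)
The unifying strategy is to transfer every assertion through the equivalence $\mathcal{G}_\calI\simeq\mathcal{X}_\calI$ of Proposition~\ref{prop.relevantfacts-recollement}(v) and then characterize each ``generating'' property of $\mathcal{X}_\calI$ via Lemma~\ref{lema.generation of C versus of X}, combined with the identifications of $\fp(\mathcal{X}_\calI)$, $\Proj(\mathcal{X}_\calI)$ and $\Flat(\mathcal{X}_\calI)$ in Proposition~\ref{prop.Proposition 4.4 of Lorenzo-notes}. Each of the four parts reduces to exhibiting a class $\mathcal{S}\subseteq\mathcal{X}_\calI$ of the appropriate ``kind'' of objects such that $\tr_\mathcal{S}(\mathcal{A})=\calI$.

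For (i) the heart of the argument is to prove $\tr_{\Flat(\mathcal{X}_\calI)}(\mathcal{A})=\calI$. The inclusion $\subseteq$ is easy: any morphism $F\to H_a$ from $F\in\Flat(\mathcal{X}_\calI)\subseteq\mathcal{C}_\calI$ has image in $\mathcal{C}_\calI$, and the $\mathcal{C}_\calI$-submodules of $H_a$ are precisely the subobjects of $H_a\calI=\calI(-,a)$. For $\supseteq$, given $\eta\in\calI(c,a)$ one builds a flat module in $\mathcal{X}_\calI$ whose canonical map to $H_a$ captures $\eta$: a matrix reshuffling first shows that $\what{\calI}$ is idempotent in $\what{\mathcal{A}}$; one then sets $\what{a}_0\leqdef c$, $\sigma_0\leqdef\eta$, and recursively uses $\what{\calI}=\what{\calI}^2$ to factor each $\sigma_i\in\what{\calI}(\what{a}_i,a)$ as $\sigma_i=\sigma_{i+1}\circ\alpha_{i,i+1}$ with both $\sigma_{i+1}\in\what{\calI}(\what{a}_{i+1},a)$ and $\alpha_{i,i+1}\in\what{\calI}(\what{a}_i,\what{a}_{i+1})$. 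The resulting direct system $(H_{\what{a}_i})_{i\geq 0}$ has all transitions in $\what{\calI}$, so by Proposition~\ref{prop.Proposition 4.4 of Lorenzo-notes}(iv) its colimit $F$ lies in $\Flat(\mathcal{X}_\calI)$, and the cocone $(\sigma_i)$ induces a morphism $F\to H_a$ whose image at $c$ contains $\sigma_0(1_c)=\eta$. By Lemma~\ref{lema.generation of C versus of X}, $\Flat(\mathcal{X}_\calI)$ generates $\mathcal{X}_\calI$; since this class is closed under coproducts in $\mathcal{X}_\calI$ (flats are stable under coproducts and so is $\mathcal{X}_\calI={}^{\bot_{0,1}}\mathcal{T}_\calI$), every object of $\mathcal{X}_\calI$ is an epimorphic image of a flat one.

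Parts (ii)--(iv) then follow the same template. Part (ii) is immediate: $\mathcal{X}_\calI$ is locally finitely presented iff $\fp(\mathcal{X}_\calI)=\mathcal{X}_\calI\cap\rmod\mathcal{A}$ is a set of generators, which Lemma~\ref{lema.generation of C versus of X} rephrases as $\mathcal{C}_\calI=\Gen(\mathcal{X}_\calI\cap\rmod\mathcal{A})$. For (iii), $\mathcal{G}_\calI$ has a projective generator iff some projective $P$ both lies in $\mathcal{X}_\calI$ and satisfies $\tr_P(\mathcal{A})=\calI$, by Proposition~\ref{prop.Proposition 4.4 of Lorenzo-notes}(ii) and Lemma~\ref{lema.generation of C versus of X}. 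The non-obvious direction requires showing that $\tr_P(\mathcal{A})=\calI$ alone already forces $P=P\calI$; this comes from the general identity $\tr_P(M)=M\tr_P(\mathcal{A})$ for every projective $M$, applied to $M=P$ together with $\tr_P(P)=P$. The identity is checked first for $M=H_a$ by direct computation ($\tr_P(H_a)=\tr_P(\mathcal{A})(-,a)=H_a\tr_P(\mathcal{A})$), extended to free $M$ by additivity of trace along coproducts, and to arbitrary projective $M$ via the direct-summand splitting $\tr_P(M\oplus Q)=\tr_P(M)\oplus\tr_P(Q)$. Part (iv) combines (iii) with Proposition~\ref{prop.traces-versus-idempotents}(ii), which identifies ``trace of a set of finitely generated projectives'' with ``generated by idempotent endomorphisms of objects of $\what{\mathcal{A}}$''; the extra check that each $e_\lambda\mathcal{A}$ lies in $\mathcal{X}_\calI$ follows from (iii) applied to the subideal $\mathcal{A}e_\lambda\mathcal{A}\subseteq\calI$ via $e_\lambda\mathcal{A}=e_\lambda\mathcal{A}\cdot\mathcal{A}e_\lambda\mathcal{A}\subseteq e_\lambda\mathcal{A}\cdot\calI\subseteq e_\lambda\mathcal{A}$.

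The main technical obstacle is the telescope construction in (i): one must keep both the transitions $\alpha_{i,i+1}$ and the cocone components $\sigma_{i+1}$ inside $\what{\calI}$ at every stage, which is exactly the payoff of the idempotency of $\what{\calI}$, and one must verify that the induced map $\varinjlim H_{\what{a}_i}\to H_a$ recovers $\eta$ upon evaluation at $c$, which is ensured by the initial choice $\sigma_0=\eta$. The trace-multiplication identity in (iii) is a subtler but elementary secondary ingredient, whose clean proof requires careful tracking of how the functor $M\mapsto M\calJ$ interacts with direct summands.
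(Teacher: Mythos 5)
Your proof is correct and follows the paper's strategy: transfer everything to $\mathcal{X}_\calI$ via Proposition~\ref{prop.relevantfacts-recollement}(v), characterize each generating property through Lemma~\ref{lema.generation of C versus of X}, and use the identifications from Proposition~\ref{prop.Proposition 4.4 of Lorenzo-notes}; in particular, your telescope construction for part~(i) (iterating idempotency of $\what{\calI}$ to build a countable direct system in $\what{\mathcal{A}}$ with transitions in $\what{\calI}$) is the same as the paper's, merely phrased via $\tr_{\Flat(\mathcal{X}_\calI)}(\mathcal{A})=\calI$ instead of factoring $H_\alpha$ through a flat module. Where you are more careful than the paper is in the converse direction of~(iii): the paper passes from ``$\calI=\tr_\mathcal{P}(\mathcal{A})$ for a set $\mathcal{P}$ of projective generators of $\mathcal{X}_\calI$'' to ``$\calI$ is the trace of a projective $\mathcal{A}$-module'' without remarking that, conversely, a projective $P$ with $\tr_P(\mathcal{A})=\calI$ need not \emph{a priori} lie in $\mathcal{X}_\calI$; you close this gap by invoking the trace-multiplication identity $\tr_P(M)=M\,\tr_P(\mathcal{A})$ (applied at $M=P$) to get $P\calI=\tr_P(P)=P$, hence $P\in\mathcal{C}_\calI\cap\Proj(\mathcal{A})=\Proj(\mathcal{X}_\calI)$, which is exactly the missing step. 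Two small cautions: the base case of your trace-multiplication identity, $\tr_P(H_a)=H_a\tr_P(\mathcal{A})$, itself uses that the trace ideal of a projective is idempotent (which is harmless here, since $\calI$ is idempotent by hypothesis), and in part~(iv) the justification that $e_\lambda\mathcal{A}\in\mathcal{C}_\calI$ is cleaner stated as a direct application of the trace-multiplication identity with Proposition~\ref{prop.traces-versus-idempotents}(i), rather than ``follows from~(iii).''
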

\begin{proof}
By Proposition~\ref{prop.relevantfacts-recollement}, it is enough to prove the assertions for $\mathcal{X}_I$.

\smallskip \noindent(i) We shall prove that, for each  $\alpha\in\calI(a,b)$, the morphism $H_\alpha \colon H_a\to H_b$ factors through some countably generated module in $\mathcal{C}_I\cap\Flat(\mathcal{A})$. By \cite[Lemma~2.5]{PSV} this will imply that $\calI=\tr_F(\mathcal{A})$, where $F$ is the coproduct of a family of representatives of each isomorphism class of the countably generated $\mathcal{A}$-modules in $\mathcal{C}_I\cap\Flat(\mathcal{A})$. To do that we shall construct inductively a sequence
\(
	a=\what{a}_0\buildrel\alpha^1\over\to\what{a}_1\buildrel\alpha^2\over\to\cdots
\)
of morphisms in $\what{\mathcal{A}}$, and a second sequence of morphisms
\(
	\smash[t]{\what{a}{}^n\buildrel\beta^n\over\to b}
\)
in $\what{\mathcal{A}}$, such that $\beta^0=\alpha$, and $\beta^n\circ\alpha^n=\beta^{n-1}$, for all $n>0$. Moreover, the sequences will satisfy that $\alpha^n\in\what{\calI}(\what{a}_{n-1},\what{a}_n)$, and $\beta^n\in\what{\calI}(\what{a}_{n},b)$, for all $n>0$. We put $\what{a}^0\leqdef a$ and $\beta^0\leqdef\alpha$. Assume now that $n>0$, and that $\alpha^k\colon\what{a}{}^{\,k-1}\to\what{a}{}^{\,k}$, and $\beta^k\colon\what{a}{}^{\,k}\to b$, have been defined, for all $0\leq k<n$. Since $\beta^{n-1}\in\what{\calI}(\what{a}{}^{\,n-1},b)$, the idempotency of $\what{\calI}$ in $\what{\mathcal{A}}$ implies that there is a factorization
\[
	\beta^{n-1}=\beta^n\circ\alpha^n\colon\what{a}{}^{\,n-1}\buildrel\alpha^n\over\longrightarrow
		\what{a}{}^{\,n}\buildrel\beta^n\over\longrightarrow b,
\]
for some $\what{a}{}^{\,n}\in\Obj(\what{\mathcal{A}})$, where $\alpha^n\in\what{\calI}(\what{a}{}^{\,n-1},\what{a}{}^{\,n})$ and $\beta^n\in\what{\calI}(\what{a}{}^n,b)$. Applying the Yoneda Embedding $\mathbf{y}\colon\what{\mathcal{A}}\to\rMod\mathcal{A}$, which takes each $\what{a}=\coprod_{i=1}^ra_i\in\Obj(\what{\mathcal{A}})$ to $H_{\what{a}}\cong\coprod_{i=1}^rH_{a_i}$, we get the following commutative diagram in $\rMod\mathcal{A}$:
\[
\xymatrix{%
	H_a=H_{\what a_0} \ar[r]^-{H_{\alpha^1}}\ar@/_15pt/[drr]|-{H_\alpha^{\vphantom0}=H_{\beta^0}} &
		H_{\what a_1} \ar[r]\ar@/_8pt/[dr]|-{H_{\beta^1}} &
		\cdots \ar@{}[d]|(.6){\textstyle\cdots}\ar[r]^{H_{\alpha^n}} &
		H_{\what a_n} \ar[r]\ar@/_-8pt/[dl]|-{H_{\beta^n}} & \cdots \ar@{}[dll]^(.475){\textstyle\ \ \cdots} \cr
	&& H_{\what b}
}
\]
As a consequence, we find the following factorization of $H_\alpha$,
\[
	H_a=H_{\what{a}_0}\buildrel\what{\alpha}\over\longrightarrow
		\varinjlim_{n\in\N} (H_{\what{a}_n},H_{\alpha^n})\reqdef
		F\buildrel\what{\beta}\over\longrightarrow H_b,
\]
where $F$ is in $\mathcal{C}_I\cap\Flat(\mathcal{A})$, by Proposition~\ref{prop.Proposition 4.4 of Lorenzo-notes}\,(iv). Moreover, $F$ is countably generated as it is a quotient of the countably generated module $\coprod_{\mathbb{N}}H_{\what{a}_n}$.

\smallskip \noindent(ii) The category $\mathcal{X}_I$ is locally finitely presented if, and only, if it is generated by its (skeletally small) subcategory $\fp(\mathcal{X}_I)$. By Proposition~\ref{prop.Proposition 4.4 of Lorenzo-notes} and Lemma~\ref{lema.generation of C versus of X}, this occurs precisely when $\mathcal{C}_I=\Gen(\mathcal{X}_I\cap\rmod\mathcal{A})$.

\smallskip \noindent(iii,\,iv) Since $\mathcal{X}_{\calI}$ is a Grothendieck category, it has enough projectives if, and only if, it has a set $\mathcal{P}$ of projective generators, that for the proof of assertion~(iv), can be taken all finitely generated. By Lemma~\ref{lema.generation of C versus of X}, this happens if, and only if, $I=\tr_\mathcal{P}(\mathcal{A})$, and $\mathcal{P}\subseteq\mathcal{X}_{\calI}\cap\Proj(\mathcal{A})$, by Proposition~\ref{prop.Proposition 4.4 of Lorenzo-notes}. Moreover, the objects in $\mathcal{P}$ are finitely generated (i.e., finitely presented) in $\mathcal{X}_I$ if, and only if $\mathcal{P}\subseteq\rmod\mathcal{A}$. That is, if and only if $\mathcal{P}\subseteq\proj(\mathcal{A})$. The final part of assertion~(iv), regarding the idempotents in $\what \A$, follows by Proposition~\ref{prop.traces-versus-idempotents}. 
\end{proof}

From the above theorem and its proof we derive the following:

\begin{cor}
\label{cor.idempotent=trace of flat}%
Let $\mathcal{A}$ be a small preadditive category (e.g., a ring). Each idempotent ideal of $\mathcal{A}$ is the trace of a flat $\mathcal{A}$-module.
\end{cor}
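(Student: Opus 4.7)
The plan is to observe that the statement is essentially extracted from the proof of Theorem~\ref{thm.always enough flats}\,(i), which we should restate explicitly.

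More precisely, we first recall that by Proposition~\ref{prop.Proposition 4.4 of Lorenzo-notes}\,(iv) the subcategory $\mathcal{C}_\calI\cap\Flat(\mathcal{A})$ is closed under direct limits of direct systems in $\what{\mathcal{A}}$ whose transition morphisms eventually land in $\what{\calI}$. The key computation performed in the proof of Theorem~\ref{thm.always enough flats}\,(i) shows that, given any morphism $\alpha\in\calI(a,b)$, one can use the idempotency of $\what{\calI}$ repeatedly to factor $\beta^{n-1}=\beta^n\circ\alpha^n$ with $\alpha^n,\beta^n\in\what{\calI}$, and thereby produce a factorization
\[
	H_\alpha\colon H_a\buildrel\what{\alpha}\over\longrightarrow
		F_\alpha\buildrel\what{\beta}\over\longrightarrow H_b
\]
through a countably generated module $F_\alpha=\varinjlim_{n\in\N}H_{\what{a}_n}\in\mathcal{C}_\calI\cap\Flat(\mathcal{A})$.

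Next, since every countably generated object of $\rMod\mathcal{A}$ is an epimorphic image of $\coprod_\N H_{a_n}$ for some sequence $(a_n)$ in $\Obj(\mathcal{A})$, the isomorphism classes of countably generated objects in $\mathcal{C}_\calI\cap\Flat(\mathcal{A})$ form a set. Let $F$ be the coproduct of one representative of each such isomorphism class; then $F$ is flat by Corollary~\ref{cor.properties of flat modules}, and $F\in\mathcal{C}_\calI$ because $\mathcal{C}_\calI$ is closed under coproducts (as the hereditary torsionfree class of the TTF triple in $\rMod\mathcal{A}^\textup{op}$, or equivalently because $-\otimes_\mathcal{A}\calI$ preserves coproducts).

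Finally, to conclude that $\calI=\tr_F(\mathcal{A})$, one inclusion is automatic: since $F\in\mathcal{C}_\calI$, i.e.\ $F\calI=F$, each morphism $H_a\to F$ has image contained in $F\calI$, which yields $\tr_F(H_a)\subseteq\calI(-,a)$ and hence $\tr_F(\mathcal{A})\subseteq\calI$. For the reverse inclusion, by \cite[Lemma~2.5]{PSV} (cited in the proof of Theorem~\ref{thm.always enough flats}\,(i)) it suffices to check that every $\alpha\in\calI(a,b)$, viewed as $H_\alpha\colon H_a\to H_b$, factors through an object belonging to $\Sum(F)$; this is exactly what the factorization $H_\alpha=\what{\beta}\circ\what{\alpha}$ through $F_\alpha$ provides, since $F_\alpha$ is (isomorphic to) a direct summand-free summand of $F$ up to isomorphism class. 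There is no real obstacle here: the entire argument is a direct repackaging of the first paragraph of the proof of Theorem~\ref{thm.always enough flats}, the only new observation being that the witnessing module can be chosen to be a single flat $\mathcal{A}$-module rather than a class.
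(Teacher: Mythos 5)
Your proposal is correct and takes essentially the same approach as the paper: the corollary is just a by-product of the first paragraph of the proof of Theorem~\ref{thm.always enough flats}\,(i), which exhibits $\calI=\tr_F(\mathcal{A})$ for $F$ the coproduct of one representative of each countably generated object of $\mathcal{C}_\calI\cap\Flat(\mathcal{A})$, and $F$ is flat as a coproduct of flat modules (Corollary~\ref{cor.properties of flat modules}). One small slip: to get $\tr_F(\mathcal{A})\subseteq\calI$ you should consider morphisms $F\to H_a$ (not $H_a\to F$); any such morphism has image a quotient of $F\in\mathcal{C}_\calI$, hence lying in $\mathcal{C}_\calI$ and therefore inside $H_a\calI=\calI(-,a)$, which gives $\tr_F(H_a)\subseteq\calI(-,a)$ as desired.
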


As a consequence of Theorem~\ref{thm.always enough flats} and Propositions~\ref{prop.Roos result} and \ref{prop.Carlos result}, we get the following:

\begin{cor}
\label{cor.Cuadra-Simson versus Djament}%
Consider the following claims for a Grothendieck category $\mathcal{G}$:
\begin{enumerate}[(a)]
\item $\mathcal{G}$ is locally finitely presented and has enough flat objects;
\item $\mathcal{G}$ is locally finitely presented and $\mathrm{AB}\mathchar`-4^\ast$;
\item $\mathcal{G}$ is $\mathrm{AB}\mathchar`-6$ and $\mathrm{AB}\mathchar`-4^\ast$;
\item there is a small preadditive category (a ring) $\mathcal{A}$ with an idempotent ideal $\calI$ such that $\mathcal{G}$ is equivalent to the Giraud subcategory $\mathcal{G}_{\calI}$ of $\rMod\mathcal{A}$.
\end{enumerate}
Then, the implications $\textup{``(a)}\Leftrightarrow\textup{(b)}\Rightarrow\textup{(c)}\Leftrightarrow\textup{(d)''}$ hold true, but $\textup{``(b)}$\rotatebox[origin=c]{180}{$\not\Rightarrow$}$\textup{(c)''}$.
\end{cor}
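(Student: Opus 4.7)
The plan is to deduce the three stated equivalences and implications from results already established in the paper, and then to address the non-implication separately.

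First, I would observe that $\textup{(c)}\Leftrightarrow\textup{(d)}$ is an immediate restatement of Proposition~\ref{prop.Roos result} (which moreover allows $\mathcal{A}$ to be taken as a ring). Next, $\textup{(a)}\Rightarrow\textup{(b)}$ is obtained directly from Proposition~\ref{prop.Carlos result}, which provides the $\mathrm{AB}\mathchar`-4^\ast$ property (in fact also $\mathrm{AB}\mathchar`-6$, but this latter is not needed here). For $\textup{(b)}\Rightarrow\textup{(c)}$, I would recall from Remark~\ref{r:Sources}(2) that every locally finitely generated Grothendieck category is $\mathrm{AB}\mathchar`-6$; since every locally finitely presented category is locally finitely generated, combining this with the assumed $\mathrm{AB}\mathchar`-4^\ast$ property yields (c).

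The converse $\textup{(b)}\Rightarrow\textup{(a)}$ is the only implication requiring a substantial tool from the body of the paper. Chaining $\textup{(b)}\Rightarrow\textup{(c)}\Leftrightarrow\textup{(d)}$, we get $\mathcal{G}\cong\mathcal{G}_{\calI}$ for some small preadditive category $\mathcal{A}$ and some idempotent ideal $\calI$ of $\mathcal{A}$; Theorem~\ref{thm.always enough flats}(i) then produces a flat generator in $\mathcal{G}_{\calI}$, and hence in $\mathcal{G}$. This closes the loop and establishes the equivalence of (a), (b) and (c), each of which implies (d).

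For the non-implication $\textup{(d)}\not\Rightarrow\textup{(b)}$, I would use Theorem~\ref{thm.always enough flats}(ii) to reduce the task to producing an idempotent ideal $\calI$ of a small preadditive category $\mathcal{A}$ (even a ring) for which $\mathcal{C}_{\calI}\neq\Gen(\mathcal{X}_{\calI}\cap\rmod\mathcal{A})$. The main obstacle is the explicit construction of such a witness; I would appeal to a concrete example of a ring $R$ with an idempotent two-sided ideal $\calI$ whose associated $\mathcal{X}_{\calI}\cap\rmod R$ is too small to generate $\mathcal{C}_{\calI}$. This ensures that the resulting $\mathcal{G}_{\calI}$ satisfies (d) but not (b), while remaining automatically $\mathrm{AB}\mathchar`-6$ and $\mathrm{AB}\mathchar`-4^\ast$ by Proposition~\ref{prop.Roos result}.
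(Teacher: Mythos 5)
Your treatment of the implications \textup{(a)}$\Leftrightarrow$\textup{(b)}$\Rightarrow$\textup{(c)}$\Leftrightarrow$\textup{(d)} is correct and mirrors the paper's own argument: Proposition~\ref{prop.Carlos result} gives \textup{(a)}$\Rightarrow$\textup{(b)}; local finite presentability implies $\mathrm{AB}\mathchar`-6$ via Remark~\ref{r:Sources}; Proposition~\ref{prop.Roos result} gives \textup{(c)}$\Leftrightarrow$\textup{(d)}; and chaining through \textup{(d)} followed by Theorem~\ref{thm.always enough flats}\,(i) recovers \textup{(a)} from \textup{(b)}.

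However, the non-implication \textup{(d)}$\not\Rightarrow$\textup{(b)} is not actually proved: you correctly identify that it suffices (via Theorem~\ref{thm.always enough flats}\,(ii)) to find an idempotent ideal $\calI$ with $\mathcal{C}_{\calI}\neq\Gen(\mathcal{X}_{\calI}\cap\rmod\mathcal{A})$, and you acknowledge that ``the main obstacle is the explicit construction of such a witness,'' but then you merely say you ``would appeal to a concrete example'' without supplying one. This is the actual mathematical content of the non-implication, and without a specific $(\mathcal{A},\calI)$ the claim is unsupported. The paper closes this gap concretely: it invokes Example~4.2 of Roos in \cite{R06}, where $\calI=\mathbf{m}=J(A)$ is the Jacobson radical of a suitable ring $A$; since $\calI\subseteq\rad_{\mathcal{A}}$, the Nakayama Lemma over preadditive categories (Proposition~\ref{prop.NAK}) forces every $X\in\mathcal{X}_{\calI}\cap\rmod\mathcal{A}$ to satisfy $X=X\calI\subseteq X\rad_{\mathcal{A}}$, hence $X=0$; but $\mathcal{C}_{\calI}\neq 0$ there, so $\mathcal{G}_{\calI}$ cannot be locally finitely presented. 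Until you replace the placeholder with an explicit witness (such as this one), the final claim of the corollary remains unverified.
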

\begin{proof}
\smallskip \noindent The implication ``$\textup{(a)}\Rightarrow\textup{(b)}$'' follows from Proposition~\ref{prop.Carlos result}, while the implication ``$\textup{(b)}\Rightarrow\textup{(a)}$'' follows from Proposition~\ref{prop.Roos result} and Theorem~\ref{thm.always enough flats}. The implication ``$\textup{(b)}\Rightarrow\textup{(c)}$'' is clear, and the equivalence ``$\textup{(c)}\Leftrightarrow\textup{(d)}$'' is Proposition~\ref{prop.Roos result}. Finally, it follows from Proposition~\ref{prop.Proposition 4.4 of Lorenzo-notes}, that the Grothendieck category described in \cite[Example~4.2]{R06} satisfies condition~(d) (and so, also (c)) but it cannot be locally finitely presented since, in that example, $I=\mathbf{m}=J(A)$ and so, by Nakayama's Lemma, $\fp(\mathcal{X}_{\calI})=\mathcal{X}_{\calI}\cap\rmod\mathcal{A}=0$.
\end{proof}

Recall that a hereditary torsion class in any Grothendieck category is also a Grothendieck category.

\begin{exmpl}
\label{ex.heredtor-not-AB4star-in-PID}
If $R$ is a (commutative) principal ideal domain that is not a field, then the category $\rTors R$ of torsion $R$-modules does not have a flat generator. Indeed, the proof of \cite[Example~2.11]{AM-P} also works in this slightly more general setting, thus proving that $\rTors R$ is not $\mathrm{AB}\mathchar`-4^\ast$ and, since it is clearly locally finitely presented, Corollary~\ref{cor.Cuadra-Simson versus Djament} tells us that it does not have enough flat objects or, equivalently, that it does not have a flat generator.
\end{exmpl}

\subsection{Reformulation of the Cuadra--Simson Problem and its refutation}%

According to Theorem~\ref{thm.always enough flats} and Corollary~\ref{cor.Cuadra-Simson versus Djament}, the Cuadra--Simson Problem can be given the following equivalent reformulations:

\begin{probl}[Cuadra--Simson reformulated]
\mbox{\label{probl.CS reformulated}}\vglue0pt%
\begin{enumerate}
\item Let $\mathcal{A}$ be a small preadditive category (or just a ring) and let $\calI=\calI^2$ be an ideal in $\A$, such that the associated Giraud subcategory $\mathcal{G}_I$ of $\rMod\mathcal{A}$ is locally finitely presented. Is $\calI$ the trace of a set of projective right $\mathcal{A}$-modules? Or, equivalently, does $\mathcal{G}_I$ have enough projectives?
\item Let $\mathcal{G}$ be an $\mathrm{AB}\mathchar`-4^\ast$ locally finitely presented Grothendieck category. Does $\mathcal{G}$ have enough projectives?
\end{enumerate}
\end{probl}

According to the reformulation \ref{probl.CS reformulated}\,(1), we see that the problem is a particular case of the following problem raised by Robert Miller in 1975 (see \cite{W}).

\begin{probl}[Miller]
Let $R$ be a ring and let $I$ be an idempotent ideal of $R$. When is $I$ the trace of a projective $R$-module?
\end{probl}

On the other hand, by the reformulation~\ref{probl.CS reformulated}\,(2), the Cuadra--Simson Problem is a generalization of the following problem posted online by Aur\'elien Djament (see \cite{Djament}):

\begin{probl}[Djament]
Let $\mathcal{G}$ be a locally noetherian (resp., locally coherent, or locally finitely presented) $\mathrm{AB}\mathchar`-4^\ast$ Grothendieck category. Does $\mathcal{G}$ have a family of (finitely generated) projective generators?
\end{probl}

Our next result shows two sources for potential counterexamples to the Cuadra--Simson Problem.

\begin{thm}
\label{teor.counterexample-recipe}%
Let $\mathcal{A}$ be a small preadditive category, and let $\calI$ be an idempotent ideal of $\mathcal{A}$. Suppose that one of the following two conditions is satisfied:
\begin{enumerate}[(i)]
\item The canonical morphism  $\rho_{\calI}\colon\calI\otimes_\mathcal{A}\calI\to\calI$ is an isomorphism, and $\calI(-,a)$ is a finitely presented right $\mathcal{A}$-module, for all $a\in\Obj(\mathcal{A})$.
\item $\calI$ is pure as a left ideal, i.e., the multiplication map $\rho_M\colon M\otimes_\mathcal{A}\calI\to M$ is a monomorphism, for all (finitely presented) right $\mathcal{A}$-modules $M$.
\end{enumerate}
Then, the associated Giraud subcategory $\mathcal{G}_I$ of $\rMod\mathcal{A}$ is locally finitely presented (and has enough flat objects). Moreover, if $\calI$ is not the trace in $\mathcal{A}$ of any projective right $\mathcal{A}$-module, then $\mathcal{G}_I$ does not have enough projective objects and, therefore, it provides a negative answer to the Cuadra--Simson Problem.
\end{thm}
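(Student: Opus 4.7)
By Theorem~\ref{thm.always enough flats}(ii), the category $\mathcal{G}_\calI\cong\mathcal{X}_\calI$ is locally finitely presented precisely when $\mathcal{C}_\calI=\Gen(\mathcal{X}_\calI\cap\rmod\mathcal{A})$; once this is established, the ``In particular'' assertion is immediate from Theorem~\ref{thm.always enough flats}(iii) (and the having-enough-flats part is unconditionally ensured by Theorem~\ref{thm.always enough flats}(i)). So the plan is to prove the generation identity above under each hypothesis.

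For~(i), I use the equivalence $\mathcal{A}\biMod\mathcal{A}\simeq[\mathcal{A},\rMod\mathcal{A}]$ from Proposition~\ref{prop.interpretations of bimodules}: the bimodule $\calI$ corresponds to the functor $a\mapsto\calI(-,a)$, and Corollary~\ref{cor.properties tensor product}(iii) applied to $\calI\otimes_\mathcal{A}\calI$ identifies, for each $a\in\Obj(\mathcal{A})$, the value at~$a$ of the bimodule map $\rho_\calI\colon\calI\otimes_\mathcal{A}\calI\to\calI$ with the right-$\mathcal{A}$-module map $\rho_{\calI(-,a)}\colon\calI(-,a)\otimes_\mathcal{A}\calI\to\calI(-,a)$. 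The hypothesis that $\rho_\calI$ is an isomorphism therefore forces every $\rho_{\calI(-,a)}$ to be an isomorphism, and Proposition~\ref{prop.colocalization as tensor product}(iv) places $\calI(-,a)\in\mathcal{X}_\calI$. Combined with the finite-presentability hypothesis in~(i) and Proposition~\ref{prop.Proposition 4.4 of Lorenzo-notes}(i), this yields $\calI(-,a)\in\fp(\mathcal{X}_\calI)=\mathcal{X}_\calI\cap\rmod\mathcal{A}$. Since $\mathcal{C}_\calI=\Gen(\calI(-,a)\mid a\in\Obj(\mathcal{A}))$ (as used in the proof of Lemma~\ref{lema.generation of C versus of X}), this gives $\mathcal{C}_\calI=\Gen(\mathcal{X}_\calI\cap\rmod\mathcal{A})$, as required.

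For~(ii), the decisive preliminary observation is that $\mathcal{X}_\calI=\mathcal{C}_\calI$ under this hypothesis. The inclusion $\mathcal{X}_\calI\subseteq\mathcal{C}_\calI$ is Proposition~\ref{prop.relevantfacts-recollement}(i); conversely, for $M\in\mathcal{C}_\calI$, the map $\rho_M$ is monic by~(ii) and has image $M\calI=M$ by Proposition~\ref{prop.relevantfacts-recollement}(ii), hence is an isomorphism, so $M\in\mathcal{X}_\calI$ by Proposition~\ref{prop.colocalization as tensor product}(iv). Consequently $\mathcal{X}_\calI\cap\rmod\mathcal{A}=\mathcal{C}_\calI\cap\rmod\mathcal{A}$, and by Lemma~\ref{lema.generation of C versus of X} the task reduces to checking $\tr_\mathcal{S}(\mathcal{A})=\calI$ for $\mathcal{S}\leqdef\mathcal{C}_\calI\cap\rmod\mathcal{A}$. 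Given $\xi\in\calI(c,a)$, idempotency $\calI=\calI^2$ yields a decomposition $\xi=\eta\circ\gamma$ with $\gamma\in\what\calI(c,\what d)$ and $\eta\in\what\calI(\what d,a)$, reducing the problem to producing a finitely presented quotient $N$ of $H_{\what d}$ that lies in $\mathcal{C}_\calI$ and accepts a morphism $N\to H_a$ factoring $H_\eta$.

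The main obstacle is precisely this last construction. Its leverage is that~(ii) forces $\calI$ to be flat as a left bimodule (being a pure sub-bimodule of the flat $\mathcal{A}_\textup{reg}$), so any finitely generated sub-bimodule $\calK\subseteq\calI$ satisfies $H_b\otimes_\mathcal{A}\calK\hookrightarrow H_b$ with image $H_b\calK$; the task is to combine this with idempotency $\calI=\calI^2$ (to secure $\mathcal{C}_\calI$-membership of the quotient) in order to exhibit, for the given $\eta$, a finitely presented $N$ as above. Whereas in case~(i) the canonical candidate $\calI(-,a)$ is automatically finitely presented and in $\mathcal{X}_\calI$, in case~(ii) the finitely presented modules in $\mathcal{C}_\calI$ must be built by a more delicate approximation; this is where the bulk of the technical work lies.
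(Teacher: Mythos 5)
Your treatment of case~(i) is correct and coincides with the paper's argument: you pass from $\rho_\calI$ being an isomorphism to each $\rho_{\calI(-,a)}$ being an isomorphism via Corollary~\ref{cor.properties tensor product}\,(iii), invoke Proposition~\ref{prop.colocalization as tensor product}\,(iv) to place $\calI(-,a)$ in $\mathcal{X}_\calI$, combine with the finite-presentability hypothesis and Proposition~\ref{prop.Proposition 4.4 of Lorenzo-notes}\,(i), and close with $\mathcal{C}_\calI=\Gen(\calI(-,a)\mid a\in\Obj(\mathcal{A}))$ and Theorem~\ref{thm.always enough flats}\,(ii). Your preliminary step in case~(ii), establishing $\mathcal{C}_\calI=\mathcal{X}_\calI$, is also correct and is what the paper uses.

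However, case~(ii) has a genuine gap that you yourself flag: after reducing via Lemma~\ref{lema.generation of C versus of X} to producing, for each $\xi\in\calI(c,a)$, a finitely presented object of $\mathcal{C}_\calI$ through which $H_\xi$ factors, you do not carry out the construction, and the idempotency-plus-flatness strategy you sketch does not obviously close the loop. What is missing is a change of viewpoint. The equality $\mathcal{C}_\calI=\mathcal{X}_\calI$ has a much stronger immediate consequence than you exploit: $\mathcal{C}_\calI$ is then closed under subobjects. Indeed, if $N\leq X$ with $X\in\mathcal{C}_\calI$, then $X/N\in\mathcal{C}_\calI=\mathcal{X}_\calI$ as well, and applying $-\otimes_\mathcal{A}\calI$ to the short exact sequence $0\to N\to X\to X/N\to 0$ (with $\rho_X$ and $\rho_{X/N}$ isomorphisms) forces $\rho_N$ to be surjective, i.e.\ $N\calI=N$, i.e.\ $N\in\mathcal{C}_\calI$ -- exactly the argument used in the proof of ``(a)\,$\Rightarrow$\,(c)'' of Lemma~\ref{lema.generation of C versus of X}. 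Thus $(\mathcal{C}_\calI,\mathcal{T}_\calI)$ is a \emph{hereditary} torsion pair, and one may then invoke \cite[Proposition~3.9]{PSV} to obtain a basis of the associated Gabriel topology consisting of finitely generated right ideals. This yields sets $\mathbf{B}_a$ of finitely generated submodules of $H_a$ with $\mathcal{C}_\calI=\Gen(H_a/N\mid a\in\Obj(\mathcal{A}),\,N\in\mathbf{B}_a)$, and the quotients $H_a/N$ are then a set of finitely presented generators of $\mathcal{X}_\calI=\mathcal{C}_\calI$ by Proposition~\ref{prop.Proposition 4.4 of Lorenzo-notes} and Theorem~\ref{thm.always enough flats}. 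Your attempted direct construction of a finitely presented object of $\mathcal{C}_\calI$ through which a prescribed $H_\eta$ factors sidesteps this hereditarity observation and, as you note, never gets off the ground; the hereditary-torsion-pair/Gabriel-topology route is what makes the argument close.
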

\begin{proof}
The fact that $\mathcal{G}_I$ has enough flat objects is a consequence of Theorem~\ref{thm.always enough flats}, from which we also deduce the final part of the statement. 

\smallskip \noindent(i) By Corollary~\ref{cor.properties tensor product}\,(iii), the canonical morphism  $\rho_{\calI}\colon\calI \otimes_\mathcal{A}\calI\to\calI$ is an isomorphism if, and only if, the morphism $\rho_{\calI(-,a)}\colon\calI(-,a) \otimes_\mathcal{A}\calI\to\calI(-,a)$ is an isomorphism, for all $a\in\Obj(\mathcal{A})$. By Proposition~\ref{prop.colocalization as tensor product}\,(iv), this is equivalent to say that $\calI(-,a)\in \mathcal{X}_I$, for all $a\in\Obj(\mathcal{A})$. The fact that $\mathcal{C}_{\calI}=\Gen(\calI(-,a)\mid a\in\Obj(\mathcal{A}))$ (see \cite[Lemma~2.4]{PSV}) implies that $\mathcal{X}_{\calI}\;(\empty\cong\mathcal{G}_{\calI})$ is locally finitely presented (see Theorem~\ref{thm.always enough flats}\,(ii)).

\smallskip \noindent(ii) If $I$ is pure on the left then $\mathcal{C}_\calI=\mathcal{X}_\calI$, by  Proposition~\ref{prop.colocalization as tensor product}\,(iv). We claim that $\mathcal{C}_\calI$ is then closed under submodules, so that $(\mathcal{C}_{\calI},\mathcal{T}_{\calI})$ is a hereditary torsion pair in $\rMod \mathcal{A}$. Indeed, if $N$ is a submodule of $X\in\mathcal{C}_\calI$, then $X/N\in\mathcal{C}_\calI=\mathcal{X}_\calI$ and, by an argument similar to  the proof of ``$\textup{(a)}\Rightarrow\textup{(c)}$'' in Lemma~\ref{lema.generation of C versus of X},  $N\in\mathcal{C}_\calI$. 

In this paragraph we follow the terminology of \cite[Section~3]{PSV}. By \cite[Proposition~3.9]{PSV}, the  Grothendieck topology $\mathbf{G}$ associated to $(\mathcal{C}_{\calI},\mathcal{T}_{\calI})$ has a basis consisting of finitely generated right ideals. This implies that, for each $a\in\Obj(\mathcal{A})$, there is a set $\mathbf{B}_a$ of finitely generated submodules of $H_a$, such that $\mathcal{C}_I=\Gen(H_a/N\mid a\in\Obj(\mathcal{A}),\, N\in\mathbf{B}_a)$. Thus, $\{H_a/N\mid a\in\Obj(\mathcal{A}),\, N\in\mathbf{B}_a\}$ is a set of finitely presented generators of $\mathcal{X}_\calI=\mathcal{C}_\calI$ by Proposition~\ref{prop.Proposition 4.4 of Lorenzo-notes} and Theorem~\ref{thm.always enough flats}.
\end{proof}

In the following proposition we describe a class of rings that come with a canonical idempotent ideal that satisfies both conditions (i) and (ii) from Theorem~\ref{teor.counterexample-recipe}. This class of examples was suggested to us by Dolors Herbera and Pavel P\v r\'\i hoda \cite{HPV}. Furthermore, the proof of assertion~(iv) in the following proposition is due to Pavel P\v r\'\i hoda. The reader is referred to \cite{DP} for
the non-defined concepts and terminology used in the next proposition
and its proof.

\begin{prop}\label{exmpl.Dubrovin-Puninski} 
Let $R$ be a nearly simple uniserial domain, let $X=R/rR$, with $0\neq r\in D$, be its unique (up to isomorphism) cyclically presented torsion right $R$-module, denote by $S\leqdef\End_R(X_R)$ its endomorphism ring, and let $K$ be the idempotent two-sided ideal of $S$ consisting of the endomorphisms which are not surjective. Then, the following assertions hold true:
\begin{enumerate}[(i)]
\item ${}_SK$ is finitely presented;
\item the canonical map $K\otimes_SK\to K$ is an isomorphism;
\item $K$ is pure as a right ideal of $S$;
\item $K$ is not the trace of a projective left $S$-module.
\end{enumerate}
In particular,  the Giraud subcategory $\mathcal{G}_K$ of $S\lMod=\rMod{S^{\op}}$ associated with $K$ is locally finitely presented, it has enough flat objects, but not enough projectives. Therefore, the Cuadra--Simson Problem has a negative answer in general.
\end{prop}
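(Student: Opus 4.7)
The plan is to establish the four claims (i)--(iv) in turn and then invoke Theorem~\ref{teor.counterexample-recipe} with $\mathcal{A}=S^\textup{op}$ and $\calI=K$ (so that $\rMod\mathcal{A}=S\lMod$). I would first recall the structural facts about $S$ from \cite{DP} and the appendix of \cite{B-H-P-S-T*}. The ring $S$ is semilocal with exactly two maximal two-sided ideals, namely $K$ and its counterpart $J=\{\sigma\in S\mid\sigma\text{ is not injective}\}$, both of which are idempotent; $K+J=S$, $K\cap J=J(S)$, and $S/K$ is a division ring. Moreover, the one-sided ideal lattices of $S$ are explicitly describable, with $K$ principal both on the left and on the right (say $K=S\xi=\xi S$), and every projective $S$-module is free.

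The cleanest item is (iv): since every projective left $S$-module is free and $K$ is a proper nonzero ideal, $\tr_P(S)\in\{0,S\}$ for every projective $P$, so $K$ cannot be such a trace. For (iii), I would use that purity of $K\hookrightarrow S$ as right $S$-modules is equivalent to flatness of $(S/K)_S$, which by the Chase--Lazard criterion amounts to the identity $K\cap L=KL$ for every finitely generated left ideal $L\subseteq S$; this is verified using the explicit lattice of one-sided ideals of $S$. Granted (iii), item (ii) drops out: tensoring the right-module short exact sequence $0\to K\to S\to S/K\to 0$ with ${}_SK$ remains exact, producing $0\to K\otimes_S K\to K\to (S/K)\otimes_S K\to 0$, and since $(S/K)\otimes_S K=K/K^2=0$ by the idempotency $K^2=K$, the multiplication map $K\otimes_S K\to K$ is an isomorphism. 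For (i), the left-principality $K=S\xi$ together with the finite generation of $\lann_S(\xi)$ (again extracted from the one-sided ideal lattice) yields a short exact sequence $0\to\lann_S(\xi)\to S\to K\to 0$ that exhibits ${}_SK$ as finitely presented.

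With (i)--(iv) in hand, the hypotheses of Theorem~\ref{teor.counterexample-recipe} are met for $\mathcal{A}=S^\textup{op}$ and $\calI=K$ (either the combination (i)+(ii) of the proposition above, which yields condition~(i) of that theorem, or (iii) alone, which yields its condition~(ii)). The theorem then ensures that the Giraud subcategory $\mathcal{G}_K$ of $S\lMod$ is locally finitely presented with enough flat objects, while (iv) prevents it from having enough projectives; thus $\mathcal{G}_K$ answers the Cuadra--Simson Problem in the negative. The chief obstacle in carrying out this plan is extracting from \cite{DP} and the appendix of \cite{B-H-P-S-T*} the three fine structural facts invoked above: the freeness of every projective $S$-module (for (iv)), the identity $K\cap L=KL$ for every finitely generated left ideal $L$ (for (iii)), and the finite generation of $\lann_S(\xi)$ (for (i)).
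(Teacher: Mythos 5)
Your plan shares the paper's skeleton: prove (i)--(iv) and feed them into Theorem~\ref{teor.counterexample-recipe} with $\mathcal{A}=S^{\op}$ and $\calI=K$. Item (i) and the final deduction agree with the paper, and your derivation of (ii) from (iii), by tensoring $0\to K\to S\to S/K\to 0$ with ${}_SK$ and using $K^2=K$, is a clean alternative to the paper's route; the paper instead proves (ii) directly from $\mathrm{pd}_S(K)\leq 1$ (extracted from \cite[Lemmas~5.3, 6.1]{DP}) via a short $\Ext$-computation and Proposition~\ref{prop.colocalization as tensor product}\,(iv), and then feeds (ii) back into (iii). For (iii) the paper does \emph{not} run Chase's criterion over the lattice of finitely generated left ideals; it invokes \cite[Theorem~5.5]{DP}, which cuts the purity check down to the three test modules $S$, $S/K$, $K$ (the case $Z=K$ is precisely where (ii) is used). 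Your proposed verification of $K\cap L=KL$ for all finitely generated left ideals $L$ is a correct criterion, but you give no reason to believe the computation is tractable, and it is a much heavier route than what \cite{DP} actually provides.

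The real gap is (iv). Your argument rests on the assertion that every projective left $S$-module is free, which you do not justify, and which you should not expect to be readable off \cite{DP}. Indeed, the paper's remark observes that $K$ \emph{is} the trace of a projective right $S$-module by \cite[Corollary~2.7]{W}; since $0\neq K\neq S$ that projective is nonfree, so the freeness of all projectives already fails on one side of $S$, and nothing you cite rules out nonfree projective left $S$-modules. What (iv) actually requires is the sharper fact that the specific ideal $K$ is not a trace on the left; the paper relies on the private communication \cite{HPV} for this, and its remark sketches an alternative argument (under a countable-generation hypothesis on the ideals of $S$) via silting theory: by \cite[Theorem~A]{AM-B-P}, if $K$ were such a trace then $S/K$ would be a silting left $S$-module, which \cite[Lemma~5.4\,(1), Example~5.12]{B-H-P-S-T*} exclude. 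Your one-line reduction therefore does not establish (iv), and this is the step you must repair.
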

\begin{proof}
The final statement is a direct consequence of Theorem~\ref{teor.counterexample-recipe} and assertions (i--iv), that are proved below.

\smallskip \noindent(i) By \cite[Lemma~4.4, Corollary~3.6]{DP}, the ring $S$ is left and right coherent and $K$ is principal, whence finitely presented, as a left ideal. 

\smallskip \noindent(ii) By \cite[Lemmas~5.3 and~6.1]{DP}, $S/K$ is an injective left $S$-module such that  $\mathrm{pd}_S(S/K)\leq 2$, equivalently $\mathrm{pd}_S(K)\leq 1$, where $\mathrm{pd}_S$  denotes the projective dimension as a left $S$-module. Denote by $\mathcal{T}'_K$ the TTF class in $S\lMod$ associated with $K$, which can be identified with $(S/K)\lMod$. If $T\in\mathcal{T}'_K$ and we consider a projective presentation in this latter category,
\[
	0\longrightarrow T' \longrightarrow (S/K)^{(J)}\longrightarrow
		T \longrightarrow0,
\]
where $J$ is some set, then we obtain the following exact sequence of $\Ext$-groups:
\[
	0=\Ext_S^1(K,S/K)^{(J)}\cong\Ext_S^1(K, (S/K)^{(J)})\to\Ext_S^1(K,T)\to\Ext_S^2(K,T')=0,
\]
showing that $K\in\mathcal{X}'_K\leqdef{}^{\perp_{0,1}}\mathcal{T}'_K$. We can now conclude by Proposition~\ref{prop.colocalization as tensor product}\,(iv).

\smallskip \noindent(iii) By \cite[Theorem~5.5]{DP}, it suffices to verify that the canonical map 
\[
\mu_{K,Z}\colon K\otimes_SZ\longrightarrow Z,
\]is injective, whenever $Z={}_SS$, $Z={}_S(S/K)$, or $Z={}_SK$. On the other hand, this is clear for $Z={}_SS$, as $K\otimes_SS\cong K$, so that $\mu_{K,S}$ is just the inclusion of $K$ in $S$, and also for $Z={}_S(S/K)$, as $K\otimes_S S/K\cong K/K^2=0$. Finally, the case $Z={}_SK$ is a consequence of part~(ii).

\smallskip \noindent(iv) Suppose, looking for a contradiction, that ${K}$ is the trace of a projective left $S$-module $P$. Consider an idempotent row-finite matrix $A=(a_{\lambda,\mu})\in \mathcal{M}_{\Lambda\times \Lambda}(S)$ (for some set $\Lambda$), giving a free presentation of $P$. Then, ${{K}}$ is generated by the content of $A$. Choose a non-trivial row in $A$, corresponding to some $\lambda \in\Lambda$. Since $A$ is row-finite, the family $\{\Im(a_{\lambda,\mu})\mid  \mu\in\Lambda\}$ is finite and totally ordered by inclusion (as $X$ is uniserial). Hence, there exists $\mu\in\Lambda$ such that $\Im(a_{\lambda,\mu})$ is maximal. Since  $A$ is idempotent, we can write $a_{\lambda,\mu}$ as follows:
\[
a_{\lambda,\mu} = \sum_{\nu\in\Lambda} a_{\lambda,\nu} a_{\nu,\mu}.
\]
Observe that, being $A$ row-finite, the sum on the right-hand side is finite and, since $X$ is uniserial, we obtain that $\Im(a_{\lambda,\mu})=\Im(a_{\lambda,\nu}a_{\nu,\mu})$, for some $\nu$; as we had chosen $\mu$ for which $\Im(a_{\lambda,\mu})$ is maximal, we deduce that:
\[
\Im(a_{\lambda,\nu})=\Im(a_{\lambda,\mu})=\Im(a_{\lambda,\nu}a_{\nu,\mu}).
\]
Hence, $a_{\lambda,\nu}(X)=a_{\lambda,\nu}(\Im(a_{\nu,\mu}))$, and so:
\[
X = a_{\lambda,\nu}^{-1} \left(a_{\lambda,\nu} (X)\right) = a_{\lambda,\nu}^{-1}\left(a_{\lambda,\nu}(\Im(a_{\nu,\mu}))\right) = \Im(a_{\nu,\mu}) + \ker(a_{\lambda,\nu}).
\]
Hence, since $X$ is uniserial, either $X=\ker(a_{\lambda,\nu})$, which is a contradiction since we have chosen $\lambda$ so that the corresponding row is non-trivial, and $\mu$ so that $a_{\lambda,\mu}$ has maximal image, or $X=\Im(a_{\nu,\mu})$ which is a contradiction because $a_{\nu,\mu}$ belongs to the ideal of non-epimorphisms. 
\end{proof}

\begin{rem}\mbox{}\vglue0pt%
\begin{enumerate}
\item Note that the ideal $K$ of Proposition~\ref{exmpl.Dubrovin-Puninski} is the trace of a projective right $S$-module (see \cite[Corollary~2.7]{W}), showing that the property of being the trace of a projective module is not left-right symmetric. Similarly, $K$ is pure as a right, but not as a left, ideal of $S$.

\item Several of the ideas that we have used in the proof of Proposition~\ref{exmpl.Dubrovin-Puninski} did already appear in the appendix
of \cite{B-H-P-S-T*}, although that part did not make it to the final version of the paper (see \cite{B-H-P-S-T}).

\item It follows by \cite[Theorem~A]{AM-B-P} that, in the setting of Proposition~\ref{exmpl.Dubrovin-Puninski}, for $K$ to be the trace of a projective left $S$-module, it is necessary for $S/K$ to be a silting module. In fact, under the extra assumption that all the ideals of $S$ are countably generated, it follows from \cite[Lemma~5.4\,(1), Example~5.12]{B-H-P-S-T*} that $S/K$ is not silting, hence precluding $K$ from being the trace of a projective left $S$-module. This provides an alternative proof for part (iv) of Proposition~\ref{exmpl.Dubrovin-Puninski}, in this particular case. We refer to \cite[Example~3.1, Remark~3.7]{DP} for a concrete example of a ring $S$ that fits into the setting of Proposition~\ref{exmpl.Dubrovin-Puninski} and whose ideals are all countably generated. 

\item Observe that the ring $S$ in Proposition~\ref{exmpl.Dubrovin-Puninski} is left coherent, so the Grothendieck category $\mathcal{G}_K$ is  locally coherent (see \cite[Theorem~2.16]{H97}). Hence, such a $\mathcal{G}_K$ also gives a negative answer for two of the three questions asked by Djament. 
\end{enumerate}
\end{rem}

\section{Partial affirmative answers to the Cuadra--Simson Problem}%
Although the Cuadra--Simson Problem has a negative answer in general, we shall see in this section that it may still be answered in the affirmative when we restrict it to suitable classes of Grothendieck categories, including the case of comodule categories, which were the primary focus of Cuadra and Simson.

\subsection{Bilocalizing subcategories of modules  over commutative rings}%
Our next result gives an affirmative answer to the (reformulated) Cuadra--Simson Problem~\ref{probl.CS reformulated}\,(1) in the commutative case. Recall that if $R$ is a commutative ring and $M$ is an $R$-module, then the \emph{support of $M$}, denoted by $\Supp(M)$, is the set of all primes $\mathbf{p}\in\Spec(R)$, such that $M_\mathbf{p}\neq 0$.

\begin{thm}
\label{teor.CS-commutative case}%
Let $R$ be a commutative ring, and let $I$ be an idempotent ideal of $R$ such that the associated Giraud subcategory $\mathcal{G}_I$ of $\rMod R$ is locally finitely presented. Then, $I_\mathbf{p}=R_\mathbf{p}$, for all $\mathbf{p}\in\Supp(I)$. In particular, $I$ is the trace of a projective $R$-module, and so $\mathcal{G}_I$ has enough projectives. 
\end{thm}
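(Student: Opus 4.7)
The plan is to split the proof into two stages. \emph{Stage~1: the support equality} $I_{\mathbf p}=R_{\mathbf p}$ for $\mathbf p\in\Supp(I)$. By Theorem~\ref{thm.always enough flats}\,(ii), the hypothesis that $\mathcal{G}_I$ is locally finitely presented is equivalent to $\mathcal{C}_{\calI}=\Gen(\mathcal{X}_{\calI}\cap\rmod R)$. Since $I^2=I$, the ideal $I$ itself lies in $\mathcal{C}_{\calI}$, and hence there is a family of morphisms $f_\lambda\colon X_\lambda\to I$ with $X_\lambda\in\fp(\mathcal{X}_{\calI})$ such that $I=\sum_\lambda\im f_\lambda$. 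Fixing $\mathbf p\in\Supp(I)$, localizing this sum forces $(\im f_\lambda)_{\mathbf p}\neq0$ for at least one index, so $\mathbf p\in\Supp(X_\lambda)$; set $X\leqdef X_\lambda$. By Proposition~\ref{prop.colocalization as tensor product}\,(iv), the multiplication map $X\otimes_R I\to X$ is an isomorphism, which localizes to yield $X_{\mathbf p}\cdot I_{\mathbf p}=X_{\mathbf p}$. Since $X_{\mathbf p}$ is a nonzero finitely generated module over the local commutative ring $R_{\mathbf p}$, Nakayama's lemma rules out $I_{\mathbf p}\subseteq\mathbf pR_{\mathbf p}$, forcing $I_{\mathbf p}=R_{\mathbf p}$.

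\emph{Stage~2: $I$ is the trace of a projective.} The equality of Stage~1 gives $I_{\mathbf p}\in\{0,R_{\mathbf p}\}$ for every $\mathbf p\in\Spec(R)$, so $R/I$ is locally either $R_{\mathbf p}$ or $0$, hence flat; in other words, $I$ is a pure ideal of $R$. A classical theorem in commutative algebra then gives that every pure ideal of a commutative ring is generated by idempotents, i.e., $I=\sum_\lambda Re_\lambda$ with $e_\lambda^2=e_\lambda\in I$. Proposition~\ref{prop.traces-versus-idempotents}\,(ii) identifies such an $I$ as the trace of the projective $R$-module $\coprod_\lambda Re_\lambda$, and Theorem~\ref{thm.always enough flats}\,(iii) converts the latter into the existence of enough projective objects in $\mathcal{G}_I$; in fact, part~(iv) even gives a set of finitely generated projective generators.

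The main obstacle is the passage ``pure $\Rightarrow$ generated by idempotents'' used in Stage~2. Starting from $x\in I$, the purity identity $xR=xI$ (a consequence of $\Tor_1(R/I,R/xR)=0$, obtained by tensoring $0\to xR\to R\to R/xR\to0$ with the flat $R/I$) produces some $e_0\in I$ with $xe_0=x$, and the finite-subset trick $1-\prod_i(1-e_i)$ yields an $e\in I$ acting as the identity on any prescribed finitely generated subideal. Ensuring that $e$ itself be idempotent is delicate and relies on the Chinese Remainder decomposition $R/(I\cdot\operatorname{ann}(x))\cong R/I\times R/\operatorname{ann}(x)$, valid because $I+\operatorname{ann}(x)=R$ (a consequence of $\Supp(Rx)\subseteq\Supp(I)$) and $I\cap\operatorname{ann}(x)=I\cdot\operatorname{ann}(x)$ (by purity). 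I would simply cite this classical commutative-algebra result rather than reproduce its proof here.
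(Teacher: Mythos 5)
Your Stage~1 is correct and is essentially the paper's argument run forwards instead of by contradiction: you pick a single finitely presented $X\in\mathcal{X}_I$ with $X_{\mathbf p}\neq0$ and apply Nakayama, whereas the paper assumes $0\neq I_{\mathbf p}\subsetneq R_{\mathbf p}$, concludes via Nakayama that \emph{every} $X\in\mathcal{X}_I\cap\rmod R$ localizes to $0$ at $\mathbf p$, and then contradicts $\mathcal{C}_I=\Gen(\mathcal{X}_I\cap\rmod R)$. Both versions are fine and rely on the same inputs (Theorem~\ref{thm.always enough flats}\,(ii), Proposition~\ref{prop.colocalization as tensor product}\,(iv), Nakayama over $R_{\mathbf p}$).

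Stage~2, however, contains a genuine error. You pass from ``$I$ is a pure ideal of the commutative ring $R$'' to ``$I$ is generated by idempotents of $R$''. This is false in general. A concrete counterexample: take $R=C[0,1]$ (continuous real-valued functions) and let $I$ be the ideal of functions vanishing on some neighbourhood of $0$. For $x\in I$ vanishing on $[0,\varepsilon]$, a bump function $e\in I$ that equals $1$ on $[\varepsilon,1]$ gives $xe=x$, so $I$ is pure. But $[0,1]$ is connected, so the only idempotents of $R$ are $0$ and $1$, and $I\neq0,R$; hence $I$ is \emph{not} generated by idempotents. Your Chinese-Remainder sketch breaks precisely at the lifting step: the idempotent $(0,\bar 1)$ of $R/I\times R/\operatorname{ann}(x)$ need not lift to an idempotent of $R$ (in the example it visibly does not, since a lift would be a continuous $\{0,1\}$-valued non-constant function on $[0,1]$). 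The correct statement, and the one the paper uses, is that a pure ideal of a commutative ring is the trace ideal of a (not necessarily finitely generated) projective $R$-module; this is a theorem of J\o ndrup--Trosborg (also in Herbera--P\v r\'\i hoda), which together with Theorem~\ref{thm.always enough flats}\,(iii) gives enough projectives. Your attempted upgrade to ``a set of finitely generated projective generators'' via Theorem~\ref{thm.always enough flats}\,(iv) is therefore also unjustified: that stronger conclusion would require $I$ to be generated by idempotent endomorphisms of finite free modules, which does not follow from purity and which the paper correctly does not assert.
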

\begin{proof}
If $I_\mathbf{p}=R_\mathbf{p}$, for all $\mathbf{p}\in\Supp(I)$ then, for each $\mathbf{p}\in\Spec(R)$, one has that either $(R/I)_\mathbf{p}=R_\mathbf{p}$ or $(R/I)_\mathbf{p}=0$. So $(R/I)_\mathbf{p}$ is a projective $R_\mathbf{p}$-module, for all $\mathbf{p}\in\Spec(R)$, which implies that $R/I$ is flat as $R$-module (see \cite[Proposition~3.10]{AM}). This is equivalent to say that $I$ is pure in $R$. But then $I$ is the trace of a projective $R$-module (see \cite[Proposition 1.1]{JT} or  \cite[Corollary~2.13]{HP14}).

It remains to verify that $I_\mathbf{p}=R_\mathbf{p}$, for all $\mathbf{p}\in\Supp(I)$. Suppose, looking for a contradiction, that there is some $\mathbf{p}\in\Spec(R)$ such that $0\neq I_\mathbf{p}\subsetneq R_\mathbf{p}$. For all $X\in\mathcal{X}_I\cap\rmod R$ we have that $X=XI$, and so $X_\mathbf{p}=X_\mathbf{p}I_\mathbf{p}$. But $X_\mathbf{p}$ is a finitely presented $R_\mathbf{p}$-module while $I_\mathbf{p}\subseteq\mathbf{p}R_\mathbf{p}=J(R_\mathbf{p})$. Nakayama's Lemma then forces $X_\mathbf{p}=0$, for all $X\in \mathcal{X}_I\cap\rmod R$. On the other hand, by Theorem~\ref{thm.always enough flats}, we have that $\mathcal{C}_I=\Gen(\mathcal{X}_I\cap\rmod R)$, and so there exists  an epimorphism $p\colon \coprod_{\Lambda}X_\lambda\twoheadrightarrow I$ in $\rMod R$, with $X_\lambda\in \mathcal{X}_I\cap\rmod R$, for all $\lambda\in\Lambda$. Applying the localization functor $(-)_\mathbf{p}\cong-\otimes_RR_\mathbf{p}\colon\rMod R\to\rMod R_\mathbf{p}$ we get an epimorphism $0=\coprod_{\Lambda}(X_\lambda)_\mathbf{p}\cong (\coprod_{\Lambda}X_\lambda)_\mathbf{p}\twoheadrightarrow I_\mathbf{p}\neq 0$, a contradiction.
\end{proof}

\subsection{When the endomorphism rings are semiregular}%

The class of those locally finitely presented Grothendieck categories $\G$ which are {\em regular (in the sense of von Neumann)} was defined in \cite[Section~5]{St2}. In fact,  Stenstr\"om characterized the regularity of $\G$ via eight equivalent conditions, see \cite[Theorem~4]{St2}, labeled with letters going from (a) to (f\,$'$). Note that condition~(c), i.e., the equality $\fp(\G)=\proj(\G)$, implies that $\G$ is generated by $\proj(\G)$, and this occurs only in module categories. Hence, it is not restrictive to assume $\G= \rMod\A$, for a small preadditive category $\A$ (e.g., take $\A\leqdef\proj(\G))$. 

In what follows, we will say that $\A$ is  \emph{von Neumann regular (vNR)} if $\rMod\A$ is regular in the sense of \cite{St2}. The equivalence between Stenstr\"om's conditions (a), (e), and (f), implies that $\A$ is  vNR if and only if it satisfies one of the following equivalent conditions:

%
\begin{itemize}
\item for each $\alpha\colon \what b\rightarrow \what a$ in $\what{\A}$, there is $\beta\colon \what a\rightarrow \what b$ such that $\alpha=\alpha\circ\beta\circ\alpha$;
\item for each $\what a\in\Obj(\what \A)$, the ring $\what \A(\what a,\what a)$ is von Neumann regular;
\item for each $a\in\Obj(\mathcal{A})$, the ring $\A(a,a)$ is von Neumann regular.
\end{itemize}
Adapting the usual ring-theoretical terminology, we may call a morphism $\alpha$ in $\what\A$ {\em regular} if it has a {\em quasi-inverse} $\beta$, i.e., the following equality holds: $\alpha=\alpha\circ\beta\circ\alpha$. With these conventions, $\A$ is vNR precisely when each $\alpha$ in $\what\A$ is regular.


Recall from \cite{N76} that an element $x$ in a ring $R$ is called \emph{semiregular} if there exists $y\in R$ such that $y=yxy$, and $x - xyx\in J(R)$. We say that $R$ is
\emph{semiregular} when each element $x\in R$ is semiregular or, equivalently, when $R/J(R)$ is vNR, and idempotents lift modulo $J(R)$ (see \cite[Theorem~2.9]{N76}); we now extend this idea to small preadditive categories.

\begin{defn}
A small preadditive category $\A$ is called \emph{semiregular} when $\what\A/\rad_{\what\A}$ is vNR, and idempotents in $\what\A/\rad_{\what\A}$ lift modulo the radical.
\end{defn}
Observe that, as a direct consequence of the definitions, $\A$ is semiregular if, and only if, the ring $\what\A(\what a,\what a)$ is semiregular for all $\what a\in \Obj(\what\A)$.

In what follows we will use the notation $\rad_\A(-,\what a)$ for the right $\A$-submodule $\rad_{\what\A}(-,\what a)_{|\A}\leq H_{\what a}$.

\begin{lem}\label{prop.semiregular categories}
Let $\A$ be a semiregular small preadditive category. Then, for each $\alpha\in\what\A(\what b,\what a)$ there is an idempotent $e=e^2\in \what\A(\what a,\what a)$, such that:
\begin{equation*}
\tag{$\dagger$} e\A\subseteq\alpha\A\qquad\text{and}\qquad \alpha\A\subseteq e\A+\rad_{\A}(-,\what a).
\end{equation*}
As a consequence: $\A e\A\subseteq\A \alpha\A $ and $\A \alpha\A \subseteq\A e\A +\rad_{\A}$.
\end{lem}

\begin{proof}
Let $\wbar{\A}\leqdef \what{\A}/\rad_{\what{\A}}$\, and denote by $\pi \colon \what{\A} \to \wbar{\A}$ the projection functor. Given $\alpha \in \what{\A}(\what b,\what a)$, the map $\bar{\alpha}\leqdef \pi(\alpha)$ is regular in $\wbar{\A}$, so there is a quasi-inverse $\bar \beta\leqdef\pi(\beta)$ of $\bar \alpha$, for some $\beta \in \what{\A}(\what a,\what b)$; let  also  $\bar{e}\leqdef \pi(\alpha \circ \beta)=\bar\alpha \circ\bar \beta\in \wbar{\A}(\what a,\what a)$. The following equalities hold by construction:
\begin{equation*}
\bar{\alpha} = \bar{\alpha} \circ \bar\beta \circ \bar{\alpha}=\bar e\circ \bar \alpha\in\wbar{\A}(\what b,\what a)\qquad\text{and} \qquad \bar e=\bar e^2\in \wbar{\A}(\what a,\what a).
\end{equation*}
Since $\A$ is semiregular, idempotents lift modulo the radical. Furthermore, as we have observed, the ring $\what\A(\what a,\what a)$ is semiregular and so, by \cite[Proposition~2.2]{N76}, there exists an idempotent $e = e^2 \in (\alpha \circ \beta)\what{\A}(\what a,\what a)$ such that $\pi(e) = \bar{e}$.

To prove the first inclusion in ($\dagger$), we simply observe that $e \in (\alpha \circ \beta)\what{\A}(\what a,\what a)$ implies $e = \alpha \circ \beta \circ \delta$ for some $\delta \in \what{\A}(\what a,\what a)$. Thus, $e{\A} \subseteq \alpha{\A}$ (by definition).
As for the second inclusion in ($\dagger$), we apply the regularity of $\bar{\alpha}$ in the quotient and, in particular, the following equality: $\bar{\alpha} = \bar{e} \circ \bar{\alpha}$. Lifting back to $\what{\A}$, we have that $\alpha - e \circ \alpha \in \rad_{\what{\A}}(\what b,\what a)$, which implies $\alpha{\A}\subseteq e{\A}+\rad_{\A}(-,\what a)$, as desired.
\end{proof}

\begin{thm}\label{teor.conjecture true for semiregular-quotients}%
Let $\mathcal{A}$ be a semiregular small preadditive category, let $\calI$ be an idempotent ideal of $\mathcal{A}$, and suppose that the associated category $\mathcal{G}_{\calI}$ is locally finitely presented. Then, $\mathcal{G}_{\calI}$ is generated by its finitely generated projectives.
\end{thm}
\begin{proof}
 Suppose  $\calI\neq 0$; then $\mathcal{X}_\calI\cap\rmod\A\neq 0$, since $\Gen(\mathcal{X}_\calI\cap\rmod \A)=\mathcal{C}_\calI\neq 0$. By Nakayama's Lemma, we deduce that $\calI\not\subseteq\rad_\A$. Moreover, given a morphism $\alpha\in\what\calI(\what b,\what a)\setminus\rad_{\what{\A}}(\what b,\what a)$, we obtain  by Proposition~\ref{prop.semiregular categories}, that: ${\A}e{\A}\subseteq{\A}\alpha{\A}$ and ${\A}\alpha{\A}\subseteq{\A}e{\A}+\rad_{{\A}}$.

Define $\mathcal{E}\leqdef \smash[b]{\bigcup_{\what a\in\Obj(\what{\mathcal{A}})}\mathcal{E}(\what a,\what a)}$, where $\mathcal{E}(\what a,\what a)$ is the set of those idempotents in $\what{\mathcal{A}}(\what a,\what a)$ that belong to $\what{\calI}(\what a,\what a)$, for all $\what a\in\Obj(\what{\mathcal{A}})$. The ideal $\calI'=\sum_{e\in\mathcal{E}}\mathcal{A}e\mathcal{A}$ is the trace in $\mathcal{A}$ of the set $\mathcal{P}=\{e\mathcal{A}\mid e\in\mathcal{E}\}\subseteq \proj\text{-}\A$ (see Proposition~\ref{prop.traces-versus-idempotents}), and it is then an idempotent ideal of $\mathcal{A}$ such that $\calI'\subseteq\calI$. Our goal is to verify the converse inclusion, after which we will have concluded, by  Theorem~\ref{thm.always enough flats}. We have already observed that $\calI'+\rad_\mathcal{A}=\calI+\rad_\mathcal{A}$. We put now
\[
	\wbar{\calI}{}'\leqdef (\calI'+\rad_\mathcal{A})/{\rad_\mathcal{A}}
	\qquad\hbox{and}\qquad
	\wbar{\calI}\leqdef (\calI+\rad_\mathcal{A})/{\rad_\mathcal{A}},
\]
so that $\wbar{\calI}{}'=\wbar{\calI}$, as ideals of the preadditive category $\wbar{\mathcal{A}}=\mathcal{A}/{\rad_\mathcal{A}}$.

Let now $X\in\rmod\mathcal{A}$ and consider the right $\wbar{\mathcal{A}}$-module $\wbar{X}\leqdef X/{X\rad_\mathcal{A}}$. If we also have that $X\in\mathcal{C}_I$ (e.g., if $X\in\mathcal{X}_I\cap\rmod\mathcal{A}$), then $X=X\calI$ from which we readily get that $\wbar{X}=\wbar{X}\wbar{\calI}=\wbar{X}\wbar{\calI'}$. It then follows that $X=X\calI'+X\rad_\mathcal{A}$, which implies that $X=X\calI'$ by Proposition~\ref{prop.NAK}. Thus, $\mathcal{C}_I=\Gen(\mathcal{X}_I\cap\rmod\mathcal{A})\subseteq\mathcal{C}_{I'}$ and this inclusion is an equality since $\calI'\subseteq\calI$. Hence, the TTF triples in $\rMod\mathcal{A}$ associated to $\calI$ and $\calI'$ coincide, which implies that $\calI=\calI'$ by Proposition~\ref{prop.TTF triple associated to idemp.ideal}. 
\end{proof}

\begin{cor}
\label{cor.semiregular ring}%
Let $R$ be a semiregular ring (e.g., a semiperfect or a left or right artinian ring), and let $I$ be an idempotent ideal of $R$. The associated Giraud subcategory $\mathcal{G}_I$ of $\rMod R$ is locally finitely presented if, and only if, $I$ is generated by idempotent elements of $R$. 
\end{cor}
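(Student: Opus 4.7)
My plan is to prove the two implications separately. The ``if'' direction will be an immediate application of Theorem~\ref{thm.always enough flats}\,(iv): every idempotent $e\in R$ is in particular an idempotent endomorphism of the object $R$ of $\what{R}$, so if $I$ is generated as a two-sided ideal by idempotents of $R$, then $I$ is the trace in $R$ of a set of finitely generated projective right $R$-modules of the form $eR$, and Theorem~\ref{thm.always enough flats}\,(iv) guarantees that $\mathcal{G}_I$ has a set of finitely generated projective generators, hence is locally finitely presented.

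For the converse I would adapt the proof of Theorem~\ref{teor.conjecture true for semiregular-quotients} to the case where $\mathcal{A}=R$ has a single object. The crucial observation is that the $2\times 2$ matrix construction used there becomes unnecessary: given any $\alpha\in I\setminus J(R)$, one may apply \cite[Theorem~2.9]{N} directly to $\alpha$ in the semiregular ring $R$, obtaining an idempotent $e=e^2\in R$ with $eR\subseteq\alpha R$ and $(1-e)R\cap\alpha R\subseteq J(R)$. Since $eR\subseteq\alpha R\subseteq I$, the element $e$ is a genuine idempotent of $R$ lying in $I$, and the decomposition $\alpha=e\alpha+(1-e)\alpha$ yields $\alpha\in ReR+J(R)$. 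Setting $E\leqdef\{e\in I\mid e^2=e\}$ and $I'\leqdef\sum_{e\in E}ReR$, this gives an inclusion $I\subseteq I'+J(R)$, with $I'$ an idempotent two-sided ideal contained in $I$.

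The remainder of the argument will follow the pattern of the proof of Theorem~\ref{teor.conjecture true for semiregular-quotients}: first one reduces to the case $I\not\subseteq J(R)$, since otherwise Nakayama's Lemma applied to the finitely presented modules in $\mathcal{X}_I$, combined with Theorem~\ref{thm.always enough flats}\,(ii), would force $I=0$; then, using that $\mathcal{C}_I=\Gen(\mathcal{X}_I\cap\rmod R)$ by Theorem~\ref{thm.always enough flats}\,(ii), one observes that any $X\in\mathcal{X}_I\cap\rmod R$ satisfies $X=XI$, so passing to $\wbar{R}=R/J(R)$ yields $\wbar{X}=\wbar{X}\wbar{I}=\wbar{X}\wbar{I'}$, and hence $X=XI'+X\rad_R$; Nakayama's Lemma (Proposition~\ref{prop.NAK}) then forces $X=XI'$, whence $\mathcal{C}_I\subseteq\mathcal{C}_{I'}\subseteq\mathcal{C}_I$ and $I=I'$ by Proposition~\ref{prop.TTF triple associated to idemp.ideal}. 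The main obstacle I anticipate is that a direct invocation of Theorem~\ref{teor.conjecture true for semiregular-quotients} only produces idempotents in $\what{R}$, i.e.\ idempotent matrices over $R$, whose entries need not themselves be idempotents of $R$ (for example $\tfrac{1}{2}\bigl(\begin{smallmatrix}1&1\\1&1\end{smallmatrix}\bigr)\in M_2(\Q)$); hence one really must reprove Theorem~\ref{teor.conjecture true for semiregular-quotients} in this special case, bypassing the matrix construction in order to extract honest idempotents of $R$ from scratch.
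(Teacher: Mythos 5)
Your proof is correct. The ``if'' direction matches the paper's (both pass through Theorem~\ref{thm.always enough flats}\,(iv)), but the ``only if'' direction takes a genuinely different path. The paper applies Theorem~\ref{teor.conjecture true for semiregular-quotients} (valid since each $\mathcal{M}_{n\times n}(R)$ is again semiregular) and Theorem~\ref{thm.always enough flats}\,(iv) to conclude that $I$ is the trace of a set of finitely generated projective $R$-modules, hence generated by idempotent matrices over $R$; it then cites Warfield (and Nicholson) for the fact that finitely generated projectives over a semiregular ring decompose as finite direct sums of cyclic projectives $eR$ with $e=e^2\in R$, which converts the idempotent matrices into honest idempotent elements of $R$. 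You correctly anticipate that invoking Theorem~\ref{teor.conjecture true for semiregular-quotients} as a black box only yields idempotent matrices, and you close the gap differently, by reworking its proof in the single-object case: since any $\alpha\in I$ is already an endomorphism of the unique object of $\mathcal{A}=R$, Nicholson's characterization applies to $\alpha$ directly, the $2\times2$ matrix trick becomes superfluous, and the resulting idempotent $e$ lies in $R$ itself with $\alpha\in ReR+J(R)$. The closing Nakayama argument you sketch then carries over verbatim. The paper's route is more modular, outsourcing the matrix-to-element conversion to a classical decomposition theorem; yours is more self-contained, at the cost of re-proving a special case of Theorem~\ref{teor.conjecture true for semiregular-quotients}.
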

\begin{proof}
Note that if $\mathcal{A}=R$ is viewed as a preadditive category with just one object, then $\what{\mathcal{A}}$ is equivalent to $\mathrm{free}\mathchar`-R$, the subcategory of finitely generated free $R$-modules. Since the ring $\mathcal{M}_{n\times n}(R)$ is semiregular, for all $n>0$ (see \cite[Proposition~2.7]{N76}), Theorem~\ref{teor.conjecture true for semiregular-quotients} tells us that $\mathcal{G}_I$ is locally finitely presented if, and only if, $I$ is the trace of a set of finitely generated projective $R$-modules. Use then Theorem~\ref{teor.conjecture true for semiregular-quotients} and \cite[Theorems~1 and~3]{War} (see \cite[Theorem~2.9]{N76}).
\end{proof}

Recall that an idempotent-complete skeletally small additive category is \emph{Krull--Schmidt} if each object is a finite coproduct of endolocal objects. Equivalently, each object has a semiperfect endomorphism ring (see \cite[Theorem~A.1]{Chen-Ye-Zhang}).

\begin{cor}
\label{cor.Krull-Schmidt case}%
Let $\mathcal{A}$ be a Krull--Schmidt additive category and let $\calI=\calI^2$ be an ideal of $\mathcal{A}$. If the associated Giraud subcategory $\mathcal{G}_I$ of $\rMod\mathcal{A}$ is locally finitely presented, then it has a set of finitely generated projective generators. 
\end{cor}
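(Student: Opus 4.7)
\medskip

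The plan is to reduce this immediately to Theorem~\ref{teor.conjecture true for semiregular-quotients}. Since $\mathcal{A}$ is Krull--Schmidt it is, in particular, an additive category, so the additive closure $\what{\mathcal{A}}$ is (canonically equivalent to) $\mathcal{A}$ itself: any formal coproduct $\what{a}=\coprod_{j=1}^{n}a_{j}$ is isomorphic in $\what{\mathcal{A}}$ to an honest biproduct $a_{1}\oplus\cdots\oplus a_{n}\in\Obj(\mathcal{A})$, and hence its endomorphism ring in $\what{\mathcal{A}}$ is isomorphic to the endomorphism ring of this biproduct in $\mathcal{A}$.

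Next, I would invoke the characterization of Krull--Schmidt categories already cited in the paper (\cite[Theorem~A.1]{Chen-Ye-Zhang}): a skeletally small idempotent-complete additive category is Krull--Schmidt if and only if every object has a semiperfect endomorphism ring. Since semiperfect rings are semiregular, this gives that $\what{\mathcal{A}}(\what{a},\what{a})$ is semiregular for every $\what{a}\in\Obj(\what{\mathcal{A}})$, verifying the hypothesis of Theorem~\ref{teor.conjecture true for semiregular-quotients}.

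Applying that theorem directly to the pair $(\mathcal{A},\calI)$ then yields that the associated Giraud subcategory $\mathcal{G}_{\calI}$ admits a set of finitely generated projective generators, which is exactly the conclusion. There is really no obstacle here beyond the bookkeeping of identifying $\what{\mathcal{A}}$ with $\mathcal{A}$ and recording the implication ``semiperfect $\Rightarrow$ semiregular''; the substance of the result is already contained in Theorem~\ref{teor.conjecture true for semiregular-quotients}, so the corollary is just the specialization to the Krull--Schmidt setting.
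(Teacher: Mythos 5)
Your proposal is correct and is exactly the paper's argument: the paper's proof also just notes that semiperfect rings are semiregular and invokes Theorem~\ref{teor.conjecture true for semiregular-quotients}. Your additional bookkeeping about $\what{\mathcal{A}}\cong\mathcal{A}$ and the Chen--Ye--Zhang characterization is correct and makes explicit what the paper leaves implicit (having already recorded that characterization in the sentence preceding the corollary).
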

\begin{proof}
The result is a direct consequence of last theorem since any semiperfect ring is semiregular. 
\end{proof}

\subsection{Locally finite categories and comodule categories}%

Recall that a Grothendieck category $\mathcal{G}$ is said to be \emph{locally finite} when each object is the direct union of its subobjects of finite length and the subcategory $\fl(\mathcal{G})$ of finite length objects is skeletally small. Note that $\fl(\mathcal{G})$ is a Krull--Schmidt category since it is idempotent-complete and, by Fitting's Lemma, the endomorphism ring of any finite length indecomposable object is local. On the other hand, such a $\mathcal{G}$ is locally noetherian (i.e., each of its objects is a direct union of its noetherian subobjects, and the noetherian subobjects form a skeletally small subcategory $\noeth(\mathcal{G})$). Then, $\mathcal{G}$ is locally finitely presented (see \cite[Proposition~2]{R69} and \cite{GU}), and so $\fp(\mathcal{G})=\noeth(\mathcal{G})=\fl(\mathcal{G})$.

The following result gives an affirmative answer to the Cuadra--Simson Problem for a class of Grothendieck categories that contains the locally finite ones.

\begin{thm}
\label{teor.locally finite}%
Let $\mathcal{G}$ be a locally finitely presented Grothendieck category such that $\fp \G$ is semiregular (e.g., $\mathcal{G}$ locally finite or, more generally, such that  $\fp(\mathcal{G})$ is Krull--Schmidt). Then, the following assertions are equivalent:
\begin{enumerate}[(a)]
\item $\mathcal{G}$ has enough flat objects.
\item $\mathcal{G}$ is $\mathrm{AB}\mathchar`-4^\ast$.
\item $\mathcal{G}$ has a set of finitely generated projective generators.
\end{enumerate}
\end{thm}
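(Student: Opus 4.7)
The plan is to dispense with $(c)\Rightarrow(a)$ and $(a)\Leftrightarrow(b)$ quickly and then focus on the implication $(b)\Rightarrow(c)$, for which I will realize $\mathcal{G}$ as a category of the form $\mathcal{G}_{\calI}$ for an idempotent ideal $\calI$ of a suitable small preadditive category and then apply Theorem~\ref{teor.conjecture true for semiregular-quotients}. For $(c)\Rightarrow(a)$: any finitely generated projective object $P$ is flat because every epimorphism onto $P$ splits, and a split epimorphism is trivially a direct limit of retractions (viz.\ a constant direct system), hence pure; in particular, the coproduct of a set of finitely generated projective generators is a flat generator. The equivalence $(a)\Leftrightarrow(b)$ is Corollary~\ref{cor.Cuadra-Simson versus Djament}.

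For $(b)\Rightarrow(c)$, I let $\mathcal{A}$ be a skeleton of $\fp(\mathcal{G})$. Since $\fp(\mathcal{G})$ is closed under finite coproducts and direct summands in any Grothendieck category, $\mathcal{A}$ is a skeletally small additive idempotent-complete category; in particular $\what{\mathcal{A}}=\mathcal{A}$, and, by hypothesis, every object of $\what{\mathcal{A}}$ has a semiregular endomorphism ring. Since $\mathcal{G}$ is locally finitely presented, $\mathcal{A}$ is a set of generators of $\mathcal{G}$, so Theorem~\ref{teor.Gabriel-Popescu-Mitchell} yields an exact functor $q\colon\rMod\mathcal{A}\to\mathcal{G}$, left adjoint to the generalized Yoneda embedding $\mathbf{y}\colon\mathcal{G}\to\rMod\mathcal{A}$, inducing an equivalence $(\rMod\mathcal{A})/\mathcal{T}\cong\mathcal{G}$, where $\mathcal{T}\leqdef\ker(q)$ is a hereditary torsion class.

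The crucial step will be to show that $\mathcal{T}$ is closed under products in $\rMod\mathcal{A}$, whence, by Proposition~\ref{prop.TTF triple associated to idemp.ideal}, $\mathcal{T}=\mathcal{T}_{\calI}$ for some idempotent ideal $\calI$ of $\mathcal{A}$ and $\mathcal{G}\cong\mathcal{G}_\calI$. Given a family $(T_i)_{i\in I}$ in $\mathcal{T}$, Proposition~\ref{lem.Mitchell} provides, for each $i$, an epimorphism $f_i\colon Y_i\twoheadrightarrow Y'_i$ in $\mathcal{G}$ such that $\mathbf{y}(Y_i)\to\mathbf{y}(Y'_i)\to T_i\to 0$ is right-exact in $\rMod\mathcal{A}$. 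Taking products (which preserve right-exactness in the $\mathrm{AB}\mathchar`-4^\ast$ category $\rMod\mathcal{A}$), and using that $\mathbf{y}$, as a right adjoint, preserves products, I obtain a right-exact sequence $\mathbf{y}(\prod^{\mathcal{G}}_{i}Y_i)\to\mathbf{y}(\prod^{\mathcal{G}}_{i}Y'_i)\to\prod_iT_i\to 0$. Applying the exact functor $q$, and using the natural isomorphism $q\circ\mathbf{y}\cong 1_{\mathcal{G}}$, produces a right-exact sequence $\prod^{\mathcal{G}}_{i}Y_i\to\prod^{\mathcal{G}}_{i}Y'_i\to q(\prod_iT_i)\to 0$ in $\mathcal{G}$ whose first map is $\prod_if_i$; since $\mathcal{G}$ is $\mathrm{AB}\mathchar`-4^\ast$ by $(b)$, this product of epimorphisms is itself an epimorphism, forcing $q(\prod_iT_i)=0$, i.e.\ $\prod_iT_i\in\mathcal{T}$.

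Once $\mathcal{G}\cong\mathcal{G}_\calI$ is established, the hypothesis that $\mathcal{G}$ is locally finitely presented is preserved through the equivalence, and the assumption that $\what{\mathcal{A}}$ consists of objects with semiregular endomorphism rings allows me to invoke Theorem~\ref{teor.conjecture true for semiregular-quotients}, delivering a set of finitely generated projective generators of $\mathcal{G}_\calI$, equivalently of $\mathcal{G}$. The principal obstacle is precisely the verification that $\mathcal{T}$ is closed under products in $\rMod\mathcal{A}$; everything else is a direct application of results already proved in the paper.
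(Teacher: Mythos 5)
Your proof coincides with the paper's argument step by step: the equivalence $(a)\Leftrightarrow(b)$ via Corollary~\ref{cor.Cuadra-Simson versus Djament}, the trivial $(c)\Rightarrow(a)$, and then, for $(b)\Rightarrow(c)$, taking $\mathcal{A}$ a skeleton of $\fp(\mathcal{G})$, using Proposition~\ref{lem.Mitchell} to present each $T_\lambda\in\mathcal{T}=\ker(q)$ as the cokernel of $\mathbf{y}(f_\lambda)$ with $f_\lambda$ an epimorphism in $\mathcal{G}$, and exploiting $\mathrm{AB}\mathchar`-4^\ast$ of both $\rMod\mathcal{A}$ and $\mathcal{G}$ together with $q\circ\mathbf{y}\cong 1_\mathcal{G}$ to conclude that $\mathcal{T}$ is closed under products, hence is a TTF class $\mathcal{T}_\calI$, so that $\mathcal{G}\cong\mathcal{G}_\calI$ and Theorem~\ref{teor.conjecture true for semiregular-quotients} applies. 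The only minor imprecision is the claim $\what{\mathcal{A}}=\mathcal{A}$ (they are only equivalent, since $\what{\mathcal{A}}$ is built from formal coproducts), but the point you actually need --- that every object of $\what{\mathcal{A}}$ has a semiregular endomorphism ring because it corresponds to a finitely presented object of $\mathcal{G}$ --- is correct.
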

\begin{proof}
\smallskip \noindent``$\textup{(a)}\Leftrightarrow\textup{(b)}$'' It follows from Corollary~\ref{cor.Cuadra-Simson versus Djament}.

\smallskip \noindent``$\textup{(c)}\Rightarrow\textup{(a)}$'' It is clear.

\smallskip \noindent``$\textup{(b)}\Rightarrow\textup{(c)}$'' Let $\mathcal{A}$ be a skeleton of $\fp(\mathcal{G})$, let $\mathbf{y}\colon\mathcal{G}\to\rMod\mathcal{A}$ be the Restricted Yoneda Embedding (see Theorem \ref{teor.Gabriel-Popescu-Mitchell}), and let $q\colon\rMod\mathcal{A}\to\mathcal{G}$ be its (exact) left adjoint. We claim that $\mathcal{T}=\ker(q)$ is closed under products, and so  it is a TTF class in $\rMod\mathcal{A}$. This would imply that $\mathcal{T}=\mathcal{T}_{\calI}$, for a uniquely determined idempotent ideal $\calI$ of $\mathcal{A}$, and the implication would then follow immediately from Theorem~\ref{teor.conjecture true for semiregular-quotients}.

In order to prove our claim, consider a family $(T_\lambda )_{\lambda\in\Lambda}$ of objects in $\mathcal{T}$. Using Proposition~\ref{lem.Mitchell}, we can fix a family of epimorphisms $(f_\lambda^{\vphantom\prime}\colon Y_\lambda^{\vphantom\prime}\twoheadrightarrow Y'_\lambda)_{\lambda\in\Lambda}^{\vphantom\prime}$ in $\mathcal{G}$ such that $T_\lambda\cong\coker(\mathbf{y}(f_\lambda))$, for all $\lambda\in\Lambda$. Since $\rMod\mathcal{A}$ is $\mathrm{AB}\mathchar`-4^\ast$ and $\mathbf{y}$ preserves products, we have an induced exact sequence in $\rMod\mathcal{A}$,
\[
	\mathbf{y}\Bigl(\prod_{\lambda\in\Lambda}Y_\lambda\Bigr)
		\buildrel\mathbf{y}(\prod f_\lambda)\over\longrightarrow
		\mathbf{y}\Bigl(\prod_{\lambda\in\Lambda}Y_\lambda\Bigr)\longrightarrow
		\prod_{\lambda\in\Lambda}T_\lambda\longrightarrow 0.
\]
Applying the exact functor $q$ to this sequence, and using the natural isomorphism $q\circ\mathbf{y}\cong 1_{\mathcal{G}}$, we deduce that $q(\prod_{\Lambda}T_\lambda)$ is isomorphic to the cokernel in $\mathcal{G}$ of the morphism $\prod f_\lambda^{\vphantom\prime}\colon\prod_{\Lambda}Y_\lambda^{\vphantom\prime}\to\prod_{\Lambda}Y'_\lambda$. The latter is an epimorphism in $\mathcal{G}$, since it satisfies the $\mathrm{AB}\mathchar`-4^\ast$ condition. Thus, $q(\prod_{\Lambda}T_\lambda)=0$, and so  $\prod_{\Lambda}T_\lambda\in\mathcal{T}$.
\end{proof}

The initial motivation that led Cuadra and Simson to formulate their problem is to be found in their work in categories of comodules over  $K$-coalgebras. The reader is referred to \cite{Sweedler} and \cite{DNR} for the basics of coalgebras and their comodule categories. 

In the rest of this subsection, we assume $K$ to be a field, and $C$ a $K$-coalgebra. It is well-known that any (right) $C$-comodule is the direct union of its finite dimensional subcomodules (see \cite[Theorem~2.1.3]{Sweedler}). This immediately implies that, if $\rComod C$ is the category of $C$-comodules, which is well-known to be a Grothendieck category (see \cite[Corollary~2.2.8]{DNR}), and $\mathcal{C}_0$ is its skeletally small subcategory of finite dimensional $C$-comodules, then $\mathcal{C}_0=\fl(\rComod C)$, and $\rComod C$ is locally finite. As an immediate consequence of Theorem~\ref{teor.locally finite}, we get an affirmative answer to the Cuadra--Simson Problem for comodule categories.

\begin{cor}
\label{thm.Cuadra-Simson for comodules}%
Let $C$ be a $K$-coalgebra. Then, the following are equivalent:
\begin{enumerate}[(a)]
\item $\rComod C$ has enough flat objects;
\item $\rComod C$ is $\mathrm{AB}\mathchar`-4^\ast$;
\item $\rComod C$ has a set of finitely generated projective generators.
\end{enumerate}
\end{cor}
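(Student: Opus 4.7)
The plan is to deduce this corollary directly from Theorem~\ref{teor.locally finite}, by verifying that $\rComod C$ falls within the scope of that theorem. Since Theorem~\ref{teor.locally finite} already gives the three-way equivalence (a)$\iff$(b)$\iff$(c) whenever $\mathcal{G}$ is a locally finitely presented Grothendieck category all of whose finitely presented objects have semiregular endomorphism rings, the work reduces to checking these two hypotheses for $\mathcal{G}=\rComod C$.

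First, I would recall the fundamental theorem on comodules (\cite[Theorem~2.1.3]{Sweedler}): every $C$-comodule is the direct union of its finite-dimensional subcomodules. As noted in the paragraph preceding the corollary, this implies that $\rComod C$ is locally finite, with skeletally small subcategory of finite length objects $\fl(\rComod C)=\mathcal{C}_0$ consisting of the finite-dimensional $C$-comodules. Since any locally finite Grothendieck category is locally noetherian, hence locally finitely presented, and in this setting $\fp(\rComod C)=\fl(\rComod C)=\mathcal{C}_0$, the first hypothesis of Theorem~\ref{teor.locally finite} is satisfied.

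Second, I would observe that if $M\in\mathcal{C}_0$ then $\End_C(M)$ is a subalgebra of $\End_K(M)$, and therefore a finite-dimensional $K$-algebra. Any finite-dimensional $K$-algebra is (left and right) artinian, hence semiperfect, hence semiregular. Consequently, every object in $\fp(\rComod C)$ has semiregular endomorphism ring, verifying the second hypothesis.

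Having checked both conditions, Theorem~\ref{teor.locally finite} applies verbatim to $\mathcal{G}=\rComod C$ and yields the equivalence of (a), (b) and (c). The proof is essentially a verification exercise, so I do not foresee any real obstacle; the only point worth spelling out is the identification $\fp(\rComod C)=\mathcal{C}_0$, which could equivalently be argued via the Krull--Schmidt property of $\mathcal{C}_0$ (Fitting's lemma gives local endomorphism rings on indecomposables in $\mathcal{C}_0$) and the parenthetical clause of Theorem~\ref{teor.locally finite}.
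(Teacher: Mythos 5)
Your proposal is correct and takes essentially the same route as the paper: the corollary is an immediate application of Theorem~\ref{teor.locally finite} once one notes that $\rComod C$ is locally finite (so locally finitely presented with $\fp(\rComod C)=\fl(\rComod C)=\mathcal{C}_0$) and that finitely presented objects have semiregular endomorphism rings. The only cosmetic difference is that you verify semiregularity directly via $\End_C(M)\subseteq\End_K(M)$ being a finite-dimensional $K$-algebra, whereas the paper covers this automatically through the ``locally finite'' parenthetical in Theorem~\ref{teor.locally finite}, which rests on the Krull--Schmidt property of $\fl(\mathcal{G})$ via Fitting's lemma --- both are sound and you yourself note the latter as an alternative.
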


\begin{rem}
The equivalence ``$\textup{(b)}\Leftrightarrow\textup{(c)}$'' in Theorem~\ref{teor.locally finite} (as well as the corresponding equivalence in Corollary~\ref{thm.Cuadra-Simson for comodules}) was already known for locally finite Grothendieck categories (see \cite[Theorem~3.1 and Corollary~3.2]{CIENT04}).
\end{rem}

\section{Local finite presentability of the associated Giraud subcategories}%

This section may be seen as a first approximation to \cite[Problem~2.9.a]{CS07}. Using the results of Section~\ref{sec.Cuadra-Simson via colocal.}, the problem can be seen to be equivalent to finding conditions on a small preadditive category $\mathcal{A}$ and an idempotent ideal $\calI$ in $\A$,  for $\mathcal{X}_{\calI}$ to be locally finitely presented. We interpret any $X\in\rmod\mathcal{A}$ as the cokernel of a map $H_\alpha$, where $\alpha\in\what{\mathcal{A}}(\what{a},\what{b})$, for some $\what{a},\,\what{b}\in \what{\mathcal{A}}$.
The key point is to identify the objects in $\mathcal{X}_I\cap\rmod\mathcal{A}$. This is tantamount to a characterization of the morphisms $\alpha$ in $\what{\A}$ such that $\coker(H_\alpha)\in\mathcal{X}_I\cap\rmod\mathcal{A}$: this is the primary goal of this section. The reader is referred to Subsection~\ref{ss:modules-morphisms} for the definition of some $\mathcal{A}$-modules associated to morphisms in $\what{\mathcal{A}}$.

\begin{defn}\label{def.ideal:morphism}%
Given a morphism $\alpha\in\what{\mathcal{A}}(\what{a},\what{b})$, we denote by $(\what{\calI}(-,\what{b}):\alpha)$ the $\mathcal{A}$-submodule of $H_{\what{a}}$ such that, for each $c\in\Obj(\mathcal{A})$:
\[
(\what{\calI}(-,\what{b}):\alpha)(c)\leqdef\{\gamma\in\what{\mathcal{A}}(c,\what{a})=H_{\what{a}}(c)\mid  \alpha\circ\gamma\in\what{\calI}(c,\what{
b})\}.
\]
\end{defn}

\begin{rem}\label{rem.I versus I:alpha}%
We leave to the reader the easy verification of the fact that $(\what{\calI}(-,\what{b}):\alpha)$ is indeed an $\mathcal{A}$-submodule of $H_{\what{a}}$\,, and that it contains both $\what{\calI}(-,\what{a})$, and $\rann_\mathcal{A}(\alpha)$. In particular, there are the following inclusions in $\rMod\A$: 
\[
\what{\calI}(-,\what{a})+\rann_\mathcal{A}(\alpha )\leq(\what{\calI}(-,\what{b}):\alpha)\leq H_{\what{a}}.
\]
\end{rem}

\subsection{fp-Detecting morphisms}%

In the following definition we introduce the class of morphisms that we need to characterize in order to describe $\cal X_{\calI}\cap \rmod\A$.

\begin{defn}
\label{def.fp-detecting morphisms}%
A morphism $\alpha\colon\what{a}\to\what{b}$ in $\what{\mathcal{A}}$ is said to be \emph{fp-detecting} (with respect to $I$), when $\coker(H_\alpha)\in \mathcal{X}_I\cap\rmod\mathcal{A}$.
\end{defn}

Our following result identifies a wider class.

\begin{lem}\label{lem.fpmodules-in-C}%
Let $\what{a},\what{b}\in\Obj(\what{\mathcal{A}})$, $\alpha\in\what{\mathcal{A}}(\what{a},\what{b})$, and let  $X_\alpha\leqdef\coker(H_\alpha)$. Then, the following assertions are equivalent:
\begin{enumerate}[(a)]
\item $X_\alpha\in\mathcal{C}_I$;
\item There exists a morphism $\beta\in\what{\mathcal{A}}(\what{b},\what{a})$ such that $1_{\smash[t]{\what{b}}}-\alpha\circ\beta\in\what{\calI}(\what{b},\what{b})$, i.e., $\alpha$ is mapped onto a retraction by the quotient functor $\what{\mathcal{A}}\twoheadrightarrow\what{\mathcal{A}}/\what{\calI}$.
\end{enumerate}
\end{lem}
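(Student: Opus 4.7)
The plan is to reduce the condition $X_\alpha\calI=X_\alpha$ to a concrete sum-decomposition of submodules of $H_{\what{b}}$, and then to recognize the desired $\beta$ as arising from evaluating that decomposition at $\what{b}$. All the work happens inside the comma object $H_{\what{b}}$, using the exact sequence $H_{\what{a}}\xrightarrow{H_\alpha}H_{\what{b}}\to X_\alpha\to 0$.

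First I would identify the submodule $H_{\what{b}}\calI$ of $H_{\what{b}}$. By Proposition~\ref{prop.relevantfacts-recollement}\,(ii), $H_{\what{b}}\calI=\im(\nu_{H_{\what{b}}})$, and a direct computation (or the calculation appearing in the proof of Proposition~\ref{prop.colocalization as tensor product}\,(i), applied additively to $\what{b}=\coprod_i b_i$) gives the equality $H_{\what{b}}\calI=\what{\calI}(-,\what{b})$ as subfunctors of $H_{\what{b}}$. Since $X_\alpha$ is a quotient of $H_{\what{b}}$ by $\im(H_\alpha)$, this yields the equivalence
\[
	X_\alpha\in\mathcal{C}_\calI\iff H_{\what{b}}=\what{\calI}(-,\what{b})+\im(H_\alpha).
\]

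Next I would prove $\textup{(b)}\Rightarrow\textup{(a)}$ by a one-line identity. Given $\beta$ with $\eta\leqdef 1_{\what{b}}-\alpha\circ\beta\in\what{\calI}(\what{b},\what{b})$, any $\gamma\in\what{\mathcal{A}}(c,\what{b})=H_{\what{b}}(c)$ decomposes as $\gamma=\eta\circ\gamma+\alpha\circ(\beta\circ\gamma)$, the first summand lying in $\what{\calI}(c,\what{b})$ because $\what{\calI}$ is an ideal, and the second in $\im(H_\alpha)(c)$. This gives the required equality of submodules at every $c\in\Obj(\mathcal{A})$.

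For $\textup{(a)}\Rightarrow\textup{(b)}$, the only real step is to lift the decomposition from $\mathcal{A}$-objects to $\what{b}\in\Obj(\what{\mathcal{A}})$. Writing $\what{b}=\coprod_{i=1}^m b_i$ with canonical inclusions $E_i\colon b_i\to\what{b}$ and projections $\pi_i\colon\what{b}\to b_i$, the decomposition at $c=b_i$ applied to $E_i$ gives $E_i=\eta_i+\alpha\circ\delta_i$ with $\eta_i\in\what{\calI}(b_i,\what{b})$ and $\delta_i\in\what{\mathcal{A}}(b_i,\what{a})$. Setting $\beta\leqdef\sum_{i=1}^m\delta_i\circ\pi_i$ and $\eta\leqdef\sum_{i=1}^m\eta_i\circ\pi_i$, the identity $1_{\what{b}}=\sum_i E_i\circ\pi_i$ yields $1_{\what{b}}=\eta+\alpha\circ\beta$, and $\eta\in\what{\calI}(\what{b},\what{b})$ because $\what{\calI}$ is an ideal and is closed under matrix operations in $\what{\mathcal{A}}$ (Remark~\ref{rem.regular bimodule and ideal}\,(3)).

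The only potentially subtle point is this last passage from ``decomposition at each $b_i$'' to ``decomposition at $\what{b}$'', which could fail if the submodules involved were not compatible with the biproduct structure of $\what{b}$; the use of the canonical inclusions/projections and the ideal property of $\what{\calI}$ dispatches it. Everything else is a direct unfolding of definitions.
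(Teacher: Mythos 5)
Your proof is correct and follows essentially the same route as the paper: both reduce $X_\alpha\in\mathcal{C}_\calI$ to the equality $H_{\what{b}}=\what{\calI}(-,\what{b})+\im(H_\alpha)$ of submodules of $H_{\what{b}}$, and then read off the conclusion by evaluating at $\what{b}$. The one local difference is in the direction $\textup{(a)}\Rightarrow\textup{(b)}$: the paper invokes Lemma~\ref{lem.proper submodule} (implicitly applied to the object $\what{b}$ of $\what{\mathcal{A}}$ via the equivalence $\rMod\mathcal{A}\cong\rMod\what{\mathcal{A}}$) to conclude at once that $1_{\what{b}}$ must lie in $\what{\calI}(\what{b},\what{b})+(\im H_\alpha)(\what{b})$, whereas you reconstruct the same conclusion by hand, decomposing each canonical inclusion $E_i\colon b_i\to\what{b}$ and reassembling via $1_{\what{b}}=\sum_i E_i\circ\pi_i$. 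Your route is slightly longer but has the small advantage of never needing to argue that Lemma~\ref{lem.proper submodule}, stated for objects of $\mathcal{A}$, also applies to objects of $\what{\mathcal{A}}$.
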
 
\begin{proof}
Letting $X\leqdef X_\alpha=H_{\what{b}}/{\im(H_{\alpha})}$, we have that $X=X\calI$ if, and only if,
\begin{equation}\label{fpmodules-in-C_eq}
	H_{\what{b}}=H_{\what{b}}\calI +\im(H_{\alpha}).
\end{equation}
But $\smash[b]{H_{\what{b}}\calI} =\what{\calI}(-,\what{b})$ and, by Lemma~\ref{lem.proper submodule}, the equality \eqref{fpmodules-in-C_eq} holds if, and only if, $1_{\what{b}}\in\what{\calI}(\what{b},\what{b})+(\im(H_{\alpha}))(\what{b})$, which is equivalent to assertion~(b).
\end{proof}

The above lemma identifies the morphisms $\alpha$ in $\what{\mathcal{A}}$ such that the canonical morphism $\rho_{X_\alpha}\colon X_\alpha\otimes_\mathcal{A}\calI\to X_\alpha$, induced by multiplication, is an epimorphism. The following lemma identifies those $\alpha$ for which $\rho_{X_\alpha}$ is a monomorphism.

\begin{lem}
\label{lem.I-Mittag-Leffler}%
In the same setting of Lemma~\ref{lem.fpmodules-in-C}, the following assertions are equivalent:
\begin{enumerate}[(a)]
\item The canonical morphism $\rho_{X_\alpha}\colon X_\alpha\otimes_\mathcal{A}\calI\to X_\alpha$ is a monomorphism.
\item $(\what{\calI}(-,\what{b}):\alpha)=\what{\calI}(-,\what{a})+\rann_\mathcal{A}(\alpha)$.
\end{enumerate}
\end{lem}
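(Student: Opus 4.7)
My plan is to make $\rho_{X_\alpha}\colon X_\alpha\otimes_\mathcal{A}\calI\to X_\alpha$ completely explicit and then read off its kernel pointwise.

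First I would identify $X_\alpha\otimes_\mathcal{A}\calI$. Viewing $\calI$ as an $\mathcal{A}$-$\mathcal{A}$-bimodule, Corollary~\ref{cor.properties tensor product}\,(ii) gives a canonical isomorphism $H_{\what{c}}\otimes_\mathcal{A}\calI\cong\what{\calI}(-,\what{c})$ for every $\what{c}\in\Obj(\what{\mathcal{A}})$, and a direct inspection of Remark~\ref{rem.morphism by multiplication} shows that under this isomorphism $\rho_{H_{\what{c}}}$ becomes the inclusion $\what{\calI}(-,\what{c})\hookrightarrow H_{\what{c}}$, while the map $H_\alpha\otimes 1_{\calI}\colon H_{\what{a}}\otimes_\mathcal{A}\calI\to H_{\what{b}}\otimes_\mathcal{A}\calI$ becomes postcomposition with $\alpha$. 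Applying the right exact functor $-\otimes_\mathcal{A}\calI$ to the presentation $H_{\what{a}}\xrightarrow{H_\alpha}H_{\what{b}}\to X_\alpha\to 0$ then yields
\[
X_\alpha\otimes_\mathcal{A}\calI\cong\what{\calI}(-,\what{b})\big/\alpha\circ\what{\calI}(-,\what{a}),
\]
and $\rho_{X_\alpha}$ is the obvious map to $X_\alpha=H_{\what{b}}/\alpha\circ H_{\what{a}}$ induced by the inclusion of numerators.

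Next I would compute $\ker(\rho_{X_\alpha})$ pointwise. For $c\in\Obj(\mathcal{A})$, the class of $y\in\what{\calI}(c,\what{b})$ lies in $\ker(\rho_{X_\alpha})(c)$ iff $y\in\alpha\circ\what{\mathcal{A}}(c,\what{a})$, i.e.\ $y=\alpha\circ\gamma$ for some $\gamma\in\what{\mathcal{A}}(c,\what{a})$; the constraint $y\in\what{\calI}(c,\what{b})$ translates exactly into $\gamma\in(\what{\calI}(-,\what{b}):\alpha)(c)$. Hence $\gamma\mapsto[\alpha\circ\gamma]$ defines a surjective homomorphism $\phi_c\colon(\what{\calI}(-,\what{b}):\alpha)(c)\twoheadrightarrow\ker(\rho_{X_\alpha})(c)$, and $\phi_c(\gamma)=0$ iff there exists $\delta\in\what{\calI}(c,\what{a})$ with $\alpha\circ\gamma=\alpha\circ\delta$, iff $\gamma-\delta\in\rann_\mathcal{A}(\alpha)(c)$, iff $\gamma\in(\what{\calI}(-,\what{a})+\rann_\mathcal{A}(\alpha))(c)$.

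Putting these together yields a canonical isomorphism
\[
\ker(\rho_{X_\alpha})(c)\cong(\what{\calI}(-,\what{b}):\alpha)(c)\big/\bigl(\what{\calI}(-,\what{a})+\rann_\mathcal{A}(\alpha)\bigr)(c),
\]
from which the equivalence of (a) and (b) follows immediately, the inclusion $\what{\calI}(-,\what{a})+\rann_\mathcal{A}(\alpha)\subseteq(\what{\calI}(-,\what{b}):\alpha)$ being guaranteed by Remark~\ref{rem.I versus I:alpha}. The only delicate step is the computation of $\ker(\phi_c)$: the summand $\rann_\mathcal{A}(\alpha)$ in (b) appears precisely because the map $\alpha\circ-\colon\what{\calI}(-,\what{a})\to\what{\calI}(-,\what{b})$ need not be injective, and overlooking this would wrongly suggest the cleaner equality $(\what{\calI}(-,\what{b}):\alpha)=\what{\calI}(-,\what{a})$.
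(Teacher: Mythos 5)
Your proof is correct and follows essentially the same route as the paper: apply the right exact functor $-\otimes_\mathcal{A}\calI$ to the projective presentation $H_{\what{a}}\to H_{\what{b}}\to X_\alpha\to 0$, identify $\rho_{X_\alpha}$ with the induced quotient map $\what{\calI}(-,\what{b})/H_\alpha(\what{\calI}(-,\what{a}))\to H_{\what{b}}/\im(H_\alpha)$, and then compute the kernel pointwise. Your pointwise bookkeeping via the surjection $\phi_c\colon(\what{\calI}(-,\what{b}):\alpha)(c)\twoheadrightarrow\ker(\rho_{X_\alpha})(c)$ is just a slightly different packaging of the paper's identification $\ker(\psi)=\bigl(\what{\calI}(-,\what{b})\cap\im(H_\alpha)\bigr)/H_\alpha(\what{\calI}(-,\what{a}))$, and your closing remark about $\rann_\mathcal{A}(\alpha)$ accounting for the failure of injectivity of $\alpha\circ-$ is exactly the point the paper is implicitly handling.
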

\begin{proof}
Let $X\leqdef X_\alpha$, and consider the following commutative diagram $\rMod\mathcal{A}$:
\[
	H_{\what{a}}\otimes_\mathcal{A}\calI\cong\!%
\xymatrix@R=20pt@C=38pt{%
	\what{\calI}(-,\what{a}) \ar[r]^-{H_\alpha\otimes 1_I}\ar@{^{`}->}[d]_-{\iota_{\what{a}}} &
		\what{\calI}(-,\what{b})=H_{\what{b}}\otimes_\mathcal{A}\calI
			\ar[r]\ar@{^{`}->}[d]^-{\iota_{\what{b}}} &
		X\otimes_\mathcal{A}\calI \ar[d]^-{\rho_X} \ar[r] & 0\  \cr
	H_{\what{a}} \ar[r]^-{H_\alpha} & H_{\what{b}} \ar[r] & X \ar[r] & 0.
}
\]
By the exactness of the rows in the above diagram, one sees that $\rho_X$ is isomorphic to:
\[
	\psi\colon\frac{\what{\calI}(-,\what{b})}{H_\alpha (\what{\calI}(-,\what{a}))}\longrightarrow\frac{H_{\what{b}}}{\im(H_\alpha)},
\]
where $\psi$ is induced by the inclusion $\iota_{\what{b}}\colon\what{\calI}(-,\what{b})\hookrightarrow H_{\what{b}}$\,. It follows that $\rho_X$ is a monomorphism if, and only if, so is $\psi$. On the other hand,
\[
	\ker(\psi)=\frac{\what{\calI}(-,\what{b})\cap\im(H_\alpha)}{H_\alpha (\what{\calI}(-,\what{a}))}.
\]
Therefore, $\psi$ is a monomorphism if, and only if,
\begin{equation}
	\what{\calI}(-,\what{b})\cap\im(H_\alpha)\subseteq H_\alpha (\what{\calI}(-,\what{a})).
	\tag{$\diamond\diamond$}
	\label{eq:5}
\end{equation}
Evaluating at a given $c\in\Obj(\mathcal{A})$, we have that $[\what{\calI}(-,\what{b})\cap\im(H_\alpha)](c)$ consists of those $\gamma\in\what{\calI}(c,\what{b})$ such that $\gamma =\alpha\circ\beta$, for some $\beta\in\what{\mathcal{A}}(c,\what{a})$, while $[H_\alpha(\what{\calI}(-,\what{a}))](c)$ consists of the morphism $\gamma\in\what{\mathcal{A}}(c,\what{b})$ such that $\gamma =\alpha\circ\eta$, for some $\eta\in\what{\calI}(c,\what{a})$. Therefore, \eqref{eq:5} holds if, and only if, for each $\beta\in (\what{\calI}(-,\what{b}):\alpha)(c)$, there exists $\eta\in\what{\calI}(c,\what{a})$ such that $\alpha\circ\beta =\alpha\circ\eta$ or, equivalently, $\beta-\eta\in\rann_\mathcal{A}(\alpha)(c)$. Hence, $\psi$ is a monomorphism if, and only if, $(\what{\calI}(-,\what{b}):\alpha)\subseteq\what{\calI}(-,\what{a})+\rann_\mathcal{A}(\alpha)$ or, equivalently, $(\what{\calI}(-,\what{b}):\alpha)=\what{\calI}(-,\what{a})+\rann_\mathcal{A}(\alpha)$ (see Remark~\ref{rem.I versus I:alpha}). 
\end{proof}

It is now straightforward to deduce from the last two lemmas the desired characterization of the fp-detecting morphisms in $\what{\mathcal{A}}$:

\begin{prop}
\label{prop.fp(X)}%
Let $\alpha\colon\what{a}\to\what{b}$ be a morphism in $\what{\mathcal{A}}$, and let $p\colon\what{\mathcal{A}}\to\what{\mathcal{A}}/\what{\calI}$ be the projection. Then, the following assertions are equivalent:
\begin{enumerate}[(a)]
\item $\alpha$ is fp-detecting, i.e., $X_\alpha\leqdef\coker(H_\alpha)\in\mathcal{X}_I\cap\rmod\mathcal{A}$;
\item The following two conditions hold:
\begin{enumerate}[(i)]
\item the image $p(\alpha)$ of $\alpha$ in $\what{\mathcal{A}}/\what{\calI}$,  is a retraction.
\item $(\what{\calI}(-,\what{b}):\alpha)=\what{\calI}(-,\what{a})+\rann_\mathcal{A}(\alpha)$.
\end{enumerate}
\end{enumerate}
\end{prop}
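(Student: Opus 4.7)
The plan is essentially to combine the two preceding lemmas with the tensor-product description of $\mathcal{X}_\calI$. First, note that $X_\alpha=\coker(H_\alpha)$ is automatically finitely presented, since it is presented by the morphism $H_\alpha\colon H_{\what a}\to H_{\what b}$ between finite coproducts of representables. Hence the condition $X_\alpha\in\mathcal{X}_\calI\cap\rmod\mathcal{A}$ reduces to $X_\alpha\in\mathcal{X}_\calI$, and by Proposition~\ref{prop.colocalization as tensor product}\,(iv), this is equivalent to the multiplication morphism
\[
 \rho_{X_\alpha}\colon X_\alpha\otimes_\mathcal{A}\calI\longrightarrow X_\alpha
\]
being an isomorphism.

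Next, the plan is to split the ``isomorphism'' condition into surjectivity and injectivity and apply the two preceding lemmas. For surjectivity, by Proposition~\ref{prop.relevantfacts-recollement}\,(i,ii), $\rho_{X_\alpha}$ is an epimorphism if and only if $X_\alpha\in\mathcal{C}_\calI$, i.e.\ $X_\alpha\calI=X_\alpha$. Lemma~\ref{lem.fpmodules-in-C} translates this exactly into condition~(b)(i): there exists $\beta\in\what{\mathcal{A}}(\what b,\what a)$ with $1_{\what b}-\alpha\circ\beta\in\what{\calI}(\what b,\what b)$, which is the statement that $p(\alpha)=\ubar{\alpha}$ is a retraction in $\what{\mathcal{A}}/\what{\calI}$. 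For injectivity, Lemma~\ref{lem.I-Mittag-Leffler} shows that $\rho_{X_\alpha}$ is a monomorphism precisely when condition~(b)(ii) holds, i.e.\ $(\what{\calI}(-,\what b):\alpha)=\what{\calI}(-,\what a)+\rann_\mathcal{A}(\alpha)$.

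Putting these together: $\alpha$ is fp-detecting $\iff$ $\rho_{X_\alpha}$ is an isomorphism $\iff$ $\rho_{X_\alpha}$ is both epi and mono $\iff$ both (b)(i) and (b)(ii) hold. This gives the equivalence ``(a)$\Leftrightarrow$(b)''. No obstacle is anticipated here, since all the hard work has already been carried out in Lemmas~\ref{lem.fpmodules-in-C} and~\ref{lem.I-Mittag-Leffler} and in the tensor-product description of $\mathcal{X}_\calI$; the present proposition is a clean assembly of these results.
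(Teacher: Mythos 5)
Your proof is correct and is precisely the ``straightforward'' assembly the paper intends: the paper itself gives no proof of this proposition, merely stating that it follows immediately from Lemmas~\ref{lem.fpmodules-in-C} and~\ref{lem.I-Mittag-Leffler} together with the tensor-product characterization of $\mathcal{X}_\calI$ in Proposition~\ref{prop.colocalization as tensor product}\,(iv). One tiny citation nit: the fact that $\rho_{X_\alpha}$ is epi iff $X_\alpha\in\mathcal{C}_\calI$ is most directly seen from $\im(\rho_{X_\alpha})=X_\alpha\calI$ (Remark~\ref{rem.morphism by multiplication}) rather than from Proposition~\ref{prop.relevantfacts-recollement}\,(i,ii), which concerns the counit $\nu_M$; but the conclusion is the same.
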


\begin{exmpl}
\label{ex.stableunit-fpdetecting}%
Let $\what{b}\in\Obj(\what{\mathcal{A}})$ and $\eta\in\what{\calI}(\what{b},\what{b})$. Then $1_{\what{b}}-\eta$ is fp-detecting.

Indeed, it is clear that  $\alpha\leqdef 1_{\what{b}}-\eta$ satisfies condition (b.i) of Proposition~\ref{prop.fp(X)}. Let us verify also condition~(b.ii), which is equivalent to the following inclusion (see Remark~\ref{rem.I versus I:alpha}):
\begin{equation}
	(\what{\calI}(-,\what{b}):\alpha )\subseteq \what{\calI}(-,\what{b})+\rann_\mathcal{A}(\alpha ).
	\tag{\dag}\label{eq:3}
\end{equation}
Given $c\in\Obj(\mathcal{A})$, by Definition~\ref{def.ideal:morphism}, $[(\what{\calI}(-,\what{b}):\alpha )](c)$ consists of those morphisms $\gamma\in\what{\mathcal{A}}(c,\what{b})$ such that $\alpha\circ\gamma\in\what{\calI}(c,\what{b})$ or, equivalently, $\gamma-\eta\circ\gamma\in\what{\calI}(c,\what{b})$ which is equivalent to saying that $\gamma\in\what{\calI}(c,\what{b})$, because $\eta\in\what{\calI}(\what{b},\what{b})$. Hence, we have deduced that $(\what{\calI}(-,\what{b}):\alpha )=\what{\calI}(-,\what{b})$, which clearly implies \eqref{eq:3}. 
\end{exmpl}

\subsection{The main theorem}%

We can now prove the main result of this section.

\begin{thm}
\label{thm.finitepresentability-on-ideal}%
Let $\mathcal{A}$ be small preadditive category, let $\calI$ be an idempotent ideal of $\mathcal{A}$, and denote by $\ubar{\mathcal{V}}$ the set of fp-detecting morphisms in $\what{\mathcal{A}}$ (see Definition~\ref{def.fp-detecting morphisms} and Proposition~\ref{prop.fp(X)}). Then, the following assertions are equivalent:
\begin{enumerate}[(a)]
\item The associated Giraud subcategory $\mathcal{G}_I$ is locally finitely presented;
\item $\calI =\sum_{\alpha\in\ubar{\mathcal{V}}}\lann_\mathcal{A}(\alpha)\mathcal{A}$;
\item $\calI=\sum_{\what{b}\in\Obj(\what{\mathcal{A}})}\sum_{\eta\in\what{\calI}(\what{b},\what{b})}\Fix(\eta)\mathcal{A}$, where $\Fix(\eta)\leq H'_{\what{b}}$ is the $\mathcal{A}$-submodule such that $\Fix(\eta) (c)\leqdef\{\beta\in\what{\mathcal{A}}(\what{b},c)\mid\beta =\beta\circ\eta\}$, for all $c\in \Obj(\A)$.
\end{enumerate} 
\end{thm}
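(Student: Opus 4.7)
The plan is to prove (a)$\Leftrightarrow$(b) by combining the locally finite presentability criterion from Theorem~\ref{thm.always enough flats}\,(ii) with the description of fp-detecting morphisms in Proposition~\ref{prop.fp(X)}, and then to prove (b)$\Leftrightarrow$(c) by a direct manipulation based on Example~\ref{ex.stableunit-fpdetecting} and Lemma~\ref{lem.fpmodules-in-C}.

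For (a)$\Leftrightarrow$(b), I would start by observing that Theorem~\ref{thm.always enough flats}\,(ii) says that $\mathcal{G}_I$ is locally finitely presented exactly when $\mathcal{C}_I=\Gen(\mathcal{X}_I\cap\rmod\mathcal{A})$. By Lemma~\ref{lema.generation of C versus of X} this is in turn equivalent to $\tr_\mathcal{S}(\mathcal{A})=\calI$, where $\mathcal{S}=\mathcal{X}_I\cap\rmod\mathcal{A}$. Now every finitely presented $\mathcal{A}$-module has the form $X_\alpha=\coker(H_\alpha)$ for some $\alpha\in\what{\mathcal{A}}(\what{a},\what{b})$, so by Proposition~\ref{prop.fp(X)} we have (up to isomorphism) $\mathcal{S}=\{X_\alpha\mid\alpha\in\ubar{\mathcal{V}}\}$, and hence $\tr_\mathcal{S}(\mathcal{A})=\sum_{\alpha\in\ubar{\mathcal{V}}}\tr_{X_\alpha}(\mathcal{A})$. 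The central computation is then that $\tr_{X_\alpha}(\mathcal{A})=\lann_\mathcal{A}(\alpha)\mathcal{A}$: for $\alpha\colon\what{a}\to\what{b}$ and any object $c$ of $\mathcal{A}$, the exact sequence $H_{\what{a}}\to H_{\what{b}}\to X_\alpha\to 0$ together with Yoneda's lemma identifies $(\rMod\mathcal{A})(X_\alpha,H_c)$ with the set of $\mu\in\what{\mathcal{A}}(\what{b},c)$ such that $\mu\circ\alpha=0$, i.e.\ with $[\lann_\mathcal{A}(\alpha)](c)$; the image of the morphism $H_{\what{b}}\to H_c$ determined by $\mu$ is the submodule generated by $\mu$, so summing over all such $\mu$ gives precisely the ideal $\lann_\mathcal{A}(\alpha)\mathcal{A}$ as described in Corollary~\ref{cor.idealgenerated-annihilator}\,(ii). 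Putting these pieces together yields (a)$\Leftrightarrow$(b).

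For (c)$\Rightarrow$(b) I would invoke Example~\ref{ex.stableunit-fpdetecting}: for each $\what{b}$ and $\eta\in\what{\calI}(\what{b},\what{b})$ the morphism $1_{\what{b}}-\eta$ is fp-detecting, and the definitions give the immediate identity $\lann_\mathcal{A}(1_{\what{b}}-\eta)=\Fix(\eta)$ as submodules of $H'_{\what{b}}$ (since $\mu\circ(1_{\what{b}}-\eta)=0$ iff $\mu=\mu\circ\eta$). Hence each summand in~(c) is of the form $\lann_\mathcal{A}(\alpha)\mathcal{A}$ for some $\alpha\in\ubar{\mathcal{V}}$, and the required inclusion $\sum_\eta\Fix(\eta)\mathcal{A}\subseteq\sum_{\alpha\in\ubar{\mathcal{V}}}\lann_\mathcal{A}(\alpha)\mathcal{A}$ follows.

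For (b)$\Rightarrow$(c), the key (and likely most delicate) observation is that for every fp-detecting $\alpha\colon\what{a}\to\what{b}$ the ideal $\lann_\mathcal{A}(\alpha)\mathcal{A}$ is already contained in a single summand of the sum~(c). Indeed, Proposition~\ref{prop.fp(X)}\,(b.i) and Lemma~\ref{lem.fpmodules-in-C} give $\beta\in\what{\mathcal{A}}(\what{b},\what{a})$ such that $\xi\leqdef 1_{\what{b}}-\alpha\circ\beta\in\what{\calI}(\what{b},\what{b})$; then for any $c\in\Obj(\mathcal{A})$ and any $\mu\in[\lann_\mathcal{A}(\alpha)](c)$ one computes $\mu=\mu\circ 1_{\what{b}}=\mu\circ\alpha\circ\beta+\mu\circ\xi=\mu\circ\xi$, so $\mu\in\Fix(\xi)(c)$. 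Consequently $\lann_\mathcal{A}(\alpha)\subseteq\Fix(\xi)$, hence $\lann_\mathcal{A}(\alpha)\mathcal{A}\subseteq\Fix(\xi)\mathcal{A}$, and summing over all $\alpha\in\ubar{\mathcal{V}}$ gives the reverse containment needed for~(c). The main conceptual hurdle is step~(b)$\Rightarrow$(c), because it is the step where one must convert a left annihilator of an arbitrary fp-detecting morphism into a fixed-point submodule of a single endomorphism in $\what{\calI}$; everything else is essentially a matter of assembling previously established identifications.
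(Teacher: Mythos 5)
Your proposal is correct and follows essentially the same route as the paper: you reduce (a)$\Leftrightarrow$(b) to the identity $\tr_{X_\alpha}(\mathcal{A})=\lann_\mathcal{A}(\alpha)\mathcal{A}$ via Theorem~\ref{thm.always enough flats}\,(ii), Lemma~\ref{lema.generation of C versus of X}, Proposition~\ref{prop.fp(X)} and Corollary~\ref{cor.idealgenerated-annihilator}, and you establish (b)$\Leftrightarrow$(c) by proving the two inclusions between the sub-ideals $\sum_\alpha\lann_\mathcal{A}(\alpha)\mathcal{A}$ and $\sum_\eta\Fix(\eta)\mathcal{A}$ using Example~\ref{ex.stableunit-fpdetecting} in one direction and, in the other, the retraction $\beta$ supplied by Proposition~\ref{prop.fp(X)}\,(b.i), which gives $\lann_\mathcal{A}(\alpha)\subseteq\lann_\mathcal{A}(\alpha\circ\beta)=\Fix(\xi)$.
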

\begin{proof}
Recall that $\mathcal{G}_I$ and $\mathcal{X}_I$ are equivalent Grothendieck categories. 

\smallskip \noindent``$\textup{(a)}\Leftrightarrow\textup{(b)}$'' By Proposition~\ref{prop.fp(X)}, we know that the objects of $\mathcal{X}_I\cap\rmod\mathcal{A}$, i.e., the finitely presented objects of $\mathcal{X}_I$, are those of the form $X_\alpha\leqdef\coker(H_\alpha)$, for some $\alpha\in\ubar{\mathcal{V}}$. A morphism $f\colon X_\alpha\to H_d$ in $\rMod\mathcal{A}$ is then identified by a morphism $g\colon H_{\what{b}}\to H_d$ in this same category such that $g\circ H_\alpha =0$. By the Yoneda Lemma, given such $g$, there is unique morphism $\beta\in\what{\mathcal{A}}(\what{b},d)$ such that $g=H_\beta$, and $\beta\circ\alpha =0$. It is also clear that $\im(f)=\im(g)=\im(H_\beta)$ where, for each $c\in\Obj(\mathcal{A})$, $\im(H_\beta)(c)$ consists of those morphisms $\xi\in\what{\mathcal{A}}(c,d)=\mathcal{A}(c,d)$ such that $\xi =\beta\circ\gamma$, for some $\gamma\in\what{\mathcal{A}}(c,\what{b})$.

Recall that the ideal $\tr_{X_\alpha}(\mathcal{A})$ of $\mathcal{A}$ is the $\mathcal{A}\mathchar`-\mathcal{A}$-bimodule associated to the additive functor $\mathcal{A}\to\rMod\mathcal{A}$ that takes $d\mapsto\tr_{X_\alpha}(H_d)$. Using the discussion in the previous paragraph, one sees that $\tr_{X_\alpha}(\mathcal{A})(c,d)$ consists of the morphisms $\xi\in\mathcal{A}(c,d)$ that can be expressed as finite sums of the form  $\xi=\sum_{k=1}^t\beta_k\circ\gamma_k$, with $\gamma_k\in\what{\mathcal{A}}(c,\what{b})$, $\beta_k\in\what{\mathcal{A}}(\what{b},d)$, and $\beta_k\circ\alpha =0$, for all $k=1,\ldots,t$. By Corollary~\ref{cor.idealgenerated-annihilator}, we conclude that $\tr_{X_\alpha}(\mathcal{A})=\lann_\mathcal{A}(\alpha)\mathcal{A}$ for all $\alpha\in\ubar{\mathcal{V}}$.

Let now $\mathcal{X}_0\leqdef \{X_\alpha\mid \alpha\in\ubar{\mathcal{V}}\}$, which is a small subcategory equivalent to $\mathcal{X}_I\cap\rmod\mathcal{A}$. Lemma~\ref{lema.generation of C versus of X} and Theorem~\ref{thm.always enough flats} tell us that $\mathcal{X}_I$ is locally finitely presented if, and only if, $\calI =\tr_{\mathcal{X}_0}(\mathcal{A})=\sum_{\alpha\in\ubar{\mathcal{V}}}\tr_{X_\alpha }(\mathcal{A})$. By the argument in the previous paragraph, this is equivalent to the condition $\calI =\sum_{\alpha\in\ubar{\mathcal{V}}}\lann_\mathcal{A}(\alpha)\mathcal{A}$.

\smallskip\noindent
``$\textup{(b)}\Leftrightarrow\textup{(c)}$'' Observe that $\Fix(\eta)=\lann_\mathcal{A}(1_{\what{b}}-\eta)$, for all $\what{b}\in\Obj(\what{\mathcal{A}})$, and $\eta\in\what{\calI}(\what{b},\what{b})$. Putting $\calI'\leqdef\sum_{\what{b}\in\Obj(\what{\mathcal{A}})}\sum_{\eta\in\what{\calI}(\what{b},\what{b})}\Fix(\eta)\mathcal{A}$, Example~\ref{ex.stableunit-fpdetecting} tells us that $I'\subseteq \sum_{\alpha\in\ubar{\mathcal{V}}}\lann_\mathcal{A}(\alpha)\mathcal{A}$. 

On the other hand, if $\alpha\in\what{\mathcal{A}}(\what{a},\what{b})$ is fp-detecting, then $\ubar{\alpha}\leqdef p(\alpha )$ is a retraction in $\what{\mathcal{A}}/\what{\calI}$ (see Proposition~\ref{prop.fp(X)}). Choose  $\beta\in\what{\mathcal{A}}(\what{b},\what{a})$ such that $\ubar{\alpha}\circ\ubar{\beta}=\underline{1}_{\what{b}}\,$. Then we have $\alpha\circ\beta=1_{\what{b}}-\eta$, where $\eta\in\what{\calI}(\what{b},\what{b})$, so that $\lann_\mathcal{A}(\alpha\circ\beta)=\Fix(\eta)$. Since we clearly have an inclusion $\lann_\mathcal{A}(\alpha)\subseteq\lann_\mathcal{A}(\alpha\circ\beta)$, we conclude that $\sum_{\alpha\in\ubar{\mathcal{V}}}\lann_\mathcal{A}(\alpha)\mathcal{A}\subseteq I'$ and so $\sum_{\alpha\in\ubar{\mathcal{V}}}\lann_\mathcal{A}(\alpha)\mathcal{A}= I'$. It follows that assertion~(b) holds if, and only if, so does assertion~(c). 
\end{proof}

\begin{rem}
\label{rem.leftannihilator-in-I}%
Note that, if $\calI $ is an idempotent ideal of $\mathcal{A}$, and $\ubar{\mathcal{V}}$ is the corresponding set of fp-detecting morphisms in $\what{\mathcal{A}}$, then we always have the inclusion:
\[
\sum_{\what{b}\in\Obj(\what{\mathcal{A}})}\sum_{\eta\in\what{\mathcal{A}}(\what{b},\what{b})}\Fix(\eta )\mathcal{A}=\smash[b]{\sum_{\alpha\in\ubar{\mathcal{V}}}\lann_\mathcal{A}(\alpha)\mathcal{A}}\subseteq\calI.
\]
It is the converse inclusion that makes $\mathcal{X}_I\cong\mathcal{G}_I$ locally finitely presented. Indeed, given $\alpha\colon\what{a}\to\what{b}$  in $\ubar{\mathcal{V}}$, it is clear that $\smash[b]{\ubar{\beta}\circ\ubar{\alpha}}=0$ in $\what{\mathcal{A}}/\what{\calI}$, for all $\beta\colon\what{b}\to d$ such that $\beta\circ\alpha =0$. Since $\ubar{\alpha}$ is a retraction, this means that $\smash[b]{\ubar{\beta}}=0$, i.e., $\beta\in\what{\calI}(\what{b},d)$. Thus, $\lann_\mathcal{A}(\alpha )\subseteq \calI$ and, therefore, also $\lann_\mathcal{A}(\alpha)\mathcal{A}\subseteq \calI$.
\end{rem}

Recall that if $\alpha\in\what{\mathcal{A}}(\what{a},\what{b})$ then a \emph{pseudocokernel} or a \emph{weak cokernel} of $\alpha$ is a morphism $\alpha'\colon\what{b}\to\what{c}$ in $\what{\mathcal{A}}$ such that $\alpha'\circ\alpha =0$ and any $\beta\in\what{\mathcal{A}}(\what{b},\what{d})$ such that $\beta\circ\alpha =0$ satisfies that $\beta =\gamma\circ\alpha'$, for a (not necessarily unique) morphism $\gamma\in\what{\mathcal{A}}(\what{c},\what{d})$. We leave to the reader the easy verification that $\alpha'$ is a pseudocokernel of $\alpha$ if, and only if, $\lann_\mathcal{A}(\alpha)=\mathcal{A}\alpha'$. 

\begin{defn}
\label{def.coherentcategory}%
A small preadditive category $\mathcal{A}$ will be called \emph{left coherent} when $\what{\mathcal{A}}$ has pseudocokernels, equivalently, when $\mathcal{A}\lMod\cong\what{\mathcal{A}}\lMod$ is a locally coherent Grothendieck category (see \cite[Corollary~1.11]{PSV}).
\end{defn}

We then get the following immediate consequence of Theorem~\ref{thm.finitepresentability-on-ideal}.

\begin{cor}
\label{cor.I generated by pseudocokernels}%
Let $\mathcal{A}$ be a left coherent preadditive category and $I$ be an idempotent ideal of it. Then $\mathcal{G}_I$ is locally finitely presented if and only if the ideal $\calI$ is generated by the pseudocokernels of fp-detecting morphisms, and if and only if $\calI$ is generated by pseudocokernels of morphisms of the form $1_{\what{b}}-\eta$, where $\what{b}\in\Obj(\what{\mathcal{A}})$ and $\eta\in\what{\calI}(\what{b},\what{b})$.
\end{cor}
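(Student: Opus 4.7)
The plan is to deduce the corollary directly from Theorem~\ref{thm.finitepresentability-on-ideal} by translating both generating conditions on $\calI$ into the language of pseudocokernels, using the left coherence of $\mathcal{A}$ to guarantee their existence.

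The first step is to record the key identification, already noted in the paragraph preceding Definition~\ref{def.coherentcategory}: a morphism $\alpha'\colon\what{b}\to\what{c}$ in $\what{\mathcal{A}}$ is a pseudocokernel of $\alpha\colon\what{a}\to\what{b}$ if and only if $\lann_\mathcal{A}(\alpha)=\mathcal{A}\alpha'$ as submodules of $H'_{\what{b}}$. Under left coherence this pseudocokernel exists for every morphism of $\what{\mathcal{A}}$, so the identification is always available. Combining it with Corollary~\ref{cor.idealgenerated-annihilator}\,(i), we obtain
\[
\lann_\mathcal{A}(\alpha)\mathcal{A}=(\mathcal{A}\alpha')\mathcal{A}=\mathcal{A}\alpha'\mathcal{A},
\]
i.e.\ the ideal of $\mathcal{A}$ generated (in the sense of Definition~\ref{def.ideal associated to a submodule}) by $\lann_\mathcal{A}(\alpha)$ coincides with the ideal of $\mathcal{A}$ generated by the matrix entries of any pseudocokernel $\alpha'$ of $\alpha$.

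The second step is simply to plug this into Theorem~\ref{thm.finitepresentability-on-ideal}. The equivalence (a)$\Leftrightarrow$(b) of that theorem says that $\mathcal{G}_I$ is locally finitely presented if and only if $\calI=\sum_{\alpha\in\ubar{\mathcal{V}}}\lann_\mathcal{A}(\alpha)\mathcal{A}$, which by the previous step is the same as saying that $\calI$ is generated by pseudocokernels of the fp-detecting morphisms; this yields the first stated equivalence. For the second one, use (a)$\Leftrightarrow$(c) together with the observation $\Fix(\eta)=\lann_\mathcal{A}(1_{\what{b}}-\eta)$ already recorded in the proof of Theorem~\ref{thm.finitepresentability-on-ideal}, so that $\Fix(\eta)\mathcal{A}=\mathcal{A}\eta'\mathcal{A}$ whenever $\eta'$ is a pseudocokernel of $1_{\what{b}}-\eta$. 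The morphisms $1_{\what{b}}-\eta$ are automatically fp-detecting by Example~\ref{ex.stableunit-fpdetecting}, which is also why the two characterizations nest correctly.

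There is no real obstacle to this argument; essentially all the work has been done in Section~6. The only point worth double-checking is the harmless notational matter that the phrase \emph{``ideal generated by a pseudocokernel $\alpha'$''} in the statement really means the ideal $\mathcal{A}\alpha'\mathcal{A}$ obtained by summing the two-sided ideals generated by the matrix entries of $\alpha'$, as in Corollary~\ref{cor.idealgenerated-annihilator}\,(i); once this is made explicit, the corollary is a direct specialization of Theorem~\ref{thm.finitepresentability-on-ideal} to the coherent setting.
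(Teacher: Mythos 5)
Your argument is correct and is exactly the route the paper has in mind: the paper labels the corollary an ``immediate consequence'' of Theorem~\ref{thm.finitepresentability-on-ideal}, and your proof just makes the translation explicit, using the identification $\lann_\mathcal{A}(\alpha)=\mathcal{A}\alpha'$ (available by left coherence) together with Corollary~\ref{cor.idealgenerated-annihilator}\,(i) to get $\lann_\mathcal{A}(\alpha)\mathcal{A}=\mathcal{A}\alpha'\mathcal{A}$, then reading off both equivalences from parts (b) and (c) of the theorem. Nothing is missing.
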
 

\subsection{When $\mathcal{A}$ is a ring}%
When $\mathcal{A}=R$ is a ring, i.e.\ a preadditive category with just one object, and $I$ is an idempotent ideal of $R$, $\ubar{\mathcal{V}}$ may be identified with a set of matrices with entries in $R$. Let us call a matrix $A\in\mathcal{M}_{m\times n}(R)$ an \emph{fp-detecting matrix (with respect to $I$)} when the associated morphism in $\rMod R$
\[
A\colon R^n \longrightarrow R^m \quad\text{such that}\quad X=(x_1,\dots,x_n)^t \longmapsto AX
\]
is fp-detecting, in which case we put $A\in\ubar{\mathcal{V}}$. On the other hand, in this particular situation the equivalent of a morphism $\eta\in\what{\calI}(\what{b},\what{b})$ for an object $\what{b}\in\what{\mathcal{A}}$ is just a matrix $E\in\mathcal{M}_{n\times n}(I)$. Then we can identify $\Fix(\eta)$ with the submodule $\Fix(E)$ of the left $R$-module $R^n$ that consists of those $X\in R^n=\mathcal{M}_{1\times n}(R)$ such that $X=XE$.

\begin{lem}
\label{lem.fpdetecting-ringcase}%
Let $R$ be a ring, $I$ be an idempotent ideal of $R$ and let $\ubar{\mathcal{V}}$ be the set of fp-detecting matrices (with respect to $I$). For any matrix $A\in\mathcal{M}_{m\times n}(R)$, the following assertions hold:
\begin{enumerate}[(i)]
\item $I_A\leqdef\lann_R(A)R$ is the two-sided ideal of all those $r\in R$ that can be expressed as finite sums of matrix products $r=\sum_{k=1}^tX_kY_k$, where $X_k\in\mathcal{M}_{1\times m}(R)$, $Y_k\in\mathcal{M}_{m\times 1}(R)$ and $X_kA=0$, for $k=1,\ldots,t$.
\item $A\in\ubar{\mathcal{V}}$, i.e., $A$ is fp-detecting if, and only if, the following conditions hold:
\begin{enumerate}[(1)]
\item $A$ is right invertible modulo $I$, i.e.\ there is $B\in\mathcal{M}_{n\times m}(R)$ such that $1_m-AB$ has all its entries in $I$;
\item for each column-vector
$Y=(\begin{matrix}
		y_1 & \ldots & y_m	
	\end{matrix})^t\in I^m$, the equation $AX=Y$ is solvable in $R^n$ if and only if it has a solution in $I^n$.
\end{enumerate}
\end{enumerate}
\end{lem}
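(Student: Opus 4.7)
The idea is that both parts are just the specialisation of earlier machinery, Corollary~\ref{cor.idealgenerated-annihilator} and Proposition~\ref{prop.fp(X)}, to the one-object case $\mathcal{A}=R$. Under this specialisation, the additive closure $\what{\mathcal{A}}$ is (equivalent to) the category of finitely generated free right $R$-modules $R^n$ with morphisms $R^n\to R^m$ identified with $m\times n$-matrices over $R$; composition is matrix multiplication and the representable module $H_{R^n}$ evaluated at the single object $R$ is the column-space $R^n$.

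For \textbf{(i)}, I would simply unpack Corollary~\ref{cor.idealgenerated-annihilator}(ii) for the morphism $A\colon R^n\to R^m$. That result says the group $[\lann_\mathcal{A}(A)\mathcal{A}](c,d)$ consists of sums $\sum_k\mu_k\circ\beta_k$ with $\beta_k\in\what{\mathcal{A}}(c,\what{b})$ and $\mu_k\in\what{\mathcal{A}}(\what{b},d)$ satisfying $\mu_k\circ A=0$. Taking $c=d=R$ and $\what{b}=R^m$ turns $\beta_k$ into a column $Y_k\in\mathcal{M}_{m\times 1}(R)$, $\mu_k$ into a row $X_k\in\mathcal{M}_{1\times m}(R)$ with $X_kA=0$, and the composition $\mu_k\circ\beta_k$ into the product $X_kY_k\in R$. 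This is precisely the description in (i).

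For \textbf{(ii)}, I would apply Proposition~\ref{prop.fp(X)} and translate its two equivalent conditions. Condition (b.i) says $p(A)$ is a retraction in $\what{\mathcal{A}}/\what{I}$, i.e.\ there exists a morphism $B\colon R^m\to R^n$ (an $n\times m$-matrix) with $p(A)p(B)=p(1_m)$; this is the same as saying $1_m-AB\in\what{I}(R^m,R^m)=\mathcal{M}_{m\times m}(I)$, which is exactly condition (1). For condition (b.ii), since $\mathcal{A}$ has a single object, the equality of submodules $(\what{I}(-,R^m)\mathbin{:}A)=\what{I}(-,R^n)+\rann_\mathcal{A}(A)$ is tested at $R$, where it reads
\[
	\{\gamma\in R^n\mid A\gamma\in I^m\}=I^n+\{\beta\in R^n\mid A\beta=0\}.
\]
The inclusion $\supseteq$ holds automatically (Remark~\ref{rem.I versus I:alpha}); the inclusion $\subseteq$ is exactly condition (2) of the lemma. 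Indeed, given $Y\in I^m$ such that $Y=A\gamma$ for some $\gamma\in R^n$, the inclusion produces $\gamma=\eta+\beta$ with $\eta\in I^n$ and $A\beta=0$, so $A\eta=Y$ is a solution in $I^n$; conversely, if the equation $AX=Y$ has a solution $\eta\in I^n$ whenever it is solvable and $Y\in I^m$, then for an arbitrary $\gamma\in R^n$ with $A\gamma\in I^m$ the vector $\beta\leqdef\gamma-\eta$ lies in $\rann_\mathcal{A}(A)(R)$ and yields the required decomposition. The trivial ``if'' half of (2) (a solution in $I^n$ is a solution in $R^n$) costs nothing and is recorded only to phrase the condition as an ``iff''.

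No serious obstacle is expected: the whole argument is a dictionary translation from representable-module language to matrices, and everything follows from results already established in the excerpt.
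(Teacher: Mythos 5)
Your proof is correct and takes essentially the same approach as the paper: part (i) is the specialisation of Corollary~\ref{cor.idealgenerated-annihilator}\,(ii) to the one-object case, and part (ii) is the matrix translation of the two conditions in Proposition~\ref{prop.fp(X)}, with condition (1) coming from (b.i) and condition (2) coming from evaluating the submodule equality in (b.ii) at the unique object. The paper even leaves the (b.i)\,$\leftrightarrow$\,(1) translation implicit, so your spelling it out makes the argument slightly more complete.
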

\begin{proof}
\smallskip \noindent(i) It is just an interpretation in this particular case of what the ideal $\lann_\mathcal{A}(\alpha )\mathcal{A}$ means for a morphism $\alpha$ in $\what{\mathcal{A}}$ (see Corollary~\ref{cor.idealgenerated-annihilator}).

\smallskip \noindent(ii) Here we only need to check that condition~(ii.2) is exactly the translation to this context of condition~(b.ii) in Proposition~\ref{prop.fp(X)}. Indeed, in this context $\what{a}$ and $\what{b}$ are coproducts of, say, $n$ and $m$ copies of the unique object of $R$, when we view $R$ as a preadditive category with just one object, that we shall denote by $x$. It follows that the morphism $\alpha\colon\what{a}\to\what{b}$ may be interpreted as a matrix $A\in\mathcal{M}_{m\times n}(R)$. Then $(\what{\calI}(-,\what{b}):\alpha)$ consists of the morphisms $\beta\in \what{R}(x,\what{a})\cong\Hom_R(R,R^n)$ such that $\alpha\circ\beta\in\what{\calI}(x,\what{b})$. But such a $\beta$ may then be viewed as a $B\in\mathcal{M}_{n\times 1}(R)$ such that $AB\in\mathcal{M}_{m\times 1}(I)=I^m$. 

The equality $(\what{\calI}(-,\what{b}):\alpha)=\what{\calI}(-,\what{a})+\rann_R(\alpha)$ says that $\beta =\eta +\gamma$, where $\eta\in\what{\calI}(x,\what{a})\cong I^n=\mathcal{M}_{n\times 1}(I)$ and $\gamma\in\what{\calI}(x,\what{a})\cong\Hom_R(R,R^n)\cong\mathcal{M}_{n\times 1}(R)$ satisfies that $\alpha\circ\gamma =0$. Notice that this is equivalent so say that there exists an $\eta\in\what{\calI}(x,\what{a})$ such that $\alpha\circ\beta =\alpha\circ\eta$. Putting now $Y\leqdef AB\in I^m$, this is saying that there exists a $n\times 1$ matrix $E$ with entries in $I$ (i.e., the one corresponding to $\eta$) such that $AE=Y$. 

All in all, condition~(b.ii) in Proposition~\ref{prop.fp(X)} translates in this context to the fact that if $Y\in\calI^m$ is any column vector with entries in $I$, then the matrix equation $AX=Y$, with unknown a $n\times 1$ matrix $X$, has a solution in $R^n$ if and only if it has a solution in $I^n$.
\end{proof}

\begin{rem}
\label{purity}%
The reader will have noticed that the truth of condition~(ii.2) of last lemma for all matrices $A$ is equivalent to the purity of $I$ on the left (see \cite[Proposition~I.11.2]{St}). It is not hard to see that this extends to the general case, i.e.\ when $\mathcal{A}$ is a small preadditive category and $\calI$ is an idempotent ideal of it. Indeed, we leave to the reader checking that condition~(b.ii) of Proposition~\ref{prop.fp(X)} holds for all morphisms $\alpha$ in $\what{\mathcal{A}}$ if, and only if, $\calI$ is pure on the left, i.e.\ $\rho_M\colon M\otimes_\mathcal{A}\calI\to M$ is a monomorphism for all (finitely presented) right $\mathcal{A}$-modules $M$. This is in turn equivalent to the equality of subcategories $\mathcal{C}_{\calI}=\mathcal{X}_{\calI}$. When $\calI$ is such an ideal, the fp-detecting morphisms (resp., matrices) are exactly the ones that are right invertible module $\calI$.
\end{rem}

As a consequence of Lemma~\ref{lem.fpdetecting-ringcase} and Theorem~\ref{thm.finitepresentability-on-ideal}, we get the following:

\begin{prop}
\label{prop.finitepresentability-ringcase}%
Let $R$ be a ring, $I\leq R$ an idempotent ideal, and let $\ubar{\mathcal{V}}$ be the corresponding set of fp-detecting matrices (see Lemma~\ref{lem.fpdetecting-ringcase}). Then, the following assertions are equivalent:
\begin{enumerate}[(a)]
\item The  Giraud subcategory $\mathcal{G}_I$ of $\rMod R$ is locally finitely presented;
\item $I=\sum_{A\in\ubar{\mathcal{V}}}I_A$, where if $A\in\mathcal{M}_{m\times n}(R)$ then $I_A$ is the two-sided ideal of $R$ that consists of the $r\in R$ that can be expressed as finite sums of matrix products $r=\sum_{k=1}^tX_kY_k$, where $X_k\in\mathcal{M}_{1\times m}(R)$, $Y_k\in\mathcal{M}_{m\times 1}(R)$ and $X_kA=0$, for $k=1,\ldots,t$.
\item $I=\sum_{n>0}\sum_{E\in\mathcal{M}_{n\times n}(I)}\Fix(E)R$, where $\Fix(E)R$ is the two-sided ideal of $R$ that consists of the $r\in R$ that can be expressed as finite sums of matrix products $r=\sum_{k=1}^tX_kY_k$, where $X_k\in\mathcal{M}_{1\times n}(R)$, $Y_k\in\mathcal{M}_{n\times 1}(R)$ and $X_k=X_kE$ for $k=1,\ldots,t$.
\end{enumerate}
\end{prop}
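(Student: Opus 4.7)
The plan is to obtain the proposition as a direct translation of Theorem~\ref{thm.finitepresentability-on-ideal} to the one-object case $\mathcal{A}=R$, using Lemma~\ref{lem.fpdetecting-ringcase} as the dictionary. When $R$ is viewed as a preadditive category with a single object $x$, the additive closure $\what{R}$ is equivalent to the full subcategory of $\rMod R$ whose objects are the finitely generated free right $R$-modules $R^n$. Under this equivalence, an object $\what{a}\in\Obj(\what{R})$ corresponds to some $R^n$, and a morphism $\alpha\colon R^n\to R^m$ in $\what{R}$ is identified with a matrix $A\in\mathcal{M}_{m\times n}(R)$. In particular, fp-detecting morphisms in the sense of Definition~\ref{def.fp-detecting morphisms} correspond exactly to fp-detecting matrices in the sense of Lemma~\ref{lem.fpdetecting-ringcase}.

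Now I would invoke Theorem~\ref{thm.finitepresentability-on-ideal}, according to which \textup{(a)} is equivalent to the equality $\calI=\sum_{\alpha\in\ubar{\mathcal{V}}}\lann_\mathcal{A}(\alpha)\mathcal{A}$ and, equivalently, to $\calI=\sum_{\what b}\sum_{\eta\in\what{\calI}(\what b,\what b)}\Fix(\eta)\mathcal{A}$. So the only thing left to verify is that, under the one-object translation, these two sums become respectively the sums in \textup{(b)} and \textup{(c)} of the statement.

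For \textup{(b)}, the ideal $\lann_R(\alpha)R$ attached to a morphism $\alpha\colon R^n\to R^m$ in $\what{R}$ is, by Corollary~\ref{cor.idealgenerated-annihilator}\,(ii), the subgroup of $R=R(x,x)$ consisting of the elements $\xi=\sum_k\mu_k\circ\beta_k$ with $\beta_k\in\what R(x,R^m)=\mathcal{M}_{m\times1}(R)$, $\mu_k\in\what R(R^m,x)=\mathcal{M}_{1\times m}(R)$, and $\mu_k\circ A=0$. This is precisely the description of $I_A$ given in Lemma~\ref{lem.fpdetecting-ringcase}\,(i) (after swapping the roles of $X_k$ and $Y_k$, which does not affect the product $X_kY_k\in R$). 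Therefore $\sum_{\alpha\in\ubar{\mathcal{V}}}\lann_R(\alpha)R=\sum_{A\in\ubar{\mathcal{V}}}I_A$, proving the equivalence \textup{(a)}$\Leftrightarrow$\textup{(b)}.

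For \textup{(c)}, an endomorphism $\eta\in\what{\calI}(\what b,\what b)$ of $\what b\cong R^n$ corresponds to a matrix $E\in\mathcal{M}_{n\times n}(I)$, and $\Fix(\eta)$ is the left $R$-submodule of $\what R(R^n,-)$ consisting of $\beta$'s with $\beta=\beta\circ\eta$. Evaluating at $x$, elements of $\Fix(\eta)$ are row-vectors $X\in\mathcal{M}_{1\times n}(R)$ with $X=XE$, which matches the definition of $\Fix(E)$. Applying once more Definition~\ref{def.ideal associated to a submodule} to the submodule $\Fix(E)\leq H'_{R^n}$, we obtain precisely the two-sided ideal described in \textup{(c)} as the set of finite sums $\sum_kX_kY_k$ with $X_k\in\Fix(E)$ and $Y_k\in\mathcal{M}_{n\times 1}(R)$. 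Hence $\sum_{\what b}\sum_\eta\Fix(\eta)\mathcal{A}=\sum_{n>0}\sum_{E\in\mathcal{M}_{n\times n}(I)}\Fix(E)R$, and the equivalence \textup{(a)}$\Leftrightarrow$\textup{(c)} follows.

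Since everything reduces to matching notations between the categorical setup of Theorem~\ref{thm.finitepresentability-on-ideal} and the matrix language of Lemma~\ref{lem.fpdetecting-ringcase}, there is no genuine obstacle; the only care needed is being explicit that composition of morphisms in $\what{R}$ is matrix multiplication with entries multiplied in $R$, so that the ``sums of products'' appearing in \textup{(b)} and \textup{(c)} really are the evaluations at $x$ of the corresponding ideals in the general statement.
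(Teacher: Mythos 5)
Your proposal is correct and follows the same route as the paper: Proposition~\ref{prop.finitepresentability-ringcase} is stated there as an immediate consequence of Lemma~\ref{lem.fpdetecting-ringcase} and Theorem~\ref{thm.finitepresentability-on-ideal}, with no further proof given, and you carry out exactly that translation, verifying that $\lann_R(\alpha)R$ evaluated at $(x,x)$ is $I_A$ and that $\Fix(\eta)\mathcal{A}$ evaluated at $(x,x)$ is $\Fix(E)R$. One small inaccuracy: no ``swapping of the roles of $X_k$ and $Y_k$'' is needed — in Corollary~\ref{cor.idealgenerated-annihilator}\,(ii) the $\mu_k$ are the row vectors in $\mathcal{M}_{1\times m}(R)$ with $\mu_kA=0$ (so $\mu_k\leftrightarrow X_k$) and the $\beta_k$ are the column vectors (so $\beta_k\leftrightarrow Y_k$), matching Lemma~\ref{lem.fpdetecting-ringcase}\,(i) verbatim; the parenthetical aside is harmless but misleading.
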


\section{Cuadra-Simson's problem versus the telescope conjecture}%
The Telescope Conjecture (TC) was proposed by Ravenel (\cite{Rav}) for the homotopy category of spectra $\operatorname{Ho}(\mathbf{Sp})$ and has only recently been refuted (see \cite{BHLS}). Transferred to the general context of compactly generated triangulated categories, of which $\operatorname{Ho}(\mathbf{Sp})$ is an example, it asserts the following:

\begin{TC}
If $\mathcal{D}$ is a compactly generated triangulated category and $\mathbf{t}\leqdef(\mathcal{X},\mathcal{Y})$ is a smashing semiorthogonal decomposition of $\mathcal{D}$, then $\mathbf{t}$ is compactly generated (see Subsection~\ref{subs.triangulcats}).
\end{TC}

Krause (\cite{Krause1}) proposed a study of the conjecture via the analysis of certain ideals of the (skeletally small) subcategory $\mathcal{D}^c$. Concretely, for $\mathcal{D}$ compactly generated and $\mathbf{t}\leqdef(\mathcal{X},\mathcal{Y})$ a smashing semiorthogonal decomposition of $\mathcal{D}$, Krause defined the ideal $\calI_\mathbf{t}$ of $\mathcal{D}^c$ consisting of the morphisms $\alpha\colon C\to C'$ in $\mathcal{D}^c$ such that $\mathcal{D}(\alpha,Y)\colon\mathcal{D}(C',Y)\to\mathcal{D}(C,Y)$ is the zero map, for all $Y\in\mathcal{Y}$. It turns out that $\mathbf{t}$ is compactly generated precisely when $\calI_\mathbf{t}$ is generated by identity morphisms of objects in $\mathcal{D}^c$, so that TC can be re-stated as follows:

\begin{TC}[Reformulated]
\label{TC-restatement}%
If $\mathcal{D}$ is a compactly generated triangulated category and $\mathbf{t}\leqdef(\mathcal{X},\mathcal{Y})$ a smashing semiorthogonal decomposition in $\mathcal{D}$, then the ideal $\calI_\mathbf{t}$ is generated by identity morphisms of objects in $\mathcal{D}^c$.
\end{TC}

As shown by Krause, $\mathbf{t}$ is recovered from the ideal $\calI_\mathbf{t}$. Namely $\mathcal{Y}$ consists of the objects $Y\in\mathcal{D}$ such that, for all $C,\, C'\in\mathcal{D}^c$ and all $\alpha\in\calI_\mathbf{t}(C,C')$,  $\mathcal{D}(\alpha ,Y)\colon\mathcal{D}(C',Y)\to\mathcal{D}(C,Y)$ is the zero map. In particular, the smashing semiorthogonal decompositions in $\mathcal{D}$ are precisely the semiorthogonal decompositions with definable co-aisle. This observation led to a generalized version of the conjecture for general t-structures.

\begin{NTC}
Exactly as the stable version (or, equivalently, its reformulation), but with $\mathbf{t}$ a t-structure in $\mathcal{D}$ with definable co-aisle. 
\end{NTC}

We point out that, when $\mathcal{D}$ has a model which allows for a definition of homotopy colimits, e.g., when it is the base of a strong stable derivator or the homotopy category of a stable $\infty$-category, a co-aisle in $\mathcal{D}$ is definable if and only if it is closed under directed homotopy colimits and pure monomorphisms; furthermore, if $\mathcal{D}$ is algebraic and compactly generated, then the definable co-aisles are precisely those that are closed under directed homotopy colimits (see \cite[Remark~8.9]{SS}). With this in mind, the main result of \cite{HN22} provides a positive answer to NTC when $\mathcal{D}$ is the derived category of a commutative noetherian ring. Similarly, in \cite{Sabatini} NTC is proved for the derived category of a path algebra $RQ$, where either $R$ is a commutative noetherian ring and $Q$ is a
Dynkin quiver, or $R$ is a commutative artinian ring and $Q$ is any finite
quiver.

TC (and consequently NTC) has long been known to be false even for derived categories of rings (see \cite{K2}). Then the study of these conjectures has moved to the problem of identifying classes of compactly generated triangulated categories $\mathcal{D}$ for which they hold, or alternatively, identifying some classes of smashing semiorthogonal decompositions (resp., t-structures with definable co-aisles) for which TC (resp., NTC) is true. 

The aim of this subsection is to show a surprising connection between the Cuadra--Simson Problem and these telescope conjectures. The crucial point for the connection is the following.

\begin{prop}[{\cite{Krause1}, \cite{SS}}]%
\label{prop.TC ideal}%
If $\mathbf{t}=(\mathcal{X},\mathcal{Y})$ is a t-structure with definable co-aisle (e.g., a smashing semiorthogonal decomposition) in $\mathcal{D}$, then the ideal $\calI_\mathbf{t}$ of $\mathcal{D}^c$ is idempotent. 
\end{prop}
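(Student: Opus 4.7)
The plan is to exploit the truncation triangle with respect to $\mathbf{t}$ together with a compact approximation of the aisle, the latter being made possible precisely by the definability hypothesis on $\mathcal{Y}$. The inclusion $\calI_\mathbf{t}\cdot\calI_\mathbf{t}\subseteq\calI_\mathbf{t}$ is automatic since $\calI_\mathbf{t}$ is a two-sided ideal of $\mathcal{D}^c$, so the substance of the proof is the reverse inclusion, which amounts to factoring every $\alpha\in\calI_\mathbf{t}(C,C')$ as $\alpha=\beta\circ\gamma$ through a compact object with $\beta,\gamma\in\calI_\mathbf{t}$.

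First I would make the following elementary observation. Given $\alpha\colon C\to C'$ in $\calI_\mathbf{t}$, the composition $C\mathrel{\smash[t]{\buildrel\alpha\over\to}} C'\to\tau_\mathbf{t}^{>0}C'$ is zero because $\tau_\mathbf{t}^{>0}C'\in\mathcal{Y}$, so $\alpha$ factors as $\omega\circ\tilde\alpha$ with $\omega\colon\tau_\mathbf{t}^{\leq 0}C'\to C'$ the canonical truncation morphism and $\tilde\alpha\colon C\to\tau_\mathbf{t}^{\leq 0}C'$ the induced map. Moreover, any morphism $D\to C'$ from a compact object $D$ that factors through $\tau_\mathbf{t}^{\leq 0}C'$ automatically lies in $\calI_\mathbf{t}$, because the orthogonality $\mathcal{D}(\mathcal{X},\mathcal{Y})=0$ makes such a map invisible to any object of $\mathcal{Y}$.

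The technical core is the second step: present $\tau_\mathbf{t}^{\leq 0}C'$ as a directed homotopy colimit $\mathrm{hocolim}(D_i,\gamma_{ij})$ with $D_i\in\mathcal{D}^c$ and with all transition morphisms $\gamma_{ij}$ in $\calI_\mathbf{t}$. I would construct such a presentation by an iterative Milnor-type cone procedure, analogous on the triangulated side to the directed-system description of flat objects of $\mathcal{X}_{\calI}$ in Proposition~\ref{prop.Proposition 4.4 of Lorenzo-notes}\,(iv), attaching at each stage cones of families of compact test morphisms that certify membership in $\mathcal{X}$. The definability of $\mathcal{Y}$ is essential here: it is what guarantees, on the one hand, that such a directed homotopy colimit of compacts with transition maps in $\calI_\mathbf{t}$ stays in $\mathcal{X}$, and on the other hand that $\mathcal{X}$ is in fact exhausted by such approximations. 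This is the main obstacle, and it is exactly the place where one relies on the machinery of \cite{Krause1,SS}; without definability the argument breaks down, and indeed idempotency itself would in general fail.

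With such a presentation in hand, the remainder is routine. Compactness of $C$ forces $\tilde\alpha$ to factor through some $D_{i_0}$ via a map $\gamma_0\colon C\to D_{i_0}$, and choosing any $j>i_0$ produces the factorization
\[
\alpha=(\omega\circ\lambda_j)\circ(\gamma_{i_0 j}\circ\gamma_0),
\]
where $\lambda_j\colon D_j\to\tau_\mathbf{t}^{\leq 0}C'$ is the canonical structural map. Both factors lie in $\calI_\mathbf{t}$: the left one by the observation of the first paragraph, since it factors through $\tau_\mathbf{t}^{\leq 0}C'\in\mathcal{X}$; the right one because it is the composition with $\gamma_{i_0 j}\in\calI_\mathbf{t}$ of a morphism in $\mathcal{D}^c$, and $\calI_\mathbf{t}$ is a two-sided ideal there. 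Thus $\alpha\in\calI_\mathbf{t}\cdot\calI_\mathbf{t}$, as required.
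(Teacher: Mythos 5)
The paper does not prove this proposition: it is cited from Krause~\cite{Krause1} and Saor\'\i n--\v S\v tov\'\i\v cek~\cite{SS}, whose arguments pass through the restricted Yoneda functor $h\colon\mathcal{D}\to\rMod\mathcal{D}^c$ and identify the t-structure with a TTF triple (respectively a suitable hereditary torsion pair) in $\rMod\mathcal{D}^c$; idempotency of $\calI_\mathbf{t}$ is then extracted from the bijection between TTF classes and idempotent ideals (respectively from an explicit analysis of the torsion-pair structure). Your proposal takes a genuinely different route, by trying to factor $\alpha$ directly through $\tau^{\le0}_{\mathbf{t}}C'$ and then through a compact approximation. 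The first reduction and the final compactness step are correct and well in the spirit of the module-theoretic picture of Section~4.

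The problem is the technical core, namely the assertion that $\tau^{\le0}_{\mathbf{t}}C'$ is a directed homotopy colimit of compacts $D_i$ with all transition maps $\gamma_{ij}\in\calI_\mathbf{t}$. As stated, this is not an available tool: the ``iterative Milnor-type cone procedure'' does not produce such a system, and the module-theoretic analogue you are invoking, Proposition~\ref{prop.Proposition 4.4 of Lorenzo-notes}\,(iv), is a non-starter here because its proof uses the hypothesis $F\in\mathcal{C}_\calI$, i.e.\ $F\calI=F$. The triangulated counterpart of that hypothesis is precisely the statement that every compact-source map $D\to\tau^{\le0}_{\mathbf{t}}C'$ factors (as a finite sum of composites) through a map in $\calI_\mathbf{t}$ into a compact --- and that statement already implies idempotency directly, without any hocolim refinement. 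In other words, the only natural way to establish your step~2 presupposes a fact that is at least as strong as the proposition being proved, which makes the argument circular unless one supplies that fact by an independent route. That independent route is exactly the functor-category machinery of \cite{Krause1,SS}; but once one has it, step~2 and the whole hocolim construction become unnecessary. So the proof as written either begs the question or silently delegates the entire content of the result to uncited machinery, and it is not the approach of the cited sources.
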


\begin{rem}
It is proved in \cite{LMV} that the above result is also true for an arbitrary torsion pair with definable co-aisle, thus giving rise to what the authors call the strong telescope property. So far we have not been able to prove our main result in this section, Theorem~\ref{thm.CuadraSimson-Telescope}, in that generality.
\end{rem}

In order to prove the announced connection, we need the following:

\begin{lem}\label{lem.auxiliary for TC}%
Let  $\mathbf{t}=(\mathcal{X},\mathcal{Y})$ be a t-structure  with definable co-aisle (e.g., a smashing semiorthogonal decomposition) in  a compactly generated triangulated category $\mathcal{D}$, let $\calI_\mathbf{t}$ be the associated idempotent ideal of $\mathcal{D}^c$, and denote by $\tau^{\leq0}\colon\mathcal{D}\to\mathcal{X}$ and $\tau^{>0}\colon\mathcal{D}\to\mathcal{Y}$ the associated truncation functors. Then, the following assertions hold:
\begin{enumerate}[(i)]
\item $\calI_\mathbf{t}$ consists of the morphisms $\alpha$ in $\mathcal{D}^c$ such that $\tau^{>0}(\alpha)=0$;
\item If $\alpha$ is morphism in $\mathcal{D}^c$ that is mapped onto an isomorphism in $\mathcal{D}^c\!/\calI_\mathbf{t}$, then $\tau^{>0}(\alpha)$ is an isomorphism;
\item If $\alpha$ is as in assertion~\textup{(ii)}, then $\operatorname{Cone}(\alpha)\in\mathcal{X}\cap\mathcal{D}^c$.
\end{enumerate}
\end{lem}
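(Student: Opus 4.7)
The plan is to exploit the adjunction between the right truncation $\tau^{>0}\colon\mathcal{D}\to\mathcal{Y}$ and the inclusion $j\colon\mathcal{Y}\hookrightarrow\mathcal{D}$, coupled with the long exact sequence obtained by applying $\mathcal{D}(-,Y)$ ($Y\in\mathcal{Y}$) to distinguished triangles. For (i), I will use the natural isomorphism $\mathcal{D}(\alpha,Y)\cong\mathcal{D}(\tau^{>0}(\alpha),Y)$ supplied by the adjunction: hence $\alpha\in\calI_\mathbf{t}$ if and only if $\mathcal{D}(\tau^{>0}(\alpha),Y)=0$ for every $Y\in\mathcal{Y}$, and by testing against $Y=\tau^{>0}(C')$ (where $C'$ is the codomain of $\alpha$) and evaluating at $1_{\tau^{>0}(C')}$, this forces $\tau^{>0}(\alpha)=0$; the converse is trivial.

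Part (ii) is immediate from (i). Given $\beta\in\mathcal{D}^c$ with $\alpha\beta-1_{C'},\,\beta\alpha-1_C\in\calI_\mathbf{t}$, applying $\tau^{>0}$ and invoking (i) yields $\tau^{>0}(\alpha)\tau^{>0}(\beta)=1_{\tau^{>0}(C')}$ and $\tau^{>0}(\beta)\tau^{>0}(\alpha)=1_{\tau^{>0}(C)}$, exhibiting $\tau^{>0}(\alpha)$ as an isomorphism with two-sided inverse $\tau^{>0}(\beta)$.

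For (iii), I will apply the cohomological functor $\mathcal{D}(-,Y)$ for an arbitrary $Y\in\mathcal{Y}$ to the defining triangle $C\xrightarrow{\alpha}C'\to\Cone(\alpha)\to C[1]$, obtaining the exact sequence
\[
\mathcal{D}(C'[1],Y)\xrightarrow{\mathcal{D}(\alpha[1],Y)}\mathcal{D}(C[1],Y)\to\mathcal{D}(\Cone(\alpha),Y)\to\mathcal{D}(C',Y)\xrightarrow{\mathcal{D}(\alpha,Y)}\mathcal{D}(C,Y).
\]
The same mechanism behind (ii), applied pointwise, shows that $\mathcal{D}(\alpha,Y)$ is a bijection for every $Y\in\mathcal{Y}$. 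The crucial extra input is that since $\mathbf{t}$ is a t-structure, $\mathcal{Y}[-1]\subseteq\mathcal{Y}$, so $Y[-1]\in\mathcal{Y}$ and hence $\mathcal{D}(\alpha[1],Y)\cong\mathcal{D}(\alpha,Y[-1])$ is also a bijection. Chasing the exact sequence now forces $\mathcal{D}(\Cone(\alpha),Y)=0$ for every $Y\in\mathcal{Y}$, whence $\Cone(\alpha)\in{}^{\perp}\mathcal{Y}=\mathcal{X}$; membership in $\mathcal{D}^c$ is automatic because $\mathcal{D}^c$ is a triangulated subcategory.

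The truly delicate point is in (iii): while (i) and (ii) use nothing more than the adjunction and functoriality, the vanishing argument in (iii) relies essentially on the closure $\mathcal{Y}[-1]\subseteq\mathcal{Y}$, which is precisely what distinguishes a t-structure from a generic torsion pair with definable co-aisle. For a general such torsion pair this invariance under inverse suspension would fail, and the long exact sequence would only control $\mathcal{D}(\alpha,Y)$ but not the shifted map, so the argument would not close.
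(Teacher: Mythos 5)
Your arguments for (i) and (ii) coincide in substance with the paper's: both exploit the adjunction $\mathcal{Y}(\tau^{>0}C,Y)\cong\mathcal{D}(C,Y)$ to translate ``$\mathcal{D}(\alpha,Y)=0$ for all $Y\in\mathcal{Y}$'' into ``$\tau^{>0}(\alpha)=0$'', and (ii) follows in both cases by applying $\tau^{>0}$ to the two relations $1_{C'}-\alpha\beta,\,1_C-\beta\alpha\in\calI_\mathbf{t}$ and invoking (i).

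For (iii) you take a genuinely different route. The paper promotes $\alpha$ to a morphism of truncation triangles, observes that since $\tau^{>0}(\alpha)$ is an isomorphism the square involving $\tau^{\le0}\alpha$ and $\alpha$ is a homotopy pushout (dual of Neeman's Lemma 1.4.3), hence $\Cone(\alpha)\cong\Cone(\tau^{\le0}\alpha)$, and finally uses that $\mathcal{X}$ is closed under cones of its own morphisms (which needs $\mathcal{X}[1]\subseteq\mathcal{X}$). You instead apply the contravariant cohomological functor $\mathcal{D}(-,Y)$ directly to the triangle $C\xrightarrow{\alpha}C'\to\Cone(\alpha)\to C[1]$ and chase the resulting five-term exact sequence, using that $\mathcal{D}(\alpha,Y)$ is a bijection for every $Y\in\mathcal{Y}$ and that the shifted map $\mathcal{D}(\alpha[1],Y)\cong\mathcal{D}(\alpha,Y[-1])$ is one as well because $\mathcal{Y}[-1]\subseteq\mathcal{Y}$. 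Both approaches rely on the same underlying hypothesis (the t-structure inclusion $\mathcal{X}[1]\subseteq\mathcal{X}$, equivalently $\mathcal{Y}[-1]\subseteq\mathcal{Y}$), which your closing remark correctly isolates as the essential input; yours avoids the homotopy-pushout recognition and works entirely at the level of Hom-groups, while the paper's argument gives the slightly sharper structural information $\Cone(\alpha)\cong\Cone(\tau^{\le0}\alpha)$. Your proof is correct (the compactness of $\Cone(\alpha)$ follows, as you note, because $\mathcal{D}^c$ is a triangulated subcategory, a fact the paper uses tacitly as well).
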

\begin{proof}
\smallskip \noindent(i) Let $\alpha\colon C\to C'$ be a morphism in $\mathcal{D}^c$. The adjunction isomorphism $\mathcal{Y}(\tau^{>0}C,\tau^{>0}C')\mathrel{\smash[t]{\buildrel\cong\over\to}}\mathcal{D}(C,\tau^{>0}C')$ takes $\tau^{>0}(\alpha)\mapsto \tau^{>0}(\alpha)\circ p_C=p_{C'}\circ\alpha$, where $p_D\colon D\to\tau^{>0}D$ is the canonical morphism, for all $D\in\mathcal{D}$. But if $\alpha\in\calI_\mathbf{t}$ then $p_{C'}\circ\alpha=0$, which implies that $\tau^{>0}(\alpha)=0$, as desired.

\smallskip \noindent(ii) If $\alpha\colon C\to C'$ is a morphism in $\mathcal{D}^c$ such that $\ubar{\alpha}\leqdef p(\alpha)$ is an isomorphism in $\mathcal{D}^c\!/I_\mathbf{t}$,  there is a morphism $\beta\colon C'\to C$ such that $1_{C'}-\alpha\circ\beta\in\calI_\mathbf{t}(C',C')$ and $1_{C}-\beta\circ\alpha\in\calI_\mathbf{t}(C,C)$. Applying $\tau^{>0} $ to these equalities and using assertion~(i), we see that $\tau^{>0}(\alpha)$ and $\tau^{>0}(\beta)$ are mutually inverse isomorphisms.

\smallskip \noindent(iii) Consider the following morphism of triangles:
\[
\xymatrix@R=18pt{%
	\tau^{\le0}C \ar[r]\ar[d]_-{\tau^{\le0}\alpha} & C \ar[r]\ar[d]^-{\alpha} &
		\tau^{>0}C \ar[r]^-{+}\ar[d]_-{\cong}^-{\tau^{>0}\alpha} & {} \cr
	\tau^{\le0}C' \ar[r]
    & C' \ar[r]
    & \tau^{>0}C' \ar[r]^-{+} & {} & {}
}
\]
Since $\tau^{>0}(\alpha)$ is an isomorphism, the commutative square on the left is a homotopy pushout. To see this, apply the dual of \cite[Lemma~1.4.3]{N} and the functoriality of $\tau^{\leq 0}$. Hence, there is an isomorphism $\operatorname{Cone}(\tau^{\leq 0}(\alpha))\cong\operatorname{Cone} (\alpha)$. But $\operatorname{Cone}(\tau^{\leq 0}(\alpha))\in\mathcal{X}$, and so $\operatorname{Cone} (\alpha)\in\mathcal{X}\cap\mathcal{D}^c$. 
\end{proof}

We are now ready to prove the surprising mentioned connection.

\begin{thm}\label{thm.CuadraSimson-Telescope}%
Let $\mathcal{D}$ be a compactly generated triangulated category, let $\mathcal{A}$ be a skeleton of $\mathcal{D}^c$, let $\mathbf{t}=(\mathcal{X},\mathcal{Y})$ be a t-structure with definable co-aisle (e.g., a smashing semiorthogonal decomposition) of $\mathcal{D}$,  and let $\calI_\mathbf{t}$ be the associated idempotent ideal of $\mathcal{A}$. Then, the following assertions are equivalent:
\begin{enumerate}[(a)]
\item $\mathbf{t}$ is compactly generated.
\item The Giraud subcategory $\mathcal{G}_{I_\mathbf{t}}$ of $\rMod\mathcal{A}$ is locally finitely presented.
\end{enumerate}
\end{thm}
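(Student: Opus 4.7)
The plan is to translate both assertions into purely ideal-theoretic statements about $\calI_\mathbf{t}$ inside $\mathcal{A}$ and then check that these coincide. By the reformulation of the Telescope Conjecture recorded above, assertion~(a) amounts to the equality $\calI_\mathbf{t}=\sum_{C\in\mathcal{X}\cap\mathcal{D}^c}\mathcal{A}1_C\mathcal{A}$, since an identity $1_C$ belongs to $\calI_\mathbf{t}$ precisely when $C\in\mathcal{X}$. On the other hand, a skeleton $\mathcal{A}$ of the additive, idempotent-complete category $\mathcal{D}^c$ can be chosen additive, so $\what{\mathcal{A}}\simeq\mathcal{A}$; moreover, for any $\alpha\colon B\to C$ in $\mathcal{A}$, the second map $\alpha'\colon C\to\Cone(\alpha)$ of the distinguished cone triangle is a pseudocokernel of $\alpha$ in $\mathcal{A}$ (both because $\mathcal{D}^c$ is closed under cones and because the triangulated axioms provide factorisations of morphisms annihilating $\alpha$ through $\alpha'$). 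Hence $\mathcal{A}$ is left coherent in the sense of Definition~\ref{def.coherentcategory}, and Corollary~\ref{cor.I generated by pseudocokernels} reformulates~(b) as
\[
	\calI_\mathbf{t}=\sum_{B\in\Obj(\mathcal{A})}\sum_{\eta\in\calI_\mathbf{t}(B,B)}
		\mathcal{A}\alpha'_{B,\eta}\mathcal{A},
\]
where $\alpha'_{B,\eta}\colon B\to\Cone(1_B-\eta)$ is a pseudocokernel of $1_B-\eta$.

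With these two reformulations in hand, the equivalence becomes a short diagram chase. For (a)$\Rightarrow$(b), given $C\in\mathcal{X}\cap\mathcal{D}^c$, the plan is to set $B=C$ and $\eta=1_C\in\calI_\mathbf{t}(C,C)$; then $1_B-\eta=0$, whose left annihilator is all of $H'_C$ so that $1_C$ itself can be chosen as a pseudocokernel. Therefore every generating summand $\mathcal{A}1_C\mathcal{A}$ of (a)'s reformulation already appears among those of (b)'s. For the converse (b)$\Rightarrow$(a), given $B\in\Obj(\mathcal{A})$ and $\eta\in\calI_\mathbf{t}(B,B)$, the projection $p(1_B-\eta)=1_B$ is an isomorphism in $\mathcal{A}/\calI_\mathbf{t}$, so Lemma~\ref{lem.auxiliary for TC}\,(iii) forces $C\leqdef\Cone(1_B-\eta)$ to lie in $\mathcal{X}\cap\mathcal{D}^c$. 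The pseudocokernel $\alpha'_{B,\eta}\colon B\to C$ then factors trivially as $1_C\circ\alpha'_{B,\eta}$, giving the chain of inclusions $\mathcal{A}\alpha'_{B,\eta}\mathcal{A}\subseteq\mathcal{A}1_C\mathcal{A}\subseteq\calI_\mathbf{t}$, where the last inclusion uses $C\in\mathcal{X}$.

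The main obstacle, and the reason Lemma~\ref{lem.auxiliary for TC}\,(iii) is indispensable, is precisely the location-of-the-cone step in (b)$\Rightarrow$(a): Corollary~\ref{cor.I generated by pseudocokernels} by itself only produces pseudocokernels sitting a priori anywhere inside $\mathcal{D}^c$, with no control over where they lie with respect to $\mathbf{t}$. It is the truncation-and-homotopy-pushout argument of Lemma~\ref{lem.auxiliary for TC}\,(iii) that pins $\Cone(1_B-\eta)$ inside the aisle $\mathcal{X}$, thereby identifying the syntactic condition ``$\calI_\mathbf{t}$ is generated by pseudocokernels of morphisms $1-\eta$'' with the geometric condition ``$\calI_\mathbf{t}$ is generated by identities of compact objects of $\mathcal{X}$'' that characterises compact generation of $\mathbf{t}$.
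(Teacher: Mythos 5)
Your proof is correct and follows essentially the same line of reasoning as the paper's: both hinge on the reformulation of~(a) as ``$\calI_\mathbf{t}$ is generated by identity morphisms,'' on Theorem~\ref{thm.finitepresentability-on-ideal} (you via its Corollary~\ref{cor.I generated by pseudocokernels}) to translate~(b) into ideal-theoretic terms, and, crucially, on Lemma~\ref{lem.auxiliary for TC}\,(iii) to place $\Cone(1_B-\eta)$ inside $\mathcal{X}\cap\mathcal{D}^c$ and close the implication $\textup{(b)}\Rightarrow\textup{(a)}$ by factoring the pseudocokernel through $1_C$. The only cosmetic deviation is in $\textup{(a)}\Rightarrow\textup{(b)}$: the paper argues directly that an ideal generated by idempotents is the trace of finitely generated projectives and therefore $\mathcal{G}_{\calI_\mathbf{t}}$ has a set of finitely generated projective generators (via Theorem~\ref{thm.always enough flats} and Proposition~\ref{prop.traces-versus-idempotents}), whereas you stay inside the pseudocokernel framework by observing that $1_C$ is a pseudocokernel of $0=1_C-1_C$; both are valid, yours has the merit of using a single criterion for both implications, while the paper's gives the sharper conclusion that $\mathcal{G}_{\calI_\mathbf{t}}$ even has finitely generated projective generators.
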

\begin{proof}

Recall that assertion~(a) holds if, and only if, the ideal $\calI_\mathbf{t}$ is generated by identity morphisms of objects in $\mathcal{A}$. Then the implication \smallskip \noindent``$\textup{(a)}\Rightarrow\textup{(b)}$'' follows from Theorem~\ref{thm.always enough flats} and Proposition~\ref{prop.traces-versus-idempotents}.

\smallskip \noindent``$\textup{(b)}\Rightarrow\textup{(a)}$'' For each $B\in\Obj(\mathcal{A})$ and each $\eta\in\calI_{\mathbf{t}}(B,B)$, the endomorphism $\alpha\leqdef 1_B-\eta$ clearly satisfies the hypothesis of assertions~(ii) and~(iii) of Lemma~\ref{lem.auxiliary for TC}, so that $C_\eta\leqdef\operatorname{Cone}(1_B-\eta)\in\mathcal{X}\cap\mathcal{D}^c$. Up to replacement by an isomorphic object, we can assume that $C_\eta\in\mathcal{A}$, so that we have a triangle
\[
	B\buildrel 1_B-\eta\over\longrightarrow
	B\buildrel{\gamma_\eta}\over\longrightarrow
	C_\eta\buildrel+\over\longrightarrow{}
\]
and $\gamma_\eta$ is a pseudocokernel of $1_B-\eta$ in $\mathcal{A}$. By Corollary~\ref{cor.I generated by pseudocokernels}, assertion (b) is equivalent to the fact that $\calI_{\mathbf{t}}$ is generated by the morphisms of the form $\gamma_\eta\in \A(B,C_\eta)$, for all $B\in\Obj(\mathcal{A})$ and all $\eta\in\calI_{\mathbf{t}}(B,B)$. On the other hand, it is clear that $\gamma_\eta=1_{C_\eta}\circ\gamma_\eta\in 1_{C_\eta}\A\subseteq \calI_{\mathbf{t}}$. In other words, $\calI_\mathbf{t}$ is generated by identity morphisms of objects in $\mathcal{A}$.
\end{proof}

\end{document}